\renewcommand{\epsilon}{\varepsilon}
\newcommand{\N}{\mathbb{N}}
\newcommand{\Z}{\mathbb{Z}}
\newcommand{\R}{\mathbb{R}}
\newcommand{\C}{\mathbb{C}}
\renewcommand{\P}{\mathbb{P}}
\renewcommand{\Re}{\operatorname{Re}}
\newcounter{mtheorem}
\newtheorem{mtheorem}[mtheorem]{Theorem}
\newcommand{\supp}{\operatorname{supp}}
\renewcommand{\P}{\mathbb{P}}
\newcommand{{\vol}}{\rm vol}
\newcommand{\p}{\partial}
\newcommand{\norm}[1]{\Vert #1 \Vert}
\newcommand{\Ric}{\operatorname{Ric}}
\newcommand{\Rm}{\operatorname{Rm}}
\providecommand{\norm}[1]{\lVert#1\rVert}
\def\tr{\operatorname{tr}}
\def\inj{\operatorname{inj}}
\def\Id{\operatorname{Id}}
\def \Cstarn{(\mathbb{C}^*)^n}
\def \Cstar{\mathbb{C}^*}
\def \t {\mathfrak{t}}
\def \bp {\bar{\partial}}
\def \Pol {P_{-K_M}}
\def\supp{\operatorname{supp}}
\def\tr{\operatorname{tr}}
\def\Ric{\operatorname{Ric}}
\def\tr{\operatorname{tr}}
\def\inj{\operatorname{inj}}
\def\Id{\operatorname{Id}}
\def\diam{\operatorname{diam}}
\def\vol{\operatorname{vol}}
\newtheoremstyle{fancy}{}{}{\itshape}{}{\textbf\bgroup}{.\egroup}{ }{}
\newtheoremstyle{fancy2}{}{}{\rm}{}{\textbf\bgroup}{.\egroup}{ }{}
\theoremstyle{fancy}
\newtheorem{theorem}{Theorem}[section]
\newtheorem{lemma}[theorem]{Lemma}
\newtheorem{corollary}[theorem]{Corollary}
\newtheorem{prop}[theorem]{Proposition}
\theoremstyle{fancy2}
\newtheorem{definition}[theorem]{Definition}
\newtheorem{example}[theorem]{Example}
\newtheorem{remark}[theorem]{Remark}
\newtheorem{claim}[theorem]{Claim}
\setlist{leftmargin=*}
\numberwithin{equation}{section}
\begin{document}
\title{An Aubin continuity path for shrinking gradient K\"ahler-Ricci solitons}
\date{\today}

\author{Charles Cifarelli}
\address{Mathematics Department, Stony Brook University, 100 Nicolls Road, Stony Brook, NY 11794}
\email{charles.cifarelli@stonybrook.edu}
\author{Ronan J.~Conlon}
\address{Department of Mathematical Sciences, The University of Texas at Dallas, Richardson, TX 75080}
\email{ronan.conlon@utdallas.edu}
\author{Alix Deruelle}
\address{Sorbonne Universit\'e and Universit\'e de Paris, CNRS, IMJ-PRG, F-75005 Paris, France}
\email{alix.deruelle@imj-prg.fr}

\date{\today}

\begin{abstract}
Let $D$ be a toric K\"ahler-Einstein Fano manifold. We
show that any toric shrinking gradient K\"ahler-Ricci soliton
on certain toric blowups of $\mathbb{C}\times D$ satisfies a complex Monge-Amp\`ere equation. We then set up an Aubin continuity path to
solve this equation and show that it has a solution at the initial value of the path parameter.
This we do by implementing another continuity method.
\end{abstract}

\maketitle

\markboth{Charles Cifarelli, Ronan J.~Conlon, and Alix Deruelle}{An Aubin path for shrinking gradient K\"ahler-Ricci solitons}

\tableofcontents

\section{Introduction}

\subsection{Overview}\label{overview}
A \emph{Ricci soliton} is a triple $(M,\,g,\,X)$, where $M$ is a Riemannian manifold endowed with a complete Riemannian metric $g$
and a complete vector field $X$, such that
\begin{equation}\label{hot}
\Ric_{g}+\frac{1}{2}\mathcal{L}_{X}g=\lambda g
\end{equation}
for some $\lambda\in\mathbb{R}$. The vector field $X$ is called the
\emph{soliton vector field}. If $X=\nabla^{g} f$ for some smooth real-valued function $f$ on $M$,
then we say that $(M,\,g,\,X)$ is \emph{gradient}. In this case, the soliton equation \eqref{hot}
becomes $$\Ric_{g}+\operatorname{Hess}_{g}(f)=\lambda g,$$
and we call $f$ the \emph{soliton potential}. In the case of gradient Ricci solitons, the completeness of $X$ is guaranteed by the completeness of $g$
\cite{zhang12}.

Let $(M,\,g,\,X)$ be a Ricci soliton. If $g$ is K\"ahler and $X$ is real holomorphic, then we say that $(M,\,g,\,X)$ is a \emph{K\"ahler-Ricci soliton}. Let $\omega$ denote the K\"ahler
form of $g$. If $(M,\,g,\,X)$ is in addition gradient, then \eqref{hot} may be rewritten as
\begin{equation}\label{krseqn}
\rho_{\omega}+i\partial\bar{\partial}f=\lambda\omega,
\end{equation}
where $\rho_{\omega}$ is the Ricci form of $\omega$ and $f$ is the soliton potential.

Finally, a Ricci soliton and a K\"ahler-Ricci soliton are called \emph{steady} if $\lambda=0$, \emph{expanding}
if $\lambda<0$, and \emph{shrinking} if $\lambda>0$ in \eqref{hot}.
One can always normalise $\lambda$, when non-zero, to satisfy $|\lambda|=1$. We henceforth assume that this is the case.

Ricci solitons are interesting both from the point of view of canonical metrics and of the Ricci flow. On one hand, they represent one direction in which the concept of an Einstein manifold can be generalised. On compact manifolds, shrinking Ricci solitons are known to exist in several instances where there are obstructions to the existence of Einstein metrics; see for example \cite{soliton}. By the maximum principle, there are no nontrivial expanding or steady Ricci solitons on compact manifolds. However, there are many examples on noncompact manifolds; see for example \cite{conlon33,cds, futaki3} and the references therein. On the other hand, one can associate to a Ricci soliton a self-similar solution of the Ricci flow, and gradient shrinking Ricci solitons in particular provide models for finite-time Type I singularities of the flow \cite{topping, naber}. From this perspective, it is an important problem to classify such solitons in order to better understand singularity development along the Ricci flow.

In this article, we are concerned with the construction of shrinking gradient K\"ahler-Ricci solitons, models for finite-time Type I singularities of the K\"ahler-Ricci flow. In essence, we set up an Aubin continuity path for a complex Monge-Amp\`ere equation to construct such solitons in a particular geometric setting that allows for control on the data of the equation. We then show that there is a solution to the equation for the initial value of the path parameter. This we do by implementing another continuity path.

\subsection{Main result}\label{sec:main}

In order to state the main result, recall that a complex toric manifold is a smooth $n$-dimensional complex manifold $D$ endowed with an effective holomorphic action of the complex
torus $(\mathbb{C}^{*})^{n}$ with a compact fixed point set. In such a setting, there always exists an orbit $U\subset D$ of the $(\mathbb{C}^{*})^{n}$-action
which is open and dense in $D$. The $(\mathbb{C}^{*})^{n}$-action of course determines the holomorphic action of a real torus
$T^{n}\subset(\mathbb{C}^{*})^{n}$, as is easily seen for the action of the one-dimensional torus
$\mathbb{C}^{*}$ on $\mathbb{P}^{1}$ via $\lambda\cdot[z_{0}:z_{1}]\mapsto[\lambda z_{1}:z_{2}]$.
{This assumption is crucial for obtaining a uniform lower bound on the solution along our continuity path.}
Our main result is stated as follows.
\begin{mtheorem}\label{mainthm}
Let $D^{n-1}$ be a toric K\"ahler-Einstein Fano manifold of complex dimension $n-1$ with K\"ahler form $\omega_{D}$ and Ricci form $\rho_{\omega_{D}}=\omega_{D}$, and consider $\mathbb{P}^{1}\times D$ with the induced product torus action acting by rotation on the $\mathbb{P}^{1}$-factor.
Let $T^{n}$ denote the real torus acting on $\mathbb{P}^{1}\times D$, write $D_{x}:=\{x\}\times D$, and let $\overline{M}$ be a toric Fano manifold obtained as a torus-equivariant (possibly iterated) blowup $\pi:\overline{M}\to\mathbb{P}^{1}\times D$ along smooth torus-invariant subvarieties contained in $D_{0}$.
Let $M:=\overline{M}\setminus\pi^{-1}(D_{\infty})$, $\widehat{M}:=\mathbb{C}\times D$, write $J$ for the complex structure on $M$, $\mathfrak{t}$ for the Lie algebra of $T^{n}$,
and let $z$ denote the holomorphic coordinate on the $\mathbb{C}$-factor of $\widehat{M}$. Then:
\begin{enumerate}
\item There exists a unique complete real holomorphic vector field $JX\in\mathfrak{t}$ such that $X$ is the soliton vector field of any complete toric shrinking gradient K\"ahler-Ricci soliton on $M$.
\end{enumerate}
Assume that the flow-lines of $JX$ are closed. Then:
\begin{enumerate}\setcounter{enumi}{1}
  \item  There exists a complete K\"ahler metric $\omega$ on $M$ invariant under the action of $T$,
   $\lambda>0$ uniquely determined by $X$, and a holomorphic isometry $\nu:(M\setminus K,\,\omega)\to(\widehat{M}\setminus\widehat{K},\,\widehat{\omega}:=\omega_{C}+\omega_{D})$,
where $K\subset M,\,\widehat{K}\subset\widehat{M}$, are compact and $\omega_{C}:=\frac{i}{2}\partial\bar{\partial}|z|^{2\lambda}$,
such that $d\nu(X)=\frac{2}{\lambda}\cdot\operatorname{Re}\left(z\partial_{z}\right)$.
\item There exists a unique torus-invariant function $f\in C^{\infty}(M)$ such that
$-\omega\lrcorner JX=df$. Moreover, $f=\nu^{*}\left(\frac{|z|^{2\lambda}}{2}-1\right)$ and
$\Delta_{\omega}f+f-\frac{X}{2}\cdot f=0$ outside a compact subset of $M$ containing $K$.
  \item Any shrinking K\"ahler-Ricci soliton on $M$ invariant under the action of $T$
  of the form $\omega+i\partial\bar{\partial}\varphi$ for some $\varphi\in C^{\infty}(M)$
with $\omega+i\partial\bar{\partial}\varphi>0$ satisfies the complex Monge-Amp\`ere equation
\begin{equation}\label{cmaa}
(\omega+i\partial\bar{\partial}\varphi)^{n}=e^{F+\frac{X}{2}\cdot\varphi-\varphi}\omega^{n},
\end{equation}
where $F\in C^{\infty}(M)$ is equal to a constant outside a compact subset of $M$ and is determined by the fact that
$$\rho_{\omega}+\frac{1}{2}\mathcal{L}_{X}\omega-\omega=i\partial\bar{\partial}F\qquad\textrm{and}\qquad\int_{M}(e^{F}-1)e^{-f}\omega^{n}=0.$$
Here, $\rho_{\omega}$ denotes the Ricci form of $\omega$.
\item There exists a function $\psi\in C^{\infty}(M)$ invariant under the action of $T$ and with $\omega+i\partial\bar{\partial}\psi>0$ such that
\begin{equation}\label{ronan}
(\omega+i\partial\bar{\partial}\psi)^{n}=e^{F+\frac{X}{2}\cdot\psi}\omega^{n},
\end{equation}
where $\int_{M}\psi\,e^{-f}\omega^{n}=0$ and outside a compact subset, $\psi=c_{1}\log f+c_{2}+\vartheta$ for some constants $c_{1},\,c_{2}\in\mathbb{R}$ and
a smooth real-valued function $\vartheta:M\to\mathbb{R}$ satisfying
\begin{equation*}
|\nabla^{i}\mathcal{L}^{(j)}_{X}\vartheta|_{\omega}=O(f^{-\frac{\beta}{2}})\qquad\textrm{for all $i,\,j\in\mathbb{N}$},\quad \beta\in(0,\lambda^D).
\end{equation*}
Here, $\nabla$ denotes the Levi-Civita connection associated to $\omega$, $\mathcal{L}^{(j)}_{X}=\underbrace{\mathcal{L}_{X}\circ\ldots\circ\mathcal{L}_{X}}_{j-times}$,
and $\lambda^D$ is the first non-zero eigenvalue of $-\Delta_{D}$ acting on $L^2$-functions on $D$.
\end{enumerate}
\end{mtheorem}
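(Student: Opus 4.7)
The theorem divides naturally into a setup phase (parts (i)--(iv)) and an analytic existence phase (part (v)), with (v) being by far the main obstacle; I would attack them in this order. For (i), any real holomorphic soliton vector field $X$ must lie in $\mathfrak{t}$ by a Matsushima-type argument, since the isometry group of a toric shrinking KRS must contain the acting torus. Uniqueness then reduces to characterising $JX$ as the critical point of a strictly convex weighted-volume functional on the cone of admissible vector fields in $\mathfrak{t}$, in the spirit of Tian-Zhu on compact manifolds and of Martelli-Sparks-Yau / Donaldson in related non-compact contexts. The hypothesis that the flow of $JX$ is closed is what makes the induced scaling action at infinity genuinely $S^{1}$, and $\lambda$ is then read off from the weight of this action on the holomorphic coordinate $z$.

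For (ii) and (iii), once $X$ and $\lambda$ are fixed, the background Kähler metric $\omega$ is built by gluing the product model $\widehat{\omega} = \omega_{C} + \omega_{D}$ on $\widehat{M}\setminus\widehat{K}$ to any torus-invariant Kähler form on a compact core of $M$ via a cut-off in torus-invariant potentials; the isometry $\nu$ exists because $M$ and $\widehat{M}$ agree away from the blowup locus $\pi^{-1}(D_{0})$. The potential $f$ on the model is obtained by direct integration of $-\widehat{\omega}\lrcorner JX$ using $X = \frac{2}{\lambda}\operatorname{Re}(z\partial_{z})$, yielding $f = \nu^{*}(\frac{|z|^{2\lambda}}{2} - 1)$ after fixing the additive constant; it is extended to $M$ via $-\omega\lrcorner JX = df$, with exactness of $\omega\lrcorner JX$ guaranteed by a $\partial\bar\partial$-argument for the associated Hamiltonian function. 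The drift identity $\Delta_{\omega}f + f - \frac{X}{2}\cdot f = 0$ at infinity is then a direct calculation on the explicit product. For (iv), given a shrinking KRS $\omega_{\varphi} = \omega + i\partial\bar\partial\varphi$, one combines $\rho_{\omega_{\varphi}} = \rho_{\omega} - i\partial\bar\partial\log(\omega_{\varphi}^{n}/\omega^{n})$ with the soliton equation, the identity $\mathcal{L}_{X}\omega_{\varphi} = \mathcal{L}_{X}\omega + i\partial\bar\partial(X\cdot\varphi)$, and the defining equation for $F$, obtaining $i\partial\bar\partial(-\log(\omega_{\varphi}^{n}/\omega^{n}) + F + \frac{X}{2}\cdot\varphi - \varphi) = 0$. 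The bracketed function is $T$-invariant and decays at infinity, hence vanishes identically; the integral normalisation of $F$ against $e^{-f}\omega^{n}$ fixes the additive constant and gives \eqref{cmaa}.

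The heart of the theorem is (v), the existence of $\psi$ solving \eqref{ronan}. This equation is \eqref{cmaa} with the coercive $-\varphi$ term turned off, so the linearisation at a solution is essentially the drift Laplacian $\Delta_{\omega_{\psi}} - \frac{X}{2}\cdot$, whose kernel on reasonable spaces is the constants; a solvability constraint must therefore be imposed, which I would encode via the normalisation $\int_{M}\psi\,e^{-f}\omega^{n}=0$ against the Gaussian-like soliton weight. Existence would proceed by a second continuity method, deforming $F$ to $0$ (where $\psi\equiv 0$ solves the equation trivially) and using standard openness from invertibility of the drift operator on weighted Hölder spaces adapted to $f\sim|z|^{2\lambda}/2$, and closedness via uniform a priori estimates along the path.

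The principal difficulty is the $C^{0}$ estimate, which is unavailable by a direct maximum-principle argument because of the absent $-\psi$ term. I would obtain it by identifying $c_{1}\log f + c_{2}$ as the unique leading behavior of $\psi$ forced by \eqref{ronan} at infinity: the residual $\vartheta := \psi - c_{1}\log f - c_{2}$ then satisfies (to leading order) a linear drift equation on the product $\mathbb{C}\times D$, and separation of variables in the $D$-factor reduces the decay analysis to the spectrum of $\Delta_{D}$. The first non-zero $D$-eigenvalue $\lambda^{D}$ controls the decay rate, giving $|\vartheta| = O(f^{-\beta/2})$ for any $\beta \in (0,\lambda^{D})$; higher-order and $\mathcal{L}_{X}$-derivative decay follow by bootstrapping the linear theory on weighted spaces. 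Global $C^{0}$ control is then recovered by combining the asymptotic description with interior estimates on the compact core, higher $C^{k,\alpha}$ bounds by Krylov-Evans and Schauder, and $T$-invariance of $\psi$ by uniqueness together with averaging.
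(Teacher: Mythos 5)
Your outline of parts (i)--(iv) is broadly aligned with the paper's strategy, though two points deserve care. In (iv) you assert that the bracketed function $-\log(\omega_\varphi^n/\omega^n)+F+\frac{X}{2}\cdot\varphi-\varphi$ is $T$-invariant and ``decays at infinity, hence vanishes identically.'' There is no reason for it to decay: for a general potential it is merely pluriharmonic and $JX$-invariant. What the paper does instead (Proposition~\ref{equationsetup}, via Lemma~\ref{pluri}) is show that, on the product model $\mathbb{C}\times D$, any such function equals $c_0\log r + c_1$ outside a compact set; one then absorbs this pluriharmonic term by replacing $\tilde{\varphi}$ with $\tilde{\varphi}-H-\tfrac{c_0}{2}$, and \emph{that} modified potential solves \eqref{cmaa}. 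Similarly, in (ii) you should note that the biholomorphism $\nu$ is not automatic from $M$ and $\widehat M$ agreeing off the blowup locus; it is built by flowing along $X^{1,0}$, and the closedness of the $JX$-orbits is exactly what makes this map well-defined and determines $\lambda$.

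The substantive gap is in (v). You propose to get the $C^0$ estimate by extracting the asymptotic profile $\psi = c_1\log f + c_2 + \vartheta$ and bootstrapping, then recover ``global $C^0$ control \ldots by combining the asymptotic description with interior estimates on the compact core.'' This is circular: the interior (Yau/Evans--Krylov/Schauder) machinery for a fully nonlinear Monge--Amp\`ere equation already needs a $C^0$ bound as input, and the asymptotic decomposition only describes the shape of $\psi$ at infinity --- it does not bound $c_1$, $c_2$, or the core oscillation uniformly along the continuity path. In fact the paper is explicit that the $C^0$ estimate for \eqref{ronan} is the principal obstacle, that it cannot be obtained by a direct maximum principle, and that \emph{toricity} is essential for the lower bound. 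What actually carries the argument is a chain your proposal omits entirely: first, the data of the reformulated equation \eqref{starstar-s} is made compactly supported by subtracting a controlled $\log r$-potential, which localizes the extrema of $X\cdot\vartheta_s$ via the maximum principle and yields an a priori \emph{lower} bound on $X\cdot\vartheta_s$ (Section~\ref{sec-low-bd-rad-der}); second, the Aubin--Tian--Zhu $I$- and $J$-functionals together with the Poincar\'e inequality of Proposition~\ref{poincare} give uniform weighted $L^p$ bounds (Proposition~\ref{prop-a-priori-ene-est}), which a Nash--Moser iteration converts into an a priori upper bound on $\sup\vartheta_s$; third, and crucially, the a priori \emph{lower} bound on $\inf\vartheta_s$ comes from the $\hat{F}$-functional, the Legendre transform, and the invariance of the moment polytope under $T$-invariant K\"ahler perturbations (Lemma~\ref{expression}, Corollary~\ref{boundd}, Proposition~\ref{prop-bd-bel-uni-psi}). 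Your separation-of-variables analysis on $\mathbb{C}\times D$ is the right tool for the \emph{linear} decay theory (it is exactly the content of Theorem~\ref{iso-sch-Laplacian-pol} and of the weighted a priori estimates in Section~\ref{sec-wei-bd}), but it is downstream of the nonlinear $C^0$, $C^2$, and $C^3$ bounds, not a substitute for them. Finally, a smaller point: the paper does not use linear interpolation $F_s = sF$ but rather $F_s = \log(1+s(e^F-1))$, precisely so that $\int_M e^{F_s-f}\omega^n = \int_M e^{-f}\omega^n$ holds for every $s$; this invariance is used in the energy estimates and would be lost under the interpolation you implicitly describe.
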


Note that since $M$ does not split off any $S^{1}$-factors, toricity implies that $M$ has finite fundamental group \cite{cox}, a necessary condition for the existence of a shrinking gradient K\"ahler-Ricci soliton on $M$ \cite{wyliee}. {Note also that throughout, our convention for the K\"ahler Laplacian $\Delta_{\omega}$ is that with respect to the K\"ahler form $\omega$, $\Delta_{\omega}f=\operatorname{tr}_{\omega}\left(i\partial\bar{\partial}f\right)$ for $f$ a smooth real-valued function, so that the eigenvalues of minus the Laplacian are non-negative on a compact Riemannian manifold.}

Part (i) of the theorem determines the soliton vector field of any complete toric shrinking gradient K\"ahler-Ricci soliton on $M$ and follows immediately
from \cite[Theorem A]{charlie}, where it is asserted that a complete toric shrinking gradient K\"ahler-Ricci soliton is unique up to biholomorphism. The vector field $JX$ is characterised by the fact that it is the point in a specific open convex subset of $\mathfrak{t}$ at which a certain strictly convex functional attains its minimum. More precisely, because $H^{1}(M,\,\mathbb{R})=0$ and $M$ is toric, the action of $T$ is Hamiltonian and there exists a strictly convex functional $\mathcal{F}_{\omega}:\Lambda_{\omega}\to\mathbb{R}_{>0}$, the ``weighted volume functional'' \cite[Definition 5.16]{cds}, defined on an open convex cone $\Lambda_{\omega}\subset\mathfrak{t}$ uniquely determined by the image of $M$ under the moment map defined by the action of $T$ and the choice of $\omega$ \cite[Proposition 1.4]{wu} and well-defined by the non-compact version of the Duistermaat-Heckman formula \cite{wu} (see also \cite[Theorem A.3]{cds}). Because $T$ provides a full-dimensional torus symmetry, the domain $\Lambda_{\omega}$ of $\mathcal{F}_{\omega}$ and $\mathcal{F}_{\omega}$ itself only depend on the torus action \cite{ccd} so that both are independent of the choice of $\omega$. Furthermore, henceforth dropping the subscripts $\omega$, $\mathcal{F}$ is known to be strictly convex \cite[Lemma 5.17(i)]{cds}
and in addition proper \cite[Proposition 3.1]{charlie} on $\Lambda$ in the toric case, and so it must attain a unique minimum on $\Lambda$.
This minimum defines a distinguished point in $\mathfrak{t}$, namely the only vector field in $\mathfrak{t}$ that can admit a complete toric shrinking gradient K\"ahler-Ricci soliton \cite[Theorem 4.6]{charlie}. This is precisely the vector field $JX$ of Theorem \ref{mainthm}(i). Since everything is explicit and is determined by the torus action, one can a priori determine this vector field for a given $M$; see for example \cite[Section A.4]{cds}.

Parts (ii) and (iii) give a reference metric on $M$ that is isometric to a model shrinking gradient K\"ahler-Ricci soliton outside a compact set.
This requires the assumption that the flow-lines of $JX$ are closed. Indeed, this is the case for the soliton vector field on the model.
With respect to this background metric, part (iv) gives a complex Monge-Amp\`ere equation \eqref{cmaa} that any complete toric shrinking gradient K\"ahler-Ricci soliton on $M$ must satisfy with control on the asymptotics of the data $F$ of the equation. By \cite{charlie}, we know that there is at most one such soliton on $M$
and we expect that this equation has a solution, resulting in a complete toric shrinking gradient K\"ahler-Ricci soliton on $M$. Such a soliton should model finite time collapsing of the K\"ahler-Ricci flow in order to be consistent with \cite{tosatti10}. One may attempt to solve \eqref{cmaa} by implementing the Aubin continuity path that was introduced for K\"ahler-Einstein manifolds \cite[Section 7.26]{Aubin}. Specifically in our case, one may consider the path
\begin{equation}\label{ast-t}
\left\{
\begin{array}{rl}
(\omega+i\partial\bar{\partial}\varphi_{t})^{n}=e^{F+\frac{X}{2}\cdot\varphi_{t}-t\varphi_{t}}\omega^{n},&\quad\varphi\in C^{\infty}(M),\quad\mathcal{L}_{JX}\varphi=0,\quad\omega+i\partial\bar{\partial}\varphi>0,\quad t\in[0,\,1],\\
\int_{M}e^{F-f}\omega^{n}=\int_{M}e^{-f}\omega^{n}. &
\end{array} \right.\tag{$\ast_{t}$}
\end{equation}
The main content of Theorem \ref{mainthm} is part (v) where we provide a solution to the equation corresponding to $t=0$. This we do by implementing another continuity path. In the compact case,
this was achieved by Zhu \cite{Zhu-KRS-C1}.

The simplest example of a toric Fano manifold $D$ satisfying the conditions of Theorem \ref{mainthm} is $D=\mathbb{P}^{1}$ with $\pi$ the blowup map. Indeed, these choices result in
$M$ being the blowup of $\mathbb{C}\times\mathbb{P}^{1}$ at one point, a manifold for which the flow-lines of $JX$ close as one can see from  Example \ref{finaleg} or \cite[Example 2.33]{ccd}.
In \cite[Conjecture 1.1]{ccd}, $M$ was identified as a new manifold potentially admitting a (unique) complete shrinking gradient K\"ahler-Ricci soliton with bounded scalar curvature. Thanks to \cite{bamler1}, it is now known that $M$ admits such a soliton. However, the proof of existence in \cite{bamler1}
is strictly dimension dependent and is \emph{indirect} in that the soliton is constructed as a blowup limit of a specific K\"ahler-Ricci flow on the blowup of $\mathbb{P}^{1}\times\mathbb{P}^{1}$ at one point. The principal motivation behind Theorem \ref{mainthm} therefore is that it provides a first step in a \emph{direct} construction of this soliton on $M$, namely via the continuity method,
and is more widely applicable than the methods of \cite{bamler1}. It also serves to provide examples of non-compact manifolds with strictly positive Bakry-Emery tensor.

Equation \eqref{ronan} a priori looks identical to the complex Monge-Amp\`ere equation solved in \cite{conlon33}, where complete steady gradient K\"ahler-Ricci solitons were constructed. Even though
the equations appear the same and the same continuity path is used in both cases, there are several important differences between the two that result in
additional difficulties arising in the solution of \eqref{ronan} in contrast to the equation of \cite{conlon33}. We conclude this section by highlighting some of these differences.
\begin{itemize}
  \item On a closed K\"ahler manifold, the $X$-derivative of any K\"ahler potential is bounded prior to any other bound; see \cite{Zhu-KRS-C1}. This fact does not seem to be amenable to an arbitrary noncompact K\"ahler manifold and  represents one of the major obstacles to adapting Tian and Zhu's work \cite{Tian-Zhu-I} to our current setting. For us, not only is the drift operator $X$ of \eqref{ronan} unbounded, in contrast to \cite{conlon33} where it is bounded, but it also has the opposite sign. This prevents us from adapting the proof of the $C^0$ a priori estimate in \cite{conlon33} to the present situation.
  \item In \cite{Zhu-KRS-C1}, a generalisation of Calabi's conjecture was proved on compact K\"ahler manifolds using a continuity path that shrinks the hypothetical soliton vector field $X$ to zero as the path parameter tends to zero, thereby reducing the existence at the initial value of the path parameter to Yau's solution of the Calabi conjecture \cite{Calabiconj}. In our setting, implementing such a continuity path to solve \eqref{cmaa} does not preserve the weighted volume and indeed the weighted volume diverges at the initial value of the path parameter. This explains why the Aubin continuity path is more suited to solving \eqref{cmaa} which yields \eqref{ronan} at the initial value of the path parameter (in contrast to the Calabi-Yau equation). This is precisely the equation that we provide a solution to in Theorem \ref{mainthm}(v).
  \item In \cite{conlon33}, the corresponding equation was solved using the continuity path with exponentially weighted function spaces. Here, we solve
  \eqref{ronan} in polynomially weighted function spaces. This difference is derived from the fact that in the present situation, the linearised operator contains logarithmically growing functions in its kernel at infinity. This makes the linear theory more delicate than in the previous work \cite{conlon33}.
  \item In obtaining an a priori $C^{0}$-estimate for \eqref{ronan}, the toricity assumption is crucial. This was not the case in
  \cite{conlon33} where no toricity was required. However, a priori weighted $L^p$-estimates on the solution of \eqref{ronan} are obtained \emph{without} requiring toricity. The same also applies to the a priori estimates apart from the one concerning a \textit{lower} bound on the solution. This will all be made clear in the relevant statements throughout.
\item The order in which we obtain the a priori estimates differs to that of \cite{conlon33}. Here we first obtain an a priori \textit{lower} bound on the radial derivative of the solution. This then allows us to derive an a priori \textit{upper} bound on the solution. The next step is to derive an a priori \textit{lower} bound on the solution. At this stage, we follow the same strategy as that of \cite{conlon33} to obtain a priori \textit{local} estimates on the solution.
\item In addition to containing logarithmically growing functions, the kernel of the linearised operator in the present situation contains constants, a fact that makes the a priori weighted estimate of the difference of the solution and of its value at infinity more subtle in a nonlinear setting. To circumvent this issue, we apply the Bochner formula to the $X$-derivative of our solution with respect to the unknown K\"ahler metric.
\item Our geometric setting bears some resemblance to the work \cite{acyl} on asymptotically cylindrical Calabi-Yau metrics. However, in the context of metric measure spaces, our setting is somewhat dissimilar to theirs {in that as metric measure spaces, our spaces have finite volume, whereas their spaces have infinite volume. This forces us to take an alternative approach to obtain (weighted) a priori estimates.}
\end{itemize}

\subsection{Outline of paper}

We begin in Section \ref{sec-srs} by recalling the basics of shrinking Ricci and K\"ahler-Ricci solitons. Some important examples are discussed as well as some technical
lemmas proved. We also recall the definition of a metric measure space in Section \ref{metricmeasure}. In Section \ref{pooly}, we digress and define
polyhedrons and polyhedral cones before moving on to the definition of a Hamiltonian action in Section \ref{hamilton}.
Section \ref{toric-geom} then comprises the background material on toric geometry that we require.

In Section \ref{sec-construction-back-metric},  we construct a background metric with the desired properties, resulting in
the proof of Theorem \ref{mainthm}(ii). Next, in Section \ref{sec-set-up-CMA}, the complex Monge-Amp\`ere equation is set up and the normalisation of the Hamiltonian of $JX$ is obtained, leading to the proof of Theorem \ref{mainthm}(iii)--(iv). Our background metric is isometric to a shrinking gradient K\"ahler-Ricci soliton compatible with $X$ outside a compact set. This is what allows us to set up the complex Monge-Amp\`ere equation with compactly supported data.

From Section \ref{sec-poin-inequ} onwards, the content takes on a more analytic flavour with the proof of Theorem \ref{mainthm}(v) taking up Sections \ref{sec-poin-inequ}--\ref{sec-a-priori-est}. To prove this part of Theorem \ref{mainthm},
we implement the continuity method. The specific continuity path that we consider is outlined at the beginning of Section \ref{sec-a-priori-est} but beforehand, in Section \ref{sec-poin-inequ}, we
prove a Poincar\'e inequality which is the content of Proposition \ref{poincare}. This is essential in deriving the a priori weighted energy estimate for the complex Monge-Amp\`ere equation \eqref{cmaa} with compactly supported data.

In Section \ref{linear-theory-section}, we study the properties of the drift Laplacian of our background metric acting on polynomially weighted function spaces. More
precisely, we introduce polynomially weighted function spaces
whose elements are invariant under the flow of $JX$ in Section \ref{function-spaces-subsection}. We follow this up in Section \ref{linear} by showing that the drift Laplacian of our
background metric is an isomorphism between such spaces. This latter result is the content of Theorem \ref{iso-sch-Laplacian-pol}.
Using it, we then prove Theorem \ref{Imp-Def-Kah-Ste} that serves as the openness part of the continuity argument.
The closedness part involves a priori estimates and these make up Section \ref{sec-a-priori-est}.

As noted previously, the presence of the unbounded vector field $X$ makes the analysis much more involved. An a priori lower bound  for the radial derivative $X\cdot \psi$, where $\psi$ solves \eqref{cmaa}, has to be proved \emph{before} the a priori $C^0$ bound in order to avoid a circular argument; see Section \ref{sec-low-bd-rad-der}. A priori energy estimates are obtained in Section \ref{sec-a-priori-energy} through the use of the so-called Aubin-Tian-Zhu's functionals and result in an a priori upper bound on a solution to the complex Monge-Amp\`ere equation \eqref{cmaa}; cf.~Proposition \ref{prop-bd-abo-uni-psi}. As explained above, the invariance of the solution under the whole action torus is crucial in obtaining
an a priori lower bound on the infimum; cf.~Proposition \ref{prop-bd-bel-uni-psi}. Then and only then an a priori upper bound on the radial derivative of a solution to \eqref{cmaa} is derived; cf.~Proposition \ref{sec-upp-bd-rad-der}. Section \ref{sec-high-der} is devoted to proving a local bootstrapping phenomenon for \eqref{cmaa}. Finally, Section \ref{sec-wei-bd} takes care of establishing a priori weighted estimates at infinity for \eqref{cmaa}, leading to the completion of the proof of Theorem \ref{mainthm}(v) in Section \ref{sec-proof-main-thm}.

\subsection{Acknowledgements}
The authors wish to thank Song Sun and Jeff Viaclovsky for useful discussions, as well as the referees whose comments improved the clarity of the writing in certain places. The first author is supported by the grant Connect Talent ``COCOSYM'' of the r\'{e}gion des Pays de la Loire and the Centre Henri Lebesgue,  programme ANR-11-LABX-0020-0. The second author is supported by NSF grant DMS-1906466 and the third author is supported by grants ANR-17-CE40-0034 of the French National Research Agency ANR (Project CCEM) and ANR-AAPG2020 (Project PARAPLUI).

\section{Preliminaries}

\subsection{Shrinking Ricci solitons}\label{sec-srs}

Recall the definitions given at the beginning of Section \ref{overview}. An important class of examples of such manifolds for us is the following.

\begin{example}\label{example2}
We have a $1$-parameter family $\{\tilde{\omega}_{a}\}_{a>0}$ of (in-complete) shrinking gradient K\"ahler-Ricci soliton on $\mathbb{C}$. Indeed
for each $a>0$ , the K\"ahler form of the shrinking soliton is given by $\tilde{\omega}_{a}:=\frac{i}{2}\partial\bar{\partial}|z|^{2a}$, where $z$ is the holomorphic coordinate on $\mathbb{C}$. The soliton vector field of $\tilde{\omega}_{a}$ is given by $\frac{2}{a}\cdot\operatorname{Re}\left(z\partial_{z}\right)$. Of course when $a=1$, $\tilde{\omega}_{a}$ is complete and we recover the flat shrinking Gaussian soliton $\omega_{\mathbb{C}}$ on $\mathbb{C}$ with soliton vector field $2\cdot\operatorname{Re}\left(z\partial_{z}\right)$.
\end{example}

Any K\"ahler-Einstein manifold trivially defines a shrinking gradient K\"ahler-Ricci soliton (with soliton vector field $X=0$).
We may then take the Cartesian product with Example \ref{example2} to produce many more examples. These examples
provide the model at infinity for the reference metric that we will construct in Theorem \ref{mainthm}(i).

\begin{example}\label{example}
Let $(D,\,\omega_{D})$ be a K\"ahler-Einstein Fano manifold with K\"ahler form $\omega_{D}$. Then for each $a>0$,
the Cartesian product $\widehat{M}:=\mathbb{C}\times D$ endowed with the K\"ahler form
$\widehat{\omega}_{a}:=\tilde{\omega}_{a}+\omega_{D}$ is an example of an (incomplete) shrinking gradient K\"ahler-Ricci soliton.
Here, $\tilde{\omega}_{a}$ is as in Example \ref{example2}. Writing $r:=|z|^{a}$ with $z$ the complex coordinate on the $\mathbb{C}$-factor of $\widehat{M}$, the soliton vector field of this example is given by $\widehat{X}:=r\partial_{r}=\frac{2}{a}\cdot\operatorname{Re}\left(z\partial_{z}\right)$. When $a=1$, the soliton is complete and
up to isometry, we obtain a complete shrinking gradient K\"ahler-Ricci soliton on $\C\times D$ with bounded scalar curvature which is unique if $D$ is moreover toric \cite[Corollary C]{charlie}. We write $\widehat{g}_{a}$ and $\widehat{J}$ for the K\"ahler metric associated to $\widehat{\omega}_{a}$ and product complex structure on $\widehat{M}$ respectively.
\end{example}

The following lemma concerning $(\widehat{M},\,\widehat{\omega}_{a})$ will prove useful throughout.

\begin{lemma}\label{pluri}
With notation as in Example \ref{example}, fix $a>0$ (and hence the function $r$) and let $\widehat{K}\subset\widehat{M}$ be a compact subset such that $\widehat{M}\setminus\widehat{K}$ is connected.
If $u:\widehat{M}\setminus\widehat{K}\to\mathbb{R}$ is a smooth real-valued function defined on $\widehat{M}\setminus\widehat{K}$ that is pluriharmonic (meaning that $\partial\bar{\partial}u=0$) and invariant under the flow of $\widehat{J}\widehat{X}$, then $u=c_{0}\log(r)+c_{1}$ for some $c_{0},\,c_{1}\in\mathbb{R}$.
\end{lemma}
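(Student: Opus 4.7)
The plan is to reduce to the $\mathbb{C}$-factor via a slicing argument, invoke the classical description of rotation-invariant harmonic functions on an annulus, and then propagate the resulting identity throughout $\widehat{M}\setminus\widehat{K}$ by unique continuation for real-analytic functions.

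First I would observe that $\widehat{J}\widehat{X}$ generates the $S^{1}$-rotation $z \mapsto e^{it}z$ on the $\mathbb{C}$-factor of $\widehat{M}$, as follows from a direct computation using $\widehat{X} = \frac{2}{a}\operatorname{Re}(z\partial_{z})$ from Example \ref{example}. Hence the hypothesis on $u$ is just that $u(z,y)$ depends only on $|z|$ and $y \in D$. Choose $R_{0} > 0$ so that $\widehat{K} \subset \{|z| \le R_{0}\} \times D$, possible by compactness of $\widehat{K}$. For each fixed $z_{0}$ with $|z_{0}| > R_{0}$, the slice $\{z_{0}\} \times D$ lies in $\widehat{M}\setminus\widehat{K}$, and writing $\partial\bar{\partial} u = 0$ componentwise in local holomorphic coordinates $(z,w^{1},\dots,w^{n-1})$ shows that $u(z_{0},\cdot)$ is pluriharmonic on the compact K\"ahler manifold $D$. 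Since pluriharmonicity implies $\Delta_{\omega_{D}} u(z_{0},\cdot) = 0$ and $D$ is compact, this restriction is constant. Hence $u(z,y) = c(z)$ on $\{|z| > R_{0}\} \times D$ for some function $c$, which the equation $u_{z\bar{z}} = 0$ together with rotation invariance force to be a rotation-invariant harmonic function on the exterior disk $\{|z| > R_{0}\}$. Elementary ODE analysis then gives $c(z) = c_{0}'\log|z| + c_{1}$ for some constants $c_{0}', c_{1} \in \mathbb{R}$.

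To extend this identity to all of $\widehat{M}\setminus\widehat{K}$, I would set $\Omega := (\widehat{M}\setminus\widehat{K}) \setminus (\{0\} \times D)$. This set is connected, since removing the complex hypersurface $\{0\}\times D$ of real codimension two from the connected open set $\widehat{M}\setminus\widehat{K}$ preserves connectedness. The pluriharmonic, hence real-analytic, function $u - c_{0}'\log|z| - c_{1}$ vanishes on the non-empty open subset $\{|z| > R_{0}\} \times D$ of $\Omega$, so by unique continuation it vanishes throughout $\Omega$. If the axis $\{0\}\times D$ meets $\widehat{M}\setminus\widehat{K}$, smoothness of $u$ near the axis forces $c_{0}' = 0$. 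In every case, using $\log r = a\log|z|$ and setting $c_{0} := c_{0}'/a$, the identity $u = c_{0}\log r + c_{1}$ holds on $\widehat{M}\setminus\widehat{K}$. The most delicate point is really topological: one must verify that removing the axis does not disconnect $\widehat{M}\setminus\widehat{K}$ so that unique continuation applies, and then separately handle the smoothness of $u$ along the axis when it is not contained in $\widehat{K}$.
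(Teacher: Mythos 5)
Your argument is correct, but it takes a genuinely different route from the paper's. The paper's key observation is that since $\widehat{X}$ is real holomorphic, $\mathcal{L}_{\widehat{J}\widehat{X}}u=0$, and $\partial\bar{\partial}u=0$, the derivative $\widehat{X}\cdot u=2\widehat{X}^{1,0}\cdot u$ is a holomorphic function, and being also real-valued it must be constant, $c_0$ say; this immediately yields $u=c_0\log r+c_1(x,\theta)$ everywhere on $\widehat{M}\setminus\widehat{K}$ in one stroke, after which the splitting $\Delta_{\mathbb{C}}+\Delta_D$ of the Laplacian forces $c_1$ to be constant. Your route instead slices along the compact factor $D$ in a shell $\{|z|>R_0\}$, uses compactness of $D$ to kill the $D$-dependence there, solves the radial ODE on an exterior disk, and then propagates the identity inward by real-analytic unique continuation, finally invoking smoothness at the axis $\{0\}\times D$ to rule out the logarithmic term when the axis meets the domain. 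Both arguments work, but the paper's is tighter: working directly with the holomorphicity of $\widehat{X}\cdot u$ dispenses with the topological point about removing the axis (which you correctly identify and resolve by noting it is a real-codimension-two subvariety) and with the separate case analysis near $z=0$. What your approach buys is a certain conceptual transparency — the log appears as the unique rotation-invariant harmonic on an annulus — but at the cost of more moving parts (slicing, unique continuation, and the axis discussion) than the two-line argument based on the constancy of real-valued holomorphic functions on a connected domain.
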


\begin{proof}
{Let $\widehat{X}^{1,\,0}:=\frac{1}{2}(\widehat{X}-i\widehat{J}\widehat{X})$. Then since $\widehat{X}$ is real holomorphic and $\mathcal{L}_{\widehat{J}\widehat{X}}u=0$, we see that
$$\bar{\partial}(\widehat{X}\cdot u)=\partial\bar{\partial}u\lrcorner(\widehat{X}^{1,\,0})=0,$$
i.e., $\widehat{X}\cdot u$ is holomorphic. As a real-valued holomorphic function, $\widehat{X}\cdot u$, which itself is equal to $r\partial_{r}u$, must be equal to a constant, $c_{0}$ say. Thus, being invariant under the flow of $\widehat{J}\widehat{X}$, we can write
$$u=c_{0}\log r+c_{1}(x),$$ where $x\in D$. Let $\Delta_{\mathbb{C}}$
and $\Delta_{D}$ denote the Riemannian Laplacians with respect to the flat metric $g_{\mathbb{C}}$ on $\mathbb{C}$ and the K\"ahler-Einstein metric $\omega_{D}$ on $D$, respectively. Then $u$, being pluriharmonic, implies that $\Delta_{\mathbb{C}}u+\Delta_{D}u=0$, and so}
\begin{equation*}
\begin{split}
0&=(\Delta_{D}+\Delta_{\mathbb{C}})(c_{0}\log(r)+c_{1}(x))\\
&=\Delta_{D}c_{1}(x)+\underbrace{\Delta_{\mathbb{C}}c_{1}(x)}_{=\,0}+c_{0}\underbrace{\Delta_{\mathbb{C}}\log(r)}_{=\,0}\\
&=\Delta_{D}c_{1}(x),\\
\end{split}
\end{equation*}
which infers that $c_{1}(x)=c_{1}$. This leaves us with $u=c_{0}\log(r)+c_{1}$, as desired.
\end{proof}

We conclude this section with a gluing lemma.

\begin{lemma}[Gluing lemma]\label{glue}
With notation as in Example \ref{example}, fix $a>0$ (and hence the function $r$), let $\widehat{K}\subset\widehat{M}$ be a compact subset, and let
$\phi\in C^{\infty}(\widehat{M}\setminus\widehat{K})$ be such that $\phi=O(\log(r))$, $|d\phi|_{\widehat{g}_{a}}=O(1)$, and
$|i\partial\bar{\partial}\phi|_{\widehat{g}_{a}}=O(r^{-a})$. Then for all $R>0$ with $\widehat{K}\subseteq\{r\leq R\}$,
there exists a cut-off function $\chi_{R}:M\to\mathbb{R}$ supported on $M\setminus\{r\leq R\}$ with $\chi_{R}(x)=1$
if $r(x)>2R$ such that
$$|i\partial\bar{\partial}(\chi_{R}\cdot\phi)|_{\widehat{g}_{a}}\leq\frac{C}{R^{\min\{1,\,a\}}}\left(\|(\log(r))^{-1}\cdot\phi\|_{C^{0}(\widehat{M}\setminus\widehat{K})}
+\|d\phi\|_{C^{0}(\widehat{M}\setminus\widehat{K},\,\widehat{g}_{a})}+\|r^{a}\cdot i\partial\bar{\partial}\phi\|_{C^{0}(\widehat{M}\setminus\widehat{K},\,\widehat{g}_{a})}\right)$$
for  some $C>0$ independent of $R$. In particular, $\chi_{R}\cdot\phi=\phi$ on $\{r(x)>2R\}$.
\end{lemma}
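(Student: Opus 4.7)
The plan is straightforward: take $\chi_R$ to be a radial cut-off of the form $\chi_R(x) = \tilde\chi(r(x)/R)$, where $\tilde\chi \colon \mathbb{R} \to [0,1]$ is a smooth, fixed (independent of $R$) bump with $\tilde\chi \equiv 0$ on $(-\infty,1]$ and $\tilde\chi \equiv 1$ on $[2,\infty)$. The support condition and the condition $\chi_R \equiv 1$ on $\{r > 2R\}$ are then automatic, and the cut-off is non-trivial only on the annulus $A_R := \{R \leq r \leq 2R\}$. Then I expand
$$
i\partial\bar\partial(\chi_R \cdot \phi) = \chi_R \cdot i\partial\bar\partial \phi \;+\; \phi \cdot i\partial\bar\partial \chi_R \;+\; i\partial\chi_R \wedge \bar\partial \phi \;+\; i\partial\phi \wedge \bar\partial\chi_R,
$$
and estimate each of the four contributions separately.

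The key geometric inputs are direct computations on $(\widehat M,\widehat\omega_a)$ in polar coordinates $z = \rho e^{i\theta}$. One checks, using $\tilde\omega_a = \tfrac{i}{2}\partial\bar\partial|z|^{2a}$ and $r=\rho^a$, that $\widehat g_a = dr^2 + a^2 r^2 d\theta^2 + g_D$, so $|dr|_{\widehat g_a} = 1$. A direct calculation gives $i\partial\bar\partial r = \tfrac{1}{2r}\tilde\omega_a$, hence $|i\partial\bar\partial r|_{\widehat g_a} = O(1/r)$, while $|i\partial r \wedge \bar\partial r|_{\widehat g_a} = O(1)$. Combining these with the chain rule yields
$$
|d\chi_R|_{\widehat g_a} \;\leq\; \frac{C}{R}, \qquad
|i\partial\bar\partial \chi_R|_{\widehat g_a} \;\leq\; \frac{C}{R^2}
$$
uniformly on $A_R$, where $C$ depends only on $\tilde\chi$ and $a$.

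With these bounds in hand, I estimate the four terms in turn. The first, $\chi_R \cdot i\partial\bar\partial\phi$, is supported in $\{r \geq R\}$ and satisfies
$|\chi_R \cdot i\partial\bar\partial\phi|_{\widehat g_a} \leq r^{-a}\|r^a i\partial\bar\partial\phi\|_{C^0} \leq R^{-a}\|r^a i\partial\bar\partial\phi\|_{C^0}$, which is dominated by $R^{-\min\{1,a\}}\|r^a i\partial\bar\partial\phi\|_{C^0}$ for $R$ bounded below away from $0$ (this is ensured by the hypothesis $\widehat K \subseteq \{r \leq R\}$, which forces $R \geq R_0 > 0$). The mixed terms $i\partial\chi_R \wedge \bar\partial \phi$ and $i\partial\phi \wedge \bar\partial \chi_R$ are supported on $A_R$ and are bounded by $C|d\chi_R|_{\widehat g_a}\cdot|d\phi|_{\widehat g_a} \leq (C/R)\|d\phi\|_{C^0}$, hence by $CR^{-\min\{1,a\}}\|d\phi\|_{C^0}$. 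For the last term, $\phi \cdot i\partial\bar\partial\chi_R$, again supported in $A_R$, I use $|\phi|\leq \|(\log r)^{-1}\phi\|_{C^0}\log(2R)$ together with $|i\partial\bar\partial \chi_R|_{\widehat g_a} \leq C/R^2$ to obtain $C\log(2R)/R^2\cdot\|(\log r)^{-1}\phi\|_{C^0}$; since $\log(2R)/R^{2-\min\{1,a\}}$ is bounded on $[R_0,\infty)$, this is again absorbed into $CR^{-\min\{1,a\}}\|(\log r)^{-1}\phi\|_{C^0}$. Summing the four contributions produces precisely the claimed inequality.

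The only genuinely delicate point is the third estimate: the $\phi \cdot i\partial\bar\partial\chi_R$ term only beats $R^{-\min\{1,a\}}$ because $\log(2R)$ grows slower than any positive power of $R$, so one must invoke the fact that $R$ is bounded below to absorb the logarithm into the universal constant $C$. All other estimates are purely algebraic consequences of the Leibniz rule, the conical structure of $\widehat g_a$, and the three hypotheses on $\phi$. No non-trivial analytic input is required.
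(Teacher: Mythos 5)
Your proof is correct and follows essentially the same strategy as the paper's: a radial cut-off supported on the annulus $\{R \leq r \leq 2R\}$, a Leibniz-rule expansion, and term-by-term estimates using the three hypotheses on $\phi$. The only cosmetic difference is that you cut off in $r/R$ whereas the paper uses $r^2/R^2$ (yielding four terms rather than five after the chain rule), and you carry out the geometric calculations $|dr|_{\widehat g_a}=1$, $|i\partial\bar\partial r|_{\widehat g_a}=O(1/r)$ more explicitly, which makes your observation that the lower bound $R\geq R_0$ is needed to absorb the $\log(2R)$ and reconcile $R^{-a}$ with $R^{-\min\{1,a\}}$ a welcome clarification of a point the paper leaves implicit.
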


\begin{proof}
Let $\chi:\mathbb{R}\rightarrow\mathbb{R}$ be a smooth function satisfying
$\chi(x)=0$ for $x\leq 1$, $\chi(x)=1$ for $x\geq 4$, and $|\chi(x)|\leq 1$ for all $x$, and with it, define a function $\chi_{R}:M\to\mathbb{R}$ by
$$\chi_{R}(x)=\chi\left(\frac{r(x)^{2}}{R^{2}}\right)\qquad\textrm{for $R>0$ as in the statement of the lemma}.$$
Then $\chi_{R}$ is identically zero on $\{x\in\widehat{M}\,|\,r(x)<R\}$ and identically equal to one on the set\linebreak $\{x\in \widehat{M}\,|\,r(x)>2R\}$.
Define $\phi_{R}:=\chi_{R}.\phi$. Then the closed real $(1,\,1)$-form $i\partial\bar{\partial}(\chi_{R}.\phi)$ on $\widehat{M}$ is given by
\begin{equation*}
\begin{split}
i\partial\bar{\partial}(\chi_{R}.\phi)=\chi_{R}(r).i\partial\bar{\partial}\phi
&+\chi'\left(\frac{r^{2}}{R^2}\right).
i\frac{\partial r^{2}}{R}\wedge\frac{\bar{\partial}\phi}{R}+\frac{\phi}{R^{2}}.\chi'\left(\frac{r^{2}}{R^{2}}\right)
.i\partial\bar{\partial}r^{2}\\
&+\chi'\left(\frac{r^{2}}{R^{2}}\right).\frac{i\partial\phi}{R}\wedge\frac{\bar{\partial}r^{2}}{R}+\frac{\phi}{R^{2}}.\chi''\left
(\frac{r^{2}}{R^{2}}
\right).i\frac{\partial r^{2}}{R}\wedge \frac{\bar{\partial}r^{2}}{R}.
\end{split}
\end{equation*}
The assumptions on $\phi$ and its derivatives then imply for example that
\begin{equation*}
|\chi_{R}(x).i\partial\bar{\partial}\phi|_{\widehat{g}_{a}}\leq\sup_{r\,\in\,[R,\,\infty)}|i\partial
\bar{\partial}\phi|_{\widehat{g}_{a}}\leq\left(\sup_{r\,\in\,[R,\,\infty)}r^{-a}\right)
\left(\sup_{r\,\in\,[R,\,\infty)}r^{a}\cdot|i\partial
\bar{\partial}\phi|_{\widehat{g}_{a}}\right)\leq R^{-a}\|r^{a}\cdot i\partial\bar{\partial}\phi\|_{C^{0}(\widehat{M}\setminus\widehat{K},\,\widehat{g}_{a})}
\end{equation*}
and that
\begin{equation*}
\left|\chi'\left(\frac{r^{2}}{R^{2}}\right).
i\frac{\partial r^{2}}{R}\wedge\frac{\bar{\partial}\phi}{R}\right|_{\widehat{g}_{a}}\leq\frac{C}{R^{2}}\left(\sup_{r\,\in\,[R,\,2R]}r\right)\left(\sup_{r\,\in\,[R,\,2R]}\left|
i\partial r\wedge\bar{\partial}\phi\right|_{\widehat{g}_{a}}\right)\leq CR^{-1}\|d\phi\|_{C^{0}(\widehat{M}\setminus\widehat{K},\,\widehat{g}_{a})}.
\end{equation*}
The estimate of the lemma is now clear.
\end{proof}

\subsection{Basics of metric measure spaces}\label{metricmeasure}

We take the following from \cite{fut}.

A smooth metric measure space is a Riemannian manifold endowed with a weighted volume.
\begin{definition}
A \emph{smooth metric measure space} is a triple $(M,\,g,\,e^{-f}dV_{g})$, where $(M,\,g)$ is a complete Riemannian manifold with Riemannian metric $g$,
$dV_{g}$ is the volume form associated to $g$, and $f:M\to\mathbb{R}$ is a smooth real-valued function.
\end{definition}
\noindent A shrinking gradient Ricci soliton $(M,\,g,\,X)$ with $X=\nabla^g f$ naturally defines a smooth metric measure space $(M,\,g,\,e^{-f}dV_{g})$.
On such a space, we define the weighted Laplacian $\Delta_{f}$ by
$$\Delta_{f}u:=\Delta u-g(\nabla^g f,\,\nabla u)$$
on smooth real-valued functions $u\in C^{\infty}(M,\,\mathbb{R})$. There is a natural $L^{2}$-inner product $\langle\cdot\,,\,\cdot\rangle_{L^{2}_{f}}$ on the space $L^{2}_{f}$ of square-integrable smooth real-valued functions on $M$
with respect to the measure $e^{-f}dV_g$ defined by $$\langle u,\,v\rangle_{L_{f}^{2}}:=\int_{M}uv\,e^{-f}dV_{g},\qquad u,\,v\in L_{f}^{2}.$$
As one can easily verify, the operator $\Delta_{f}$ is self-adjoint with respect to $\langle\cdot\,,\,\cdot\rangle_{L_{f}^{2}}$.

\subsection{Polyhedrons and polyhedral cones}\label{pooly}

We take the following from \cite{cox}.

Let $E$ be a real vector space of dimension $n$ and let $E^{*}$ denote the dual. Write $\langle\cdot\,,\,\cdot\rangle$ for the evaluation $E^{*}\times E\to\mathbb{R}$. Furthermore, assume that we are given a \emph{lattice} $\Gamma \subset E$, that is, an additive subgroup $\Gamma \simeq \Z^n$. This gives rise to a dual lattice $\Gamma^* \subset E^*$. For any $\nu\in E$, $c\in\mathbb{R}$, let
$K(\nu,\,c)$ be the (closed) half space $\{x\in E\:|\:\langle\nu,\,x\rangle\geq c\}$ in $E$. Then we have:

\begin{definition}
A \emph{polyhedron} $P$ in $E$ is a finite intersection of half spaces,
i.e., $$P=\bigcap_{i=1}^{r}K(\nu_{i},\,c_{i})\qquad\textrm{for $\nu_{i}\in E^{*},\,c_{i}\in\mathbb{R}$}.$$
It is called a \emph{polyhedral cone} if all $c_{i}=0$, and moreover a \emph{rational polyhedral cone} if all $\nu_i \in \Gamma^*$ and $c_i = 0$. In addition, a polyhedron is called \emph{strongly convex} if it does not contain any affine subspace of $E$.
\end{definition}

The following definition will be useful.

\begin{definition}
A polyhedron $P \subset E^{*}$ is called \emph{Delzant} if its set of vertices is non-empty and each vertex $v \in P$ has the property that there are precisely $n$ edges $\{e_1, \dots e_n\}$ (one-dimensional faces) emanating from $v$ and there exists a basis $\{\varepsilon_1, \dots, \varepsilon_n\}$ of $\Gamma^*$ such that $\varepsilon_i$ lies along the ray $\R (e_i - v) $.
\end{definition}
\noindent Note that any such $P$ is necessarily strongly convex. We also have:
\begin{definition}
The \emph{dual} of a polyhedral cone $C$ is the set $C^{\vee}=\{x\in E^{*}\:|\:\langle x,\,C\rangle\geq0\}$.
\end{definition}

\subsection{Hamiltonian actions}\label{hamilton}

Recall what it means for an action to be Hamiltonian.

\begin{definition}
Let $(M,\,\omega)$ be a symplectic manifold and let $T$ be a real torus acting by symplectomorphisms on $(M,\,\omega)$.
Denote by $\mathfrak{t}$ the Lie algebra of $T$ and by $\mathfrak{t}^{*}$ its dual. Then we say that the action of $T$ is \emph{Hamiltonian}
if there exists a smooth map $\mu_{\omega}:M\to\mathfrak{t}^{*}$ such that for all $\zeta\in\mathfrak{t}$,
\begin{equation*}
-\omega\lrcorner\zeta=du_{\zeta},
\end{equation*}
where $u_{\zeta}(x)=\langle\mu_{\omega}(x),\,\zeta\rangle$ for all $\zeta\in\mathfrak{t}$ and $x\in M$
and $\langle\cdot\,,\cdot\rangle$ denotes the dual pairing between $\mathfrak{t}$ and $\mathfrak{t}^{*}$.
We call $\mu_{\omega}$ the \emph{moment map} of the $T$-action and we call $u_{\zeta}$ the \emph{Hamiltonian (potential)} of $\zeta$.
\end{definition}

\subsection{Toric geometry}\label{toric-geom}

In this section, we collect together some standard facts from toric geometry as well as recall those results from \cite{charlie} that we require.
We begin with the following definition.
\begin{definition}\label{toricmanifold}
A \emph{toric manifold} is an $n$-dimensional complex manifold $M$ endowed
with an effective holomorphic action of the algebraic torus $\Cstarn$ such that the following hold true.
\begin{itemize}
  \item The fixed point set of the $\Cstarn$-action is compact.
  \item There exists a point $p\in M$ with the property that the orbit $\Cstarn \cdot p \subset M$ forms a dense open subset of $M$.
\end{itemize}
\end{definition}
We will often denote the dense orbit simply by $\Cstarn \subset M$ in what follows.
The $\Cstarn$-action of course determines the action of the real torus $T^n \subset \Cstarn$.

\subsubsection{Divisors on toric varieties and fans}

Let $T^n \subset \Cstarn$ be the real torus with Lie algebra $\t$ and denote the dual pairing between $\t$ and the dual space $\mathfrak{t}^{*}$ by $\langle \cdot\,,\cdot\rangle$. There is a natural integer lattice $\Gamma \simeq \Z^n \subset \t$ comprising all $\lambda \in \t$ such that $\operatorname{exp}(\lambda) \in T^n$ is the identity. This then induces a dual lattice $\Gamma^* \subset \t^*$. We have the following combinatorial definition.

\begin{definition}
 A \emph{fan} $\Sigma$ in $\t$ is a finite set of rational polyhedral cones $\sigma$ satisfying:

	\begin{enumerate}
		\item For every $\sigma \in \Sigma$, each face of $\sigma$ also lies in $\Sigma$.
		\item For every pair $\sigma_1, \sigma_2 \in \Sigma$, $\sigma_1 \cap \sigma_2$ is a face of each.
	\end{enumerate}
\end{definition}

To each fan $\Sigma$ in $\t$, one can associate a toric variety $X_\Sigma$. Heuristically, $\Sigma$ contains all the data necessary to produce a partial equivariant compactification of $\Cstarn$, resulting in $X_\Sigma$. More concretely, one obtains $X_\Sigma$ from $\Sigma$ as follows. For each $n$-dimensional cone $\sigma \in \Sigma$, one constructs an affine toric variety $U_\sigma$ which we first explain. We have the dual cone $\sigma^{\vee}$ of $\sigma$. Denote by $S_\sigma$ the semigroup of those lattice points which lie in $\sigma^{\vee}$ under addition. Then one defines the semigroup ring, as a set, as all finite sums of the form
	\begin{equation*}
		\C[S_\sigma] = \left\{ \left.\sum \lambda_s s \, \right| \, s \in S_\sigma \right\},
	\end{equation*}
with the ring structure defined on monomials by $\lambda_{s_1}s_1\cdot \lambda_{s_2}s_2  = (\lambda_{s_1}\lambda_{s_2})(s_1+ s_2)$ and extended in the natural way. The affine variety $U_\sigma$ is then defined to be $\text{Spec}(\C[S_\sigma])$. This automatically comes endowed with a $\Cstarn$-action with a dense open orbit. This construction can also be applied to the lower dimensional cones $\tau \in \Sigma$. If $\sigma_1 \cap \sigma_2 = \tau$, then there is a natural way to map $U_\tau$ into $U_{\sigma_1}$ and $U_{\sigma_2}$ isomorphically. One constructs $X_\Sigma$ by declaring the collection of all $U_\sigma$ to be an open affine cover of $X_{\Sigma}$ with transition functions determined by $U_\tau$. This identification is also reversible.

\begin{prop}[{\cite[Corollary 3.1.8]{cox}}]\label{fann}
Let $M$ be a smooth toric manifold. Then there exists a fan $\Sigma$ such that $M \simeq X_\Sigma$.
\end{prop}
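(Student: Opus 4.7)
The plan is to construct the fan $\Sigma$ directly from the torus action on $M$, using the orbit structure and local equivariant linearization at the fixed points, and then to verify that $M$ is recovered as $X_\Sigma$.

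First I would exploit compactness of the fixed locus together with effectiveness of the $(\mathbb{C}^*)^n$-action to deduce that the fixed point set is a finite collection $\{p_1, \ldots, p_N\}$. At each $p_i$, since $T^n$ is compact and acts holomorphically fixing $p_i$, an equivariant holomorphic slice / Bochner-linearization argument produces a $T^n$-equivariant biholomorphism from a neighborhood of $p_i$ in $M$ onto a neighborhood of $0 \in T_{p_i}M$. The linear $T^n$-representation on $T_{p_i}M$ decomposes into weight spaces $T_{p_i}M = \bigoplus_{j=1}^n L_{\alpha_j^{(i)}}$ with weights $\alpha_j^{(i)} \in \Gamma^*$. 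Smoothness of $M$ at $p_i$, combined with effectiveness of the $(\mathbb{C}^*)^n$-action (and the fact that the dimension of the representation equals $n$), forces $\{\alpha_1^{(i)}, \ldots, \alpha_n^{(i)}\}$ to be a $\mathbb{Z}$-basis of $\Gamma^*$. Letting $\sigma_i \subset \mathfrak{t}$ be the cone whose dual is generated by the $\alpha_j^{(i)}$, this identifies a neighborhood of $p_i$ with the affine toric variety $U_{\sigma_i}$; in particular $\sigma_i$ is smooth and strongly convex.

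Next I would assemble all relevant cones. For each $(\mathbb{C}^*)^n$-orbit $\mathcal{O} \subset M$, I define a cone $\sigma_\mathcal{O} \subset \mathfrak{t}$ by taking the $\R_{\geq 0}$-span of those one-parameter subgroups $\lambda \in \Gamma$ with the property that, for a fixed base point $p_0$ in the dense orbit, $\lim_{t \to 0} \lambda(t)\cdot p_0$ exists in $M$ and lies in the closure of $\mathcal{O}$. The local model at each fixed point $p_i$ already shows that the top-dimensional cones obtained this way are precisely the $\sigma_i$, and that the lower-dimensional orbits in a neighborhood of $p_i$ correspond to faces of $\sigma_i$. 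I would then set $\Sigma := \{\sigma_\mathcal{O}\}$ and check the fan axioms: closure under faces follows from the fact that faces of $\sigma_i$ correspond to $(\mathbb{C}^*)^n$-suborbits in $U_{\sigma_i}$, and the intersection property $\sigma_1 \cap \sigma_2 \in \Sigma$ follows from comparing two local models on the overlap $U_{\sigma_1} \cap U_{\sigma_2}$, where the intersection is itself an affine toric variety corresponding to a common face.

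Finally I would construct the equivariant biholomorphism $M \simeq X_\Sigma$. The charts $U_{\sigma_i} \subset M$ around each fixed point are identified via the linearization with the corresponding affine toric varieties in $X_\Sigma$; on overlaps the transition functions agree because both sides restrict to the same equivariant open embedding of $U_{\sigma_i \cap \sigma_j}$. These local isomorphisms glue to a global equivariant map, and surjectivity follows once one verifies that every $(\mathbb{C}^*)^n$-orbit of $M$ lies in some $U_{\sigma_i}$ (which uses that the $\sigma_i$ cover all of $|\Sigma|$, a consequence of compactness of the fixed locus together with the limiting argument defining $\sigma_\mathcal{O}$). The main obstacle, and the point where one must be careful, is precisely this last covering step: one needs to know that no orbit $\mathcal{O}$ ``escapes'' the union of the affine charts built from the fixed points, and that the local cones fit together consistently rather than merely locally. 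The Delzant/smoothness condition on each $\sigma_i$ is what makes the gluing purely combinatorial, and the standard verification then gives the result as stated in \cite[Corollary 3.1.8]{cox}.
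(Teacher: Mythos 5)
This statement is not proved in the paper: it is a black-box citation to \cite[Corollary 3.1.8]{cox}, so there is no paper proof to compare against. I will therefore assess your sketch on its own merits.

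The overall strategy --- reconstructing the fan from equivariant linearization at fixed points and from one-parameter subgroup limits --- is a legitimate line of attack, and the definition of $\sigma_{\mathcal{O}}$ via the set of $\lambda \in \Gamma$ for which $\lim_{t\to 0}\lambda(t)\cdot p_{0}$ lands in $\overline{\mathcal{O}}$ is the right notion. However, your covering step is genuinely wrong as stated, not merely delicate. You assert that surjectivity of $M\to X_{\Sigma}$ will follow once one shows that every orbit lies in some $U_{\sigma_{i}}$ with $\sigma_{i}$ built from a fixed point, and that the $\sigma_{i}$ cover $|\Sigma|$. Neither holds in general under the paper's Definition \ref{toricmanifold}. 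The fixed locus is only required to be \emph{compact}, which includes the empty case, and even when non-empty the union of the top-dimensional cones coming from fixed points can be a proper subset of $|\Sigma|$. Take $M=\mathbb{C}^{2}\setminus\{0\}$ with the standard $(\mathbb{C}^{*})^{2}$-action: it satisfies the definition (empty, hence compact, fixed locus; $(\mathbb{C}^{*})^{2}$ is a dense orbit), its fan consists of $\{0\}$ and the two rays $\R_{\geq 0}e_{1}$, $\R_{\geq 0}e_{2}$, and there are no fixed points and no two-dimensional cones at all. Similarly $\mathbb{C}^{*}\times\mathbb{P}^{1}$ or $\mathbb{C}\times\mathbb{C}^{*}$ have one-dimensional orbit closures that never limit into a fixed-point chart. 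The correct statement is that one must produce a $(\mathbb{C}^{*})^{n}$-invariant affine-chart structure around \emph{every} orbit, not just the zero-dimensional ones; in the algebraic category this is Sumihiro's theorem, and its analogue in the complex-analytic category is precisely the nontrivial input that \cite{cox} (in the algebraic case) packages for you.

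A second, smaller gap: Bochner linearization gives a $T^{n}$-equivariant biholomorphism between a (possibly small) $T^{n}$-invariant neighbourhood of $p_{i}$ and a $T^{n}$-invariant neighbourhood of $0\in T_{p_{i}}M$. This is a germ-level statement; it does not by itself identify a full $(\mathbb{C}^{*})^{n}$-invariant chart in $M$ with $U_{\sigma_{i}}\cong\mathbb{C}^{n}$. You need to saturate the germ under the $(\mathbb{C}^{*})^{n}$-action, check the extended map is well-defined (using freeness of the action on the dense orbit) and surjective onto $\mathbb{C}^{n}$ (using that every point of $\mathbb{C}^{n}$ can be scaled into a neighbourhood of the origin), and check that the saturation is open and Hausdorff-compatible in $M$. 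You also never invoke separatedness/Hausdorffness of $M$ explicitly, which is exactly what encodes the fan axiom that two cones meet along a common face. These can all be repaired, but as written the argument would not survive the examples above.
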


 \begin{prop}[{\cite[Theorem 3.2.6]{cox}, Orbit-Cone Correspondence}]\label{orbitcone} The $k$-dimensional cones $\sigma \in \Sigma$ are in a natural one-to-one correspondence with the $(n-k)$-dimensional orbits $O_\sigma$ of the $\Cstarn$-action on $X_\Sigma$.
 \end{prop}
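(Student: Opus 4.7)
The plan is to construct, for each cone $\sigma \in \Sigma$ of dimension $k$, a distinguished point $x_\sigma \in U_\sigma$ whose $\Cstarn$-orbit $O_\sigma$ has complex dimension $n-k$, and then to show that every $\Cstarn$-orbit on $X_\Sigma$ arises uniquely in this way. Recall that a point of $U_\sigma = \operatorname{Spec}(\C[S_\sigma])$, with $S_\sigma = \sigma^\vee \cap \Gamma^*$, is the same data as a semigroup homomorphism $\gamma : S_\sigma \to (\C, \cdot)$. I would define $x_\sigma$ by $x_\sigma(m) = 1$ if $m \in \sigma^\perp$ and $x_\sigma(m) = 0$ otherwise; since $\sigma^\perp \cap S_\sigma$ is a face of the semigroup $S_\sigma$, this map is multiplicative and hence a well-defined semigroup homomorphism.

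Next, I would compute the isotropy of $x_\sigma$ under the action of $\Cstarn \simeq \operatorname{Hom}_\Z(\Gamma^*, \Cstar)$. An element $t$ fixes $x_\sigma$ precisely when $t(m) = 1$ for every $m \in S_\sigma \setminus \sigma^\perp$, and working out the linear algebra inside $\Gamma^*$ shows that this isotropy group is the $k$-dimensional subtorus $T_\sigma$ whose cocharacter lattice is $\Gamma \cap \operatorname{span}_\R(\sigma)$. Hence $O_\sigma \cong \Cstarn / T_\sigma$ is itself an algebraic torus of complex dimension $n - k$, which settles the dimension count.

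The remaining and main step is to show that the $O_\sigma$ exhaust all orbits and that distinct cones yield distinct orbits. The heart of this is the local orbit decomposition
\[
U_\sigma = \bigsqcup_{\tau\, \preceq\, \sigma} O_\tau.
\]
To prove it, given $\gamma \in U_\sigma$ viewed as a semigroup homomorphism, one observes that $F_\gamma := \{m \in S_\sigma : \gamma(m) \neq 0\}$ is a face of $S_\sigma$, and by the face-duality of rational polyhedral cones $F_\gamma = \tau^{\perp} \cap S_\sigma$ for a unique face $\tau \preceq \sigma$; a further application of an element of $\Cstarn$ then carries $\gamma$ to $x_\tau$, so that $\gamma \in O_\tau$. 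Disjointness is immediate from the fact that $\tau$ can be recovered intrinsically from the vanishing locus of $x_\tau$. Because the gluing of $U_{\sigma_1}$ and $U_{\sigma_2}$ along $U_{\sigma_1 \cap \sigma_2}$ is arranged so that $O_\tau$ lies inside $U_\tau$ for every face $\tau$, the local decompositions patch into a global one $X_\Sigma = \bigsqcup_{\sigma \in \Sigma} O_\sigma$, giving the bijection. The main obstacle I expect to face is verifying the face correspondence between faces of $\sigma$ and faces of the semigroup $S_\sigma$ via $\sigma^\vee$—essentially Gordan's lemma combined with face duality for rational polyhedral cones—together with checking that the gluing of the $U_\sigma$ does indeed respect the local orbit stratifications.
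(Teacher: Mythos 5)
The paper does not prove this statement; it is cited directly from \cite[Theorem 3.2.6]{cox} (Cox--Little--Schenck, \emph{Toric Varieties}) and used as a black box, so there is no proof in the paper to compare against. Your sketch is, up to small points left to the reader, the standard proof from that reference: the distinguished semigroup homomorphism $x_\sigma$ taking the value $1$ on $\sigma^\perp \cap S_\sigma$ and $0$ elsewhere; the stabilizer computation identifying the isotropy subtorus with the one whose cocharacter lattice is $\Gamma \cap \operatorname{span}_\R(\sigma)$; and the affine orbit stratification $U_\sigma = \bigsqcup_{\tau \preceq \sigma} O_\tau$ obtained by reading off, for each $\gamma \in U_\sigma$, the face $F_\gamma = \{m : \gamma(m) \neq 0\}$ of $S_\sigma$ and matching it with a face $\tau$ of $\sigma$ via face duality. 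One small item you elide but should keep in mind when filling in details: to move a general $\gamma$ onto $x_\tau$ by an element of the torus, you need to know that $F_\gamma$ generates the full sublattice $\tau^\perp \cap \Gamma^*$ as a group and that this sublattice is a direct summand of $\Gamma^*$, so that $\gamma|_{F_\gamma}$ extends to a character of $\Cstarn$; both facts are standard but are exactly where the rationality and saturation hypotheses enter. Otherwise the argument is correct and matches the cited source.
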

\noindent In particular, each ray $\sigma \in \Sigma$ determines a unique torus-invariant divisor $D_\sigma$. As a consequence, a torus-invariant Weil divisor $D$ on $X_\Sigma$ naturally determines a polyhedron $P_D \subset \mathfrak{t}^{*}$. Indeed, we can decompose $D$ uniquely as $D = \sum_{i=1}^N a_i D_{\sigma_i}$, where $\{\sigma_i\}_{i}\subset\Sigma$ is the collection of rays. Then by assumption, there exists a unique minimal lattice element $\nu_i \in \sigma_i \cap \Gamma$. $P_{D}$ is then given by

 \begin{equation} \label{eqnB2}
 	P_D = \left\{ x \in \mathfrak{t}^{*} \: | \: \langle \nu_i, x \rangle \geq - a_i \right\} = \bigcap_{i = 1}^N K(\nu_i, -a_i).
 \end{equation}

\subsubsection{K\"ahler metrics on toric varieties}\label{finito}

For a given toric manifold $M$ endowed with a Riemannian metric $g$ invariant under the action of the real torus $T^n \subset \Cstarn$ and K\"ahler with respect to the underlying complex structure
of $M$, the K\"ahler form $\omega$ of $g$ is also invariant under the $T^n$-action. We call such a manifold a \emph{toric K\"ahler manifold}.
In what follows, we always work with a fixed complex structure on $M$.

Hamiltonian K\"ahler metrics have a useful characterisation due to Guillemin.

\begin{prop}[{\cite[Theorem 4.1]{Guil}}]\label{propB6}
	Let $\omega$ be any $T^n$-invariant K\"ahler form on $M$. Then the $T^{n}$-action is Hamiltonian with respect to $\omega$ if and only if the restriction of $\omega$ to the dense orbit $\Cstarn \subset M$ is exact, i.e., there exists a $T^{n}$-invariant potential $\phi$ such that
	\begin{equation*}
		\omega = 2i\p\bp \phi.
	\end{equation*}
\end{prop}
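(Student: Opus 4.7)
The plan is to work on the dense orbit $\Cstarn \subset M$ in logarithmic coordinates. Pick generators $\zeta_1, \dots, \zeta_n$ of $\Gamma \subset \t$ and introduce coordinates $z_j = e^{x_j + i\theta_j}$ on $\Cstarn$, so that $J\partial_{x_j} = \partial_{\theta_j}$ and $\zeta_j = \partial_{\theta_j}$. A real $(1,1)$, $T^n$-invariant $2$-form $\omega$ then decomposes in these coordinates as $\omega = 2a_{jk}(x)\,dx_j\wedge d\theta_k - b_{jk}(x)(dx_j\wedge dx_k + d\theta_j\wedge d\theta_k)$ with $a_{jk}$ symmetric and $b_{jk}$ antisymmetric. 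A direct calculation shows that $\omega = 2i\p\bp\phi$ for $T^n$-invariant $\phi = \phi(x)$ is equivalent to $b \equiv 0$ together with $2a_{jk} = \phi_{,jk}$, the Hessian of $\phi$ on $\t\cong\R^n$.

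For $(\Leftarrow)$, given $\omega = 2i\p\bp\phi$ on the dense orbit, the formula above gives $\omega = \phi_{,jk}\,dx_j\wedge d\theta_k$ and a short calculation using $\omega\lrcorner\zeta_l = \omega\lrcorner\partial_{\theta_l}$ yields $-\omega\lrcorner\zeta_l = d(\phi_{,l})$. Set $u_l := \phi_{,l}$. To extend $u_l$ to a smooth function on $M$, observe that $\omega\lrcorner\zeta_l$ is a globally smooth and closed $1$-form on $M$ (from $d\omega = 0$ and $\mathcal{L}_{\zeta_l}\omega = 0$), and that near each torus-fixed point there exist equivariant holomorphic coordinates $(w_1,\dots,w_n)$ in which $T^n$ acts linearly by rotations and $T^n$-invariant smooth functions correspond to smooth functions of $|w_1|^2,\dots,|w_n|^2$. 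Such a chart provides a local smooth $T^n$-invariant primitive of $-\omega\lrcorner\zeta_l$; connectedness of $\Cstarn$ and an adjustment of constants glue these local primitives to a global smooth $\tilde u_l$ on $M$ agreeing with $u_l$ on the dense orbit. Extending $\zeta\mapsto u_\zeta$ linearly assembles the moment map.

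For $(\Rightarrow)$, suppose $-\omega\lrcorner\zeta_l = du_l$ for smooth $u_l\in C^\infty(M)$. Since $d(\zeta'u_l) = \mathcal{L}_{\zeta'}du_l = 0$ for every $\zeta'\in\t$ commuting with $\zeta_l$, $\zeta'u_l$ is a constant, and the single-valuedness of $u_l$ along the closed orbits of elements of $\Gamma$ forces this constant to vanish, so each $u_l$ is $T^n$-invariant and restricts to $u_l(x)$ on $\Cstarn$. The equation $-\omega\lrcorner\zeta_l = du_l$ then has no $d\theta$-component on the right, forcing $b\equiv 0$ in the decomposition above. Symmetry $a_{jk}=a_{kj}$ (from $J$-compatibility of $\omega$ combined with $J\partial_{x_j}=\partial_{\theta_j}$) together with $d\omega=0$ yields $\partial_{x_i}(2a_{jk})=\partial_{x_j}(2a_{ik})$, so the symmetric matrix $2a_{jk}$ is a closed Hessian on $\t\cong\R^n$; the Poincar\'e lemma provides $\phi(x)$ with $2a_{jk}=\phi_{,jk}$, giving $\omega = 2i\p\bp\phi$ on the dense orbit.

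The main obstacle is the smooth extension of $u_l$ from $\Cstarn$ to $M$ in the $(\Leftarrow)$ direction: the potential $\phi$ itself generically fails to extend smoothly across the toric divisors (witness the Fubini--Study potential $\phi = \frac12\log(1+|z|^2)$ on $\P^1$, which grows linearly in $x = \log|z|$ at infinity), whereas its first derivative $\phi_{,l}$ always does, corresponding geometrically to the Hamiltonian of the globally smooth vector field $\zeta_l$ on $M$.
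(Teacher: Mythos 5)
The paper does not prove this proposition; it is cited directly from Guillemin's theorem, so your proof must be evaluated on its own merits. The logarithmic-coordinate decomposition $\omega = 2a_{jk}\,dx_j\wedge d\theta_k - b_{jk}(dx_j\wedge dx_k + d\theta_j\wedge d\theta_k)$, the identification $2a_{jk}=\phi_{,jk}$ and $b\equiv 0$ for $T^n$-invariant potentials, the computation $-\omega\lrcorner\zeta_l = d(\phi_{,l})$, and your $(\Rightarrow)$ direction (in particular the argument that single-valuedness along closed $\Gamma$-orbits forces $\zeta' u_l = 0$, which uses compactness of $T^n$) are all correct and are essentially the standard computations. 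The closing remark contrasting the behaviour of $\phi$ and $\phi_{,l}$ across the toric divisors is a good illustration.

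There is, however, a genuine gap in the extension argument of your $(\Leftarrow)$ direction. You propose to cover $M$ by $\Cstarn$ together with equivariant charts around torus-fixed points, and to glue local $T^n$-invariant primitives of $-\omega\lrcorner\zeta_l$. But this is not a cover of $M$ in general: a toric manifold in the sense of Definition 2.11 need not have any fixed points at all (take $\Cstar\times\C$, whose $(\Cstar)^2$-fixed locus is empty, hence compact), and even when it does, the lower-dimensional orbits need not be contained in a chart around a fixed point. The correct argument here does not require the fixed-point structure at all. Set $\alpha_l := -\omega\lrcorner\zeta_l$; this is a smooth, globally defined $1$-form on $M$ which is closed because $d\omega = 0$ and $\mathcal{L}_{\zeta_l}\omega = 0$. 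For any point $p\in M\setminus\Cstarn$ and any sufficiently small ball $V\ni p$, the Poincar\'e lemma produces a local primitive $v$ of $\alpha_l$ on $V$. Since $M\setminus\Cstarn$ is a complex-analytic subset of complex codimension at least $1$, hence real codimension at least $2$, the open set $V\cap\Cstarn$ is connected; therefore $u_l - v$ is constant on $V\cap\Cstarn$, and after adjusting the constant, $u_l$ extends smoothly across $p$. Doing this at every boundary point gives the desired global smooth extension $\tilde u_l$, which is automatically $T^n$-invariant (being a continuous extension of the $T^n$-invariant $u_l$). With this repair the proof is complete.
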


Fix once and for all a $\Z$-basis $(X_1,\ldots,X_n)$ of $\Gamma \subset \t$. This in particular induces a background coordinate system $\xi=(\xi^1, \dots, \xi^n)$ on $\t$.
Using the natural inner product on $\t$ to identify $\t \cong \t^*$, we can also identify $\t^* \cong \R^n$.
For clarity, we will denote the induced coordinates on $\t^*$ by $x=(x^1,\ldots, x^n)$. Let $(z_1, \dots, z_n)$ be the natural
coordinates on $\Cstarn$ as an open subset of $\C^n$. There is a natural diffeomorphism $\text{Log}:
\Cstarn \to \t \times T^n$ which provides a one-to-one correspondence between $T^n$-invariant smooth functions on
 $\Cstarn$ and smooth functions on $\t$. Explicitly,
\begin{equation}\label{diffeoo}
(z_1, \dots, z_n)\xmapsto{\operatorname{Log}}(\log(r_1), \dots, \log(r_n), \theta_1, \dots, \theta_n)=(\xi_{1},\ldots,\xi_{n},\,\theta_{1},\ldots,\theta_{n}),
\end{equation}
where $z_j = r_j e^{i \theta_j}$,\,$r_{j}>0$. Given a function $H(\xi)$ on $\t$, we can extend $H$ trivially to $\t \times T^n$ and pull back by Log to
obtain a $T^n$-invariant function on $\Cstarn$. Clearly, any $T^n$-invariant function on $\Cstarn$ can be written in this form.

Choose any branch of $\log$ and write $w = \log(z)$. Then clearly $w = \xi + i \theta$, where $\xi=(\xi^1,\ldots,\xi^n)$ are real coordinates on $\t$
(or, more precisely, there is a corresponding lift of $\theta$ to the universal cover with respect to which this equality holds),
and so if $\phi$ is $T^n$-invariant and $\omega = 2i \p \bp \phi$, then we have that
\begin{equation}\label{e:T5}
	\omega = 2i\frac{\p^2 \phi}{ \p w^i \p\bar{w}^j} dw_i \wedge d\bar{w}_j = \frac{\p^2 \phi}{ \p \xi^i \p\xi^j} d\xi^i \wedge d\theta^j.
\end{equation}
In this setting, the metric $g$ corresponding to $\omega$ is given on $\t \times T^n$ by
\begin{equation*}
	g = \phi_{ij}(\xi)d\xi^i d\xi^j + \phi_{ij}(\xi)d\theta^i d\theta^j,
\end{equation*}
and the moment map $\mu$ as a map $\mu: \t \times T^n \to \t^*$ is defined by the relation
		\begin{equation*}
		\langle \mu(\xi, \theta), b \rangle = \langle \nabla \phi(\xi), b \rangle
	\end{equation*}
for all $b \in \t$, where $\nabla \phi$ is the Euclidean gradient of $\phi$.
The $T^n$-invariance of $\phi$ implies that it depends only on $\xi$ when considered a function on $\t \times T^n$
via \eqref{diffeoo}. Since $\omega$ is K\"ahler, we see from \eqref{e:T5} that the Hessian of $\phi$ is positive-definite so that $\phi$ itself is strictly convex.
In particular, $\nabla \phi$ is a diffeomorphism onto its image.
Using the identifications mentioned above, we view $\nabla \phi$ as a map from $\t$ into an open subset of $\t^*$.

\subsubsection{K\"ahler-Ricci solitons on toric manifolds}

Next we define what we mean by a shrinking K\"ahler-Ricci soliton in the toric category.
\begin{definition}
A complex $n$-dimensional shrinking K\"ahler-Ricci soliton $(M,\,g,\,X)$ with complex structure $J$ and K\"ahler form $\omega$ is \emph{toric} if $(M,\,\omega)$ is a toric K\"ahler
manifold as in Definition \ref{toricmanifold} and $JX$ lies in the Lie algebra $\t$ of the underlying real torus $T^{n}$ that acts on $M$. In particular, the zero set of $X$ is compact.
\end{definition}

It follows from \cite{wyliee} that $\pi_{1}(M)=0$, hence the induced real $T^n$-action is automatically Hamiltonian with respect to $\omega$.
Working on the dense orbit $\Cstarn \subset M$, the condition that a vector field $JY$ lies in $\t$ is equivalent to saying that in the coordinate system $(\xi^1,\ldots,\xi^n,\,\theta_{1},\ldots,\theta_{n})$
from \eqref{diffeoo}, there is a constant $b_Y=(b_{Y}^{1},\ldots,b_{Y}^{n})\in \R^n$ such that
\begin{equation}\label{eqnY4}
	JY =  b_Y^i \frac{\p}{\p\theta^i}\qquad\textrm{or equivalently,}\qquad Y =   b_Y^i \frac{\p}{\p\xi^i}.
\end{equation}
From Proposition \ref{propB6}, we know that $\mathcal{L}_{X}\omega=2i\p\bp X(\phi)$. In addition,
the function $X(\phi)$ on $\Cstarn$ can be written as $\langle b_X, \nabla \phi \rangle = b_X^j \frac{\p\phi}{\p\xi^j}$,
where $b_{X}\in\mathbb{R}^{n}$ corresponds to the soliton vector field $X$ via \eqref{eqnY4}.
These observations allow us to write the shrinking soliton equation \eqref{krseqn} as a real Monge-Amp\`ere equation for $\phi$ on $\R^n$.

\begin{prop}[{\cite[Proposition 2.6]{charlie}}]
Let $(M,\,g,\,X)$ be a toric shrinking gradient K\"ahler-Ricci soliton with K\"ahler form $\omega$. Then
there exists a unique smooth convex real-valued function $\phi$ defined on the dense orbit $\Cstarn\subset M$ such that $\omega=2i\partial\bar{\partial}\phi$
and
\begin{equation} \label{realMA}
	\det(\phi_{ij})=e^{-2\phi+\langle b_{X},\,\nabla\phi\rangle}.
\end{equation}
\end{prop}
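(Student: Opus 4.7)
The plan is to exploit the Hamiltonian structure to produce a $T^n$-invariant K\"ahler potential for $\omega$ on the dense orbit, translate the soliton equation into a real Monge-Amp\`ere equation satisfied by this potential up to an affine correction, and then absorb that correction by a careful normalization of the potential. As recalled in the text above, $\pi_1(M) = 0$ for our soliton, so the induced $T^n$-action on $(M,\omega)$ is Hamiltonian. Proposition \ref{propB6} then provides a $T^n$-invariant function $\phi_0$ on $\Cstarn \subset M$ with $\omega = 2i\p\bp \phi_0$, and strict convexity of $\phi_0$ as a function of $\xi$ follows from positive-definiteness of $\omega$ via \eqref{e:T5}.

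A direct computation in the toric coordinates $(\xi,\theta)$ shows that $\rho_\omega = -i\p\bp\log\det(\phi_0)_{ij}$ on $\Cstarn$, since the correction $-\sum_\alpha \log|z^\alpha|^2$ arising from the change of variables $z\mapsto w = \log z$ is pluriharmonic. Substituting this into the soliton equation $\rho_\omega + i\p\bp f = \omega = 2i\p\bp\phi_0$ gives that $f - \log\det(\phi_0)_{ij} - 2\phi_0$ is $T^n$-invariant and pluriharmonic on $\Cstarn$. Toricity ensures that $f$ itself is $T^n$-invariant: the identity $[Y,X] = 0$ for $Y \in \t$ (coming from $JX \in \t$ and integrability of $J$) implies that $Y(f)$ is a constant for every Killing $Y \in \t$, and this constant vanishes because $T^n$ has fixed points on $M$. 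The diffeomorphism \eqref{diffeoo} then identifies $T^n$-invariant pluriharmonic functions on $\Cstarn$ with affine functions of $\xi$, so the above expression equals some affine $L(\xi)$. On the other hand, $f$ is a Hamiltonian potential for $JX$ and, from the expression \eqref{e:T5}, the moment map of the $T^n$-action is $\nabla\phi_0$; hence $f = \langle b_X, \nabla\phi_0\rangle + c_X$ for a constant $c_X$. Absorbing $c_X$ into $L$, we obtain
\begin{equation*}
    \langle b_X, \nabla\phi_0\rangle - \log\det(\phi_0)_{ij} - 2\phi_0 = L(\xi).
\end{equation*}

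Finally, we seek $\phi = \phi_0 + \alpha\cdot\xi + \gamma$ satisfying \eqref{realMA}. Under such an adjustment, $\log\det\phi_{ij}$ is unchanged, $\nabla\phi = \nabla\phi_0 + \alpha$, and $-2\phi = -2\phi_0 - 2\alpha\cdot\xi - 2\gamma$; substituting into \eqref{realMA} and matching the linear and constant parts of the resulting affine expression against $L$ uniquely determines $\alpha$ (from the linear part) and then $\gamma$ (from the constant part). This yields existence; strict convexity is preserved since we only add an affine term. For uniqueness, any other $\tilde\phi$ with $\omega = 2i\p\bp\tilde\phi$ satisfying \eqref{realMA} differs from $\phi$ by a $T^n$-invariant pluriharmonic function, hence by an affine function of $\xi$; plugging this difference back into \eqref{realMA} and comparing to the equation for $\phi$ forces the affine shift to vanish identically. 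The only mildly delicate point in the whole argument is the $T^n$-invariance of $f$, which hinges on the existence of a $T^n$-fixed point in $M$; the rest is a direct computation in the coordinates \eqref{diffeoo}.
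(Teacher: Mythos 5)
Your proof is correct and follows the standard toric argument that the paper's surrounding text already lays out: use Guillemin's criterion to obtain a $T^n$-invariant potential $\phi_0$, compute $\rho_\omega = -i\partial\bar\partial\log\det(\phi_{0,ij})$ on the dense orbit (the Jacobian correction $\sum\log|z^\alpha|^2$ being pluriharmonic), observe that the soliton equation together with $T^n$-invariance of $f$ forces $\langle b_X,\nabla\phi_0\rangle - \log\det(\phi_{0,ij}) - 2\phi_0$ to be pluriharmonic and $T^n$-invariant, hence affine in $\xi$, and finally absorb this affine discrepancy by a uniquely determined affine shift of $\phi_0$. One small remark on your ``only mildly delicate point'': to conclude that the constant $Y\cdot f$ vanishes you need not invoke a $T^n$-fixed point (though one does exist, since the Delzant polytope $P_{-K_M}$ has vertices) --- it suffices that each $T^n$-orbit is compact, so $f$ is bounded along the flow of $Y$, and $f(\exp(tY)\cdot p) = f(p) + (Y\cdot f)t$ then forces $Y\cdot f = 0$.
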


A priori, the function $\phi$ is defined only up to addition of a linear function.
However, \eqref{realMA} provides a normalisation for $\phi$ which in turn provides a normalisation for $\nabla\phi$, the moment map of the action.
The next lemma shows that this normalisation coincides with that for the moment map as defined in \cite[Definition 5.16]{cds}.

\begin{lemma}\label{normal}
Let $(M,\,g,\,X)$ be a toric complete shrinking gradient K\"ahler-Ricci soliton with complex structure $J$ and K\"ahler form $\omega$
with soliton vector field $X=\nabla^{g}f$ for a smooth real-valued function $f:M\to\mathbb{R}$.
Let $\phi$ be given by Proposition \ref{propB6} and normalised by \eqref{realMA}, let $JY\in\mathfrak{t}$, and let $u_{Y}=\langle\nabla\phi,\,b_{Y}\rangle$ be the Hamiltonian potential of $JY$
with $b_{Y}$ as in \eqref{eqnY4} so that $\nabla^{g}u_{Y}=Y$. Then $\mathcal{L}_{JX}u_{Y}=0$ and $\Delta_{\omega}u_{Y}+u_{Y}-\frac{1}{2}Y\cdot f=0$.
\end{lemma}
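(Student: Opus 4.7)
The plan is to verify both identities by direct computation on the dense orbit $\Cstarn\subset M$ in the symplectic coordinates $(\xi,\theta)$ of \eqref{diffeoo}, and to extend them to $M$ using the density of $\Cstarn$ together with the smoothness on $M$ of all objects involved. The first identity is essentially immediate: on $\Cstarn$, formula \eqref{eqnY4} gives $JX = b_X^i\partial_{\theta^i}$, while the $T^n$-invariance of $\phi$ forces $u_Y = b_Y^j\phi_j(\xi)$ to depend only on $\xi$, so $\mathcal{L}_{JX}u_Y = JX\cdot u_Y = 0$.

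For the second identity, I would proceed in three short steps. First, using that $u_Y$ depends only on $\xi$ together with the toric expression for $g$ in Section \ref{finito}, the K\"ahler Laplacian reduces to $\Delta_\omega u_Y = \tfrac12\phi^{ij}\partial_{\xi^i}\partial_{\xi^j}u_Y$; the key simplification is that $\partial_{\xi^j}u_Y = b_Y^k\phi_{kj}$, so $\phi^{ij}\partial_{\xi^j}u_Y = b_Y^i$ is \emph{constant}, and $\Delta_\omega u_Y$ collapses to $\tfrac12\,b_Y^k\partial_{\xi^k}\log\det(\phi_{ij})$ via the standard identity $\phi^{ij}\phi_{ij,k} = \partial_{\xi^k}\log\det(\phi_{ij})$. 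Second, I would insert the normalization \eqref{realMA}, which gives $\log\det(\phi_{ij}) = -2\phi + \langle b_X,\nabla\phi\rangle$, and differentiate and contract with $b_Y^k$ to obtain $\Delta_\omega u_Y = -u_Y + \tfrac12 X(u_Y)$. Third, I would convert $X(u_Y)$ into $Y\cdot f$ using the hypothesis $X = \nabla^g f$: evaluating $df(Z) = g(X,Z)$ on the coordinate basis of $\Cstarn$ yields $\partial_{\xi^j}f = b_X^i\phi_{ij}$ and $\partial_{\theta^j}f = 0$, so $f-\langle b_X,\nabla\phi\rangle$ is locally constant on $\Cstarn$, and in particular $Y\cdot f = b_X^i b_Y^j\phi_{ij} = X(u_Y)$. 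Chaining the three steps yields $\Delta_\omega u_Y + u_Y - \tfrac12 Y\cdot f = 0$ on $\Cstarn$, hence on $M$ by continuity.

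The main obstacle is bookkeeping the normalizations. In particular, the factor $\tfrac12$ in the identity corresponds to the convention $\Delta_\omega = g^{\alpha\bar\beta}\partial_\alpha\partial_{\bar\beta} = \tfrac12\Delta_g$ for the K\"ahler Laplacian, which is dictated by the relation $\omega = 2i\partial\bar\partial\phi$ of Proposition \ref{propB6} and which matches the analogous identity $\Delta_\omega f + f - \tfrac{X}{2}\cdot f = 0$ already stated in Theorem \ref{mainthm}(iii). Once this convention and the normalization \eqref{realMA} are carefully tracked, the proof is a routine chain of identities in the toric coordinates, and the extension from $\Cstarn$ to $M$ is standard by density.
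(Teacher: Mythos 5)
Your computation is correct and complete, and since the paper omits the proof of this lemma, there is nothing to compare against other than to say that a direct toric computation is the natural and, effectively, the only approach. All the individual steps check out: on the dense orbit with $\omega = \phi_{ij}\,d\xi^i\wedge d\theta^j$ and $g$ block-diagonal, the contraction $\phi^{ij}\partial_{\xi^j}u_Y = b_Y^i$ is indeed constant, so the Riemannian Laplacian of $u_Y$ reduces to $b_Y^k\partial_{\xi^k}\log\det(\phi_{ij})$ and the K\"ahler Laplacian to half that (equivalently, your shortcut $\Delta_\omega u_Y = \tfrac12\phi^{ij}\partial_{\xi^i}\partial_{\xi^j}u_Y$ agrees via $\phi^{ij}\phi_{ijk}=\partial_{\xi^k}\log\det(\phi_{ij})$). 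Substituting \eqref{realMA} gives $\Delta_\omega u_Y = -u_Y + \tfrac12 b_X^i b_Y^j\phi_{ij}$, and the identity $\partial_{\xi^j}f = b_X^i\phi_{ij}$ (from $X=\nabla^g f$ and the block form of $g$) converts the last term into $\tfrac12 Y\cdot f$, closing the chain. Your normalization check against the Gaussian soliton and the identity $\Delta_\omega f + f - \tfrac{X}{2}\cdot f = 0$ of Theorem \ref{mainthm}(iii) is a worthwhile sanity check, and your extension from $\Cstarn$ to $M$ by density of the orbit and smoothness of all quantities is standard and valid. The only small thing worth making explicit is that $T^n$-invariance of $f$ (equivalently, $\partial_{\theta^j}f=0$) follows from $X=\nabla^g f$ with $X=b_X^i\partial_{\xi^i}$ and the block-diagonal form of $g$, so $f$ genuinely depends only on $\xi$; you use this implicitly when computing $Y\cdot f$, and it is not an independent hypothesis.
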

\noindent To see the equivalence with \cite[Definition 5.16]{cds},
simply replace $Y$ with $JY$ in this latter definition as here we assume that $JY\in\mathfrak{t}$, contrary to the convention in \cite[Definition 5.16]{cds} where it is assumed that
$Y\in\mathfrak{t}$.

Given the normalisation \eqref{realMA}, the next lemma identifies the image of the moment map $\mu=\nabla\phi$.

\begin{lemma}[{\cite[Lemmas 4.4 and 4.5]{charlie}}]\label{note}
Let $(M,\,g,\,X)$ be a complete toric shrinking gradient K\"ahler-Ricci soliton, let $\{D_i\}$ be the prime $\Cstarn$-invariant divisors in $M$, and let $\Sigma \subset \t$ be the fan determined by
Proposition \ref{fann}. Let $\sigma_i\in\Sigma$ be the ray corresponding to $D_i$ with minimal generator $\nu_i \in \Gamma$.
\begin{enumerate}
  \item There is a distinguished Weil divisor representing the anticanonical class $-K_{M}$ given by
  \begin{equation*}
		-K_M = \sum_i D_i
	\end{equation*}
	whose associated polyhedron (cf.~\eqref{eqnB2}) is given by
 	\begin{equation}\label{xmas}
		P_{-K_M} = \left\{ x \: | \: \langle \nu_i, x \rangle \geq -1 \right\}
	\end{equation}
which is strongly convex and has full dimension in $\mathfrak{t}^{*}$. In particular, the origin lies in the interior of $P_{-K_{M}}$.
  \item If $\mu$ is the moment map for the induced real $T^n$-action normalised by \eqref{realMA}, then the image of $\mu$ is precisely $P_{-K_{M}}$.
\end{enumerate}
	\end{lemma}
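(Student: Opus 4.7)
The proof naturally splits along the two parts.

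For part (i), the decomposition $-K_M = \sum_i D_i$ on a smooth toric variety is standard: the invariant meromorphic top form $\frac{dz_1 \wedge \cdots \wedge dz_n}{z_1 \cdots z_n}$ on the dense orbit has simple poles along exactly the prime invariant divisors. The explicit description of $P_{-K_M}$ then follows from \eqref{eqnB2} with every $a_i = 1$, and $\langle \nu_i, 0 \rangle = 0 > -1$ places the origin in its interior, which in particular gives full dimensionality. For strong convexity, I would use that the compactness and non-emptiness of the fixed point set of the $T^n$-action, via the orbit-cone correspondence (Proposition \ref{orbitcone}), forces $\Sigma$ to contain at least one $n$-dimensional cone $\sigma$; the $n$ rays of $\sigma$ give a $\Z$-basis of $\Gamma$ (by smoothness of $M$), so the corresponding affine hyperplanes $\langle \nu_{i_j}, x\rangle = -1$ intersect at a unique vertex of $P_{-K_M}$, which precludes the containment of any affine subspace.

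For part (ii), my plan is to first read off the combinatorial shape of $P := \mu(M)$ from the torus action alone, and then to fix its translation in $\t^*$ via the normalisation \eqref{realMA}. By Wu's convexity theorem \cite{wu}, $P$ is a closed convex polyhedron in $\t^*$, and the combination of equivariance of $\mu$ and the orbit-cone correspondence pairs its facets with the divisors $D_i$, with the primitive inward normal to the facet over $D_i$ being exactly $\nu_i$. Hence $P = \{x : \langle \nu_i, x\rangle \geq c_i\}$ for some real constants $c_i$, and the remaining task is to show $c_i = -1$ for every $i$.

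The constants $c_i$ are fixed by \eqref{realMA}, and I expect this to be the main technical hurdle. The essential rigidity is that \eqref{realMA} pins down not only the additive constant in $\phi$ but also its linear part: replacing $\phi$ by $\phi + \lambda \cdot \xi + C$ leaves $\det(\phi_{ij})$ unchanged while multiplying the right-hand side of \eqref{realMA} by $e^{-2\lambda\cdot\xi - 2C + \langle b_X, \lambda\rangle}$, and requiring this factor to equal $1$ identically in $\xi$ forces $\lambda = 0$ and $C = 0$. Since adding $\lambda \cdot \xi$ to $\phi$ translates $\mu(M)$ by $\lambda$ in $\t^*$, this rigidity fixes the position of $\mu(M)$ once its shape is known. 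To extract the specific value $c_i = -1$, I would carry out a local asymptotic analysis of $\phi$ as one approaches a generic point of $D_i$: in equivariant holomorphic coordinates with $D_i = \{z_1 = 0\}$ and $\nu_i$ generating the first $T^1$-factor, the log coordinates $z_j = e^{\xi^j + i\theta^j}$ make $\xi^1 \to -\infty$ the approach to $D_i$, and smoothness of $\omega = 2i\p\bp\phi$ across $D_i$ forces a Guillemin-type expansion $\phi(\xi) = c_i \xi^1 + \tilde{\phi}(\xi^2, \ldots, \xi^n) + O(e^{2\xi^1})$ as $\xi^1 \to -\infty$, under which $\p\phi/\p\xi^1 \to c_i$. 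Comparing the leading behaviour of the two sides of \eqref{realMA} as $\xi^1 \to -\infty$ (the right-hand side being dominated by $e^{-2 c_i \xi^1}$ times a bounded correction and the left-hand side being controlled by the smoothness constraint on the metric transverse to $D_i$) then yields an algebraic identity that forces $c_i = -1$.
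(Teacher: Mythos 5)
Your proposal is correct, but in part (ii) it takes a genuinely different (dual) route from the one this paper and the cited source use. The paper does not re-prove Lemma~\ref{note} directly, but the same normalisation-to-polytope argument appears in Lemma~\ref{expression}(i), where the text explicitly says it proceeds ``as in [Lemma 4.5]{charlie} using an argument originally due to Donaldson'': one passes to the \emph{symplectic potential} $u = L(\phi)$, invokes the Abreu boundary-regularity criterion (Lemma~\ref{boundaryy}) to write $u = u_P + v$ with $v$ smooth up to $\partial P$, and then computes that the quantity $\rho_u(x) = 2(\langle\nabla u, x\rangle - u) - \log\det(u_{ij})$ takes the form $(1-a_1)\log(x_1 + a_1) + v_2$ near a facet $\{x_1 = -a_1\}$. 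Since the real Monge--Amp\`ere equation forces $\rho_u$ to extend continuously across $\partial P$, the log coefficient must vanish, giving $a_1 = 1$. You instead stay on the complex side and match the exponential decay rates of $\det(\phi_{ij})$ and $e^{-2\phi+\langle b_X,\nabla\phi\rangle}$ as $\xi^1\to -\infty$; both sides force $e^{2\xi^1}$ vs.\ $e^{-2c_i\xi^1}$ and hence $c_i = -1$. Each approach has something to recommend it: yours avoids the Legendre transform entirely and makes the geometric content (smoothness of $\omega$ transverse to $D_i$) explicit, whereas the paper's symplectic-side argument is cleaner because Lemma~\ref{boundaryy} pre-packages the boundary asymptotics in a form that makes the log coefficient visible at a glance, without needing you to verify the form of the Guillemin-type expansion of $\phi$ itself. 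Your part (i) matches the standard toric computation the cited lemma rests on, and your observations about rigidity of the linear part under \eqref{realMA} and about strong convexity via an $n$-dimensional cone in $\Sigma$ are both correct.
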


\subsubsection{The weighted volume functional}\label{weighted}

As a result of Lemma \ref{normal}, we can now define the weighted volume functional.

\begin{definition}[{Weighted volume functional, \cite[Definition 5.16]{cds}}]\label{weightedvol}
Let $(M,\,g,\,X)$ be a complex $n$-dimensional toric shrinking gradient K\"ahler-Ricci soliton with K\"ahler form $\omega=2i\partial\bar{\partial}\phi$
 on the dense orbit with $\phi$ strictly convex with moment map $\mu=\nabla\phi$ normalised by \eqref{realMA}. Assume that the fixed point set of the torus is compact and define the open convex cone $$\Lambda_{\omega}:=\{Y\in\mathfrak{t}\:|\:\textrm{$\langle\mu,\,Y\rangle$ is proper and bounded below}\}\subseteq\mathfrak{t}.$$ Then the \emph{weighted volume functional} $\mathcal{F}_{\omega}:\Lambda_{\omega}\to\mathbb{R}$ is defined by
	\begin{equation*}
		\mathcal{F}_{\omega}(v) = \int_M e^{-\langle \mu,\,v \rangle} \omega^n.
	\end{equation*}	
\end{definition}

As the fixed point set of the torus is compact by definition, $\mathcal{F}_{\omega}$ is well-defined by the non-compact version of the Duistermaat-Heckman formula \cite{wu}
(see also \cite[Theorem A.3]{cds}). It is moreover strictly convex on $\Lambda_{\omega}$ \cite[Lemma 5.17(i)]{cds}, hence has at most one critical point in this set.
This leads to two important lemmas concerning the weighted volume functional in the toric category, the independence of $\Lambda_{\omega}$ and $\mathcal{F}_{\omega}$ from the choice of shrinking soliton $\omega$.

\begin{lemma}[{\cite[Lemma 2.25]{ccd}}]\label{one}
$\Lambda_{\omega}$ is independent of the choice of toric shrinking K\"ahler-Ricci soliton $\omega$ in Definition \ref{weightedvol}.
\end{lemma}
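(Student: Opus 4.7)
The plan is to reduce the condition defining $\Lambda_{\omega}$ to a statement about the moment polyhedron $P_{-K_M}$ alone, which by Lemma \ref{note} is intrinsic to the toric manifold $M$ and does not depend on the choice of soliton K\"ahler form $\omega$. Let $\omega_0$ and $\omega_1$ be two toric shrinking gradient K\"ahler-Ricci solitons on $M$, and let $\mu_0, \mu_1 : M \to \mathfrak{t}^*$ denote their respective moment maps, each normalised by the real Monge-Amp\`ere equation \eqref{realMA} applied to the corresponding K\"ahler potential on the dense orbit. By Lemma \ref{note}(ii), the images satisfy
\begin{equation*}
\mu_0(M) \,=\, \mu_1(M) \,=\, P_{-K_M},
\end{equation*}
so in particular both moment maps surject onto the same closed polyhedron in $\mathfrak{t}^*$.

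The key step is then to show that for any $Y \in \mathfrak{t}$ and any such normalised moment map $\mu$, the Hamiltonian $\langle \mu, Y\rangle : M \to \R$ is proper and bounded below if and only if the linear functional $\langle \cdot , Y\rangle : P_{-K_M} \to \R$ is proper and bounded below on the closed set $P_{-K_M}$. The ``if'' direction will follow from the standard fact in toric geometry that the moment map of a toric K\"ahler manifold is a proper map onto its image: preimages of compact subsets of $P_{-K_M}$ under $\mu$ are compact in $M$, because they are $T^n$-saturated and fiber over compact subsets of the polyhedron with $T^n$-orbit (hence compact) fibres, the orbit dimensions being controlled by the stratification supplied by the Orbit-Cone Correspondence (Proposition \ref{orbitcone}). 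The ``only if'' direction is immediate since $\mu$ is surjective onto $P_{-K_M}$ and continuous.

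Putting these two steps together, the membership $Y\in\Lambda_{\omega}$ is equivalent to the condition that $\langle \cdot, Y\rangle$ is proper and bounded below on $P_{-K_M}$, a condition phrased entirely in terms of the polyhedron $P_{-K_M}$. Since $P_{-K_M}$ depends only on the fan $\Sigma$ of $M$ (via \eqref{xmas}) and not on the choice of soliton K\"ahler form, we conclude that $\Lambda_{\omega_0}=\Lambda_{\omega_1}$, proving the lemma.

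The main technical subtlety I expect is verifying the properness of the moment map in the noncompact toric setting carefully; one needs to use that $\mu$ is $T^n$-equivariant with trivial target action, that the fixed-point set of $T^n$ in $M$ is compact (so that vertices of $P_{-K_M}$ lift to compact fibres), and that the inverse $(\nabla\phi)^{-1}$ of the Legendre transform is well-defined on the interior of $P_{-K_M}$. Once this properness is in hand, the remainder of the argument is a direct translation between a property of a function on $M$ and a property of its affine restriction to a fixed polyhedron.
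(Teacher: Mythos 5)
Since the paper does not prove this lemma itself but cites \cite[Lemma 2.25]{ccd}, there is no in-paper proof to compare against; nonetheless your reduction is the expected one: redescribe the membership $Y\in\Lambda_\omega$ as a condition on the linear functional $\langle\cdot\,,Y\rangle$ restricted to the fixed closed polyhedron $P_{-K_M}$, whose independence from $\omega$ is supplied by the normalisation \eqref{realMA} together with Lemma \ref{note}(ii). The ``only if'' direction of your equivalence is, as you say, immediate from continuity of $\mu$ and the surjectivity $\mu(M)=P_{-K_M}$: a sub-level set $\{x\in P_{-K_M}:\langle x,Y\rangle\le C\}$ is the continuous image of the compact set $\{\langle\mu,Y\rangle\le C\}$.

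The one load-bearing step that you flag but do not close is the properness of $\mu:M\to P_{-K_M}$. This is true in the complete toric K\"ahler setting, but it does not follow from the abstract observation that $\mu$ is a continuous $T^n$-invariant surjection with compact $T^n$-orbit fibres: such a map between locally compact Hausdorff spaces need not be proper unless it is also closed, or equivalently unless the induced bijection $M/T^n\to P_{-K_M}$ is a homeomorphism, and this is precisely what must be established. The ``fibres over compact sets'' heuristic in your second paragraph therefore assumes the conclusion. The clean route is via the Delzant local normal form near each face of $P_{-K_M}$, where $\mu$ is modelled on the moment map of $\mathbb{C}^k\times(\mathbb{C}^*)^{n-k}$ and properness can be verified directly in coordinates; on the interior of $P_{-K_M}$ one instead uses that $\nabla u = (\nabla\phi)^{-1}$ takes compact sets to compact sets, with $u$ the symplectic potential from Lemma \ref{boundaryy}. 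With this filled in, the argument is complete and in all likelihood matches the cited reference. As a minor point, Lemma \ref{note}(ii) requires completeness of the soliton metric; this is built into the definition adopted in Section \ref{overview}, but it is worth keeping in view, since completeness is precisely what guarantees that the moment image is the full closed polyhedron.
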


\begin{lemma}[{\cite[Lemma 2.26]{ccd}}]\label{two}
$\mathcal{F}_{\omega}$ is independent of the choice of toric shrinking K\"ahler-Ricci soliton $\omega$ in Definition \ref{weightedvol}. Moreover,
after identifying $\Lambda_{\omega}$ with a subset of $\mathbb{R}^{n}$ via \eqref{eqnY4}, $\mathcal{F}_{\omega}$
is given by $\mathcal{F}_{\omega}(v)=(2\pi)^n \int_{P_{-K_M}} e^{-\langle v,\,x \rangle }\,dx$, where $x=(x^1,\ldots,x^n)$ denotes coordinates on $\mathfrak{t}^{*}$ dual to
the coordinates $(\xi^{1},\ldots,\xi^{n})$ on $\t$ introduced in Section \ref{finito}.
\end{lemma}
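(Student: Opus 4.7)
The plan is to reduce the integral defining $\mathcal{F}_{\omega}$ on $M$ to an integral over the moment polytope $P_{-K_M}$ via the moment map, after which the desired independence from $\omega$ follows immediately because the polytope depends only on the torus action on $M$ (by Lemma \ref{note}(i)). Both the integrand $e^{-\langle \mu, v\rangle}$ and the measure $\omega^n$ are invariant under the real torus $T^n$, so the complement of the dense orbit $\Cstarn \subset M$ contributes zero and we may work entirely on $\Cstarn$ in the $(\xi,\theta)$ coordinates from \eqref{diffeoo}.

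First I would write $\omega = 2i\partial\bar\partial\phi = \phi_{ij}(\xi)\,d\xi^i \wedge d\theta^j$ on the dense orbit using Proposition \ref{propB6} and \eqref{e:T5}, and compute $\omega^n = c_n \det(\phi_{ij})\, d\xi^1\wedge d\theta^1 \wedge \cdots \wedge d\xi^n\wedge d\theta^n$ for an explicit combinatorial constant $c_n$ coming from expanding the wedge product (so that the top form is $c_n \det(\phi_{ij})\,d\xi\,d\theta$ as a measure). Since $\mu = \nabla\phi$ depends only on $\xi$, the integrand $e^{-\langle\mu,v\rangle}$ is $T^n$-invariant, and integration over the $\theta$-fibers $T^n$ produces a factor of $(2\pi)^n$. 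This leaves the Euclidean integral
\begin{equation*}
\mathcal{F}_{\omega}(v) = c_n (2\pi)^n \int_{\mathfrak{t}} e^{-\langle \nabla\phi(\xi),\, v\rangle} \det(\phi_{ij})\, d\xi.
\end{equation*}

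Next I would perform the change of variables $x = \nabla\phi(\xi)$. Because $\phi$ is strictly convex on $\mathfrak{t}$, this map is a diffeomorphism from $\mathfrak{t}$ onto the interior of its image, and by Lemma \ref{note}(ii) this image is (the interior of) $P_{-K_M}$ once $\phi$ is normalised by \eqref{realMA}. The Jacobian of $\nabla\phi$ is precisely $\det(\phi_{ij})$, so $\det(\phi_{ij})\,d\xi = dx$ under this substitution and we obtain
\begin{equation*}
\mathcal{F}_{\omega}(v) = c_n (2\pi)^n \int_{P_{-K_M}} e^{-\langle v,\, x\rangle}\, dx,
\end{equation*}
which is the claimed formula (up to the absolute constant $c_n$, whose value is a matter of normalisation convention and can be absorbed or tracked explicitly by computing $\omega^n$ in one low-dimensional case). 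Since the right-hand side depends only on the polytope $P_{-K_M}$ and not on $\phi$ or $\omega$, and since Lemma \ref{note}(i) identifies $P_{-K_M}$ as the polytope determined purely from the fan data of $M$ via \eqref{xmas}, the functional $\mathcal{F}_{\omega}$ is manifestly independent of the choice of toric shrinking K\"ahler-Ricci soliton $\omega$.

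The main subtlety — though not really an obstacle — is making sure that the normalisation \eqref{realMA} of $\phi$ (and hence of $\mu = \nabla\phi$) is what allows the image of the moment map to be exactly $P_{-K_M}$ as described in \eqref{xmas}; this is precisely the content of Lemma \ref{note}(ii) and uses the fact, recorded in Lemma \ref{normal}, that our normalisation agrees with the one in \cite[Definition 5.16]{cds}. Without this, the change of variables would produce a translate of $P_{-K_M}$ and the formula would not be intrinsic to the fan. Everything else in the argument is a direct calculation combined with the standard Duistermaat–Heckman-type push-forward for toric manifolds.
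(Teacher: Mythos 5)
The paper does not prove this lemma itself — it is cited from \cite[Lemma 2.26]{ccd} — so there is no in-paper proof to compare against. Your argument is the standard Duistermaat–Heckman-type push-forward computation for toric manifolds and is the right approach; the reduction to the dense orbit, the factorization of the $T^n$-integral, the change of variables $x=\nabla\phi(\xi)$ with Jacobian $\det(\phi_{ij})$, and the appeal to Lemma \ref{note}(ii) for the identification of the image with $P_{-K_M}$ are all correct, and the independence from $\omega$ then follows because the right-hand side depends only on the polytope, which is fan data.

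The one place you should not wave your hands is the constant $c_n$. With the convention of \eqref{e:T5}, namely $\omega=\phi_{ij}\,d\xi^i\wedge d\theta^j$, one has $\omega^n=n!\,\det(\phi_{ij})\,d\xi^1\wedge d\theta^1\wedge\cdots\wedge d\xi^n\wedge d\theta^n$, so the computation yields
\begin{equation*}
\mathcal{F}_\omega(v)=n!\,(2\pi)^n\int_{P_{-K_M}}e^{-\langle v,x\rangle}\,dx,
\end{equation*}
whereas the lemma as stated has no $n!$. This is not ``a matter of normalisation convention that can be absorbed'': either Definition \ref{weightedvol} should be read with the Liouville measure $\omega^n/n!$, or there is an $n!$-discrepancy between the statement and the paper's own conventions that you should resolve by tracking it through \cite{ccd}. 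None of this affects the main conclusion — independence of $\mathcal{F}_\omega$ from $\omega$ — but a proof of an identity must get the constant right, and here it is a determinate $n!$, not an arbitrary $c_n$. You are also implicitly using Lemma \ref{one} (independence of $\Lambda_\omega$) so that ``independence of $\mathcal{F}_\omega$'' is even meaningful as an equality of functions on a common domain; it is worth saying that explicitly.
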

\noindent Thus, we henceforth drop the subscript $\omega$ from $\mathcal{F}_{\omega}$ and $\Lambda_{\omega}$ when working in the toric category. The
functional $\mathcal{F}:\Lambda\to\mathbb{R}$ is in addition proper in this category \cite[Proof of Proposition 3.1]{charlie}, hence attains a unique critical point in $\Lambda$.
This critical point characterises the soliton vector field of a complete toric shrinking gradient K\"ahler-Ricci soliton.
\begin{theorem}[{\cite[Theorem 4.6]{charlie}, \cite[Theorem 1.1]{caoo}}]\label{thmB13}
Let $(M,\,g,\,X)$ be a complete toric shrinking gradient K\"ahler-Ricci soliton with complex structure $J$.
Then $JX\in\Lambda$ and $JX$ is the unique critical point of $\mathcal{F}$ in $\Lambda$.
\end{theorem}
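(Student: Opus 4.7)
The plan is to handle the two assertions of the theorem separately: first that $JX\in\Lambda$, then that $JX$ is a critical point of $\mathcal{F}$, with uniqueness following at once from strict convexity.

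To show $JX\in\Lambda$, I would apply Lemma \ref{normal} with $Y=X$ to produce the Hamiltonian potential $u_{X}=\langle\nabla\phi,\,b_{X}\rangle=\langle\mu,\,JX\rangle$ satisfying $\nabla^{g}u_{X}=X=\nabla^{g}f$. Thus $u_{X}$ and the soliton potential $f$ differ by an additive constant. By the well-known properness estimates for complete shrinking Ricci solitons (with quadratic lower bound on $f$ in terms of the distance), $f$ is proper and bounded below, hence so is $\langle\mu,\,JX\rangle$, placing $JX$ in $\Lambda$ by Definition \ref{weightedvol}.

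To show $d\mathcal{F}(JX)=0$, I would differentiate under the integral sign. Since $u_{X}-f$ is constant, the vanishing of the derivative of $\mathcal{F}$ at $JX$ in the direction $w=JW\in\mathfrak{t}$ reduces to verifying that
$$\int_{M}u_{W}\,e^{-f}\,\omega^{n}=0.$$
The identity $\Delta_{\omega}u_{W}+u_{W}-\tfrac{1}{2}W\cdot f=0$ of Lemma \ref{normal} lets me rewrite $u_{W}=\tfrac{1}{2}W\cdot f-\Delta_{\omega}u_{W}$. Since $W$ is a real holomorphic Killing field for $\omega$, $\operatorname{div}W=0$, so $\operatorname{div}(e^{-f}W)=-(W\cdot f)e^{-f}$ and the divergence theorem gives $\int_{M}W\cdot f\,e^{-f}\omega^{n}=0$. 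Integration by parts against the weight $e^{-f}$, using $\nabla u_{W}=W$, similarly yields
$$\int_{M}\Delta_{\omega}u_{W}\,e^{-f}\omega^{n}=\int_{M}\langle\nabla u_{W},\,\nabla f\rangle\,e^{-f}\omega^{n}=\int_{M}W\cdot f\,e^{-f}\omega^{n}=0.$$
Combining these two identities produces the desired vanishing. Uniqueness is then immediate from the strict convexity of $\mathcal{F}$ on $\Lambda$ \cite[Lemma 5.17(i)]{cds} recorded just after Definition \ref{weightedvol}, since a strictly convex function admits at most one critical point.

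The main technical obstacle is justifying these two integrations by parts on the non-compact manifold $M$. This requires enough integrability of $u_{W}$, $W\cdot f$, and $\Delta_{\omega}u_{W}$ against the Gaussian-type measure $e^{-f}\omega^{n}$. The quadratic growth of $f$ from the Cao-Zhou estimates controls the decay of the weight, while in the toric setting the Hamiltonians $u_{W}=\langle\mu,\,JW\rangle$ and the derivatives $W\cdot f$ enjoy moderate polynomial growth governed by the image $P_{-K_{M}}$ of the moment map from Lemma \ref{note}. Together these should be enough to run a cut-off argument against the sublevel sets $\{f\leq R\}$ and pass to the limit as $R\to\infty$, with negligible boundary contributions, thereby making the formal calculation rigorous.
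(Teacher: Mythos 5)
The paper cites this statement from \cite[Theorem 4.6]{charlie} and \cite[Theorem 1.1]{caoo} and does not reprove it, so there is no paper proof to compare against; I evaluate your argument on its own terms. Your plan (identify $u_X$ with $f$ via Lemma \ref{normal} to get $JX\in\Lambda$, then show $d\mathcal{F}(JX)=0$ by computing $\int_M u_W\,e^{-f}\omega^n=0$ from the normalisation PDE, with uniqueness from strict convexity) is the right shape, and the first and third steps are fine. But there is a concrete error in the central integration-by-parts step.

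You assert that $W$ is a real holomorphic Killing field and deduce $\operatorname{div}W=0$, hence $\int_M W\cdot f\,e^{-f}\omega^n=0$ and $\int_M\Delta_\omega u_W\,e^{-f}\omega^n=0$. This is not correct: $JW\in\mathfrak{t}$ is Killing, but $W=-J(JW)$ is a gradient field, $W=\nabla^g u_W$, so $\operatorname{div}W=\Delta_{\mathrm{Riem}}u_W$, which is typically nonzero. Already on the Gaussian soliton $(\mathbb{C},\omega_{\mathbb{C}},f=\tfrac{|z|^2}{2}-1)$ with $W=X$ one has $u_W=f$, $\operatorname{div}W=\Delta_{\mathrm{Riem}}f=2$, and $\int_{\mathbb{C}}W\cdot f\,e^{-f}\omega=\int 2(f+1)e^{-f}\omega=4\pi e\neq 0$, so neither intermediate integral vanishes. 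The conclusion $\int_M u_W\,e^{-f}\omega^n=0$ nevertheless holds, but by a cancellation, not by both terms vanishing: here $\Delta_\omega$ is one half of the Riemannian Laplacian (as one sees by checking \eqref{normal2} on the model), so the weighted integration by parts gives $\int_M\Delta_\omega u_W\,e^{-f}\omega^n=\tfrac12\int_M\langle\nabla u_W,\nabla f\rangle\,e^{-f}\omega^n=\tfrac12\int_M W\cdot f\,e^{-f}\omega^n$, and combining this with the pointwise identity $u_W=\tfrac12 W\cdot f-\Delta_\omega u_W$ yields $\int_M u_W\,e^{-f}\omega^n=0$ directly. Equivalently, Lemma \ref{normal} rewrites as $\Delta_f u_W=-2u_W$ for the drift Laplacian $\Delta_f=\Delta_{\mathrm{Riem}}-\nabla f\cdot\nabla$ of Section \ref{metricmeasure}, and eigenfunctions of $\Delta_f$ with nonzero eigenvalue have zero $e^{-f}\omega^n$-mean by self-adjointness tested against the constant function $1$. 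So the fix is small but real: drop the false assertion $\operatorname{div}W=0$, track the factor between $\Delta_\omega$ and $\Delta_{\mathrm{Riem}}$, and let the two terms cancel rather than vanish individually. Your remarks at the end about justifying the integrations by parts on the non-compact manifold (Cao--Zhou quadratic growth of $f$ controlling the weight, polynomial growth of the moment coordinates) are the right considerations and would carry over unchanged to the corrected argument.
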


Having established in Lemmas \ref{one} and \ref{two} that in the toric category the weighted volume functional $F$ and its domain $\Lambda$ are determined solely by the polytope $P_{-K_{M}}$ which itself, by Lemma \ref{note}, depends only on the torus action on $M$ (i.e., is independent of the choice of shrinking soliton), and having an explicit expression for $\mathcal{F}$ given by Lemma \ref{two}, after using the torus action to identify $P_{-K_{M}}$ via \eqref{xmas}, we can determine explicitly the soliton vector field of a hypothetical toric shrinking gradient K\"ahler-Ricci soliton on $M$. Indeed, in light of Lemma \ref{two}, the unique minimiser $b_{X}\in\mathfrak{t}\simeq\mathbb{R}^{n}$ is characterised by the fact that
\begin{equation}\label{futaki}
0=d_{b_{X}}\mathcal{F}(v)=\int_{P_{-K_{M}}}\langle x,\,v\rangle\,e^{-\langle b_{X},\,x\rangle}dx\qquad\textrm{for all $v\in\mathbb{R}^{n}$.}
\end{equation}

In the setting of Theorem \ref{mainthm}, we can also determine $\Lambda$ explicitly. To this end, with notation as in Theorem \ref{mainthm}, we make the following observation concerning the Lie algebra $\t$ of $T$. By assumption, the restricted map $\pi|_{M}:M\to\widehat{M}:=\mathbb{C}\times D$ is a torus-equivariant biholomorphism on the complement of
$\pi^{-1}(D_{0})\subseteq M$ and $D_{0}\subseteq\widehat{M}$, hence $M\setminus\pi^{-1}(D_{0})$ is $\Cstarn$-equivariantly biholomorphic to $\Cstar\times D \supseteq \Cstarn$. It subsequently follows that $\t$ admits the splitting
\begin{equation*}
	\t \simeq \t_{\Cstar} \oplus \t_{D},
\end{equation*}
where $\t_{\Cstar}$ and $\t_{D}$ denote the Lie algebra of vector fields in $\t$ on $M$ whose image under $d\pi$ vanish along the $D$- and $\Cstar$-factors of $\widehat{M}\setminus D_{0}$ respectively.
With this in mind, we then have:
\begin{lemma}\label{useful}
In the setting and notation of Theorem \ref{mainthm} and with respect to the splitting $\t \simeq \t_{\Cstar} \oplus \t_{D}$, the domain $\Lambda$ of the weighted volume functional $\mathcal{F}$ is the half-space
	\begin{equation*}
		\Lambda = \left\{ \alpha \textnormal{Re}( z \partial_z) + Y \in  \t_{\Cstar} \oplus \t_{D}  \: | \: \alpha > 0\quad\textrm{and}\quad Y \in \t_D \right\}.
	\end{equation*}
\end{lemma}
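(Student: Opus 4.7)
The plan is to reduce the computation of $\Lambda$ to a combinatorial question about the polyhedron $P_{-K_M}$. By Lemmas \ref{one} and \ref{two}, $\Lambda$ depends only on $P_{-K_M}$, and both the properness-and-boundedness condition defining $\Lambda$ and the convergence of the integral $\mathcal{F}(v)=(2\pi)^n\int_{P_{-K_M}}e^{-\langle v,x\rangle}\,dx$ are equivalent, for a polyhedron with uniformly bounded transverse slices, to $v$ lying in the interior of the dual cone $\operatorname{rec}(P_{-K_M})^{\vee}$. Hence it suffices to show that $\operatorname{rec}(P_{-K_M})=\mathbb{R}_{\geq 0}\cdot e_1\subset\mathfrak{t}_{\mathbb{C}^*}$, where $e_1$ is the minimal lattice generator corresponding to the proper transform of $D_0$.

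To identify the fan $\Sigma_M$, I would first recall that the fan of $\mathbb{P}^1\times D$ is the product of the fan of $\mathbb{P}^1$ in $\mathfrak{t}_{\mathbb{C}^*}$ (the rays $\pm e_1$ for $D_0, D_\infty$) with the complete fan $\Sigma_D$ of $D$ in $\mathfrak{t}_D$. A torus-equivariant blowup along a subvariety of $D_0$ corresponds combinatorially to a star subdivision of a cone contained in $\mathbb{R}_{\geq 0}e_1\oplus\mathfrak{t}_D$, which introduces only new rays with strictly positive $e_1$-component; iterating preserves this property. Passing from $\overline{M}$ to $M$ deletes the ray $-e_1$ and every cone containing it. Thus the rays of $\Sigma_M$ are $e_1$, the rays $(0,\nu_i^D)$ inherited from $\Sigma_D$, and additional rays $(a_j,b_j)$ with $a_j>0$ from the exceptional divisors. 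Writing $x=(y_1,y^D)\in\mathfrak{t}_{\mathbb{C}^*}^{*}\oplus\mathfrak{t}_D^{*}$, the recession cone is cut out by $y_1\geq 0$, $\langle\nu_i^D,y^D\rangle\geq 0$ for all $i$, and $a_jy_1+\langle b_j,y^D\rangle\geq 0$ for all $j$. Completeness of $\Sigma_D$ means its rays positively span $\mathfrak{t}_D$, forcing $y^D=0$; the remaining constraints then collapse to $y_1\geq 0$, giving $\operatorname{rec}(P_{-K_M})=\mathbb{R}_{\geq 0}e_1$.

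The main obstacle I expect is the bookkeeping of the fan combinatorics through iterated blowups: one must confirm that no step produces a ray with vanishing or negative $e_1$-component and that the rays of $\Sigma_D$ themselves are preserved. Once this combinatorial structure is pinned down, the dualisation is immediate: the interior of $\operatorname{rec}(P_{-K_M})^{\vee}$ is the open half-space $\{v\in\mathfrak{t}:\langle v,e_1\rangle>0\}$, which under the identification of $e_1$ with a positive multiple of $\operatorname{Re}(z\partial_z)$ in $\mathfrak{t}_{\mathbb{C}^*}$ is exactly the half-space in the statement. The equivalence between this open half-space and the $\Lambda$ of Definition \ref{weightedvol} is a standard Fubini argument: the slices $P_{-K_M}\cap\{y_1=\tau\}$ are uniformly contained in $P_{-K_D}$, which is compact because $D$ is Fano, so for $\alpha>0$ the $y_1$-integral converges exponentially while the transverse integral stays bounded, whereas for $\alpha\leq 0$ the slice volumes stabilise to $\operatorname{vol}(P_{-K_D})>0$ and force divergence.
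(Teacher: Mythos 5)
Your proof is correct and is in the same spirit as the paper's---both pass to the anticanonical polyhedron---but your treatment of the fan combinatorics is more careful and in fact fills a gap in the paper's argument. The paper observes that $P_{-K_M}$ is cut out by the inequalities defining the product polyhedron $P_{-K_{\C\times D}}$ together with finitely many more, and from this asserts that the two polyhedra coincide outside a large ball; but adding inequalities to a polyhedron can in general truncate its unbounded direction, so this assertion needs the new rays to have strictly positive $e_1$-component, which is exactly what your star-subdivision bookkeeping establishes (each blowup centre corresponds to a cone containing $e_1$ whose other rays have zero $e_1$-component, so the introduced ray has $a_j>0$, making $a_jx_1+\langle b_j,x^D\rangle\geq-1$ vacuous for $x_1\gg 0$; the iterated case preserves this). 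Beyond that the two arguments diverge only in packaging: the paper reads off properness and boundedness below directly from the facts that $x^D$ is bounded and that $|x|\to\infty$ in $P_{-K_M}$ iff $x_1\to+\infty$, whereas you route through $\operatorname{rec}(P_{-K_M})$ and its dual cone, which is cleaner and more reusable. Two minor points: $\operatorname{rec}(P_{-K_M})$ lives in $\t^*$, not in $\t_{\Cstar}$, so it should be written as a ray in $\t^*$ (your displayed inequalities $y_1\geq 0$, $\langle\nu_i^D,y^D\rangle\geq 0$, $a_jy_1+\langle b_j,y^D\rangle\geq 0$ are indeed in $\t^*$, so this is only a labelling slip); and the closing Fubini discussion of convergence of $\mathcal F(v)$ is not needed for this lemma, since $\Lambda$ is defined by properness and boundedness below of $\langle\mu,v\rangle$, with integrability of $\mathcal F$ on $\Lambda$ a consequence via the Duistermaat--Heckman theorem rather than part of the defining condition.
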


\begin{proof}
Since $D$ is Fano, by Lemma \ref{note} we know that the anticanonical polyhedron $P_{-K_{\C \times D}}$ for $\C \times D$ is the ``simple product'', i.e.,
	\begin{equation}\label{product-poly}
		P_{-K_{\C \times D}} = \{ (x_1, \dots, x_n) \: | \: x_{1} \geq -1 \quad\textrm{and}\quad (x_2, \dots, x_{n}) \in P_D \}.
	\end{equation}
Moreover, it follows from the definition of $\pi$ that the normal fan $\Sigma_M$ of $P_{-K_M}$ is just a refinement of the normal fan $\Sigma_{\C \times D}$ of $P_{-K_{\C \times D}}$ (see \cite[Definition 3.3.17]{cox}). The set of defining equations for $P_{-K_M}$ is therefore obtained from those defining \eqref{product-poly} by including finitely many linear inequalities. This in particular implies that $P_{-K_M}$ and  $P_{-K_{\C \times D}}$ coincide outside a sufficiently large ball $B \subset \t^*$.

Let $Z \in \t$ and via \eqref{eqnY4}, identify $Z$ with a point $b_{Z}\in \R^n$. Then
the distinguished vector field $\operatorname{Re}(z \partial_z)\in\mathfrak{t}$ is identified with $(1,\,0,\ldots,0)\in\mathbb{R}^{n}$
via the aforesaid splitting of $\mathfrak{t}$ so that $Z =  \alpha \textnormal{Re}( z \partial_z) + Y \in  \t_{\Cstar} \oplus \t_{D}$ is identified with the point $b_Z = (\alpha, b_2, \dots, b_n) \in \R^{n}$ for some $b_{i}\in\mathbb{R},\,i=2,\ldots,n$. Since $P_{-K_M}$ is closed, it follows that the Hamiltonian potential $\mu_Z = \langle \mu,  Z \rangle = \langle x, b_{Z} \rangle$ of $Z$ is proper if and only if $|\langle x, b_{Z} \rangle | \to +\infty$ as $|x| \to +\infty$. Thus, since $D$ is compact so that $P_D$ is bounded, we see that the set of vector fields $Z\in\mathfrak{t}$ for which the Hamiltonian potential $\mu_Z$ is proper is precisely the complement of the inclusion $\t_D \hookrightarrow \t$. In addition, $\mu_Z$ is bounded from below if and only if $\langle x , b \rangle \to + \infty$ as $|x| \to +\infty$ in $P_{-K_M}$. As $|x| \to +\infty$ in $P_{-K_M}$ if and only if $x_1 \to + \infty$, the condition that $\mu_Z$ be bounded from below picks out the desired component of $\mathfrak{t}$ defining $\Lambda$.
\end{proof}

We illustrate an application of Lemma \ref{useful} with the following example.

\begin{example}\label{finaleg}
Let $D=\mathbb{P}^{1}$, let $\pi$ be the blowup map,
and let $([z_{1}:z_{2}],\,w)$ denote coordinates on $\mathbb{P}^{1}\times\mathbb{C}$.
Then there is an action of a real two-dimensional torus $T^{2}$ on
$\mathbb{P}^{1}\times\mathbb{C}$ given by
$$([z_{1}:z_{2}],\,w)\mapsto([e^{ib_{2}}z_{1}:z_{2}],\,e^{ib_{1}}w),$$
where $(b_{1},\,b_{2})\in\mathbb{R}^{2}$ which we identify with the Lie algebra $\mathfrak{t}$ of $T^{2}$.
Moreover, $M$ is the blowup of $\mathbb{P}^{1}\times\mathbb{C}$ at one point which without loss of generality we may assume to be $([0:1],\,0)$. The action of $T^{2}$ on $\mathbb{P}^{1}\times\mathbb{C}$ induces a $T^{2}$-action on $M$ in the obvious way. Lemma \ref{useful} then tells us that the domain $\Lambda$ of the weighted
volume functional $\mathcal{F}$ of $M$ is given by $$\{(b_{1},\,b_{2})\in\mathbb{R}^{2}\,|\,b_{1}>0\quad\textrm{and}\quad b_{2}\in\mathbb{R}\}\subseteq\mathfrak{t}.$$
Using the Duistermaat-Heckman theorem \cite[Theorem A.3]{cds}, one can write $\mathcal{F}$ as
\begin{equation*}
\mathcal{F}(b_{1},\,b_{2})=\frac{e^{b_{1}}}{(b_{1}-b_{2})b_{2}}+\frac{e^{b_{2}}}{(b_{2}-b_{1})b_{1}} -\frac{e^{b_{1}-b_{2}}}{b_{1}b_{2}}.
\end{equation*}
Observe that this is symmetric under the transformation $(b_{1},\,b_{2})\mapsto(b_1,\, b_{1}-b_{2})$, a transformation that preserves $\Lambda$.
The minimum of $\mathcal{F}$ in $\Lambda$ therefore lies along the line $0<b_{1}=2b_{2}$, in which case we have for $b_{2}>0$,
\begin{equation*}
\begin{split}
\mathcal{F}(b_{2})&= \frac{e^{2b_{2}} - {e^{b_{2}}}}{b_2^2}.
\end{split}
\end{equation*}
We then have that
\begin{equation*}
	\mathcal{F}'(b_{2})= b_2^{-3} e^{b_2} \left[ 2(b_2 -1) e^{b_2} - (b_2 - 2) \right].
\end{equation*}
This has a zero for $b_{2}>0$ precisely when
$$2\left(b_{2}-1\right)\mathrm{e}^{b_{2}}=b_{2}-2.$$
Numerical approximations give the unique positive root as $b_{2}\approx 0.64$, in agreement with \cite[Example 2.33]{ccd}.
\end{example}

\subsubsection{The Legendre transform}

Let $M$ be a toric manifold of complex dimension $n$ endowed with a complete K\"ahler form $\omega$ invariant under the induced real $T^{n}$-action and with respect to which
this action is Hamiltonian. Write $\omega=2i\partial\bar{\partial}\phi$ on the dense orbit for $\phi$ strictly convex as in Proposition \ref{propB6}. Then
$\nabla\phi(\mathbb{R}^{n})$ is a Delzant polytope $P$. Recall that we have coordinates $\xi$ on $\mathbb{R}^{n}\simeq\mathfrak{t}$, $x$ on $P$, and $\theta$ on $T$.
Given any smooth and strictly convex function $\psi$ on $\R^n$ such that $\nabla\psi(\R^n)=P$, there exists a unique smooth and strictly convex function $u_\psi(x)$ on $P$ defined by
	\begin{equation*}
		\psi(\xi) + u_\psi(\nabla \psi) = \langle\nabla\psi,\,\xi \rangle.
	\end{equation*}
This process is reversible; that is to say, $\psi$ is the unique function satisfying
		\begin{equation*}
		\psi(\nabla u_\psi) + u_\psi(x) = \langle x ,\,\nabla u_\psi \rangle,
	\end{equation*}
where $\nabla$ now denotes the Euclidean gradient with respect to $x$. The function $u_{\psi}$ is called the \emph{Legendre transform of $\psi$} and is sometimes denoted by $L(\psi)(x)$.
Clearly $L(L(\psi))(\xi)=\psi(\xi)$. The Legendre transform $u$ of $\phi$ is called the \emph{symplectic potential} of $\omega$, as the metric $g$ associated to $\omega$ is given by
$$g=u_{ij}(x)dx^{i}dx^{j}+u^{ij}(x)d\theta^{i}d\theta^{j}.$$

The following will prove useful.

\begin{lemma}[{cf.~\cite[Lemma 2.10]{charlie}}]\label{growth}
Let $\phi$ be any smooth and strictly convex function on an open convex domain $\Omega'\subset\mathbb{R}^{n}$
and let $u=L(\phi)$ be the Legendre transform of $\phi$ defined on $(\nabla\phi)(\Omega')=:\Omega$. If $0\in\Omega$, then there exists a constant $C > 0$ such that
$$\phi(\xi)\geq C^{-1}|\xi|-C.$$ In particular, $\phi$ is proper and bounded from below.
\end{lemma}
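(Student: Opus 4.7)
The plan is to exploit the duality of the Legendre transform, which expresses $\phi$ as a supremum of affine functions parametrised by points in $\Omega$. Concretely, the Young--Fenchel inequality (which follows directly from the identity $\phi(\xi)+u(\nabla\phi(\xi))=\langle\nabla\phi(\xi),\xi\rangle$ together with strict convexity) gives
\begin{equation*}
\phi(\xi)\;\geq\;\langle x,\xi\rangle-u(x)\qquad\text{for every }\xi\in\Omega'\text{ and every }x\in\Omega,
\end{equation*}
with equality at $x=\nabla\phi(\xi)$. The linear lower bound will then come from choosing $x$ cleverly in a small ball about the origin.

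Since $0\in\Omega$ and $\Omega$ is open, I would first fix $\delta>0$ small enough that $\overline{B_\delta(0)}\subset\Omega$, so that by continuity of $u$ on the compact set $\overline{B_\delta(0)}$ there is a constant $C_0:=\sup_{|x|\leq\delta}|u(x)|<\infty$. For any nonzero $\xi\in\Omega'$, I would then apply the Young--Fenchel inequality above with the specific choice $x=\delta\,\xi/|\xi|\in\overline{B_\delta(0)}\subset\Omega$, obtaining
\begin{equation*}
\phi(\xi)\;\geq\;\langle \delta\,\xi/|\xi|,\,\xi\rangle - u(\delta\,\xi/|\xi|)\;\geq\;\delta|\xi|-C_0.
\end{equation*}
Taking $C:=\max\{\delta^{-1},C_0\}$ yields the desired estimate $\phi(\xi)\geq C^{-1}|\xi|-C$ on $\Omega'$. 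Properness and boundedness below of $\phi$ are then immediate consequences, since $|\xi|\to\infty$ within $\Omega'$ forces $\phi(\xi)\to\infty$, while the infimum of $\phi$ is at least $-C$.

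I do not expect any serious obstacle here: the argument is purely one of convex duality and the only point requiring mild care is ensuring the auxiliary point $\delta\,\xi/|\xi|$ lies inside the domain $\Omega$ where $u$ is defined and controlled, which is guaranteed by the hypothesis $0\in\Omega$ and the openness of $\Omega$. The duality statement $\phi(\xi)=\sup_{x\in\Omega}(\langle x,\xi\rangle-u(x))$ itself is already encoded in the inversion relation recalled just before the lemma, so no additional convex-analytic input is needed beyond what has already been set up.
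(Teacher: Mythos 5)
Your proof is correct, and since the paper itself does not prove Lemma~\ref{growth} but defers it to \cite[Lemma 2.10]{charlie}, there is no in-text argument to compare against. The Young--Fenchel step is the right mechanism: once $0\in\Omega$, choosing the dual variable $x=\delta\,\xi/|\xi|$ in a fixed small ball and controlling $u$ there by compactness immediately produces linear growth. One small remark on rigor: the inequality $\phi(\xi)\geq\langle x,\xi\rangle-u(x)$ requires only ordinary convexity of $\phi$ (the tangent-plane inequality at $\xi_0$ with $\nabla\phi(\xi_0)=x$), not strict convexity; strict convexity of $\phi$ (in the sense of positive-definite Hessian, as the paper implicitly uses) is what makes $\nabla\phi$ a local diffeomorphism and hence makes $\Omega$ open, which is exactly what you need to fit a closed ball $\overline{B_\delta(0)}$ inside $\Omega$. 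You note the openness requirement at the end; it would be cleaner to flag that this openness is itself a consequence of the positive-definite Hessian rather than a separate hypothesis. Also note that your bound covers $\xi=0$ as well, since $\phi(0)\geq -u(x)\geq -C_0\geq -C$ for any $x$ in the ball, so the estimate holds on all of $\Omega'$, not merely for $\xi\neq 0$.
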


If $\phi \in C^\infty(\R^n)$ solves \eqref{realMA}, then the Legendre transform $u=L(\phi)$ satisfies
\begin{equation}\label{realMA2}
		2 \left( \langle \nabla u, x \rangle - u(x) \right) - \log\det(u_{ij}(x)) = \langle b_X, x \rangle\qquad\textrm{on $\Pol$}.
	\end{equation}
To study K\"ahler-Ricci solitons on $M$ via \eqref{realMA2} on $\Pol$, we need to understand when a strictly convex function on a Delzant polytope
defines a symplectic potential, i.e., is induced from a K\"ahler metric on $M$ via the Legendre transform.
To this end, consider a Delzant polytope $P$ obtained as the image of the moment map of a toric K\"ahler manifold.
Let $F_i$, $i = 1, \dots, d$ denote the $(n-1)$-dimensional facets of $P$ with inward-pointing normal vector $\nu_i \in \Gamma$, normalised so that $\nu_i$
is the minimal generator of $\sigma_i = \R_+ \cdot \nu_i$ in $\Gamma$, and let $\ell_i(x) = \langle \nu_i, x \rangle$ so that $\overline{P}$ is defined by the system of
inequalities $\ell_i(x) \geq - a_i$, $i = 1, \dots, N$, $a_i\in \R$. Then there exists a canonical metric $\omega_P$ on $M$ \cite[Proposition 2.7]{charlie}, the symplectic potential
$u_{P}$ of which is given explicitly by the formula \cite{BGL, Guil}
\begin{equation} \label{eqn2-10}
	u_P(x) = \frac{1}{2}\sum_{i=1}^d (\ell_i(x) + a_i) \log\left( \ell_i(x) + a_i \right).
\end{equation}
In particular, the Legendre transform $\phi_{P}$ of $u_{P}$ will define the K\"ahler potential on the dense orbit of a globally defined K\"ahler metric $\omega_{P}$ on $M$ \cite{BGL, Guil}.
More generally, it was observed by Abreu \cite{Ab1} that the Legendre transform $L(u)$ of
a strictly convex function $u$ on $P$ will define the K\"ahler potential on the dense orbit of a globally defined K\"ahler metric $\omega_{P}$ on $M$
if and only if $u$ has the same asymptotic behavior as $u_{P}$ of all orders as $x \to \partial P$. Indeed, we have the following slightly more general statement.

\begin{lemma}[{\cite{Ab1},\cite{ACGT2},\cite[Proposition 2.17]{charlie}}]\label{boundaryy}
A convex function $u$ on $P$ defines a K\"ahler metric $\omega_u$ on $M$ if and only if $u$ has the form
\begin{equation*}
u = u_{P} + v,
\end{equation*}
where $v \in C^\infty(\overline{P})$ extends past $\partial P$ to all orders.
\end{lemma}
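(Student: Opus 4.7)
The plan is to prove both implications by comparing $u$ with the canonical Guillemin symplectic potential $u_P$ from \eqref{eqn2-10} and analysing the discrepancy $v := u - u_P$ near each stratum of $\partial P$. The key input is that $u_P$ is already known (from the material recalled just before the lemma) to be the symplectic potential of a smooth K\"ahler metric $\omega_P$ on $M$, so the question reduces to whether the perturbation by $v$ preserves smoothness across the toric divisors.

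For the sufficiency, assume $u = u_P + v$ with $v \in C^\infty(\overline{P})$. On the dense orbit one has
$$g_u - g_{u_P} = v_{ij}(x)\,dx^i\,dx^j + \bigl(u^{ij} - u_P^{ij}\bigr)\,d\theta^i\,d\theta^j,$$
and I would check that this tensor extends smoothly across each toric divisor $D_i$. The first summand is manifestly smooth on $\overline{P}$, so the work concentrates on $u^{ij} - u_P^{ij}$; writing $u^{ij} = u_P^{ij}\bigl(I + u_P^{jk}v_{k\ell}\bigr)^{-1}$ and invoking the explicit boundary asymptotics of $u_P^{ij}$ on each facet (which encode the flat $|z|^2$ model) gives smoothness in the ambient complex coordinates on $M$.

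For the necessity, assume $u$ is a symplectic potential for a smooth K\"ahler metric $\omega_u$ on $M$; on $P^{\circ}$, $v = u - u_P$ is smooth, so the issue is the extension across $\partial P$. I would work locally: near the relative interior of a facet $F_i$, use the Delzant condition to choose a lattice basis in which $F_i = \{x^1 = 0\}$, so that there exist local holomorphic coordinates $(z_1,\dots,z_n)$ on $M$ with $D_i = \{z_1 = 0\}$ and moment coordinate $x^1$ equal to $|z_1|^2$ times a smooth positive function. Since both $\omega_u$ and $\omega_P$ are smooth across $D_i$ and since $\tfrac{1}{2}x^1\log x^1$ is precisely the singular part forced by the flat model, $v$ must extend smoothly across $F_i^{\circ}$. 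At lower-dimensional strata the Delzant hypothesis lets one reduce locally to a product model $\mathbb{C}^k \times (\mathbb{C}^*)^{n-k}$ and iterate this argument in each boundary direction simultaneously.

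The principal technical obstacle is the simultaneous handling of several facets at a corner: at a vertex of $P$, $n$ logarithmic singularities of $u_P$ coexist, and one must verify that $v$ has a genuine smooth Taylor expansion in the ``flat'' coordinates $\ell_i(x) + a_i$ (equivalently, $|z_i|^2$), rather than merely being bounded with continuous derivatives. It is precisely the Delzant property, guaranteeing that the minimal normals $\{\nu_i\}$ at a vertex form a $\mathbb{Z}$-basis of $\Gamma$, that makes possible the reduction to the standard flat model on $\mathbb{C}^n$, in which smoothness of $v$ up to $\overline{P}$ exactly matches smoothness of the K\"ahler metric on $M$.
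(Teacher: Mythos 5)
The paper does not prove this lemma: it is cited from Abreu, from Apostolov--Calderbank--Gauduchon--T{\o}nnesen-Friedman, and from \cite[Proposition~2.17]{charlie}, so there is no in-paper proof to compare against. Your outline follows the standard and correct strategy from those references (compare with the Guillemin model $u_P$, analyse $v = u - u_P$ near each stratum of $\partial P$, and reduce via the Delzant condition to the flat model $\mathbb{C}^k \times (\mathbb{C}^*)^{n-k}$), but as a proof it asserts rather than performs the decisive local computation. In the sufficiency direction you write $g_u - g_{u_P} = v_{ij}\,dx^i dx^j + (u^{ij}-u_P^{ij})\,d\theta^i d\theta^j$ and observe that the first summand is ``manifestly smooth on $\overline{P}$''; but smoothness on $\overline{P}$ in momentum coordinates is not the same as smoothness on $M$, since $(x,\theta)$ are singular coordinates near the toric divisors: $d\theta^i = \operatorname{Im}(dz_i/z_i)$ is a genuinely singular one-form near $D_i = \{z_i=0\}$, so $u^{ij}\,d\theta^i d\theta^j$ produces a smooth Hermitian tensor only after a cancellation that uses the precise vanishing orders of $u_P^{ij}$ encoded in \eqref{eqn2-10} (and, at a vertex, the simultaneous cancellation in all $n$ boundary directions). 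That computation is the entire content of the lemma and is deferred here to ``invoking the explicit boundary asymptotics.'' The cleaner route taken in the references is to compute the Legendre transform $\phi = L(u)$ and verify directly that its complex Hessian extends smoothly and positive-definitely across each divisor, comparing with the flat model $\tfrac12\sum|z_i|^2$; the necessity direction is then the reverse of this.

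Two smaller points. The identity $u^{ij} = u_P^{ij}(I + u_P^{jk}v_{k\ell})^{-1}$ has a repeated index $j$ and, read as $U_P^{-1}(I + U_P^{-1}V)^{-1}$, is incorrect; the resolvent identity you want is $U^{-1} = (I + U_P^{-1}V)^{-1}U_P^{-1}$, equivalently $U^{-1}-U_P^{-1} = -U^{-1}VU_P^{-1}$. And the ``if and only if'' as stated (and your proposal) leaves implicit the extra positivity hypotheses needed for the ``if'' direction: strict convexity of $u$ on the interior of $P$ and nondegeneracy of $\det(u_{ij})$ on each face. Without these, $u_P + v$ for a smooth $v$ need not define a K\"ahler metric; the precise conditions appear in Abreu's theorem and in \cite[Proposition~2.17]{charlie}, which is what the paper is actually invoking.
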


In the case that $P=\Pol$, we read from Lemma \ref{note}(ii) that $a_{i}=-1$ for all $i$. Thus, in this case, the canonical metric on $\Pol$ has symplectic potential
		\begin{equation*}	
		u_{\Pol} = \frac{1}{2} \sum_i (\ell_i(x) + 1) \log(\ell_i(x) + 1).
	\end{equation*}

\subsubsection{The $\hat{F}$-functional}

We next define the $\hat{F}$-functional on toric K\"ahler manifolds.

\begin{definition}\label{fhat}
Let $(M,\,\omega)$ be a (possibly non-compact) toric K\"ahler manifold with complex structure $J$ endowed with a real holomorphic vector field $X$ such that
$JX\in\Lambda_{\omega}$. Write $T$ for the torus acting on $M$, identify the dense orbit with $\mathbb{R}^{n}$, let $\xi=(\xi_{1},\ldots,\xi_{n})$ denote coordinates on $\mathbb{R}^{n}$,
let $b_{X}$ be as in \eqref{eqnY4}, and write $\omega=2i\partial\bar{\partial}\phi_{0}$
on the dense orbit as in Proposition \ref{propB6}. Let $P:=(\nabla\phi_{0})(\mathbb{R}^{n})$ denote the image of the moment map associated to $\omega$ and let
$x=(x_{1},\ldots,x_{n})$ denote coordinates on $P$. Let $\varphi\in C^{\infty}(M)$ be a smooth function on $M$ invariant under
the action of $T$ such that $\omega+i\partial\bar{\partial}\varphi>0$ and assume that:
\begin{enumerate}[label=\textnormal{(\alph*)}]
  \item There exists a $C^{1}$-path of smooth functions $(\varphi_{s})_{s\in[0,\,1]}\subset C^{\infty}(M)$ invariant under the action of $T$ such that
$\varphi_{0}=0$, $\varphi_{1}=\varphi$, $\omega+i\partial\bar{\partial}\varphi_{s}>0$,
and $(\nabla\phi_{s})(\mathbb{R}^{n})=P$ for all $s\in[0,\,1]$, where $\phi_{s}:=\phi_{0}+\frac{\varphi_{s}}{2}$.
\item $\int_{0}^{1}\int_{\mathbb{R}^{n}}|\dot{\phi}_{s}|\,e^{-\langle b_{X},\,\nabla\phi_{s}\rangle}\det(\phi_{s,\,ij})\,d\xi\,ds<+\infty$.
\end{enumerate}
Then we define
\begin{equation*}
\begin{split}
\hat{F}(\varphi):=2\int_{P}(L(\phi_{1})-L(\phi_{0}))\,e^{-\langle b_{X},\,x\rangle}dx.
\end{split}
\end{equation*}
\end{definition}

The existence of the path $(\varphi_{s})_{s\in[0,\,1]}$ satisfying conditions (a) and (b) is required so that $\hat{F}(\varphi)$ is well-defined. To see this, first note:
\begin{lemma}\label{converge1}
Under the assumptions of Definition \ref{fhat}, let $u_{s}:=L(\phi_{s})$, $\omega_{s}=\omega+i\partial\bar{\partial}\varphi_{s}$, and
write $f_{s}:=f+\frac{X}{2}\cdot \varphi_{s}$ for the Hamiltonian potential of $JX$ with respect to $\omega_{s}$, where $f$ is the Hamiltonian potential of $JX$ with respect to
$\omega$. Then the following are equivalent.
\begin{enumerate}
\item $\int_{0}^{1}\int_{\mathbb{R}^{n}}|\dot{\phi}_{s}|\,e^{-\langle b_{X},\,\nabla\phi_{s}\rangle}\det(\phi_{s,\,ij})\,d\xi\,ds<+\infty$.
\item $\int_{0}^{1}\int_{P}|\dot{u}_{s}|\,e^{-\langle b_{X},\,x\rangle}\,dx\,ds<+\infty$.
\item $\int_{0}^{1}\int_{M}|\dot{\varphi}_{s}|\,e^{-f_{s}}\omega^{n}_{s}\,ds<+\infty$.
\end{enumerate}
In particular when this is the case, $|\hat{F}(\varphi)|<+\infty$.
\end{lemma}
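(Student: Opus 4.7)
The plan is to reduce all three integrals to a common expression by passing to the dense orbit $\Cstarn \subset M$, where the computations become explicit, and then applying the Legendre change of variables to move between $\R^n$ and $P$. The equivalence $|\hat{F}(\varphi)|<+\infty$ will then be an immediate consequence of (ii) together with Tonelli.

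First I would establish (i) $\iff$ (iii). Since $\varphi_s$ is $T$-invariant, both $\phi_s=\phi_0+\varphi_s/2$ and $\dot\phi_s=\dot\varphi_s/2$ descend to functions of $\xi$ only, and formula \eqref{e:T5} gives $\omega_s^n = c_n\det((\phi_s)_{ij})\,d\xi\wedge d\theta$ on the dense orbit for some dimensional constant $c_n>0$. Since $X=b_X^i\partial_{\xi^i}$ there, both $f_s$ and $\langle b_X,\nabla\phi_s\rangle$ are Hamiltonian potentials of $JX$ with respect to $\omega_s$; differentiating in $s$ shows that their difference is a single constant $c_0$ independent of $s$, fixed by the initial value at $s=0$. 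Integrating out the torus and noting that the complement of the dense orbit has measure zero gives
\begin{equation*}
\int_M |\dot\varphi_s|\,e^{-f_s}\,\omega_s^n = 2c_n(2\pi)^n e^{-c_0}\int_{\R^n}|\dot\phi_s|\,e^{-\langle b_X,\nabla\phi_s\rangle}\det((\phi_s)_{ij})\,d\xi,
\end{equation*}
so (i) $\iff$ (iii) after integrating in $s$.

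For (i) $\iff$ (ii), I would use the change of variables $x=\nabla\phi_s(\xi)$, which by strict convexity of $\phi_s$ is a diffeomorphism $\R^n\to P$ with Jacobian $\det((\phi_s)_{ij})$, the target $P$ being independent of $s$ by assumption (a). The key identity is
\begin{equation*}
\dot u_s(x)=-\dot\phi_s(\xi) \quad\text{at}\quad x=\nabla\phi_s(\xi),
\end{equation*}
obtained by differentiating the Legendre relation $\phi_s(\xi)+u_s(\nabla\phi_s(\xi))=\langle\nabla\phi_s(\xi),\xi\rangle$ in $s$ and using $(\nabla u_s)(\nabla\phi_s(\xi))=\xi$ to cancel the cross-term. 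Substitution gives
\begin{equation*}
\int_{\R^n}|\dot\phi_s|\,e^{-\langle b_X,\nabla\phi_s\rangle}\det((\phi_s)_{ij})\,d\xi=\int_P|\dot u_s|\,e^{-\langle b_X,x\rangle}\,dx,
\end{equation*}
and (i) $\iff$ (ii) follows. The finiteness of $\hat{F}(\varphi)$ is then immediate: writing $L(\phi_1)-L(\phi_0)=\int_0^1\dot u_s\,ds$, Tonelli yields $|\hat{F}(\varphi)|\le 2\int_0^1\int_P|\dot u_s|\,e^{-\langle b_X,x\rangle}\,dx\,ds<+\infty$ by (ii).

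The only delicate point is verifying that $f_s$ and $\langle b_X,\nabla\phi_s\rangle$ differ by a constant that is independent of the parameter $s$; this requires combining the definition $f_s=f+\tfrac{1}{2}X\cdot\varphi_s$ with Lemma \ref{normal} and the chain rule $\tfrac{d}{ds}\langle b_X,\nabla\phi_s\rangle=\tfrac{1}{2}X\cdot\dot\varphi_s=\dot f_s$. Once this is in place, everything else is a direct application of Fubini, change of variables, and the involutive property $L\circ L=\mathrm{Id}$ of the Legendre transform.
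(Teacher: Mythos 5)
Your proof is correct and takes essentially the same route as the paper's: the paper declares (i)$\iff$(iii) ``clear,'' cites \cite[Lemma 3.7]{charlie} for the Legendre identity $\dot u_s(\nabla\phi_s)=-\dot\phi_s$ underlying (i)$\iff$(ii), and concludes with Fubini/Tonelli as you do. You have simply supplied the computations that the paper leaves implicit — in particular the observation that $f_s$ and $\langle b_X,\nabla\phi_s\rangle$ are two Hamiltonian potentials of $JX$ with respect to $\omega_s$ whose $s$-derivatives agree, so their difference is an $s$-independent constant — which is exactly the content behind the paper's ``clear.''
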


\begin{proof}
The equivalence of (i) and (iii)  is clear. The equivalence of (i) and (ii) follows from \cite[Lemma 3.7]{charlie}.
Finally, for the last statement, for every $x\in P$, we have that
$$|u_{1}-u_{0}|(x)\leq\int_{0}^{1}|\dot{u}_{s}|(x)\,ds.$$
Then using Fubini's theorem and noting Lemma \ref{converge1}, we estimate that
$$|\hat{F}(\varphi)|\leq2\int_{P}|u_{1}-u_{0}|\,e^{-\langle b_{X},\,x\rangle}dx\leq2\int_{P}\left(\int_{0}^{1}|\dot{u}_{s}|\,ds\right) e^{-\langle b_{X},\,x\rangle}dx
=2\int_{0}^{1}\int_{P}|\dot{u}_{s}|\,e^{-\langle b_{X},\,x\rangle}dx\,ds<+\infty,$$
as desired.
\end{proof}

Under an additional assumption on the path $(\varphi_{s})_{s\in[0,\,1]}$, we recover the well-known expression for the
$\hat{F}$-functional given in \cite[p.702]{ctz}.

\begin{lemma}\label{converge2}
If one (and hence all) of the conditions of Lemma \ref{converge1} hold true and if in addition it holds true that
$\int_{0}^{1}\int_{M}|\dot{\varphi}_{s}|\,e^{-f}\omega^{n}\,ds<+\infty$, then
\begin{equation}\label{rel-F-hat-kahler}
\hat{F}(\varphi)=\int_0^1\int_M\dot{\varphi}_{s}\left(e^{-f}\omega^n-e^{-f_{s}}\omega_{s}^n\right)\wedge ds
-\int_M\varphi\,e^{-f}\omega^n.
\end{equation}
\end{lemma}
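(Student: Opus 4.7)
The plan is to start from the definition $\hat{F}(\varphi)=2\int_P(L(\phi_1)-L(\phi_0))\,e^{-\langle b_X,x\rangle}dx$ and write $L(\phi_1)-L(\phi_0)=\int_0^1\dot{u}_s\,ds$ where $u_s:=L(\phi_s)$. The first key computation is to differentiate the Legendre duality identity $\phi_s(\xi)+u_s(\nabla\phi_s(\xi))=\langle\nabla\phi_s(\xi),\xi\rangle$ in $s$: the two terms involving $\dot{(\nabla\phi_s)}$ cancel by the very definition of $\nabla\phi_s$, leaving $\dot{u}_s\circ\nabla\phi_s=-\dot{\phi}_s=-\tfrac{1}{2}\dot{\varphi}_s$. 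Plugging in, I get
\[
\hat{F}(\varphi)=2\int_0^1\!\int_P\dot{u}_s\,e^{-\langle b_X,x\rangle}dx\,ds=-\int_0^1\!\int_P(\dot{\varphi}_s\circ(\nabla\phi_s)^{-1})\,e^{-\langle b_X,x\rangle}dx\,ds,
\]
where Fubini is justified by Lemma~\ref{converge1}(ii), which holds by hypothesis.

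Next, I would change variables on each time slice via $x=\nabla\phi_s(\xi)$, whose Jacobian is $\det(\phi_{s,ij})$, to obtain
\[
\hat{F}(\varphi)=-\int_0^1\!\int_{\mathbb{R}^n}\dot{\varphi}_s(\xi)\,e^{-\langle b_X,\nabla\phi_s(\xi)\rangle}\det(\phi_{s,ij})\,d\xi\,ds.
\]
Using the fact, recorded in Section~\ref{finito} and used in the proof of Lemma~\ref{converge1}, that on the dense orbit $\omega_s^n$ has the explicit expression in the coordinates $(\xi,\theta)$ together with the identification $f_s=-\langle b_X,\nabla\phi_s\rangle$ modulo an additive constant coming from the normalisation of the moment map and Lemma \ref{normal}, pushing forward and integrating out the torus fibres identifies the above right-hand side with $-\int_0^1\int_M\dot{\varphi}_s\,e^{-f_s}\omega_s^n\,ds$. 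The same identification with $s=0$ shows this integral converges absolutely, which is equivalent to condition (iii) of Lemma~\ref{converge1}, as needed to apply Fubini along the way.

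Finally, the extra hypothesis $\int_0^1\!\int_M|\dot{\varphi}_s|\,e^{-f}\omega^n\,ds<+\infty$ allows another application of Fubini to produce
\[
\int_0^1\!\int_M\dot{\varphi}_s\,e^{-f}\omega^n\,ds=\int_M\!\Bigl(\int_0^1\dot{\varphi}_s\,ds\Bigr)e^{-f}\omega^n=\int_M\varphi\,e^{-f}\omega^n,
\]
since $\varphi_0=0$ and $\varphi_1=\varphi$. Adding and subtracting this quantity inside the previous expression rewrites $-\int_0^1\int_M\dot{\varphi}_s\,e^{-f_s}\omega_s^n\,ds$ as $\int_0^1\int_M\dot{\varphi}_s(e^{-f}\omega^n-e^{-f_s}\omega_s^n)\,ds-\int_M\varphi\,e^{-f}\omega^n$, which is the claimed identity \eqref{rel-F-hat-kahler}.

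The main obstacle is bookkeeping rather than substance: one must verify that the factors of $2$ coming from the normalisation $\phi_s=\phi_0+\varphi_s/2$ and from the prefactor $2$ in the definition of $\hat F$ match cleanly with the identification of the measure $e^{-f_s}\omega_s^n$ on the dense orbit (so that no extraneous constants appear), and one must carefully invoke Fubini's theorem at two separate places, each licensed by a distinct integrability hypothesis, namely (ii) of Lemma~\ref{converge1} for the $\hat{F}$ term and the newly imposed hypothesis $\int_0^1\int_M|\dot{\varphi}_s|\,e^{-f}\omega^n\,ds<+\infty$ for the telescoping term.
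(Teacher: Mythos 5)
Your proof is correct and follows essentially the same route as the paper: write $u_1-u_0=\int_0^1\dot u_s\,ds$, use $\dot u_s\circ\nabla\phi_s=-\dot\phi_s$, change variables via $\nabla\phi_s$, identify the resulting weighted Euclidean measure with $e^{-f_s}\omega_s^n$ on $M$, and then invoke Fubini twice (once via Lemma~\ref{converge1}(ii), once via the extra hypothesis) to rewrite $-\int_0^1\int_M\dot\varphi_s\,e^{-f_s}\omega_s^n\,ds$ in the stated form. The only cosmetic difference is that you derive the identity $\dot u_s\circ\nabla\phi_s=-\dot\phi_s$ directly from differentiating the Legendre duality relation, whereas the paper cites it from~\cite[Lemma 3.7]{charlie}.
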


\begin{proof}
The extra condition implies in particular that $\int_{M}|\varphi|\,e^{-f}\omega^{n}<+\infty$ since by assumption and Fubini's theorem, $\int_M|\varphi|\,e^{-f}\omega^{n}\leq \int_{0}^{1}\int_{M}|\dot{\varphi}_{s}|\,e^{-f}\omega^{n}\,ds <+\infty$ so that the right-hand side of
\eqref{rel-F-hat-kahler} is at least finite. To show that it is equivalent to the expression for $\hat{F}$ given by Definition \ref{fhat},
using the change of coordinates induced by $\nabla\phi_{s}:\mathbb{R}^{n}\to P$
and the fact that $\dot{\phi}_{s}(\nabla\phi_{s})=-\dot{u}_{s}(x)$ (cf.~\cite[Lemma 3.7]{charlie}), we compute that
\begin{equation*}
\begin{split}
\hat{F}(\varphi)&=2\int_{P}(u_{1}(x)-u_{0}(x))\,e^{-\langle b_{X},\,x\rangle}dx\\
&=2\int_0^1\int_{P}\dot{u}_{s}(x)\,e^{-\langle b_{X},\,x\rangle}dx\wedge ds\\
&=-2\int_0^1\int_{P}\dot{\phi_{s}}(\nabla\phi_{s})\,e^{-\langle b_{X},\,x\rangle}dx\wedge ds\\
&=-2\int_0^1\int_{\mathbb{R}^{n}}\dot{\phi_{s}}\,e^{-\langle b_{X},\,\nabla\phi_{s}\rangle}\det(\phi_{s,\,ij})
d\xi\wedge ds\\
&=-\int_0^1\int_M\dot{\varphi}_{s}\,e^{-f_{s}}\omega_{s}^n\wedge ds\\
&=\int_0^1\int_M\dot{\varphi}_{s}\left(e^{-f}\omega^n-e^{-f_{s}}\omega_{s}^n\right)\wedge ds-\int_M\varphi\,e^{-f}\omega^n,\\
\end{split}
\end{equation*}
resulting in the desired expression. Here we have used Fubini's theorem in the last equality.
\end{proof}

\subsubsection{Integrability and independence of the path}

In light of conditions (a) and (b) of Definition \ref{fhat} required to define the $\hat{F}$-functional, it remains to identify sufficient
conditions for the moment polytope to remain unchanged under a path of K\"ahler metrics and for each
summand in the integral of $\hat{F}$ to be finite. This will be important for achieving an a priori $C^{0}$-bound along our continuity path.

To this end, suppose that $(M,\,\omega)$ is a toric K\"ahler manifold,
i.e., $(M,\,\omega)$ is K\"ahler with K\"ahler form $\omega$ with respect to a complex structure $J$, endowed with
the holomorphic action of a complex torus of the same complex dimension as $(M, \, J)$ whose underlying real torus $T$ induces a Hamiltonian action,
and let $JX\in\mathfrak{t}$. Via \eqref{eqnY4}, we can identify $X$ with an element $b_{X}\in\mathbb{R}^{n}\simeq\mathfrak{t}$.
Using Proposition \ref{propB6}, we can also write $\omega=2i\partial\bar{\partial}\phi_{0}$ on the dense orbit for some strictly convex function
$\phi_{0}:\mathbb{R}^{n}\to\mathbb{R}$. Assume that:
\begin{itemize}
\item $JX\in\Lambda_{\omega}$ so that the Hamiltonian potential $f$ of $JX$ is proper and bounded from below.
  \item There exists a smooth bounded real-valued function $F$ on $M$ so that the Ricci form $\rho_{\omega}$ of $\omega$ satisfies
$\rho_\omega + \frac{1}{2}\mathcal{L}_X\omega - \omega = i \p\bp F$.
\end{itemize}
The equation in the second bullet point reads as
\begin{equation*}
\left(F+\log\det(\phi_{0,\,ij}) - \langle \nabla \phi_0, b_X \rangle + 2\phi_0\right)_{ij}=0\qquad\textrm{on $\mathfrak{t}\simeq\mathbb{R}^{n}$}
\end{equation*}
so that
$$F=-\log\det(\phi_{0,\,ij}) + \langle \nabla \phi_0, b_X \rangle - 2\phi_0+a(\xi)\qquad\textrm{on $\mathbb{R}^{n}$}$$
for some {affine} function $a(\xi)$ defined on $\mathbb{R}^{n}$. By considering
$2\phi_0+a+\langle\nabla a,\,b_X \rangle$, we can therefore assume that
\begin{equation}\label{norm}
F=-\log\det(\phi_{0,\,ij})+\langle\nabla\phi_0,\,b_X \rangle-2\phi_0\qquad\textrm{on $\mathbb{R}^{n}$}.
\end{equation}
The main observation of this section is the following lemma.

\begin{lemma}\label{expression}
Under the above assumptions, let $\varphi \in C^\infty(M)$ be a torus-invariant smooth real-valued function on $M$ such that
$\omega_\varphi:=\omega +  i \p \bp \varphi > 0$ and {$\sup_M|X\cdot\varphi| < \infty$}.
Define $\phi:=\phi_{0}+\frac{1}{2}\varphi$ so that $\omega + i \p\bp \varphi = 2i\p\bp \phi$ on the dense orbit. Then:
\begin{enumerate}[label=\textnormal{(\roman*)}]
\item The image of the moment map $\mu_{\omega_{\varphi}}:M \to \t^*$ with respect to $\omega_\varphi$ defined by the Euclidean gradient $\nabla\phi: \R^n \to \R^n$ is equal to $P_{-K_{M}}$.
In particular, $0\in\operatorname{int}\left(\mu_{\omega_{\varphi}}(M)\right)$.
\item $\int_{P}|L(\phi_0)|\,e^{-\langle b_{X},\,x\rangle}dx<+\infty$.
\end{enumerate}
\end{lemma}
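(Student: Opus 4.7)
The plan is to prove (i) and (ii) separately. For (i), I work via the Legendre transform: the boundary behaviour of $L(\phi_0)$ near the boundary of its domain, combined with the boundedness of $F$, pins down the facet positions of the moment polytope of $\omega$, and this is then propagated to $\omega_\varphi$ via an equivariant cohomology argument. For (ii), I combine the resulting logarithmic boundary asymptotics of $L(\phi_0)$ with the exponential decay of $e^{-\langle b_X, x\rangle}$ along the non-compact end of $P_{-K_M}$.

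For (i), I first show that $Q_0 := (\nabla\phi_0)(\mathbb{R}^n) = \operatorname{int}(P_{-K_M})$. Since $\omega$ is a K\"ahler metric on $M$, Lemma \ref{boundaryy} forces $Q_0$ to be the interior of a Delzant polytope $\bigcap_i K(\nu_i, -a_i)$, with $\nu_i$ the minimal generators of the rays of $\Sigma_M$, and forces $u_0 := L(\phi_0)$ to have the asymptotics $u_0 \sim \frac{1}{2}(\ell_i + a_i)\log(\ell_i + a_i)$ near the $i$-th facet. Taking the Legendre dual of \eqref{norm} via the identities $\det\phi_{0,ij} = 1/\det u_{0,ij}$ and $\phi_0 = \langle x, \nabla u_0\rangle - u_0$ yields
\begin{equation*}
F = \log \det u_{0,ij} + \langle b_X, x\rangle - 2\langle x, \nabla u_0\rangle + 2 u_0 \quad \text{on } Q_0.
\end{equation*}
Inserting $\log\det u_{0,ij} \sim -\log(\ell_i + a_i)$ and $\langle x,\nabla u_0\rangle \sim \frac{1}{2}\ell_i \log(\ell_i + a_i)$ into the right-hand side shows that its leading $\log(\ell_i + a_i)$ divergence has coefficient $a_i - 1$; since $F$ is bounded, this must vanish and $a_i = 1$ for every $i$, giving $Q_0 = \operatorname{int}(P_{-K_M})$. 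To propagate this to $Q_\varphi := (\nabla\phi)(\mathbb{R}^n)$, I invoke the fact that $\omega$ and $\omega_\varphi$ lie in the same equivariant K\"ahler class (they differ by the equivariantly exact form $i\partial\bar\partial\varphi$, since $\varphi$ is $T$-invariant), and hence their moment images coincide in $\mathfrak{t}^*$ once the Hamiltonians are normalised via the convex potentials $\phi_0$ and $\phi = \phi_0 + \varphi/2$. The second assertion is immediate from Lemma \ref{note}(i).

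For (ii), Lemma \ref{boundaryy} gives $u_0 = u_{P_{-K_M}} + v$ with $v$ smooth up to $\partial P_{-K_M}$, so $u_0$ has integrable logarithmic behaviour at the facets and grows at most like $O(|x|\log|x|)$ as $|x|\to\infty$ in $P_{-K_M}$. By Lemma \ref{useful}, in the splitting $\mathfrak{t} \simeq \mathfrak{t}_{\Cstar} \oplus \mathfrak{t}_D$ the first component $b_1$ of $b_X$ is strictly positive, and since $D$ is compact, $P_D$ is bounded; hence $P_{-K_M}$ is bounded in the directions $x_2, \ldots, x_n$, and $\langle b_X, x\rangle = b_1 x_1 + O(1)$, so that $e^{-\langle b_X, x\rangle}$ decays exponentially as $x_1 \to +\infty$. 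Combining this exponential decay with the at most $O(|x|\log|x|)$ growth of $u_0$ gives $\int_{P_{-K_M}} |u_0|\, e^{-\langle b_X, x\rangle}\, dx < +\infty$, proving (ii).

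The main obstacle is the final step of (i): passing from $Q_0 = P_{-K_M}$ to $Q_\varphi = P_{-K_M}$ using only $\omega_\varphi > 0$ and $|X\cdot\varphi| < C$. The perturbed Legendre-dual normalisation $F_\varphi = F - \log(\omega_\varphi^n/\omega^n) + \frac{1}{2}X\cdot\varphi - \varphi$ satisfied by $\phi$ is not a priori bounded without pointwise control on $\varphi$, so the direct asymptotic argument of the first step does not apply with $\phi$ in place of $\phi_0$. The equivariant cohomology argument circumvents this, at the price of carefully checking that the Hamiltonians induced by the potentials $\phi_0$ and $\phi$ are indeed compatibly normalised in $\mathfrak{t}^*$.
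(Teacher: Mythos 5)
Your proposal has gaps in both parts that the paper's actual argument circumvents.

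\textbf{Part (i).} Your plan is to first prove $(\nabla\phi_0)(\R^n)=\operatorname{int}(P_{-K_M})$ using boundedness of $F$, and then propagate this to $(\nabla\phi)(\R^n)$ by an ``equivariant cohomology'' argument. The propagation step is the problem. On a \emph{non-compact} Hamiltonian $T$-manifold, the moment polytope is not determined by the equivariant de~Rham class of the symplectic form together with a choice of normalization; it also depends on the behaviour of the Hamiltonian at infinity. Indeed, the paper explicitly remarks that the image $(\nabla\phi)(\R^n)$ ``is equal to a Delzant polyhedron $P$ \dots\ that a priori depends on $\varphi$'' -- this dependence is exactly what must be ruled out, and an appeal to equivariant exactness of $i\partial\bar\partial\varphi$ does not do so. Your stated reason for avoiding the direct argument (``the perturbed Legendre-dual normalisation $\dots$ is not a priori bounded'') is actually a red herring: what the Donaldson-style argument needs is not boundedness of the right-hand side on all of $P$, but only continuity in a neighbourhood of each facet. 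The paper obtains this by noting that the right-hand side equals $h\circ\mu_{\omega_\varphi}^{-1}$ for a \emph{smooth $T$-invariant} function $h$ on $M$; since $\mu_{\omega_\varphi}:M\to\overline P$ is a continuous surjection with fibers the torus orbits, the descended function is continuous on $\overline P$. This local continuity near $\partial P$, compared against the logarithmic divergence of $\rho_u$ computed from $u=u_P+v$, already forces $a_i=1$ -- no boundedness and no cohomological input needed. So the direct argument works for $\phi$ itself, and one should not split off $\phi_0$ first.

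\textbf{Part (ii).} Your argument relies on the growth estimate $u_0=O(|x|\log|x|)$ along the non-compact end of $P_{-K_M}$. Lemma~\ref{boundaryy} only controls $u_0-u_{P_{-K_M}}=v$ \emph{near the boundary facets} (``extends past $\partial P$ to all orders''); since $\overline{P_{-K_M}}$ is unbounded, this says nothing about the growth of $v$ as $x_1\to+\infty$, and convexity alone gives no polynomial upper bound. The paper instead bounds $u_0$ from below via Lemma~\ref{growth}, uses \eqref{norm} to show $|\rho_0-\langle b_X,x\rangle|<C$ on $P_{-K_M}$, and then invokes the integrability $\int_{P_{-K_M}}u_0\,e^{-\rho_0}dx<\infty$ established in \cite[Lemma~4.7]{charlie}; combined with $\int e^{-\langle b_X,x\rangle}dx<\infty$ and the lower bound on $u_0$, this yields the claim without ever needing a pointwise growth rate for $u_0$. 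Your exponential-decay-versus-polynomial-growth heuristic would go through if the polynomial growth were established, but as it stands that step is the missing ingredient.
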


\begin{proof}
\begin{enumerate}
\item To prove (i), we begin by noting that since {$\sup_M|X\cdot\varphi| < \infty$}, the Hamiltonian potential $f_\varphi = f + \frac{X}{2}\cdot \varphi$ of $X$ with respect to $\omega_{\varphi}$
is proper and bounded from below. In particular, the image $(\nabla\phi)(\mathbb{R}^{n})$ of the moment map $\mu_{\omega_{\varphi}}:M \to \t^*$ is equal
to a Delzant polyhedron $P$ \cite[Lemma 2.13]{charlie} that a priori depends on $\varphi$.
Let $u(x):=L(\phi)$ be the Legendre transform of $\phi$. Then
the domain of $u$ is precisely $P$. We need to show that $P=P_{-K_{M}}$.
To this end, let $F$ be as in \eqref{norm}. Then a computation shows that
\begin{equation}\label{help}
\begin{split}
	-\log\det \phi_{ij} &+ \langle \nabla \phi, b_X \rangle - 2 \phi  = F + \log\left( \frac{\omega_\varphi^n}{\omega^n}\right) +\frac{X}{2}\cdot \varphi - \varphi.
\end{split}
\end{equation}
Set $A(x):=\langle b_X,\,x \rangle$ and define
	\begin{equation*}
		\rho_u(x):=2 \left(\langle \nabla u,\,x \rangle - u(x) \right)-\log\det(u_{ij}).
	\end{equation*}
Then via the change of coordinates $x=\nabla\phi(\xi)$, we can rewrite \eqref{help} in terms of $u$ as
\begin{equation}\label{help2}
\begin{split}
	A(x)-\rho_{u}(x)=\left(F + \log\left( \frac{\omega_\varphi^n}{\omega^n}\right) +\frac{X}{2}\cdot \varphi - \varphi\right)(\nabla u(x))\qquad\textrm{on $P$}.
\end{split}
\end{equation}
Observe that the right-hand side of \eqref{help2} admits a continuous extension up to the boundary $\partial P$ of $P$.
Denoting the right-hand side of \eqref{help} by $h$ which is a function $h:M\to\mathbb{R}$, this extension is simply given by
$h\circ\mu_{\omega_{\varphi}}^{-1}$, where $\mu_{\omega_{\varphi}}:M\to\overline{P}$, as the moment map,
has fibers precisely the orbits of the torus action.

We now proceed as in \cite[Lemma 4.5]{charlie} using an argument originally due to Donaldson \cite{donaldsontoric}.
Suppose that $P$ is defined by the linear inequalities $\ell_i(x) \geq -a_i$,
where $\ell_i(x) = \langle \nu_i , x \rangle$. Since the right-hand side of \eqref{help2} as well as $A(x)$ has a continuous extension up to $\partial P$, we see that
the same holds true for $\rho_{u}(x)$. Moreover, as $u$ is the symplectic potential of the K\"ahler form $\omega_{\varphi}$ on $P$, we read from
Lemma \ref{boundaryy} that there exists  a function $v\in C^{\infty}(\overline{P}')$ with $u=u_{P}+v$,
where $u_{P}$ is given as in \eqref{eqn2-10}, i.e.,
	\begin{equation}\label{wtf2}
		u_{P}(x) = \frac{1}{2}\sum (\ell_i(x) + a_i) \log(\ell_i(x) + a_i).
	\end{equation}

Fix any facet $F'$ of $P$. Without loss of generality, we may assume that $F'$ is defined by $\ell_1(x) = -a_1$. Up to a change of basis in $\t^*$,
we may also assume by the Delzant condition that $\ell_1(x) = x_1$. Fix a point $p$ in the interior of $F'$. Then from \eqref{wtf2} we see that in a neighbourhood of $p$, $u$ can be written as
	\begin{equation*}
		u(x) = u_{P}(x) + v(x) = \frac{1}{2}(x_1 + a_1)\log(x_1 + a_1) + v_1
	\end{equation*}
for some smooth function $v_1$ which extends smoothly across $F'$. From this expression,
it follows that in a small half ball $B$ in the interior of $P$ containing $p$,
$\rho_u$ takes the form
		\begin{equation*}
\begin{split}
		\rho_u(x) &= x_1 \log(x_1 + a_1) -(x_1 + a_1)\log(x_1 + a_1) + \log(x_1 + a_1) + v_2\\
&=(1 - a_1)\log(x_1 + a_1) + v_2
\end{split}
	\end{equation*}
for another smooth function $v_2$ that extends smoothly across $F'$ in $B$. Thus, already knowing that $\rho_{u}$ has a continuous extension across $\partial P$, we deduce that
$1-a_1 = 0$, i.e, $a_{1}=1$. Continuing in this manner, we see that $a_{i}=1$ for all $i$, leading us to the conclusion that $P=P_{-K_{M}}$.

\item Let $u_0 = L(\phi_0)$. Then as $u_0$ is a convex function on $P_{-K_{M}}$ whose gradient has image equal to all of $\R^n$ by the invertibility of the Legendre transform,
it is proper and bounded from below by Lemma \ref{growth}. Let $A$ denote the lower bound, let $\rho_u$ be as in part (i), and
write $\rho_0 = \rho_{u_0}$. Then $F$ bounded implies the existence of a constant $C>0$ such that $|\rho_0 - \langle b_{X},\,x\rangle| < C $ on $P_{-K_{M}}$.
Indeed, this is clear from \eqref{norm}. Since $\int_{P_{-K_{M}}}u_0\,e^{-\rho_0}dx < \infty$ by \cite[Lemma 4.7]{charlie}, it follows that $\int_{P_{-K_{M}}}u_0 \,e^{-\langle b_{X},\,x\rangle}dx<+\infty$.
Finiteness of the integral $\int_{P_{-K_{M}}}e^{-\langle b_{X},\,x\rangle}dx$ together with the fact that $u_{0}$ is bounded from below now yields the desired result.
\end{enumerate}
\end{proof}

\section{Proof of Theorem \ref{mainthm}(ii): Construction of a background metric}\label{sec-construction-back-metric}

Given the setup and notation of Theorem \ref{mainthm} and with $X$ determined by Theorem \ref{mainthm}(i), we henceforth assume that the flow-lines of $JX$ close. In
this section, we construct a background metric on $M$ with the properties as stated in Theorem \ref{mainthm}(ii)
with a construction reminiscent of that of \cite[Section 4.2]{acyl}.
To this end, recall for $a>0$ the (incomplete) shrinking gradient K\"ahler-Ricci soliton $(\widehat{M}:=\mathbb{C}\times D,\,\widehat{\omega}_{a}:=\tilde{\omega}_{a}+\omega_{D},\,\frac{2}{a}\cdot\operatorname{Re}(z\partial_{z}))$
of Example \ref{example} with complex structure $\widehat{J}$ endowed with the product holomorphic action of the real $n$-torus $\widehat{T}$,
{with $z$ denoting the holomorphic coordinate on the $\mathbb{C}$-factor of $\widehat{M}$, and $r:=|z|^{a}$.}

\begin{prop}\label{background}
There exists:
\begin{enumerate}[label=\textnormal{(\alph{*})}, ref=(\alph{*})]
\item a complete K\"ahler metric $\omega$ on $M$ invariant under the action of $T$, and
\item a biholomorphism $\nu:M\setminus K\to\widehat{M}\setminus\widehat{K}$, where
$K\subset M$, $\widehat{K}\subset\widehat{M}$, are compact,
\end{enumerate}
and $\lambda>0$ such that
\begin{enumerate}
\item $d\nu(X)=\frac{2}{\lambda}\cdot\operatorname{Re}(z\partial_{z})$,
\item $\omega=\nu^{*}(\tilde{\omega}_{\lambda}+\omega_{D})$, and
\item the Ricci form $\rho_{\omega}$ of $\omega$ satisfies
\begin{equation}\label{ciao}
\rho_{\omega}+\frac{1}{2}\mathcal{L}_{X}\omega-\omega=i\partial\bar{\partial}F_{1}
\end{equation}
for $F_{1}\in C^{\infty}(M)$ compactly supported with $\mathcal{L}_{JX}F_{1}=0$.
\end{enumerate}
\end{prop}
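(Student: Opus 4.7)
First I identify $X$, $\lambda$, and $\nu$. Theorem \ref{mainthm}(i) together with Theorem \ref{thmB13} characterises $X$ via the unique minimiser $JX \in \Lambda$ of the weighted volume functional $\mathcal{F}$; by Lemma \ref{useful}, $JX$ decomposes along $\mathfrak{t} \simeq \mathfrak{t}_{\Cstar} \oplus \mathfrak{t}_D$ as a positive multiple of the distinguished $\mathfrak{t}_{\Cstar}$-generator plus a piece $JY \in \mathfrak{t}_D$, and this determines $\lambda > 0$. The natural $T$-equivariant biholomorphism $\pi|_{M \setminus \pi^{-1}(D_0)}\colon M \setminus \pi^{-1}(D_0) \to \Cstar \times D \subset \widehat M$ pushes $X$ forward to $\tfrac{2}{\lambda}\operatorname{Re}(z\partial_z) + Y$. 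Under the closed flow-line hypothesis, $JX$ generates an $S^1$-action on $M$; passing to the complex quotient by this $S^1$ at infinity provides a holomorphic trivialisation of the resulting $\Cstar$-bundle that, composed with $\pi$, produces a biholomorphism $\nu\colon M \setminus K \to \widehat M \setminus \widehat K$ (with $K \subset M$ a sufficiently large $T$-invariant compact containing $\pi^{-1}(D_0)$) satisfying $d\nu(X) = \tfrac{2}{\lambda}\operatorname{Re}(z\partial_z)$.

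Next I construct $\omega$. On $M \setminus K$ set $\omega := \nu^*(\tilde\omega_\lambda + \omega_D)$, which is $T$-invariant and K\"ahler by construction. To extend $\omega$ to $M$, fix a $T$-invariant K\"ahler form $\omega_0$ on $M$ coming from the toric structure (e.g.,\ Guillemin's metric, or the restriction of a K\"ahler form on the toric Fano $\overline M$). On the dense orbit, both $\omega_0$ and $\nu^*(\tilde\omega_\lambda + \omega_D)$ admit $T$-invariant K\"ahler potentials; interpolating these via a $T$-invariant cut-off $\chi$ supported in a collar about $\partial K$ and adding, if necessary, a large positive multiple of $i\partial\bar\partial$ of a plurisubharmonic bump supported in $K$, yields a smooth $T$-invariant K\"ahler form $\omega$ on $M$ coinciding with $\nu^*(\tilde\omega_\lambda + \omega_D)$ outside $K$. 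This establishes items (a), (b), (1) and (2).

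For item (3), Example \ref{example} identifies $(\widehat M, \tilde\omega_\lambda + \omega_D)$ as a shrinking gradient K\"ahler-Ricci soliton with soliton vector field $\tfrac{2}{\lambda}\operatorname{Re}(z\partial_z) = d\nu(X)$; hence the closed $T$-invariant $(1,1)$-form
\begin{equation*}
\eta := \rho_\omega + \tfrac{1}{2}\mathcal{L}_X\omega - \omega
\end{equation*}
vanishes on $M \setminus K$ and so is compactly supported in $K$. Since $M$ has finite fundamental group by toricity, $H^1(M,\R) = 0$, and the $\partial\bar\partial$-lemma produces $\tilde F_1 \in C^\infty(M)$ with $i\partial\bar\partial \tilde F_1 = \eta$; averaging over the compact torus $T$ preserves the equation (as $\eta$ is $T$-invariant) and renders $\tilde F_1$ both $T$-invariant and $JX$-invariant. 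Outside $K$, $\tilde F_1$ is pluriharmonic, so Lemma \ref{pluri} applied via $\nu$ forces $\tilde F_1 \circ \nu^{-1} = c_0 \log r + c_1$ at infinity for some $c_0,\, c_1 \in \R$.

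The principal obstacle is to verify $c_0 = 0$: since $\log r$ does not extend pluriharmonically across $\pi^{-1}(D_0) \subset M$, the coefficient $c_0$ is a rigid invariant of the construction and cannot be cancelled by modifying $\tilde F_1$ by a globally pluriharmonic function on $M$. Applying Stokes' theorem to the identity $d(-i\partial\tilde F_1 \wedge \omega^{n-1}) = \eta \wedge \omega^{n-1}$ over an exhaustion $B_R \nearrow M$ and using the asymptotic $\tilde F_1 \sim c_0 \log r$ reduces the vanishing of $c_0$ to the condition $\int_M \eta \wedge \omega^{n-1} = 0$, which is the weighted Futaki-type identity associated to $JX$. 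This identity holds thanks to the minimisation characterisation of $JX$ on $\Lambda$, i.e.\ \eqref{futaki}, from Theorem \ref{thmB13}. Setting $F_1 := \tilde F_1 - c_1$ therefore yields the required compactly supported, $JX$-invariant potential satisfying \eqref{ciao}, completing the proof.
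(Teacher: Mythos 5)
Your outline of the construction of $\nu$ and of an ambient K\"ahler metric modelled on $\nu^*(\tilde\omega_\lambda + \omega_D)$ at infinity is in the right spirit, but the step establishing that $F_1$ is compactly supported contains a genuine gap. You correctly observe that the constant $c_0$ in $\tilde F_1 = c_0\log r + c_1$ cannot be cancelled by a global pluriharmonic modification, and your Stokes computation correctly reduces the vanishing of $c_0$ to the vanishing of $\int_M\eta\wedge\omega^{n-1}$. But the claim that this vanishing ``is the weighted Futaki-type identity associated to $JX$'' and ``holds thanks to the minimisation characterisation of $JX$'' is not substantiated, and is in fact not available: the minimisation condition \eqref{futaki} is the vanishing of $\int_{P_{-K_M}}\langle x, v\rangle\, e^{-\langle b_X, x\rangle}\,dx$, a \emph{weighted} (by $e^{-f}$) integral over $M$, whereas $\int_M\eta\wedge\omega^{n-1}$ carries no weight. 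These are different quantities, and in general $\int_M\eta\wedge\omega^{n-1}\neq 0$, i.e.\ $c_0\neq 0$ for the metric you constructed.

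The paper eliminates $c_0$ not by an integral identity but by a second modification of the metric, and this step is missing from your proposal. Having produced a metric $\tilde\omega$ isometric to $\nu^*\widehat\omega_\lambda$ at infinity and identified $\tilde G = c_0\log r$ outside a compact set, the paper then sets $\omega := \tilde\omega + i\partial\bar\partial\varphi$, where $\varphi$ is a cut-off extension of $c_0\log r + \tfrac{c_0}{2}$ provided by Lemma \ref{glue}. Because $\log r$ does not extend pluriharmonically across $\pi^{-1}(D_0)$, this changes the metric in an essential (not compactly-supported $i\partial\bar\partial$-exact) way even though $\omega = \tilde\omega$ outside a compact set; a direct computation then shows $F_1 := \tilde G - \log(\omega^n/\tilde\omega^n) - \varphi + \tfrac{X}{2}\cdot\varphi$ vanishes outside a compact set. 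Two secondary issues: a cut-off interpolation of two K\"ahler forms is not automatically positive, and the paper achieves positivity via the curvature of a hermitian metric on $-K_{\overline{M}}$ together with a bump form and the quantitative estimate of Lemma \ref{glue}; and $\nu\neq\pi|_{M\setminus K}$ in general --- the paper constructs $\nu$ explicitly by matching the flow of $X^{1,0}$ on $M$ with that of $z\partial_z$ on $\widehat M$ from a common $\mathbb{P}^1$-fibre, precisely so as to remove the $\mathfrak{t}_D$-component that $d\pi(X)$ would otherwise retain.
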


Theorem \ref{mainthm}(ii) immediately follows from this proposition. Indeed, this is easily seen by
writing $\omega_{C}:=\tilde{\omega}_{\lambda}$ (cf.~Example \ref{example2}) and $\widehat{\omega}:=\widehat{\omega}_{\lambda}=\tilde{\omega}_{\lambda}+\omega_{D}$
(cf.~Example \ref{example}). With $\lambda$ fixed in subsequent sections, this is the notation that we adopt to be consistent with that of Theorem \ref{mainthm}.
Property (iii) of this proposition will be used in the next section.

\begin{proof}[Proof of Proposition \ref{background}]
Recall that $\pi:\overline{M}\to \mathbb{P}^{1}\times D$ is a torus-equivariant holomorphic map that restricts to
a holomorphic map $\pi:M\to\widehat{M}:=\mathbb{C}\times D$ by removing the fibre $D_{\infty}$
from $\overline{M}$ and $\mathbb{P}^{1}\times D$ respectively, and that
$z$ denotes the holomorphic coordinate on the $\mathbb{C}$-factor of $\widehat{M}$. We define the map $\nu:M\setminus\pi^{-1}(D_{0})\to\widehat{M}\setminus D_{0}$ of (b)
as the $\mathbb{C}^{*}$-equivariant map one obtains by identifying a $\mathbb{P}^{1}$-fibre in each manifold and for each point in this $\mathbb{P}^{1}$, flowing along the vector field $X^{1,\,0}:=\frac{1}{2}(X-i(JX))$ on $M$ and the holomorphic vector field $z\partial_{z}$ on $\widehat{M}$. Since the flow-lines of $JX$ close by assumption, this map is well-defined.

From the construction, it is clear that $d\nu(X^{1,\,0})=\frac{2}{\lambda}\cdot z\partial_{z}$ for some $\lambda>0$. This defines $\lambda$ and verifies condition (i) of the proposition.
The map $\nu$ then extends to a holomorphic map $\overline{\nu}:\overline{M}\setminus\pi^{-1}(D_{0})\to\widehat{M}\setminus D_{0}$.
On $\mathbb{C}\times D$ we consider the product metric $\widehat{\omega}_{\lambda}$. We write
$w:=\frac{1}{z}$ and $r:=|z|^{\lambda}$. Identifying $M\setminus\pi^{-1}(D_{0})$ and
$\widehat{M}\setminus D_{0}$ via $\nu$, we view these as functions, and $\widehat{\omega}_{\lambda}$ as a K\"ahler form, both on the former.
In this way, $w$ defines a holomorphic coordinate on $\overline{M}\setminus\pi^{-1}(D_{0})$ with the divisor $D$ at infinity defined by $\{w=0\}$.

Using $\nu$, we construct the background metric $\omega$ of (a) in the following way. As $\overline{M}$ is Fano by assumption, there exists a hermitian metric $h$ on $-K_{\overline{M}}$ with strictly positive curvature form $\Theta_{h}$. Moreover, since the normal bundle $N_{D}$ of $D$ in $\overline{M}$ is trivial so that $K_{D}=K_{\overline{M}}|_{D}$ by adjunction, the $\p\bp$-lemma guarantees the existence of a function $u\in C^{\infty}(D)$ such that $i\Theta_{h}|_{D}+i\partial\bar{\partial}u=\omega_{D}$.
Extend $u$ to be constant along the $w$-direction and multiply this extension by a cut-off function depending only on $w$
to further extend $u$ to the whole of $\overline{M}$. We still denote this extension by $u$.
If the support of this cut-off function is contained in a sufficiently small tubular neighbourhood of $D$, then the restriction of $i\Theta_{h}+i\partial\bar{\partial}u$ to any
{of the $D$-fibres of the fibration} will be positive-definite. All negative components of $i\Theta_{h}+i\partial\bar{\partial}u$
on the total space $\overline{M}$ can be compensated for by adding
a sufficiently positive ``bump $2$-form'' of the form $\chi(|w|)dw\wedge d\bar{w}$,
where $\chi$ is a bump function supported in an annulus containing the cut-off region;
such a $2$-form is automatically closed and $(1,\,1)$ on $\overline{M}$, and exact on $M$. This creates a K\"ahler form $\tau_{1}$ on $M$.
One can verify in a sufficiently small neighbourhood of $D$ that
\begin{equation}\label{lovely123}
\tau_{1}-\omega_{D}=O(|w|)\left(dw\wedge d\bar{w}+\sum_{j}dw\wedge d\bar{v}_{\bar{\jmath}}+\sum_{i,\,j}dv_{i}\wedge d\bar{v}_{\bar{\jmath}}+\sum_{i}dv_{i}\wedge d\bar{w}\right)\qquad\textrm{as $w\to0$}
\end{equation}
for $\{v_{1},\ldots,v_{n-1}\}$ local holomorphic coordinates on $D$.

Next, let $\psi:\mathbb{R}\to\mathbb{R}$ be a smooth function satisfying $$\psi'(x),\,\psi''(x)\geq 0\quad\textrm{for all $x\in\mathbb{R}$},$$ and
\begin{equation*}
\psi(x) = \left\{ \begin{array}{ll}
\operatorname{const.} & \textrm{if $x<1$,}\\
x & \textrm{if $x>2$,}
\end{array} \right.
\end{equation*}
and consider the composition $k:=\psi\circ r^{2}$, a real-valued smooth function on $M$.
One computes that
\begin{equation*}
\frac{i}{2}\p\bar{\p}k=\psi''(r^{2})\,\frac{i}{2}\p r^{2}\wedge\bar{\p}r^{2}+\psi'(r^{2})\,\frac{i}{2}\p\bar{\p}r^{2}\geq0,
\end{equation*}
a positive semi-definite form equal to $\frac{i}{2}\p\bar{\p}r^{2}$ on the region of $M$ where $r^{2}>2$. Define the K\"ahler form
$$\tau_{2}:=\tau_{1}+\frac{i}{2}\partial\bar{\partial}k$$ and in the holomorphic coordinates $(z,\,v)$ on $\widehat{M}$, consider the product metric $\widehat{\omega}_{\lambda}$ given by
$$\widehat{\omega}_{\lambda}:=\tilde{\omega}_{\lambda}+\omega_{D}
=i\partial\bar{\partial}\left(\frac{|z|^{2\lambda}}{2}\right)+\omega_{D}=\frac{\lambda^{2}dz\wedge d\bar{z}}{2|z|^{2-2\lambda}}+\omega_{D}.$$
Then from \eqref{lovely123}, it is clear that the difference is given by
\begin{equation*}
\tau_{2}-\widehat{\omega}_{\lambda}=O(|w|)\left(dw\wedge d\bar{w}+\sum_{j}dw\wedge d\bar{v}_{\bar{\jmath}}+\sum_{i,\,j}dv_{i}\wedge d\bar{v}_{\bar{\jmath}}+\sum_{i}dv_{i}\wedge d\bar{w}\right)\qquad\textrm{as $w\to0$},
\end{equation*}
so that in particular,
\begin{equation}\label{metric}
|\tau_{2}-\widehat{\omega}_{\lambda}|_{\widehat{\omega}_{\lambda}}=O(r^{-\frac{1}{\lambda}}).
\end{equation}
We now work with the hermitian metric $H$ on $-K_{\widehat{M}}$ induced by $\widehat{\omega}_{\lambda}$. Via the map $\nu$, this pulls back to the hermitian metric
 $$H=\frac{\lambda^{2}\det((g_{D})_{i\bar{\jmath}})}{2|z|^{2-2\lambda}}$$ on $-K_{M}|_{M\setminus\pi^{-1}(D_{0})}$ with respect to the local trivialisation $\partial_{z}\wedge\partial_{v_{1}}\wedge\ldots\wedge\partial_{v_{n-1}}$. The corresponding curvature form is then given by $$-i\partial\bar{\partial}\log H=\omega_{D}.$$ Hence, as a difference of two curvature forms, there exists a smooth real-valued function $\phi$ defined on $M\setminus\pi^{-1}(D_{0})$ such that
$$(i\Theta_{h}+i\partial\bar{\partial}u)-\omega_{D}=i\partial\bar{\partial}\phi.$$
In particular, outside a large compact subset of $M$, we have that
\begin{equation}\label{ribs}
\tau_{2}-\widehat{\omega}_{\lambda}=i\partial\bar{\partial}\phi.
\end{equation}

We claim that $\phi$ is in fact smooth on $\overline{M}\setminus\pi^{-1}(D_{0})$. To see this, note that as
$$i\partial\bar{\partial}\phi=-i\partial\bar{\partial}\log\left(\frac{e^{-u}|\partial_{z}\wedge\partial_{v_{1}}\
\wedge\ldots\wedge\partial_{v_{n-1}}|^{2}_{h}}{|\partial_{z}\wedge\partial_{v_{1}}\wedge\ldots\wedge\partial_{v_{n-1}}|^{2}_{H}}\right)$$
and
\begin{equation*}
\begin{split}
&\log\left(\frac{e^{-u}|\partial_{z}\wedge\partial_{v_{1}}\wedge\ldots\wedge\partial_{v_{n-1}}|^{2}_{h}}{|\partial_{z}\wedge\partial_{v_{1}}\wedge\ldots\wedge\partial_{v_{n-1}}|^{2}_{H}}\right)
=\log\left(\frac{e^{-u}|w|^{4}|\partial_{w}\wedge\partial_{v_{1}}\wedge\ldots\wedge\partial_{v_{n-1}}|^{2}_{h}}{|\partial_{z}\wedge\partial_{v_{1}}\wedge\ldots\wedge\partial_{v_{n-1}}|^{2}_{H}}\right)\\
&=\log\left(\frac{2e^{-u}|\partial_{w}\wedge\partial_{v_{1}}\wedge\ldots\wedge\partial_{v_{n-1}}|^{2}_{h}|z|^{2-2\lambda}}{\lambda^{2}\det((g_{D})_{i\bar{\jmath}})}\right)+2\log(|w|^{2})\\
&=\underbrace{\log\left(\frac{2e^{-u}|\partial_{w}\wedge\partial_{v_{1}}\wedge\ldots\wedge\partial_{v_{n-1}}|^{2}_{h}}{|\partial_{v_{1}}\wedge\ldots\wedge\partial_{v_{n-1}}|^{2}_{\omega_{D}}}\right)}_{\textrm{extends smoothly over $\{w=0\}$}}
-\underbrace{\left(1-\lambda\right)\log|w|^{2}+2\log(|w|^{2})-\log(\lambda^{2})}_{\textrm{pluriharmonic}},\\
\end{split}
\end{equation*}
$\phi$ may be taken to be
\begin{equation}\label{form}
\phi=-\log\left(\frac{2e^{-u}|\partial_{w}\wedge\partial_{v_{1}}\wedge\ldots\wedge\partial_{v_{n-1}}|^{2}_{h}}{|\partial_{v_{1}}\wedge\ldots\wedge\partial_{v_{n-1}}|^{2}_{\omega_{D}}}\right),
\end{equation}
which, albeit defined in terms of local coordinates, is clearly globally defined on $\overline{M}\setminus\pi^{-1}(D_{0})$.
Thus $\phi=O(1)$ and from \eqref{metric} and \eqref{ribs}, we see that $|i\partial\bar{\partial}\phi|_{\widehat{\omega}_{\lambda}}=O(r^{-\frac{1}{\lambda}})$.
Finally, after a computation, the expression for $\phi$ given in \eqref{form} gives us that $|d\phi|_{\widehat{\omega}_{\lambda}}=O(1)$.
Now, $\widehat{\omega}_{\lambda}$ and $\tau_{2}$ are equivalent outside some large compact subset $\tilde{K}$ of $M$ by \eqref{metric}, and on the
complement of $\tilde{K}$ in $M$, Lemma \ref{glue} implies that for all $R>0$ sufficiently large, $\phi$ admits an extension $\phi_{R}$ to $M$
supported on $M\setminus\tilde{K}$ such that $|i\partial\bar{\partial}\phi_{R}|_{\widehat{\omega}_{\lambda}}\leq CR^{-\min\{\lambda^{-1},\,1\}}$.
Thus, at the expense of increasing $C$ if necessary, we can infer that $|i\partial\bar{\partial}\phi_{R}|_{\tau_{2}}\leq CR^{-\min\{\lambda^{-1},\,1\}}$ globally on $M$.
We fix $R>0$ large enough so that $|i\partial\bar{\partial}\phi_{R}|_{\tau_{2}}<1$ and define a K\"ahler form on $M$ by
\begin{equation*}
\tilde{\omega}:=\tau_{2}-i\p\bar{\p}\phi_{R}.
\end{equation*}
By what we have just said, $\tilde{\omega}$ is positive-definite everywhere on $M$ and equal to $\widehat{\omega}_{\lambda}$ outside a large compact subset, hence is complete.
By averaging over the action of $T$, we may assume that $\mathcal{L}_{JX}\tilde{\omega}=0$ without changing the behaviour at infinity.
We further modify $\tilde{\omega}$ to construct $\omega$ satisfying conditions (a) and (ii) of the proposition.

To this end, we know that since $M$ does not split off any $S^{1}$-factors, $\pi_{1}(M)=0$ by toricity \cite{cox}. In particular, $H^{1}(M,\,\mathbb{R})=0$ so that the action of $T$ on $M$ is Hamiltonian with respect to $\tilde{\omega}$. Consequently, there exists a smooth real-valued function $\tilde{f}$
such that $\frac{1}{2}\mathcal{L}_{X}\tilde{\omega}=i\partial\bar{\partial}\tilde{f}$. By averaging, $\tilde{f}$ can be taken to be invariant under the action of $T$ on $M$. It is also clear that as $\tilde{\omega}=i\Theta_{h}+i\partial\bar{\partial}u_{1}$ for some $u_{1}\in C^{\infty}(M)$
with $i\Theta_{h}$ the curvature form of a hermitian metric on $-K_{M}$, we can write $\rho_{\tilde{\omega}}-\tilde{\omega}=i\partial\bar{\partial}u_{2}$ for another function $u_{2}\in C^{\infty}(M)$, $\rho_{\tilde{\omega}}$ here denoting the Ricci form of $\tilde{\omega}$. Thus, there exists a function $\tilde{G}\in C^{\infty}(M)$ such that
\begin{equation}\label{sexier}
\begin{split}
\rho_{\tilde{\omega}}-\tilde{\omega}+\frac{1}{2}\mathcal{L}_{X}\tilde{\omega}=i\partial\bar{\partial}\tilde{G}.
\end{split}
\end{equation}
After averaging, we may assume that $\tilde{G}$ is invariant under the action of $T$. In particular, henceforth identifying $M$ and $\widehat{M}$ on the complement of compact subsets
containing $D_{0}$ and $\pi^{-1}(D_{0})$ respectively, we can write $\tilde{G}:=\tilde{G}(r,\,x)$, where $r=|z|^{\lambda}$ is as above and $x\in D\subset\widehat{M}$.
As $\tilde{\omega}$ defines a shrinking gradient K\"ahler-Ricci soliton on $M\setminus K$ for some $K\subset M$ compact, we see that $\tilde{G}$ is pluriharmonic on $M\setminus K$. It therefore follows from Lemma \ref{pluri} that
$$\tilde{G}=c_{0}\log(r)$$ for some constant $c_{0}\in\mathbb{R}$. Arguing as above, Lemma \ref{glue} guarantees the existence of
an extension $\varphi$ of $c_{0}\log(r)+\frac{c_{0}}{2}$ to $M$ such that
$\omega:=\tilde{\omega}+i\partial\bar{\partial}\varphi$ defines a K\"ahler metric on $M$. As $\varphi$ is pluriharmonic at infinity, it is
clear that $\omega=\tilde{\omega}=\nu^{*}\widehat{\omega}_{\lambda}$ outside a large enough compact subset of $M$. Averaging over the action of $T$,
we obtain our metric $\omega$ of (a) satisfying condition (ii).

Next, as in \eqref{sexier}, we see that there exists a function $G\in C^{\infty}(M)$ invariant under the action of $T$ such that
\begin{equation}\label{sexiest}
\begin{split}
\rho_{\omega}-\omega+\frac{1}{2}\mathcal{L}_{X}\omega=i\partial\bar{\partial}G.
\end{split}
\end{equation}
Subtracting \eqref{sexier} from \eqref{sexiest} yields the relation
\begin{equation*}
\begin{split}
i\partial\bar{\partial}G&=i\partial\bar{\partial}\tilde{G}+\rho_{\omega}-\rho_{\tilde{\omega}}-i\partial\bar{\partial}\varphi+i\partial\bar{\partial}\left(\frac{X}{2}\cdot\varphi\right)\\
&=i\partial\bar{\partial}\left(\tilde{G}-\log\left(\frac{\omega^{n}}{\tilde{\omega}^{n}}\right)-\varphi+\frac{X}{2}\cdot\varphi\right)\\
\end{split}
\end{equation*}
between $G$ and $\tilde{G}$. Set $$F_{1}:=\tilde{G}-\log\left(\frac{\omega^{n}}{\tilde{\omega}^{n}}\right)-\varphi+\frac{X}{2}\cdot\varphi.$$
Then $i\partial\bar{\partial}F_{1}=i\partial\bar{\partial}G$ so that \eqref{ciao} holds true, and outside a large compact subset of $M$ we have that
$$F_{1}=\tilde{G}-\log\left(\frac{\omega^{n}}{\tilde{\omega}^{n}}\right)-\varphi+\frac{X}{2}\cdot\varphi=c_{0}\log(r)-\varphi(r)+\frac{r}{2}\cdot\varphi'(r)=0,$$
demonstrating that $F_{1}\in C^{\infty}(M)$ and is compactly supported. As $\mathcal{L}_{JX}\tilde{G}=0$, condition (iii) and correspondingly, the
proposition now follow.
\end{proof}

\section{Proof of Theorem \ref{mainthm}(iii) and (iv): Set-up of the complex Monge-Amp\`ere equation}\label{sec-set-up-CMA}

Returning now to the setup and notation of Theorem \ref{mainthm}, we next provide a proof of Theorem \ref{mainthm}(iii) by setting up a complex Monge-Amp\`ere equation
that any shrinking K\"ahler-Ricci soliton on $M$ differing from our background metric by $i\partial\bar{\partial}$ of a potential must satisfy,
followed by a proof of Theorem \ref{mainthm}(iv) where a normalised Hamiltonian potential of $JX$ with respect to $\omega$ is given.
Throughout this section we write $r:=|z|^{\lambda}$, where $z$ is the holomorphic coordinate on the $\mathbb{C}$-factor of $\widehat{M}$ and $\lambda>0$ is as in
Theorem \ref{mainthm}(iii) so that $d\nu(X)=r\partial_{r}$.
Our starting point is:

\begin{prop}\label{equationsetup}
Let $\omega$ be the K\"ahler metric in Proposition \ref{background} and let $J$ denote the complex structure on $M$.
Then there exists $\tilde{\varphi}\in C^{\infty}(M)$ with $\mathcal{L}_{JX}\tilde{\varphi}=0$ and $\omega_{\tilde{\varphi}}:=\omega+i\partial\bar{\partial}\tilde{\varphi}>0$ such that
\begin{equation}\label{sexysoliton}
\rho_{\omega_{\tilde{\varphi}}}+\frac{1}{2}\mathcal{L}_{X}\omega_{\tilde{\varphi}}=\omega_{\tilde{\varphi}}
\end{equation}
if and only if for all $a\in\mathbb{R}$, there exists $\varphi\in C^{\infty}(M)$ with $\mathcal{L}_{JX}\varphi=0$ and $\omega+i\partial\bar{\partial}\varphi>0$
and $F_{2}\in C^{\infty}(M)$ compactly supported with $\mathcal{L}_{JX}F_{2}=0$ satisfying
\begin{equation}\label{credit}
\rho_{\omega}+\frac{1}{2}\mathcal{L}_{X}\omega-\omega=i\partial\bar{\partial}F_{2}
\end{equation}
such that
\begin{equation}\label{e:soliton}
\log\left(\frac{(\omega+i\partial\bar{\partial}\varphi)^{n}}{\omega^{n}}\right)-\frac{X}{2}\cdot\varphi+\varphi=F_{2}+a.
\end{equation}
Here, $\rho_{\omega}$ and $\rho_{\omega_{\tilde{\varphi}}}$ denote the Ricci form of $\omega$ and $\omega_{\tilde{\varphi}}$ respectively.
\end{prop}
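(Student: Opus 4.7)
The plan is to prove both directions by direct computation, with the key non-trivial input being Lemma \ref{pluri} to control asymptotics in the forward direction.

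For the implication $(\Leftarrow)$: assume $\varphi$ satisfies \eqref{e:soliton} for some $a$. I would simply apply $i\partial\bar{\partial}$ to both sides of \eqref{e:soliton}. Using $i\partial\bar\partial\log\frac{\omega_{\varphi}^{n}}{\omega^{n}}=\rho_{\omega}-\rho_{\omega_{\varphi}}$, the identity $i\partial\bar\partial(X\cdot\varphi)=\mathcal{L}_{X}(i\partial\bar\partial\varphi)=\mathcal{L}_{X}\omega_{\varphi}-\mathcal{L}_{X}\omega$ (valid because $X$ is real holomorphic), and finally substituting $i\partial\bar\partial F_{2}=\rho_{\omega}+\tfrac{1}{2}\mathcal{L}_{X}\omega-\omega$ from \eqref{credit}, the constant $a$ drops out and all $\omega$-terms cancel, leaving precisely the soliton equation $\rho_{\omega_\varphi}+\tfrac{1}{2}\mathcal{L}_{X}\omega_{\varphi}=\omega_{\varphi}$.

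For the implication $(\Rightarrow)$: start with $\tilde{\varphi}$ solving the soliton equation and let $F_{1}\in C^{\infty}(M)$ be the compactly supported function produced by Proposition \ref{background} satisfying \eqref{ciao}. Reversing the computation above on the form level shows that
\[
h := \log\frac{\omega_{\tilde{\varphi}}^{n}}{\omega^{n}}-\tfrac{X}{2}\cdot\tilde{\varphi}+\tilde{\varphi}-F_{1}
\]
is pluriharmonic on $M$. All four summands are $JX$-invariant (since $\omega$ and $F_{1}$ are $T$-invariant by construction, $\mathcal{L}_{JX}\tilde{\varphi}=0$, and $[X,JX]=0$), so $h$ itself is $JX$-invariant. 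Transporting $h$ to $\widehat{M}\setminus\widehat{K}$ via the biholomorphism $\nu$ of Proposition \ref{background} and applying Lemma \ref{pluri} gives $h=c_{0}\log r+c_{1}$ on the end, for some constants $c_{0},c_{1}\in\mathbb{R}$. Since $d\nu(X)=r\partial_{r}$ at infinity, this implies $X\cdot h = c_{0}$ outside a compact set.

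Given any $a\in\mathbb{R}$, I would then set
\[
\varphi := \tilde{\varphi}-h+\bigl(a-\tfrac{c_{0}}{2}\bigr),\qquad F_{2}:=F_{1}+\tfrac{X}{2}\cdot h-\tfrac{c_{0}}{2}.
\]
Because $i\partial\bar\partial h=0$, we have $\omega+i\partial\bar\partial\varphi=\omega_{\tilde{\varphi}}>0$, and $\varphi$ inherits $JX$-invariance from $\tilde{\varphi}$ and $h$. The function $F_{2}$ is smooth and $JX$-invariant; it satisfies \eqref{credit} because $i\partial\bar\partial(F_{2}-F_{1})=\tfrac{1}{2}\mathcal{L}_{X}(i\partial\bar\partial h)=0$; and it is compactly supported since $F_{1}$ is, and $\tfrac{X}{2}\cdot h-\tfrac{c_{0}}{2}$ vanishes at infinity by the asymptotic formula for $h$. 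A direct substitution then verifies \eqref{e:soliton}: the definition of $h$ contributes $F_{1}+h$, cancellation of the $h$ terms leaves $F_{1}+\tfrac{X}{2}\cdot h+(a-\tfrac{c_{0}}{2})=F_{2}+a$.

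The only technical subtlety, which is the main content of the argument, is ensuring that $F_{2}$ remains compactly supported — this is precisely where Lemma \ref{pluri} is essential, as it forces the pluriharmonic obstruction $h$ to have a very rigid asymptotic shape ($c_{0}\log r+c_{1}$) whose $X$-derivative is a constant that we can absorb by the explicit correction $-c_{0}/2$ into the definition of $F_{2}$. Without this asymptotic classification, the forward direction would only yield an $F_{2}$ with prescribed pluriharmonic behaviour at infinity, not compactly supported data.
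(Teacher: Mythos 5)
Your proof is correct and takes essentially the same approach as the paper: the $(\Leftarrow)$ direction by taking $i\partial\bar\partial$ of \eqref{e:soliton} and using \eqref{ciao}, and the $(\Rightarrow)$ direction by isolating the pluriharmonic obstruction, classifying it via Lemma \ref{pluri}, and subtracting the resulting logarithmic term while adding the constant $-c_{0}/2$ to keep $F_{2}$ compactly supported. Your $h$ is the paper's $H+a$ and your choices of $\varphi$ and $F_{2}$ coincide with the paper's.
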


\begin{proof}
If $\varphi$ satisfies \eqref{e:soliton}, then by taking $i\partial\bar{\partial}$ of this equation, we see that
$\varphi$ satisfies \eqref{sexysoliton} by virtue of \eqref{ciao}. Conversely, assume that \eqref{sexysoliton} holds. Then we compute:
\begin{equation*}
\begin{split}
0&=\rho_{\omega_{\tilde{\varphi}}}-\omega_{\tilde{\varphi}}+\frac{1}{2}\mathcal{L}_{X}
\omega_{\tilde{\varphi}}\\
&=\rho_{\omega_{\tilde{\varphi}}}-\rho_{\omega}+\rho_{\omega}-\omega_{\tilde{\varphi}}+\frac{1}{2}\mathcal{L}_{X}\omega_{\tilde{\varphi}}\\
&=-i\partial\bar{\p}\log\left(\frac{(\omega+i\partial\bar{\partial}\tilde{\varphi})^{n}}{\omega^{n}}\right)-
i\partial\bar{\partial}\tilde{\varphi}+i\partial\bar{\partial}\left(\frac{X}{2}\cdot\tilde{\varphi}\right)
+\rho_{\omega}-\omega+\frac{1}{2}\mathcal{L}_{X}\omega\\
\end{split}
\end{equation*}
so that
\begin{equation}\label{equation}
\begin{split}
i\partial\bar{\p}\left(\tilde{\varphi}+\log\left(\frac{(\omega+i\partial\bar{\partial}\tilde{\varphi})^{n}}{\omega^{n}}\right)-\frac{X}{2}\cdot\tilde{\varphi}\right)
&=\rho_{\omega}-\omega+\frac{1}{2}\mathcal{L}_{X}\omega.\\
\end{split}
\end{equation}
Now, as we have seen in \eqref{ciao},
$$\rho_{\omega}-\omega+\frac{1}{2}\mathcal{L}_{X}\omega=i\partial\bar{\partial}F_{1}$$
for some $JX$-invariant compactly supported $F_{1}\in C^{\infty}(M)$. Plugging this into \eqref{equation}, we have that for every $a\in\mathbb{R}$,
$$i\partial\bar{\p}\left(\tilde{\varphi}+\log\frac{(\omega+i\partial\bar{\partial}\tilde{\varphi})^{n}}{\omega^{n}}-\frac{X}{2}\cdot\tilde{\varphi}-F_{1}-a\right)=0.$$
$JX$-invariance of the sum in parentheses next implies from Lemma \ref{pluri} that
$$\tilde{\varphi}+\log\left(\frac{(\omega+i\partial\bar{\partial}\tilde{\varphi})^{n}}{\omega^{n}}\right)-\frac{X}{2}\cdot\tilde{\varphi}
=F_{1}+a+H$$
for $H$ a pluriharmonic function equal to $c_{0}\log(r)+c_{1}$ outside a compact subset of $M$ for some $c_{0},\,c_{1}\in\mathbb{R}$. Thus,
\begin{equation*}
\begin{split}
\left(\tilde{\varphi}-H-\frac{c_{0}}{2}\right)&+\log\left(\frac{(\omega+i\partial\bar{\partial}(\tilde{\varphi}-H-\frac{c_{0}}{2}))^{n}}{\omega^{n}}\right)
-\frac{X}{2}\cdot\left(\tilde{\varphi}-H-\frac{c_{0}}{2}\right)\\
&=\left(\tilde{\varphi}+\log\left(\frac{(\omega+i\partial\bar{\partial}\tilde{\varphi})^{n}}{\omega^{n}}\right)-\frac{X}{2}\cdot\tilde{\varphi}\right)-H+\frac{X}{2}\cdot H-\frac{c_{0}}{2}\\
&=(F_{1}+a+H)-H+\frac{X}{2}\cdot H-\frac{c_{0}}{2}\\
&=F_{1}+a+\frac{X}{2}\cdot H-\frac{c_{0}}{2}.
\end{split}
\end{equation*}

Notice that after identifying $X$ with $r\partial_{r}$ via $\nu$, we have that $\frac{X}{2}\cdot H-\frac{c_{0}}{2}=\frac{1}{2}r\partial_{r}(c_{0}\log(r)+c_{1})-\frac{c_{0}}{2}=0$ outside a compact set. Set $\varphi:=\tilde{\varphi}-H-\frac{c_{0}}{2}$ and $F_{2}:=F_{1}+\frac{X}{2}\cdot H-\frac{c_{0}}{2}$. Then $F_{2}\in C^{\infty}(M)$, is compactly supported,
both $\varphi$ and $F_{2}$ are $JX$-invariant, $i\partial\bar{\partial}F_{2}=i\partial\bar{\partial}F_{1}$, and
$$\varphi+\log\left(\frac{(\omega+i\partial\bar{\partial}\varphi)^{n}}{\omega^{n}}\right)-\frac{X}{2}\cdot\varphi=F_{2}+a,$$
as required.
\end{proof}

Theorem \ref{mainthm}(iii) is a consequence of the next lemma.

\begin{lemma}\label{warm}
Let $\lambda$, $\omega$, and $\nu:(M\setminus K,\,\omega)\to(\widehat{M}\setminus\widehat{K},\,\widehat{\omega})$, $K\subset M,\,\widehat{K}\subset\widehat{M}$ compact,
be as in Proposition \ref{background}. Moreover, let $F_{2}\in C^{\infty}(M)$ be as in Proposition \ref{equationsetup} satisfying \eqref{credit}
and recall that $z$ denotes the holomorphic coordinate on the $\mathbb{C}$-factor of $\widehat{M}$. Set $r:=|z|^{\lambda}$.
Then there exists a unique torus-invariant smooth real-valued function $f:M\to\mathbb{R}$ such that
$-\omega\lrcorner JX=df$, $f=\nu^{*}\left(\frac{r^{2}}{2}-1\right)$ on $M\setminus K$, and
\begin{equation}\label{normal2}
\Delta_{\omega}f+f-\frac{X}{2}\cdot f=0\qquad\textrm{outside a compact subset of $M$.}
\end{equation}
In particular, $f\to+\infty$ as $r\to+\infty$, hence is proper.
\end{lemma}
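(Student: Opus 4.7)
I would construct the potential in three steps: produce a Hamiltonian potential of $JX$ from the vanishing of $H^{1}$, fix the free additive constant by matching the explicit expression $\nu^{*}(r^{2}/2-1)$ at infinity, and then verify the drift Laplace identity directly on the product model.

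First, since $M$ is toric and contains no $S^{1}$-factor, $\pi_{1}(M)=0$ by \cite{cox} and hence $H^{1}(M,\mathbb{R})=0$. As $JX\in\mathfrak{t}$ is symplectic for $\omega$, the closed $1$-form $-\omega\lrcorner JX$ is exact, giving a smooth function $\tilde{f}$ with $-\omega\lrcorner JX=d\tilde{f}$, unique up to an additive constant. Averaging over the compact torus $T$, I may take $\tilde{f}$ to be $T$-invariant.

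Secondly, I would compute the Hamiltonian of $J(r\partial_{r})=\frac{1}{\lambda}\partial_{\theta}$ on the model $(\widehat{M},\widehat{\omega})$. Using $\widehat{\omega}=\tilde{\omega}_{\lambda}+\omega_{D}$ and the fact that this vector field acts only on the $\mathbb{C}$-factor, a direct contraction calculation yields
\begin{equation*}
-\widehat{\omega}\lrcorner J(r\partial_{r})=d\bigl(\tfrac{r^{2}}{2}\bigr).
\end{equation*}
Since $d\nu(X)=r\partial_{r}$ and $\nu$ is a biholomorphic isometry onto $\widehat{M}\setminus\widehat{K}$, pulling back gives $-\omega\lrcorner JX=d(\nu^{*}(r^{2}/2))$ on $M\setminus K$. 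Therefore $\tilde{f}-\nu^{*}(r^{2}/2)$ has vanishing differential on $M\setminus K$. Enlarging $K$ (and $\widehat{K}$ accordingly) if necessary so that $M\setminus K$ is connected, this difference equals a single constant, and I then \emph{define} $f$ to be the unique Hamiltonian of $JX$ that coincides with $\nu^{*}(r^{2}/2-1)$ on $M\setminus K$. Both existence and uniqueness follow at once.

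Thirdly, to establish \eqref{normal2} on $M\setminus K$, I compute both sides on the model with $f=r^{2}/2-1$. Since $f$ depends only on the $\mathbb{C}$-factor and $i\partial\bar{\partial}(r^{2}/2)=\tilde{\omega}_{\lambda}$ there, one has $\Delta_{\omega}f=\operatorname{tr}_{\widehat{\omega}}(\tilde{\omega}_{\lambda})=1$; moreover $X\cdot f=r\partial_{r}(r^{2}/2-1)=r^{2}$, whence
\begin{equation*}
\Delta_{\omega}f+f-\tfrac{X}{2}\cdot f=1+\bigl(\tfrac{r^{2}}{2}-1\bigr)-\tfrac{r^{2}}{2}=0.
\end{equation*}
Properness of $f$ is then immediate, since $f=\nu^{*}(r^{2}/2-1)\to+\infty$ as $r\to+\infty$ while $f$ is continuous on the compact set $K$.

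The only point requiring some care is that both the matching at infinity and the vanishing of the right-hand side of \eqref{normal2} are tied to the \emph{same} additive constant. A general shrinking gradient K\"ahler-Ricci soliton only guarantees $\Delta_{\omega}f+f-\frac{X}{2}\cdot f$ to be a constant, and the specific $-1$ appearing in $\nu^{*}(r^{2}/2-1)$ is exactly what makes this constant zero, ultimately originating from the equality $\operatorname{tr}_{\widehat{\omega}}(\tilde{\omega}_{\lambda})=1$. I do not anticipate any further obstacle beyond this bookkeeping.
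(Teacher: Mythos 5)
Your proof is correct, and steps one and two coincide with the paper's argument (exactness of $-\omega\lrcorner JX$ from $H^{1}(M,\mathbb{R})=0$, torus-invariance by averaging, and fixing the additive constant by matching $\nu^{*}(r^{2}/2)$ on $M\setminus K$). Where you diverge is in the verification of \eqref{normal2}: you compute both sides directly on the model $(\widehat{M},\widehat{\omega})$ using $i\partial\bar\partial(r^{2}/2)=\tilde\omega_{\lambda}$, $\operatorname{tr}_{\widehat\omega}\tilde\omega_{\lambda}=1$ and $X\cdot f=r^{2}$, which establishes \eqref{normal2} on $M\setminus K$ — exactly what the lemma asserts. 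The paper instead contracts the soliton-type identity \eqref{credit} with $X^{1,0}=\tfrac{1}{2}(X-iJX)$ and applies a Bochner formula to conclude that $\Delta_{\omega}f-\tfrac{X}{2}\cdot f+f+\tfrac{X}{2}\cdot F_{2}$ is a real-valued holomorphic function, hence constant on \emph{all} of $M$, and then pins down the constant by evaluating at infinity. The paper's route is more structural: it yields a globally constant quantity involving the compactly supported $F_{2}$, reflecting the near-soliton structure of $\omega$ everywhere rather than only in the asymptotic region, and it is the argument pattern reused in Lemmas \ref{normal-fss} and \ref{lemma-tr-star-star}. Your route is more elementary and buys exactly what the statement needs, but one should flag (as you do) that the choice of $-1$ is not incidental: it is forced by $\operatorname{tr}_{\widehat\omega}\tilde\omega_{\lambda}=1$, and the paper's global-constancy argument makes transparent that no other normalisation can work globally.
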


\begin{proof}
Since $M$ does not split off any $S^{1}$-factors and is toric, we know that $\pi_{1}(M)=0$ \cite{cox}. Hence there exists a smooth real-valued function $f\in C^{\infty}(M)$, defined up to a constant, with $-\omega\lrcorner JX=df$. Any such choice of $f$ is invariant under the action of $T$ by virtue of the fact that $\omega\lrcorner JX$ is invariant under this action and $T$ has fixed points so that every element of $\mathfrak{t}$ has at least one zero. Next notice that $-\widehat{\omega}\lrcorner\widehat{J}r\partial_{r}=d\left(\frac{r^{2}}{2}\right)$, where recall $\widehat{J}$ is the complex structure on $\widehat{M}$.
As $\omega=\nu^{*}\widehat{\omega}$ on $M\setminus K$, it is therefore clear that $d\left(f-\frac{r^{2}}{2}\right)=0$ on $M\setminus K$ so that $f$
differs from $\frac{r^{2}}{2}$ by a constant on this set, i.e., $f=\frac{r^{2}}{2}+\operatorname{const.}$ on $M\setminus K$. Normalise $f$ so that this constant is equal to
$-1$. Then $f=\nu^{*}\left(\frac{r^{2}}{2}-1\right)$ on $M\setminus K$. What remains to show is that with this normalisation, \eqref{normal2} holds true.

To this end, using the $JX$-invariance of $F_{2}$ and $f$,
contract \eqref{credit} with $X^{1,\,0}:=\frac{1}{2}(X-iJX)$ and use the Bochner formula to derive that
$$i\bar{\partial}\left(\Delta_{\omega}f-\frac{X}{2}\cdot f+f+\frac{X}{2}\cdot F_{2}\right)=0.$$
As a real-valued holomorphic function, we must have that $\Delta_{\omega}f-\frac{X}{2}\cdot f+f+\frac{X}{2}\cdot F_{2}$ is
constant on $M$. But since $X\cdot F_{2}=0$ outside a compact subset of $M$, by the properties of $f$ and $\omega$ we have that outside a compact subset of $M$,
\begin{equation*}
\begin{split}
\Delta_{\omega}f-\frac{X}{2}\cdot f+f+\frac{X}{2}\cdot F_{2}&=\Delta_{\widehat{\omega}}\left(\frac{r^{2}}{2}-1\right)
-\frac{r}{2}\frac{\partial}{\partial r}\left(\frac{r^{2}}{2}-1\right)+\left(\frac{r^{2}}{2}-1\right)=0.
\end{split}
\end{equation*}
Thus, this constant is zero and we are done.
\end{proof}

Let $c_{0}\in\mathbb{R}$ be such that $e^{c_{0}}\int_{M}e^{F_{2}-f}\omega^{n}=\int_{M}e^{-f}\omega^{n}$ and define $F:=F_{2}+c_{0}$. Then:
\begin{itemize}
  \item $F\in C^{\infty}(M)$ and $F$ is torus-invariant,
  \item $F$ is equal to $c_{0}$ outside a compact subset of $M$, and
  \item $\int_{M}e^{F-f}\omega^{n}=\int_{M}e^{-f}\omega^{n}$.
\end{itemize}
Moreover, from \eqref{credit} we have that
\begin{equation*}
\rho_{\omega}-\frac{1}{2}\mathcal{L}_{X}\omega+\omega=i\partial\bar{\partial}F.
\end{equation*}
By Proposition \ref{equationsetup}, any shrinking K\"ahler-Ricci soliton of the form $\omega+i\partial\bar{\partial}\varphi>0$ on $M$
will solve the complex Monge-Amp\`ere equation
\begin{equation*}
\left\{
\begin{array}{rl}
(\omega+i\partial\bar{\partial}\varphi)^{n}=e^{F+\frac{X}{2}\cdot\varphi-\varphi}\omega^{n}&\quad\textrm{for $\varphi\in C^{\infty}(M)$ and $\varphi$ torus-invariant},\\
\int_{M}e^{F-f}\omega^{n}=\int_{M}e^{-f}\omega^{n}. &
\end{array} \right.
\end{equation*}
This is precisely the statement of Theorem \ref{mainthm}(iv). A strategy to solve this equation is given by considering the Aubin continuity path:
\begin{equation}\label{ast-t}
\left\{
\begin{array}{rl}
(\omega+i\partial\bar{\partial}\varphi_{t})^{n}=e^{F+\frac{X}{2}\cdot\varphi_{t}-t\varphi_{t}}\omega^{n},&\quad\varphi\in C^{\infty}(M),\quad\mathcal{L}_{JX}\varphi=0,\quad\omega+i\partial\bar{\partial}\varphi>0,\quad t\in[0,\,1],\\
\int_{M}e^{F-f}\omega^{n}=\int_{M}e^{-f}\omega^{n}. &
\end{array} \right.\tag{$\ast_{t}$}
\end{equation}
The equation corresponding to $t=0$ is given by
\begin{equation}\label{ast-0}
\left\{
\begin{array}{rl}
(\omega+i\partial\bar{\partial}\psi)^{n}=e^{F+\frac{X}{2}\cdot\psi}\omega^{n},&\qquad\psi\in C^{\infty}(M),\qquad\mathcal{L}_{JX}\psi=0,\qquad\omega+i\partial\bar{\partial}\psi>0,\\
\int_{M}e^{F-f}\omega^{n}=\int_{M}e^{-f}\omega^{n}. &
\end{array} \right.\tag{$\ast_{0}$}
\end{equation}
This equation we will solve by the continuity method, the particular
path of which will be introduced in Section \ref{continuitie}. This will yield the final part of Theorem \ref{mainthm}.
Beforehand however, we prove some analytic results regarding the metric $\omega$ and those metrics that are asymptotic to it, beginning with a Poincar\'e inequality.

\newpage
\section{Poincar\'e inequality}\label{sec-poin-inequ}

In this section, we prove a Poincar\'e inequality for the K\"ahler form $\omega$ of Proposition \ref{background}
using the fact that it holds true on the model shrinking gradient K\"ahler-Ricci soliton
$(\widehat{M}:=\mathbb{C}\times\mathbb{P}^{1},\,\widehat{\omega}:=\tilde{\omega}_{\lambda}+\omega_{D},\,r\partial_{r})$ \cite{milman}, where $r=|z|^{\lambda}$.
This will be used in Proposition \ref{prop-a-priori-ene-est} to establish an a priori weighted $L^{2}$-estimate along the
continuity path that we consider in deriving a solution to \eqref{ast-0}. Recall the Hamiltonian potential
$f$ of $JX$ satisfying \eqref{normal2}.

We work with the Lebesgue and Sobolev spaces $L^{p}(e^{-f}\omega^{n})$ and $W^{1,\,p}(e^{-f}\omega^{n})$ on $M$ respectively, defined in the obvious way for $p>1$, and we denote
$$\fint_{M}u\,e^{-f}\omega^{n}:=\frac{1}{\int_{M}e^{-f}\omega^{n}}\int_{M}u\,e^{-f}\omega^{n}\qquad\textrm{for all $u\in L^{p}(e^{-f}\omega^{n})$.}$$
By H\"older's inequality and the finiteness of $\int_{M}e^{-f}\omega^{n}$, the integral $\fint_{M}u\,e^{-f}\omega^{n}$ is finite.

\begin{prop}[Poincar\'e inequality]\label{poincare}
For all $p>1$, there exists a constant $C(p)>0$ such that
$$\left\|u-\fint_{M}u\,e^{-f}\omega^{n}\right\|_{L^{p}(e^{-f}\omega^{n})}\leq C(p)\|\nabla^{g}u\|_{L^{p}(e^{-f}\omega^{n})}\qquad\textrm{for all $u\in W^{1,\,p}(e^{-f}\omega^{n})\cap C^{1}(M)$}.$$
Here, $g$ is the K\"ahler metric associated to $\omega$.
\end{prop}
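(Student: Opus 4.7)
The plan is to derive the Poincar\'e inequality on $(M,\omega,e^{-f}\omega^n)$ by transferring it from the model $(\widehat M,\widehat\omega,e^{-\widehat f}\widehat\omega^n)$, where $\widehat f:=r^2/2-1$ plays the r\^ole of the soliton potential. By Proposition \ref{background} together with Lemma \ref{warm}, the isometry $\nu$ identifies $\omega$ with $\widehat\omega$ and $f$ with $\widehat f$ outside the compact sets $K\subset M$ and $\widehat K\subset\widehat M$, so a cut-off argument should reduce the question to the Poincar\'e inequality on the model plus a standard Poincar\'e inequality on a compact piece.

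First I would establish the inequality on the model. Since $(\widehat M,\widehat g,\widehat f)$ is a (genuine) shrinking gradient K\"ahler-Ricci soliton, its $\infty$-Bakry--Emery Ricci tensor satisfies
\begin{equation*}
\operatorname{Ric}_{\widehat g}+\operatorname{Hess}_{\widehat g}(\widehat f)=\widehat g,
\end{equation*}
so Milman's work \cite{milman} yields, for every $p>1$, a constant $C_{0}(p)>0$ such that for all $v\in W^{1,p}(e^{-\widehat f}\widehat\omega^n)\cap C^{1}(\widehat M)$,
\begin{equation*}
\left\|v-\int_{\widehat M}v\,e^{-\widehat f}\widehat\omega^n\right\|_{L^p(e^{-\widehat f}\widehat\omega^n)}\leq C_{0}(p)\,\|\nabla^{\widehat g}v\|_{L^p(e^{-\widehat f}\widehat\omega^n)}.
\end{equation*}

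Next, given $u\in W^{1,p}(e^{-f}\omega^n)\cap C^{1}(M)$, I would introduce a smooth cut-off $\chi:M\to[0,1]$ supported in a compact neighbourhood $K'$ of $K$ and equal to $1$ on a smaller neighbourhood of $K$, and decompose $u=\chi u+(1-\chi)u$. The function $(1-\chi)u$, supported in $M\setminus K$, transports via $\nu$ (and an extension by zero) to a function on $\widehat M$ to which the model inequality applies. The piece $\chi u$ is compactly supported on $M$ and is handled by the standard Poincar\'e inequality on the compact Riemannian domain $K'$ (equipped with the weighted measure $e^{-f}\omega^n$, which is smooth and bounded above and below there). The cross terms produced by $\nabla$ hitting $\chi$ are supported on $K'\cap\{0<\chi<1\}$, where $|\nabla\chi|\leq C$, and are therefore bounded by $\|u\|_{L^p(K',\,e^{-f}\omega^n)}$, which the compact Poincar\'e inequality in turn controls up to a mean.

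The main subtlety is reconciling the three \emph{different} centering constants that arise: the global mean $\int_{M}u\,e^{-f}\omega^n$ appearing in the statement, the model mean of the $\nu$-pushed-forward part, and the mean of $u$ over the compact set $K'$ used for the local Poincar\'e inequality. Their mutual differences are integrals of $u$ against bounded, compactly supported quantities, hence are controlled by $\|u-\mathrm{mean}_{K'}(u)\|_{L^{p}(K')}$ via H\"older's inequality, and thus ultimately by $\|\nabla u\|_{L^{p}(e^{-f}\omega^n)}$. Carrying out this bookkeeping and absorbing the cross terms into the gradient norm is the only delicate point; all other steps are direct applications of the two input inequalities.
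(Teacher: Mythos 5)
Your strategy --- a direct gluing of Milman's weighted Poincar\'e inequality on the model $(\widehat M,\widehat\omega,e^{-\widehat f}\widehat\omega^n)$ with a compact Poincar\'e inequality near $K$, via a cut-off --- is a genuine alternative to the paper's proof, which instead argues by contradiction together with Rellich--Kondrachov compactness. Both routes rest on exactly the same inputs (Milman's result for the model and a cut-off function), but the paper's compactness argument never has to compare different centering constants, which is precisely where your direct version is delicate.

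Your ``reconciling centering constants'' paragraph does not go through as stated. Write $\bar u$, $\bar v$, and $m_{K'}$ for the global weighted mean of $u$ on $M$, the weighted mean of the pushforward $v:=(1-\chi)u$ on $\widehat M$, and the weighted mean of $u$ over $K'$. You assert their mutual differences are integrals of $u$ against bounded, \emph{compactly supported} weights, controlled by $\|u-m_{K'}\|_{L^p(K')}$. Neither clause is right: for instance $m_{K'}-\bar u=-\fint_M(u-m_{K'})\,e^{-f}\omega^n$ is an integral over all of $M$, controlled only by $\|u-m_{K'}\|_{L^p(M)}$ --- essentially the quantity you are trying to bound --- and not by the local norm on $K'$; moreover the mismatch of the total masses $\int_Me^{-f}\omega^n$ and $\int_{\widehat M}e^{-\widehat f}\widehat\omega^n$ introduces a globally supported term into $\bar u-\bar v$. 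Fortunately the gap is short to close. Normalize so that $m_{K'}=0$ (the inequality is translation-invariant). Then the compact Poincar\'e inequality on $K'$ gives $\|u\|_{L^p(K')}\le C\|\nabla^g u\|_{L^p}$, whence $\|\nabla^{\widehat g}v\|_{L^p(\widehat M)}\le C\|\nabla^g u\|_{L^p}$; Milman gives $\|v-\bar v\|_{L^p(\widehat M)}\le C\|\nabla^g u\|_{L^p}$; and since $v\equiv 0$ on $\widehat K$, a set of positive weighted measure, H\"older applied to $\int_{\widehat K}(v-\bar v)\,e^{-\widehat f}\widehat\omega^n=-\bar v\cdot\mu_{\widehat M}(\widehat K)$ shows $|\bar v|\le C\|v-\bar v\|_{L^p(\widehat M)}$. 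As $u=v$ on $M\setminus K'$, these combine to $\|u\|_{L^p(M)}\le C\|\nabla^g u\|_{L^p}$; Jensen's inequality then yields $|\bar u|\le\bigl(\int_Me^{-f}\omega^n\bigr)^{-1/p}\|u\|_{L^p}$, so $\|u-\bar u\|_{L^p}\le 2\|u\|_{L^p}\le C\|\nabla^g u\|_{L^p}$, as required.
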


\begin{proof}
For sake of a contradiction, suppose that the assertion is not true. Then
there exists a sequence of functions $(u_{k})_{k\,\geq\,1}\subset W^{1,\,p}(e^{-f}\omega^{n})$ with the following properties:
$$\left\{
\begin{array}{rl}
&\|u_{k}\|_{L^{p}(e^{-f}\omega^{n})}=1,\qquad\int_{M}u_{k}\,e^{-f}\omega^{n}=0, \\
&\|\nabla^{g}u_{k}\|_{L^{p}(e^{-f}\omega^{n})}\leq\frac{1}{k}.  \\
\end{array} \right.$$
{Indeed, since $\int_{M}e^{-f}\omega^{n}<\infty$, an application of H\"older's inequality demonstrates that we can normalise the sequence $(u_{k})_{k\,\geq\,1}$ so that the weighted integral of each function in the sequence is zero}. By the Rellich-Kondrachov theorem, there exists a subsequence which we also denote by $(u_{k})_{k\,\geq\,1}$ converging to
some $u_{\infty}\in L_{\operatorname{\operatorname{loc}}}^{p}(M)$ as $k\rightarrow+\infty$. On the other hand, for every compactly supported one-form $\alpha$ on $M$, we have that
$$\int_{M}u_{\infty}\cdot\delta^{g}\alpha\,\omega^{n}=\lim_{k\,\to\,+\infty}\int_{M}u_{k}\cdot\delta^{g}\alpha\,\omega^{n}=-\lim_{k\,\to\,+\infty}\int_{M}g(du_{k},\,\alpha)\,\omega^{n}=0,$$
where $\delta_{g}$ is the co-differential of $d$ with respect to $g$. Thus, $u_{\infty}\in W^{1,\,p}_{\operatorname{\operatorname{loc}}}(M)$ and $du_{\infty}=0$ almost everywhere.
In particular, $u_{\infty}$ is constant.

For $R>0$, let $D_{R}:=f^{-1}((-\infty,\,R])$, a compact subset of $M$ by properness of $f$ (cf.~Lemma \ref{warm}). Then
the fact that $\int_{M}u_{k}\,e^{-f}\omega^{n}=0$ implies that for every $R>0$,
$$\int_{D_{R}}u_{k}\,e^{-f}\omega^{n}=-\int_{M\setminus D_{R}}u_{k}\,e^{-f}\omega^{n}.$$
It then follows from H\"older's inequality that
\begin{equation*}
\begin{split}
\left|\int_{D_{R}}u_{k}\,e^{-f}\omega^{n}\right|&\leq\int_{M\setminus D_{R}}|u_{k}|\,e^{-f}\omega^{n}\\
&\leq\left(\int_{M\setminus D_{R}}|u_{k}|^{p}\,e^{-f}\omega^{n}\right)^{\frac{1}{p}}\left(\int_{M\setminus D_{R}}e^{-f}\omega^{n}\right)^{1-\frac{1}{p}}\\
&\leq\|u_{k}\|_{L^{p}(e^{-f}\omega^{n})}\left(\int_{M\setminus D_{R}}e^{-f}\omega^{n}\right)^{1-\frac{1}{p}}\\
&=\left(\int_{M\setminus D_{R}}e^{-f}\omega^{n}\right)^{1-\frac{1}{p}}.
\end{split}
\end{equation*}
Furthermore, $L^{p}_{\operatorname{\operatorname{loc}}}(M)$-convergence implies that
$$\int_{D_{R}}u_{k}\,e^{-f}\omega^{n}\rightarrow\int_{D_{R}}u_{\infty}\,e^{-f}\omega^{n}=u_{\infty}\operatorname{vol}_{f}(D_{R})\qquad\textrm{as $k\to+\infty$}.$$
This allows us to derive that
$$|u_{\infty}|=\lim_{k\to+\infty}\frac{\left|\int_{D_{R}}u_{k}\,e^{-f}\omega^{n}\right|}{\operatorname{vol}_{f}(D_{R})}
\leq\lim_{k\to+\infty}\frac{\left(\int_{M\setminus D_{R}}e^{-f}\omega^{n}\right)^{1-\frac{1}{p}}}{\operatorname{vol}_{f}(D_{R})}
=\frac{\operatorname{vol}_{f}(M\setminus D_{R})^{1-\frac{1}{p}}}{\operatorname{vol}_{f}(D_{R})}\to0\quad\textrm{as $R\to+\infty$},$$
where $\operatorname{vol}_{f}(A):=\int_{A}e^{-f}\omega^{n}$ for $A\subseteq M$. That is, $u_{\infty}\equiv 0$.

Next, choose $C>0$ such that $f+C>0$ on $M$, something that is possible to do by Lemma \ref{warm}, and let $\eta:\mathbb{R}\rightarrow\mathbb{R}$ be a smooth function satisfying
$\eta(x)=0$ for $x\leq 1$, $\eta(x)=1$ for $x\geq 2$, and $|\eta(x)|\leq 1$ for all $x$. Define $\eta_{R}:M\to\mathbb{R}$ by
$$\eta_{R}(x)=\eta\left(\frac{\sqrt{f(x)+C}}{R}\right)\qquad\textrm{for $R>0$ a positive constant to be chosen
later}.$$ Then with $\frac{1}{p}+\frac{1}{q}=1$, we have that for some positive constant $C(p)>0$ that may vary from line to line,
\begin{equation}\label{estimate}
\begin{split}
1&=\|u_{k}\|^{p}_{L^{p}(e^{-f}\omega^{n})}\leq C(p)\left(\|(1-\eta_{R})u_{k}\|^{p}_{L^{p}(e^{-f}\omega^{n})}+\|\eta_{R}u_{k}\|^{p}_{L^{p}(e^{-f}\omega^{n})}\right)\\
&\leq C(p)\left(\int_{D_{R}}|u_{k}|^{p}\,e^{-f}\omega^{n}+\int_{M}|\eta_{R}u_{k}|^{p}\,e^{-f}\omega^{n}\right)\\
&\leq C(p)\left(\int_{D_{R}}|u_{k}|^{p}\,e^{-f}\omega^{n}+\int_{M}\left|\eta_{R}u_{k}-\fint_{M}\eta_{R}u_{k}\,e^{-f}\omega^{n}\right|^{p}\,e^{-f}\omega^{n}+\left|\fint_{M}\eta_{R}u_{k}\,e^{-f}\omega^{n}\right|^{p}\right)\\
&\leq C(p)\left(\int_{D_{R}}|u_{k}|^{p}\,e^{-f}\omega^{n}+\int_{M}\left|\eta_{R}u_{k}-\fint_{M}\eta_{R}u_{k}\,e^{-f}\omega^{n}\right|^{p}\,e^{-f}\omega^{n}
+\|u_{k}\|^{p}_{L^{p}(e^{-f}\omega^{n})}\|\eta_{R}\|^{p}_{L^{q}(e^{-f}\omega^{n})}\right)\\
&\leq C(p)\left(\int_{D_{R}}|u_{k}|^{p}\,e^{-f}\omega^{n}+\int_{M}\left|\eta_{R}u_{k}-\fint_{M}\eta_{R}u_{k}\,e^{-f}\omega^{n}\right|^{p}
\,e^{-f}\omega^{n}+\operatorname{vol}_{f}\left(M\setminus D_{\frac{R^{2}}{2}}\right)^{\frac{p}{q}}\right).
\end{split}
\end{equation}

Now, for $R>0$ sufficiently large, $\eta_{R}u_{k}$ is supported on the set where $\omega$ is isometric to $\widehat{\omega}$ via the
biholomorphism $\nu$ of Proposition \ref{background}, a manifold on which we know that the assertion already holds true \cite{milman}. Applying this observation to the middle term in the
last line of \eqref{estimate}, we arrive at the fact that for $R>0$ sufficiently large,
\begin{equation*}
\begin{split}
1&\leq C(p)\left(\int_{D_{R}}|u_{k}|^{p}\,e^{-f}\omega^{n}+\|\nabla^{g}(\eta_{R}u_{k})\|^{p}_{L^{p}(e^{-f}\omega^{n})}+\operatorname{vol}_{f}\left(M\setminus D_{\frac{R^{2}}{2}}\right)^{\frac{p}{q}}\right)\\
&\leq C(p)\left(\int_{D_{R}}|u_{k}|^{p}\,e^{-f}\omega^{n}+\|\nabla^{g}\eta_{R}\|^{p}_{L^{\infty}(M)}\|u_{k}\|^{p}_{L^{p}(e^{-f}\omega^{n})}+\|\nabla^{g}u_{k}\|^{p}_{L^{p}(e^{-f}\omega^{n})}
+\operatorname{vol}_{f}\left(M\setminus D_{\frac{R^{2}}{2}}\right)^{\frac{p}{q}}\right)\\
&\leq C(p)\left(\int_{D_{R}}|u_{k}|^{p}\,e^{-f}\omega^{n}+\frac{1}{R^{p}}+\frac{1}{k^{p}}+\operatorname{vol}_{f}\left(M\setminus D_{\frac{R^{2}}{2}}\right)^{\frac{p}{q}}\right).\\
\end{split}
\end{equation*}
As $u_{k}\to 0$ in $L^{p}_{\operatorname{\operatorname{loc}}}(M)$ as $k\to+\infty$, we see upon letting $k\to+\infty$ that for all $R>0$ sufficiently large,
$$1\leq C(p)\left(\frac{1}{R^{p}}+\operatorname{vol}_{f}\left(M\setminus D_{\frac{R^{2}}{2}}\right)^{\frac{p}{q}}\right).$$
Letting $R\to+\infty$ now yields the desired contradiction.
\end{proof}

\newpage
\section{Linear theory}\label{linear-theory-section}

Working again in the setting and notation of Theorem \ref{mainthm}, we set up the linear theory for metrics asymptotic to $\omega$.
Openness along the continuity path that we apply to solve \eqref{ast-0}
will automatically follow. Although Theorem \ref{mainthm} holds true for torus-invariant functions, in order
to remain as broad as possible, we present the linear theory under minimal assumptions,
namely for $JX$-invariant functions.

\subsection{Main setting}\label{setup}
Let $\tilde{g}$ be any $JX$-invariant K\"ahler metric on $M$ with K\"ahler form $\tilde{\omega}$ and Levi-Civita connection $\nabla^{\tilde{g}}$ satisfying
\begin{equation}\label{hyp-basic-ass}
|(\nabla^g)^{i}\mathcal{L}_X^{(j)}(\tilde{\omega}-\omega)|_{g}=O(r^{-\gamma})\qquad\textrm{for all $i,\,j\geq 0$,}
\end{equation}
for some $\gamma\in(0,\lambda^D)$, where $r=|z|^{\lambda}$ and $\lambda^D$ is the first non-zero eigenvalue of $-\Delta_{D}$ acting on $L^2$-functions on $D$. Write $X=\nabla^{\tilde{g}}\tilde{f}$ for some smooth function $\tilde{f}:M\rightarrow \R$, a function defined up to an additive constant
that is guaranteed to exist because as noted previously, $H^{1}(M,\,\mathbb{R})=0$ by toricity. We use $\nu$ to identify $M$ and $\widehat{M}$ so that $X=r\partial_{r}$ outside a compact set.
Since $\nabla^gf=X=\nabla^{\tilde{g}}\tilde{f}$, it follows from \eqref{hyp-basic-ass} that $|f-\tilde{f}|=O(r^{-\gamma+2})$ as $r\to+\infty$.
{Throughout, we denote $\Delta_{\tilde{g},\,X}:=\Delta_{\tilde{g}}-X$.}

We begin by identifying a good barrier function for this particular geometric setup.

\begin{lemma}\label{lemma-sub-sol-barrier}
For all $\delta\in(0,\,1)$, there exists $R(\delta)>0$ such that the function $e^{\delta f}$ is a sub-solution of the following equation:
\begin{equation*}
\begin{split}
\Delta_{\tilde{g},\,X}e^{\delta f}&\leq 0\qquad\text{on $f\geq R(\delta)$}.
\end{split}
\end{equation*}
Moreover, the logarithm and polynomial powers of $f$ {(which equals $\frac{|z|^{2\lambda}}{2}-1$ outside a compact subset of $M$)} satisfy for all $\delta >0$,
\begin{equation*}
\begin{split}
\Delta_{\tilde{g},\,X}f^{-\delta}=2\delta f^{-\delta}+O(f^{-\delta-1})\qquad\textrm{and}\qquad\Delta_{\tilde{g},\,X}\log (f+1)=-2
\qquad\textrm{outside a compact subset of $M$.}
\end{split}
\end{equation*}
\end{lemma}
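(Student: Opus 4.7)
The plan is to reduce all three assertions to explicit computations on the model soliton $(\widehat{M},\,\widehat{\omega},\,r\partial_r)$ and then absorb the $\tilde g$-versus-$\widehat g$ perturbation into controlled error terms. First I would invoke Lemma~\ref{warm} to identify $f$ with $\frac{r^2}{2}-1$ outside a compact subset of $M$; on the model this gives $X\cdot f = 2(f+1)$, $\Delta_{\widehat g}f = 2$ and $|\nabla f|^2_{\widehat g} = 2(f+1)$. The hypothesis \eqref{hyp-basic-ass} then yields $\Delta_{\tilde g}f = 2 + O(r^{-\gamma})$ and $|\nabla f|^2_{\tilde g} = 2(f+1)\bigl(1+O(r^{-\gamma})\bigr)$ outside a possibly larger compact set, while $X\cdot f$ is metric-independent.

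For the sub-solution property, I would apply the chain rule
\[
\Delta_{\tilde{g},\,X}e^{\delta f} = \delta\, e^{\delta f}\bigl[\Delta_{\tilde g}f + \delta\,|\nabla f|^2_{\tilde g} - X\cdot f\bigr]
\]
and substitute the identities above, so that the bracket reads $2 - 2(1-\delta)(f+1) + O\bigl(r^{-\gamma}(f+1)\bigr)$. Since $\delta<1$, the dominant negative term $-2(1-\delta)(f+1)$ controls the expression once $r$, equivalently $f$, exceeds a threshold $R(\delta)$ depending on $\delta$ and $\gamma$, yielding $\Delta_{\tilde{g},\,X}e^{\delta f}\le 0$ on $\{f\ge R(\delta)\}$ as required.

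For the remaining identities I would use the chain rule $\Delta_{\tilde g}\phi(f) = \phi'(f)\,\Delta_{\tilde g}f + \phi''(f)\,|\nabla f|^2_{\tilde g}$ together with $X\cdot\phi(f) = \phi'(f)\,X\cdot f$. Taking $\phi(f) = f^{-\delta}$, the contribution $-X\cdot f^{-\delta} = 2\delta\,f^{-\delta-1}(f+1)$ produces the principal term $2\delta\,f^{-\delta}$, while the remainder of $-X\cdot f^{-\delta}$, all terms from $\Delta_{\tilde g}f^{-\delta}$, and the perturbation error $O(r^{-\gamma}f^{-\delta-1})$ are all of size $O(f^{-\delta-1})$. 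Taking $\phi(f) = \log(f+1)$, the model identities give $\frac{\Delta_{\widehat g}f}{f+1} = \frac{|\nabla f|^2_{\widehat g}}{(f+1)^2} = \frac{2}{f+1}$, so these two contributions cancel exactly, leaving $-\frac{X\cdot f}{f+1} = -2$; the perturbation contributes only errors that vanish at infinity.

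The main quantitative point requiring attention is that $|\nabla f|^2$ grows linearly in $f$, so a multiplicative $O(r^{-\gamma})$ perturbation of $|\nabla f|^2_{\tilde g}$ contributes a term of size $r^{-\gamma}(f+1)$ which is not small in absolute terms. In the sub-solution step this must be dominated by the coefficient $2(1-\delta)$ of the main negative term, which is what forces $R(\delta)$ to depend on both $\delta$ and $\gamma$; in the other two computations the leading-order cancellations are algebraic and survive the perturbation with only lower-order error.
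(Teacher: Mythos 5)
Your treatment of the sub-solution bound for $e^{\delta f}$ and of the $f^{-\delta}$ asymptotics is correct and is essentially the paper's argument: the paper groups $\Delta_{\tilde g}f - X\cdot f$ and reduces to $\Delta_{g,X}f = -2f$ via Lemma~\ref{warm}, whereas you split out the individual model identities $\Delta_{\widehat g}f = 2$, $X\cdot f = 2(f+1)$, $|\nabla f|^2_{\widehat g} = 2(f+1)$, but the resulting computation is the same, and your closing observation about the $O(r^{-\gamma}(f+1))$ term being absorbed by the coefficient $2(1-\delta)$ is exactly the point.

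The gap is in the third identity. Your chain-rule argument for $\log(f+1)$ yields exact cancellation only for the model metric $\widehat g$; for $\tilde g$ the $O(r^{-\gamma})$ perturbations of $\Delta_{\tilde g}f$ and of $|\nabla f|^2_{\tilde g}$ do not cancel against one another, and you are left with $\Delta_{\tilde g,\,X}\log(f+1) = -2 + O\bigl(r^{-\gamma}/(f+1)\bigr)$. The lemma, however, asserts the \emph{exact} equality $\Delta_{\tilde g,\,X}\log(f+1) = -2$ outside a compact set, and this exactness is genuinely used downstream (e.g.\ in Claim~\ref{claim-sec-rough-growth}, where the constant $2A$ enters the maximum-principle inequality cleanly). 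A perturbative argument cannot produce this.

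The idea you are missing is that $\log(f+1) = \log(r^2/2) = 2\lambda\log|z| - \log 2$ is \emph{pluriharmonic} on the end where $\nu$ identifies $M\setminus K$ with $\widehat M\setminus\widehat K$: $i\partial\bar{\partial}\log(f+1) = 0$ there. Since $\tilde g$ is assumed K\"ahler, $\Delta_{\tilde g}\log(f+1) = 2\operatorname{tr}_{\tilde\omega} i\partial\bar{\partial}\log(f+1) = 0$ \emph{exactly}, for any such $\tilde g$ and not merely for the model. Together with $X\cdot\log(f+1) = r\partial_r\log(r^2/2) = 2$, this gives $\Delta_{\tilde g,\,X}\log(f+1) = -2$ with no error term. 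This algebraic fact sidesteps the asymptotics entirely and is the mechanism the paper relies on.
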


\begin{proof}
Using \eqref{hyp-basic-ass} {and the fact that $(\Delta_{\tilde{g}}-\Delta_{g})f=2(\Delta_{\tilde{\omega}}-\Delta_{\omega})f=(\tilde{\omega}-\omega)\ast i\partial\bar{\partial}f=O(|\tilde{g}-g|_{\tilde{g}})$, the last equality because the Hessian of $f$ is bounded on $M$}, we compute that
\begin{equation*}
\begin{split}
\Delta_{\tilde{g},\,X} e^{\delta f}&=\left(\delta \Delta_{\tilde{g},\,X}f+\delta^2|\nabla^{\tilde{g}}f|^2_{\tilde{g}}\right)e^{\delta f}\\
&=\delta\left( \Delta_{g,\,X}f+(\Delta_{\tilde{g},\,X}-\Delta_{g,\,X})f+\delta|\nabla^{\tilde{g}}f|^2_{\tilde{g}}\right)e^{\delta f}\\
&=\delta\left(-2f+\delta|\nabla^{\tilde{g}}f|^2_{\tilde{g}}+O(|\tilde{g}-g|_{\tilde{g}})\right)e^{\delta f}\\
&=\delta\left(-2f+\delta |X|^2_g(1+o(1))+o(1)\right)e^{\delta f}\\
&\leq 0
\end{split}
\end{equation*}
outside a sufficiently large compact subset of $M$. Here we have also used the fact that $|X|^2_g=2f+2$ and $\delta \in(0,\,1)$ in the last line.

A similar computation based on the asymptotics of $\tilde{g}$ given by \eqref{hyp-basic-ass} shows that
\begin{equation*}
\begin{split}
\Delta_{\tilde{g},\,X}f^{-\delta}&=(\Delta_{\tilde{g}}-X)(f^{-\delta})\\
&=-\delta f^{-\delta-1}(\Delta_{\tilde{g}}f-X\cdot f)+\delta(\delta+1)f^{-\delta-2}|\nabla^{\tilde{g}}f|_{\tilde{g}}^{2}\\
&=-\delta f^{-\delta-1}(\Delta_{g}f-X\cdot f)-\delta f^{-\delta-1}(\Delta_{\tilde{g}}f-\Delta_{g}f)+\delta(\delta+1)f^{-\delta-2}|\nabla^{\tilde{g}}f|_{\tilde{g}}^{2}\\
&=2\delta f^{-\delta}-\delta f^{-\delta-1}\underbrace{(\Delta_{\tilde{g}}f-\Delta_{g}f)}_{=\,O(|\tilde{g}-g|_{g})}+\delta(\delta+1)f^{-\delta-2}\underbrace{|\nabla^{\tilde{g}}f|_{\tilde{g}}^{2}}_{=\,O(|X|^{2}_{g})\,=\,O(f)}\\
&=2\delta f^{-\delta}+O(f^{-\delta-1}).
\end{split}
\end{equation*}
As $\log(r^2)$ is pluriharmonic outside a compact set, the fact that $X=r\partial_r$ outside a compact set gives us that
\begin{equation*}
\Delta_{\tilde{g},\,X}\log\left(f+1\right)=\Delta_{\tilde{g},\,X}\log\left(r^{2}\right)=-2
\end{equation*}
outside a compact subset of $M$, as claimed.
\end{proof}

\subsection{Function spaces}\label{function-spaces-subsection}

We next define the function spaces within which we will work.
\begin{itemize}
\item For $\beta\in\R$ and $k$ a non-negative integer, define $C_{X,\,\beta}^{2k}(M)$ to be the space of $JX$-invariant continuous functions $u$ on $M$ with $2k$ continuous derivatives such that
\begin{equation*}
\norm{u}_{C^{2k}_{X,\,\beta}} :=\sum_{i+2j\leq2k}\sup_{M}\left|f^{\frac{\beta}{2}}(\nabla^{\tilde{g}})^i\left(\mathcal{L}_{X}^{(j)}u\right)\right|_{\tilde{g}} < \infty.
\end{equation*}
Thanks to \eqref{hyp-basic-ass}, this norm is equivalent to that defined with respect to the background metric $g$, hence we may
use either $\tilde{g}$ or $g$ with our particular choice depending on the context. Similarly, as $f$ and $\tilde{f}$ are equivalent at infinity, these function
spaces can be defined in terms of either of these two potential functions.
Define $C_{X,\,\beta}^{\infty}(M)$ to be the intersection of the spaces $C_{X,\,\beta}^{2k}(M)$ over all $k\in \N_0$.

{Notice in the definition of the above norm that the number of spatial derivatives that appear in each summand is no more than
twice the number of Lie derivative terms that appear. This is because, when solving the Poisson equation for the weighted Laplacian
as defined in \eqref{setup}, the weighted Laplacian can be treated as a second order parabolic operator with the time derivative corresponding to the $X$-derivative.
These heuristics are used in the proof of Theorem \ref{iso-sch-Laplacian-pol} below. } \\

\item Let $\delta(\tilde{g})$ denote the injectivity radius of $\tilde{g}$, write $d_{\tilde{g}}(x,\,y)$ for the distance with respect to $\tilde{g}$ between two points $x,\,y\in M$,
and let $\varphi^{X}_{t}$ denote the flow of $X$ for time $t$. A tensor $T$ on $M$ is said to be in $C_{\beta}^{0,\,2\alpha}(M)$, $\alpha\in\left(0,\,\frac{1}{2}\right)$, if
 \begin{equation*}
 \begin{split}
\left[T\right]_{C^{0,\,2\alpha}_{\beta}}:=&\sup_{\substack{x\,\neq\,y\,\in\,M \\d_{\tilde{g}}(x,y)\,<\,\delta(\tilde{g})}}\left[\min(f(x),f(y))^{\frac{\beta}{2}}\frac{\arrowvert T(x)-P_{x,\,y}T(y)\arrowvert_h}{d_{\tilde{g}}(x,\,y)^{2\alpha}}\right]\\
&+\sup_{\substack{x\,\in\, M \\ t\,\neq\,s\,\geq\,1}}\left[\min(t,s)^{\frac{\beta}{2}}\frac{\arrowvert (\varphi^{X}_t)_{\ast}T(x)-(\widehat{P}_{\varphi^{X}_s(x),\,\varphi^{X}_t(x)}((\varphi^{X}_s)_{\ast}T(x)))\arrowvert_h}{|t-s|^{\alpha}}\right]<+\infty,
\end{split}
\end{equation*}
where $P_{x,\,y}$ denotes parallel transport along the unique geodesic joining $x$ and $y$, and
$\widehat{P}_{\varphi^{X}_s(x),\,\varphi^{X}_t(x)}$ denotes parallel transport along the unique flow-line of $X$ joining $\varphi^{X}_s(x)$ and $\varphi^{X}_t(x)$.

\item For $\beta\in\R$, $k$ a non-negative integer, and $\alpha\in\left(0,\,\frac{1}{2}\right)$, define the H\"older space $C_{X,\,\beta}^{2k,\,2\alpha}(M)$ with polynomial weight $f^{\frac{\beta}{2}}$ to be the set of $u\in C_{X,\,\beta}^{2k}(M)$ for which the norm
\begin{equation*}
\norm{u}_{C_{X,\,\beta}^{2k,\,2\alpha}}:=\norm{u}_{C^{2k}_{X,\,\beta}} +\sum_{i+2j\,=\, 2k}\left[\left(\nabla^{\tilde{g}}\right)^i\left(\mathcal{L}_{X}^{(j)}u\right)\right]_{C^{0,\,2\alpha}_{\beta}}
\end{equation*}
is finite. It is straightforward to check that the space $C^{2k,\,2\alpha}_{X,\,\beta}(M)$ is a Banach space.
The intersection
$\bigcap_{k\,\geq\,0}C^{2k}_{X,\,\beta}(M)$ we denote by $C^{\infty}_{X,\,\beta}(M)$.

\item We now consider a smooth cut-off function $\chi:M\rightarrow[0,\,1]$ which equals $1$ outside a compact set.
The source function space $\mathcal{D}^{2k+2,\,2\alpha}_{X,\,\beta}(M)$ is defined as
\begin{equation*}
\mathcal{D}^{2k+2,\,2\alpha}_{X,\,\beta}(M):=\left(\R\chi\log r\oplus\R\oplus C^{2k+2,\,2\alpha}_{X,\,\beta}(M)\right),
\end{equation*}
endowed with the norm
\begin{equation*}
\begin{split}
\norm{u}_{\mathcal{D}_{X,\,\beta}^{2k+2,\,2\alpha}}&:=|c_1|+|c_2|+\|\tilde{u}\|_{C_{X,\,\beta}^{2k+2,\,2\alpha}},\\
u&:=c_1\chi \log r+c_2+\tilde{u}.
\end{split}
\end{equation*}
The target function space is defined as
 \begin{equation*}
\mathcal{C}^{2k,\,2\alpha}_{X,\,\beta}(M):=\left(\R\oplus C^{2k,\,2\alpha}_{X,\,\beta}(M)\right),
\end{equation*}
endowed with a norm defined in a similar manner as above. We define
\begin{equation*}
\mathcal{C}^{\infty}_{X,\,\beta}(M):=\bigcap_{k\,\geq\,0} \mathcal{C}^{2k,\,2\alpha}_{X,\,\beta}(M).
\end{equation*}
\item Finally, we define the spaces
\begin{equation*}
\begin{split}
\mathcal{M}^{2k+2,\,2\alpha}_{X,\,\beta}(M)&:=\left\{\varphi\in C^2_{\operatorname{\operatorname{loc}}}(M)\,|\,\tilde{\omega}+i\partial\bar{\partial}\varphi>0\right\}\bigcap \mathcal{D}^{2k+2,\,2\alpha}_{X,\,\beta}(M),
\end{split}
\end{equation*}
and we will work with the following convex set of K\"ahler potentials:
\begin{equation*}
\mathcal{M}^{\infty}_{X,\,\beta}(M)=\bigcap_{k\,\geq\,0}\,\mathcal{M}^{2k+2,\,2\alpha}_{X,\,\beta}(M).
\end{equation*}
Notice that for each $k\geq 0$, the spaces $\mathcal{M}^{2k+2,\,2\alpha}_{X,\,\beta}(M)$ depend on the choice of a background metric $\tilde{\omega}$.
However, these spaces are all equivalent as soon as $\tilde{\omega}$ satisfies \eqref{hyp-basic-ass}.
\end{itemize}

\subsection{Preliminaries and Fredholm properties of the linearised operator}\label{linear}
We proceed with the same set-up as in Section \ref{setup}, beginning with the following useful observation.

\begin{lemma}\label{lemma-preserved-int}
Let $(\varphi_t)_{t\in[0,\,1]}$ be a $C^1$-path of smooth functions in $\mathcal{M}^{\infty}_{X,\,\beta}(M)$ for some $\beta>0$ and write
$\tilde{\omega}_{t}:=\tilde{\omega}+i\partial\bar{\partial}\varphi_{t}>0$ and $\tilde{f}_{t}:=\tilde{f}+\frac{X}{2}\cdot\varphi_{t}$ so that $-d\tilde{\omega}_{t}\lrcorner JX=d\tilde{f}_{t}$.
\begin{enumerate}
  \item
 Let $G:\R\rightarrow \R$ be a $C^1$-function such that for some $-\infty<\alpha<1$, $|G(x)|+|G'(x)|\leq e^{\alpha x}$, $x\geq -C$.
Then
\begin{equation*}
\int_MG(\tilde{f}_{t})\,e^{-\tilde{f}_{t}}\tilde{\omega}_{t}^n=\int_MG(\tilde{f}_{0})\,e^{-\tilde{f}_{0}}\tilde{\omega}_{0}^n,\qquad t\in[0,\,1].
\end{equation*}
  \item $\int_{0}^{1}\int_{M}|\dot{\varphi}_{t}|\,e^{-\tilde{f}_{t}}\tilde{\omega}^{n}_{t}\,dt<+\infty$ and $\int_{0}^{1}\int_{M}|\dot{\varphi}_{t}|\,e^{-\tilde{f}}\tilde{\omega}^{n}\,dt<+\infty$.
\end{enumerate}
\end{lemma}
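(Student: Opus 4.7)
My plan is to show that the integral $I(t) := \int_M G(\tilde f_t) e^{-\tilde f_t}\tilde\omega_t^n$ is constant in $t$ by computing $I'(t)$ and showing it vanishes. The starting observation is that, since $\tilde\omega_t(J\cdot,\cdot) = \tilde g_t(\cdot,\cdot)$ and $-\tilde\omega_t\lrcorner JX = d\tilde f_t$, one has $X = \nabla^{\tilde g_t}\tilde f_t$ for every $t$; in particular the drift Laplacian $\Delta_{\tilde\omega_t} - X$ is symmetric with respect to $e^{-\tilde f_t}\tilde\omega_t^n$. Using $\dot{\tilde f}_t = \tfrac12 X\cdot \dot\varphi_t$ and $\frac{d}{dt}\tilde\omega_t^n = n\, i\partial\bar\partial\dot\varphi_t\wedge\tilde\omega_t^{n-1}$, differentiation under the integral gives
\begin{equation*}
I'(t) = \int_M(G'-G)(\tilde f_t)\,\tfrac{X\cdot\dot\varphi_t}{2}\,e^{-\tilde f_t}\tilde\omega_t^n + \int_M G(\tilde f_t)e^{-\tilde f_t}\, n\,i\partial\bar\partial\dot\varphi_t\wedge\tilde\omega_t^{n-1}.
\end{equation*}
I would handle the second term by writing $2i\partial\bar\partial\dot\varphi_t = dd^c\dot\varphi_t$, using $d\tilde\omega_t^{n-1} = 0$ to apply Stokes' theorem, and then invoking the standard identity $n\,du\wedge d^c v\wedge\omega^{n-1} = g(\nabla u,\nabla v)\,\omega^n$ with $u = \tilde f_t$, $v = \dot\varphi_t$. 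Since $\nabla^{\tilde g_t}\tilde f_t = X$, the resulting expression is precisely $-\int_M(G'-G)(\tilde f_t)\tfrac{X\cdot\dot\varphi_t}{2}e^{-\tilde f_t}\tilde\omega_t^n$, which cancels the first term.

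\textbf{The key obstacle} will be justifying both the differentiation under the integral and, especially, the vanishing of the boundary term in Stokes' theorem. Here I would exhaust $M$ by the sublevel sets $\{\tilde f_t \leq R\}$, which are compact because $\tilde f_t$ is proper (since $\tilde f_t = \tilde f + \tfrac12 X\cdot\varphi_t$ and $X\cdot\varphi_t$ is bounded by the definition of $\mathcal{D}^{\infty}_{X,\beta}$, while $\tilde f$ is proper by Lemma \ref{warm} combined with \eqref{hyp-basic-ass}). On the boundary $\{\tilde f_t = R\}$ the integrand is dominated by $|G(\tilde f_t)|e^{-\tilde f_t}|d\dot\varphi_t|_{\tilde g_t}|\tilde\omega_t^{n-1}|_{\tilde g_t} \lesssim e^{(\alpha - 1)R}\cdot R^{N}$ for some $N$, because $\alpha < 1$ by hypothesis and $|d\dot\varphi_t|_{\tilde g_t}$ grows at most polynomially in $r$ (indeed is bounded outside a compact set, using $\dot\varphi_t = c_1\chi\log r + c_2 + \tilde u$ with $\tilde u \in C^\infty_{X,\beta}$). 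Exponential decay then forces the boundary contribution to vanish as $R\to+\infty$, and the same estimate serves as an integrable dominating function to justify differentiation under the integral.

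\textbf{Part (ii).} Since the path is $C^1$ into $\mathcal{M}^\infty_{X,\beta}$, the norms $\|\dot\varphi_t\|_{\mathcal{D}^{2k+2,2\alpha}_{X,\beta}}$ are bounded uniformly in $t\in[0,1]$. This gives a uniform pointwise bound of the form $|\dot\varphi_t(x)|\leq C(\log r(x)+1)$ outside a compact set, and moreover a uniform $C^2$ bound on $\varphi_t$, hence uniform equivalence of the metrics $\tilde\omega_t \sim \tilde\omega$ and of the potentials $\tilde f_t = \tilde f + O(1)$ (the $O(1)$ following because $X\cdot\varphi_t$ is bounded on $M$, as $X = r\partial_r$ asymptotically and the logarithmic piece of $\varphi_t$ contributes a constant under $X$). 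Consequently
\begin{equation*}
\int_0^1\!\!\int_M|\dot\varphi_t|\,e^{-\tilde f_t}\tilde\omega_t^n\,dt + \int_0^1\!\!\int_M|\dot\varphi_t|\,e^{-\tilde f}\tilde\omega^n\,dt \;\lesssim\; \int_M(\log r + 1)e^{-\tilde f}\tilde\omega^n,
\end{equation*}
which is finite by the Gaussian decay $e^{-\tilde f}\sim e^{-r^{2}/2}$ at infinity. This completes the proof.
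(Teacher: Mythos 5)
Your proof is correct and follows essentially the same approach as the paper's: differentiate the weighted integral, then integrate by parts using $\nabla^{\tilde g_t}\tilde f_t = X$ to produce the exact cancellation of the two resulting terms. You are more careful than the paper in justifying the vanishing of the Stokes boundary term and the interchange of $\frac{d}{dt}$ with the integral, which the paper leaves implicit.
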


\begin{proof}
\begin{enumerate}
\item By differentiating, one sees that
\begin{equation*}
\begin{split}
\frac{d}{dt}\left(\int_MG(\tilde{f}_{t})\,e^{-\tilde{f}_{t}}\tilde{\omega}_{t}^n\right)&=\int_MG'(\tilde{f}_{t})\frac{X}{2}\cdot \dot{\varphi}_t\,e^{-\tilde{f}_{t}}\tilde{\omega}_{t}^n+\int_MG(\tilde{f}_{t})\left(\Delta_{\tilde{\omega}_{t}}\dot{\varphi}_t-\frac{X}{2}\cdot \dot{\varphi}_t\right)\,e^{-\tilde{f}_{t}}\tilde{\omega}_{t}^n\\
&=\int_MG'(\tilde{f}_{t})\frac{X}{2}\cdot \dot{\varphi}_t\,e^{-\tilde{f}_{t}}\tilde{\omega}_{t}^n-\frac{1}{2}\int_MG'(\tilde{f}_{t})\nabla^{g_{\varphi_t}}\tilde{f}_{t}\cdot \dot{\varphi}_t\,e^{-\tilde{f}_{t}}\tilde{\omega}_{t}^n\\
&=0.
\end{split}
\end{equation*}
Here, we have used integration by parts together with the fact that $X=\nabla^{\tilde{g}_{t}}\tilde{f}_{t}$ for all $t\in[0,\,1]$, where $\tilde{g}_{t}$ denotes the
K\"ahler metric associated to $\tilde{\omega}_{t}$.

\item First note that by definition of the function space, the weighted measures $e^{-\tilde{f}_{t}}\tilde{\omega}^{n}_{t}$ and $e^{-\tilde{f}}\tilde{\omega}^{n}$ are equivalent to each other. Therefore it suffices to verify only that $\int_{0}^{1}\int_{M}|\dot{\varphi}_{t}|\,e^{-\tilde{f}}\tilde{\omega}^{n}\,dt<+\infty$. But
from the definition of the function space and $\tilde{\omega}$, this is trivially satisfied.

\end{enumerate}
\end{proof}

Next, define the following map as in \cite{siepmann}:
\begin{equation*}
\begin{split}
MA_{\tilde{\omega}}:\psi\in&\left\{\varphi\in C^2_{\operatorname{\operatorname{loc}}}(M)\,|\,\tilde{\omega}_{\varphi}:=\tilde{\omega}+i\partial\bar{\partial}\varphi>0\right\}\mapsto\log\left(\frac{\tilde{\omega}_{\psi}^n}{\tilde{\omega}^n}\right)
-\frac{X}{2}\cdot\psi\in\mathbb{R}.
\end{split}
\end{equation*}
For any $\psi\in C_{\operatorname{\operatorname{loc}}}^{2}(M)$, let
$\tilde{g}_{\psi}$ (respectively $\tilde{g}_{t\psi}$) denote the K\"ahler metric associated to the K\"ahler form $\tilde{\omega}_{\psi}$
(resp.~$\tilde{\omega}_{t\psi}$ for any $t\in[0,\,1]$). Brute force computations show that
\begin{equation}
\begin{split}
MA_{\tilde{\omega}}(0)&=0,\nonumber\\
D_{\psi}MA_{\tilde{\omega}}(u)&=\Delta_{\tilde{\omega}_{\psi}}u-\frac{X}{2}\cdot u,\quad u\in C^2_{\operatorname{\operatorname{loc}}}(M),\nonumber\\
\frac{d^2}{dt^2}\left(MA_{\tilde{\omega}}(t\psi)\right)&=\frac{d}{dt}(\Delta_{\tilde{\omega}_{t\psi}}\psi)=-\arrowvert\partial\bar{\partial}\psi\arrowvert^2_{\tilde{g}_{t\psi}}\quad\textrm{for $t\in[0,\,1]$},\label{equ:sec-der}
 \end{split}
 \end{equation}
\begin{equation}\label{equ:taylor-exp}
 \begin{split}
 MA_{\tilde{\omega}}(\psi)&=MA_{\tilde{\omega}}(0)+\left.\frac{d}{dt}\right|_{t\,=\,0}MA_{\tilde{\omega}}(t\psi)+\int_0^1\int_0^{u}\frac{d^2}{dt^2}(MA_{\tilde{\omega}}(t\psi))\,dt\,du\\
 &=\Delta_{\tilde{\omega}}\psi-\frac{X}{2}\cdot\psi-\int_0^1\int_0^{u}\arrowvert \partial\bar{\partial}\psi\arrowvert^2_{\tilde{g}_{t\psi}}\,dt\,du.
 \end{split}
 \end{equation}

The main result of this section is that the drift Laplacian of $\tilde{g}$ is an isomorphism between polynomially weighted function spaces with zero mean value.

\begin{theorem}\label{iso-sch-Laplacian-pol}
Let $\alpha\in\left(0,\,\frac{1}{2}\right)$, $k\in\mathbb{N}$, and $\beta\in(0,\lambda^D)$. Then the drift Laplacian
\begin{equation*}
\begin{split}
\Delta_{\tilde{g},\,X}:\mathcal{D}^{2k+2,\,2\alpha}_{X,\,\beta}(M)\cap\left\{\int_{M}u\,e^{-\tilde{f}}\tilde{\omega}^{n}=0\right\}\rightarrow \mathcal{ C}^{2k,\,2\alpha}_{X,\,\beta}(M)\cap\left\{\int_{M}v\,e^{-\tilde{f}}\tilde{\omega}^{n}=0\right\}
\end{split}
\end{equation*}
is an isomorphism of Banach spaces.
\end{theorem}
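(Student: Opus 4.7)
The plan is to prove the theorem in three stages, following the classical elliptic isomorphism strategy adapted to the drift Laplacian on our metric measure space. First, the continuity of $\Delta_{\tilde{g},X}$ between these spaces is readily verified: on $\tilde{u}\in C^{2k+2,2\alpha}_{X,\beta}(M)$ it follows from \eqref{hyp-basic-ass} together with local Schauder estimates, while Lemma \ref{lemma-sub-sol-barrier} implies
\begin{equation*}
\Delta_{\tilde{g},X}(\chi\log r)=-\chi+h_{0}
\end{equation*}
for some $h_{0}\in C^{\infty}_{c}(M)$, so the distinguished basis vector $\chi\log r$ of the source is sent into the $\mathbb{R}$-summand (plus a compactly supported error) of the target, and constants are annihilated. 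The mean-zero condition on the range is preserved by integration by parts with test function $1$, using that on $\partial\{\tilde{f}\leq R\}$ the weight $e^{-\tilde{f}}\sim e^{-R}$ dominates the polynomially growing surface area and the at most logarithmic growth of $u$ and $|\nabla u|_{\tilde{g}}$.

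For injectivity, suppose $u\in\mathcal{D}^{2k+2,2\alpha}_{X,\beta}(M)$ has zero weighted integral and $\Delta_{\tilde{g},X}u=0$. Integration by parts against $u$ on the sublevel sets $\{\tilde{f}\leq R\}$, combined with the same boundary estimate, yields
\begin{equation*}
\int_{M}|\nabla u|_{\tilde{g}}^{2}\,e^{-\tilde{f}}\tilde{\omega}^{n}=0,
\end{equation*}
so $u$ is constant and hence zero by the normalisation.

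For surjectivity, given $v=d+\tilde{v}\in\mathcal{C}^{2k,2\alpha}_{X,\beta}(M)$ with mean zero, the plan is to cancel the asymptotic constant $d$ by the ansatz $u=-d\chi\log r+\tilde{u}$. By the formula above this reduces the problem to
\begin{equation*}
\Delta_{\tilde{g},X}\tilde{u}=w,\qquad w:=d(1-\chi)+d\,h_{0}+\tilde{v}\in C^{2k,2\alpha}_{X,\beta}(M),
\end{equation*}
and the identity $\int_{M}\Delta_{\tilde{g},X}(\chi\log r)\,e^{-\tilde{f}}\tilde{\omega}^{n}=0$ (self-adjointness with test function $1$) combined with the mean-zero assumption on $v$ gives $\int_{M}w\,e^{-\tilde{f}}\tilde{\omega}^{n}=0$. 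Proposition \ref{poincare} with $p=2$ makes the Dirichlet form coercive on the mean-zero subspace of $W^{1,2}(e^{-\tilde{f}}\tilde{\omega}^{n})$, so Lax--Milgram produces a unique weak solution $\tilde{u}$ of mean zero in this Hilbert space; interior Schauder regularity (together with $JX$-invariance, which is inherited by the unique mean-zero solution because the data is $JX$-invariant) upgrades $\tilde{u}$ to $C^{2k+2,2\alpha}_{\mathrm{loc}}(M)$.

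The hard part will be the polynomial decay $\tilde{u}\in C^{2k+2,2\alpha}_{X,\beta}(M)$, which is the main obstacle. The plan here is to exploit that at infinity $(M,\tilde{g})$ is modelled on $(\widehat{M},\widehat{\omega})=(\mathbb{C}\times D,\widehat{\omega}_{\lambda})$, where $JX$-invariant functions separate as $u=\sum_{k\geq 0}u_{k}(\rho)\phi_{k}(x)$ with $\{\phi_{k}\}$ an $L^{2}$-eigenbasis of $-\Delta_{D}$ with eigenvalues $\lambda_k^D$, $\lambda_0^D=0$, $\lambda_1^D=\lambda^D$. The equation $\Delta_{\widehat{g},X}u=w$ splits mode by mode into radial ODEs: for $k=0$, the non-decaying homogeneous solutions are precisely the constants and $\log r$ already subtracted from $\tilde{u}$; for $k\geq 1$, an asymptotic analysis of the ODE shows that the decaying branch decays faster than any $f^{-\beta/2}$ with $\beta<\lambda^{D}$, while the growing branch is excluded by $L^{2}(e^{-\tilde{f}})$-membership of $\tilde{u}$. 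This model analysis is transferred to the actual metric $\tilde{g}$ via the barrier $f^{-\delta}$ of Lemma \ref{lemma-sub-sol-barrier}, which satisfies $\Delta_{\tilde{g},X}f^{-\delta}=2\delta f^{-\delta}+O(f^{-\delta-1})$ and thus furnishes a maximum-principle comparison on $\{\tilde{f}\geq R\}$ that bootstraps the decay rate of $\tilde{u}$ up to $f^{-\beta/2}$. Derivative control is then obtained by commuting $\mathcal{L}_{X}$ with the equation, the commutator being lower-order via \eqref{hyp-basic-ass}, together with scaled interior Schauder estimates. The sharp condition $\beta<\lambda^{D}$ is precisely what prevents the first non-constant $D$-mode from obstructing the prescribed decay. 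Bijectivity together with continuity then yields the isomorphism via the open mapping theorem.
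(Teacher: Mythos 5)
Your plan follows essentially the same route as the paper's proof: subtract a multiple of $\chi\log r$ so the residual data decays, produce a weak solution from the Poincar\'e inequality/spectral theory, and then establish polynomial decay by reducing to the product model $\mathbb{C}\times D$ at infinity, separating the $D$-eigenmodes, and comparing with the barrier $f^{-\delta}$; the sharp threshold $\beta<\lambda^{D}$ arises for the same reason (the first non-zero $D$-eigenvalue). The genuine gap is between the $H^{1}(e^{-\tilde f}\tilde\omega^{n})$ weak solution and the start of this decay analysis. You appeal to ``$L^{2}(e^{-\tilde f})$-membership of $\tilde u$'' to exclude the growing branch of each radial ODE and then to the barrier $f^{-\delta}$ of Lemma \ref{lemma-sub-sol-barrier} to transfer to $\tilde g$. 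But the maximum-principle comparison $\tilde u\le Af^{-\delta}$ on $\{\tilde f\ge R\}$ can only be initiated once one knows a priori that $\tilde u-Af^{-\delta}$ attains its supremum there, i.e.\ that $\tilde u$ is pointwise subexponential; mere $L^{2}$-membership does not supply pointwise control. Moreover the separation-of-variables decoupling holds only for the exact product metric: on $\tilde g$ the error $(\tilde g^{-1}-g^{-1})\ast\partial\bar\partial\tilde u$ mixes modes and involves the Hessian of $\tilde u$, which is not yet controlled. The paper bridges this with three preliminary estimates before the mode analysis begins: a local Nash--Moser iteration on the conjugated equation for $v:=e^{-\tilde f/2}u$ giving the rough bound $|u|\le Ce^{\tilde f/2}\|F\|_{C^{0}}$; a maximum principle with the barrier $A\log(f+1)+\varepsilon e^{\delta\tilde f}$ (both pieces supplied by Lemma \ref{lemma-sub-sol-barrier}) upgrading this to logarithmic growth; and local parabolic Schauder estimates on annuli for first and second derivative control. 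Without analogues of these steps your barrier and bootstrap arguments cannot start.

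A smaller inaccuracy: $\log r$ is not a homogeneous solution of the $k=0$ radial ODE. It is pluriharmonic, so $\Delta_{\widehat g}\log r=0$, while $X\cdot\log r=1$, hence $\Delta_{\widehat g,X}\log r=-1$, exactly as recorded in Lemma \ref{lemma-sub-sol-barrier}. The $k=0$ homogeneous kernel consists of constants together with the exponentially growing primitive $\int^{r}s^{-1}e^{s^{2}/2}\,ds$. The $\chi\log r$ summand in $\mathcal{D}^{2k+2,\,2\alpha}_{X,\,\beta}(M)$ therefore enters as a particular solution absorbing the asymptotic constant of the data, which is precisely how your surjectivity ansatz $u=-d\chi\log r+\tilde u$ uses it; you should phrase it that way rather than as a kernel element.
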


\begin{remark}
In the statement of Theorem \ref{iso-sch-Laplacian-pol}, if $D=\P^1$ endowed with its metric of constant sectional curvature $1$,
then $\lambda^{\mathbb{P}^{1}}=2$ and correspondingly $\beta\in(0,\,\lambda^{\mathbb{P}^{1}})=(0,\,2)$. In general, Lichnerowicz's estimate implies that $\lambda^D\geq 2$;
see \cite[Theorem $6.14$]{book:Ballmann} for a proof. The rate $\gamma$ from \eqref{hyp-basic-ass} can
take any value in the interval $(0,\lambda^D)$. In Section \ref{invert-poly}, we will apply Theorem \ref{iso-sch-Laplacian-pol} with $\gamma=\beta$.
\end{remark}

\begin{proof}[Proof of Theorem \ref{iso-sch-Laplacian-pol}]
{First observe that the drift Laplacian $\Delta_{\tilde{g},\,X}$ is symmetric with respect to the
weighted measure $e^{-\tilde{f}}\tilde{\omega}^n$, a measure with finite volume. Set
\begin{equation*}
\begin{split}
H^1_{\tilde{f}}(M)&:=\left\{u\in H^1_{\operatorname{\operatorname{loc}}}(M)\quad\text{$JX$-invariant}\quad\left.\right|\quad  u\in L^2(e^{-\tilde{f}}\tilde{\omega}^n),\, \nabla^{\tilde{g}}u\in L^2(e^{-\tilde{f}}\tilde{\omega}^n)\right\},\\
W^2_{\tilde{f}}(M)&:=\left\{u\in H^1_{\tilde{f}}(M)\,\quad\left.\right|\quad \Delta_{\tilde{\omega},\,X}u\in L^2(e^{-\tilde{f}}\tilde{\omega}^n)\right\},
\end{split}
\end{equation*}
endowed with the obvious norms induced by that of $L^2(e^{-\tilde{f}}\tilde{\omega}^n)$.
It can be shown that the operator $\Delta_{\tilde{g},\,X}$ restricted to compactly supported smooth $JX$-invariant functions admits a unique self-adjoint extension to $W^2_{\tilde{f}}(M)$,
with domain contained in $H_{\tilde{f}}^1(M)$ and with discrete $L^2(e^{-\tilde{f}}\tilde{\omega}^n)$-spectrum; see \cite[Proposition $6.13$]{Der-Com-Egs} and \cite[Theorem $4.6$]{Gri-Boo} in the context of expanding gradient Ricci solitons, but whose proofs can be adapted to the current situation.
Observe also that the kernel (and hence the cokernel) of this operator is the constant functions. By considering any function $F$ in the codomain as an element of the weighted $L^2$-space $L^2(e^{-\tilde{f}}\tilde{\omega}^n)$, we can therefore find a unique weak solution $u\in H^1(e^{-\tilde{f}}\tilde{\omega}^n)$ with zero weighted mean value of the equation}
\begin{equation}\label{love-drift-lap}
\Delta_{\tilde{g},\,X} u=F.
\end{equation}
In addition, we have the estimate
\begin{equation}
\begin{split}\label{weak-apriori-bd-lin-th}
\|u\|_{L^2(e^{-\tilde{f}}\tilde{\omega}^n)}+\|\nabla^{\tilde{g}} u\|_{L^2(e^{-\tilde{f}}\tilde{\omega}^n)}&\leq C\|F\|_{L^2(e^{-\tilde{f}}\tilde{\omega}^n)}\leq C\|F\|_{C^0}
\end{split}
\end{equation}
for some positive constant $C$ independent of $u$ and $F$ that may vary from line to line. This estimate essentially follows from the weighted $L^2$-Poincar\'e inequality with respect to the drift Laplacian $\Delta_{\tilde{g}}-X\cdot$. We improve on the regularity of $u$ through a series of claims.

\begin{claim}\label{claim-first-rough-growth}
There exists a positive constant $C=C(\tilde{\omega},n)$ such that $$|u(x)|\leq Ce^{ \frac{\tilde{f}(x)}{2}}\|F\|_{C^0},\qquad x\in M.$$
\end{claim}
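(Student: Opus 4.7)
The plan is to upgrade the weak $H^{1}(e^{-\tilde{f}}\tilde{\omega}^n)$-solution $u$ of $\Delta_{\tilde{g},X}u=F$ to a pointwise bound by combining the weighted $L^2$-estimate \eqref{weak-apriori-bd-lin-th} with standard interior elliptic (Moser) regularity. The exponential factor $e^{\tilde{f}(x)/2}$ will arise naturally from converting the global weighted norm $\|u\|_{L^2(e^{-\tilde{f}}\tilde{\omega}^n)}$ into a local unweighted $L^2$-norm on geodesic balls on which the weight $e^{-\tilde{f}}$ is essentially constant.

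For a point $x\in M$ with $\tilde{f}(x)$ large, I will choose a geodesic $\tilde{g}$-ball $B(x, r_x)$ whose radius is tailored to the local scale of the drift. Since $|X|_{\tilde{g}} = |\nabla^{\tilde{g}}\tilde{f}|_{\tilde{g}}\sim\sqrt{2\tilde{f}}$ at infinity (by \eqref{hyp-basic-ass} together with the identity $|X|_{g}^2 = 2f+2$ coming from Lemma \ref{warm} and the product soliton structure of the model), the natural scale is $r_x\sim\tilde{f}(x)^{-1/2}$; on $B(x, r_x)$, the function $\tilde{f}$ oscillates by only $O(1)$ and the weight satisfies $e^{-\tilde{f}} \asymp e^{-\tilde{f}(x)}$. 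Rescaling the metric by $\tilde{f}(x)$ turns $B(x, r_x)$ into a unit ball and reduces the first-order coefficient of $\Delta_{\tilde{g},X}$ to $O(1)$; the uniform bounded geometry of $\tilde{g}$, inherited from the background $g$ via \eqref{hyp-basic-ass}, guarantees that all Moser constants are uniform in $x$.

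Standard interior Moser iteration applied to $\Delta_{\tilde{g},X}u=F$ on the rescaled unit ball then yields
\begin{equation*}
|u(x)|\leq C\left(r_x^{-n}\|u\|_{L^2(B(x,r_x),\,\tilde{\omega}^n)} + r_x^2\|F\|_{C^0}\right)
\end{equation*}
with $C=C(\tilde{\omega},n)$ independent of $x$. Using $e^{-\tilde{f}}\geq c e^{-\tilde{f}(x)}$ on $B(x,r_x)$ to convert the unweighted integral to a weighted one,
\begin{equation*}
\|u\|_{L^2(B(x,r_x),\,\tilde{\omega}^n)}^2 \leq Ce^{\tilde{f}(x)}\|u\|_{L^2(e^{-\tilde{f}}\tilde{\omega}^n)}^2 \leq Ce^{\tilde{f}(x)}\|F\|_{C^0}^2
\end{equation*}
by \eqref{weak-apriori-bd-lin-th}, and absorbing the residual polynomial prefactor coming from $r_x^{-n}$ via a sharper weighted iteration, produces the desired bound $|u(x)|\leq Ce^{\tilde{f}(x)/2}\|F\|_{C^0}$. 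Equivalently, one can conjugate via $v = e^{-\tilde{f}/2}u$ to obtain a Schr\"odinger-type equation $(-\Delta_{\tilde{g}}+V)v = -e^{-\tilde{f}/2}F$ with confining potential $V\sim \tfrac{1}{4}|X|^2 \to +\infty$, and derive $\|v\|_{\infty}\leq C\|F\|_{C^0}$ by a maximum principle argument at infinity. On the compact sublevel $\{\tilde{f}\leq R_0\}$, the bound follows immediately from standard interior regularity on fixed-size balls, since $e^{\tilde{f}/2}$ is bounded away from zero there.

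The main technical subtlety is the unboundedness of the drift $X$: since $|X|_{\tilde{g}}\to +\infty$ as $\tilde{f}\to +\infty$, a naive Moser iteration on balls of fixed $\tilde{g}$-radius would produce constants that blow up with $\tilde{f}$. Working at the natural scale $r_x\sim \tilde{f}(x)^{-1/2}$ and rescaling the metric to obtain a problem with uniformly bounded coefficients (using the bounded geometry of $\tilde{g}$ from \eqref{hyp-basic-ass}) is the key step that keeps the constant $C=C(\tilde{\omega},n)$ independent of the base point.
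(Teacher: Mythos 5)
Your primary argument — Moser iteration on $\tilde{g}$-balls of radius $r_x\sim\tilde{f}(x)^{-1/2}$, followed by converting the local unweighted $L^2$-norm to the global weighted one — does not yield the claimed bound as written. On a ball of radius $r_x$, the weight $e^{-\tilde{f}}$ is indeed comparable to $e^{-\tilde{f}(x)}$, and the global weighted energy estimate \eqref{weak-apriori-bd-lin-th} gives $\int_{B(x,r_x)}|u|^2\,\tilde{\omega}^n\leq Ce^{\tilde{f}(x)}\|F\|_{C^0}^2$. But the Moser conclusion, whether stated with or without volume normalization, carries the local scale with it: one obtains
\[
|u(x)|\leq C\left(\left(\frac{1}{\vol B(x,r_x)}\int_{B(x,r_x)}|u|^2\right)^{1/2}+r_x^2\|F\|_{C^0}\right)\leq C\,\tilde{f}(x)^{n/2}\,e^{\tilde{f}(x)/2}\|F\|_{C^0},
\]
since $\vol B(x,r_x)\sim r_x^{2n}\sim\tilde{f}(x)^{-n}$. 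You acknowledge this polynomial excess and claim it can be "absorbed via a sharper weighted iteration," but no such argument is supplied, and I don't see how to make it work while staying at the scale $r_x$: the mismatch between the $L^2$-average on a tiny ball and the $L^2$-norm is structural, not an artifact of a crude iteration. (The resulting weaker bound $|u|\leq C\tilde{f}^{n/2}e^{\tilde{f}/2}\|F\|_{C^0}$ would in fact suffice for the later application in Claim~\ref{claim-sec-rough-growth}, which only needs growth $o(e^{\delta\tilde{f}})$ for some $\delta<1$; but it does not prove the stated claim.)

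The paper's route is the one you mention only as an alternative at the end, and it is the cleaner one precisely because it sidesteps the localization difficulty. Setting $v:=e^{-\tilde{f}/2}u$, one computes
\[
\Delta_{\tilde{g}}v=e^{-\tilde{f}/2}F+\left(\tfrac{1}{4}|X|^2_{\tilde{g}}-\tfrac{1}{2}\Delta_{\tilde{g}}\tilde{f}\right)v.
\]
The dominant potential term $\tfrac{1}{4}|X|^2_{\tilde{g}}\geq0$ has the \emph{favorable} sign, and $\Delta_{\tilde{g}}\tilde{f}$ is bounded by \eqref{hyp-basic-ass}; hence $|v|$ is a weak subsolution of $\Delta_{\tilde{g}}|v|\geq -C|v|-C\|F\|_{C^0}$ with $C$ \emph{uniform in $x$}. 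Moser iteration on balls of a fixed radius $r\leq r_0$ (using the Ricci lower bound from \eqref{hyp-basic-ass} and the Saloff-Coste Sobolev inequality) gives $\sup_{B(x,r/2)}|v|\leq C(\|v\|_{L^2(B(x,r))}+\|F\|_{C^0})$. The crucial point is that $\|v\|_{L^2(B(x,r))}^2=\int_{B(x,r)}e^{-\tilde{f}}u^2\,\tilde{\omega}^n\leq\|u\|_{L^2(e^{-\tilde{f}}\tilde{\omega}^n)}^2\leq C\|F\|_{C^0}^2$ with no ball-dependent prefactor, so the bound is globally uniform and converts back to $|u(x)|\leq Ce^{\tilde{f}(x)/2}\|F\|_{C^0}$. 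Your suggestion to handle the conjugated equation by a "maximum principle argument at infinity" would require a priori control of $v$ at infinity (or a barrier construction), which at this stage of the argument you do not have; the paper uses Moser on $v$ precisely because it requires only the weighted $L^2$-bound.
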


\begin{proof}[Proof of Claim \ref{claim-first-rough-growth}]
By conjugating \eqref{love-drift-lap} with a suitable weight, notice that the function $v:=e^{-\frac{\tilde{f}}{2}}u$ satisfies
\begin{equation*}
\begin{split}
\Delta_{\tilde{g}}v&=e^{-\frac{\tilde{f}}{2}}F+\left(\frac{1}{4}|X|^2_{\tilde{g}}-\frac{1}{2}\Delta_{\tilde{g}}\tilde{f}\right)v.
\end{split}
\end{equation*}
This implies that $|v|$ satisfies the following differential inequality in the weak sense:
\begin{equation}
\Delta_{\tilde{g}}|v|\geq -C|v|-C\|F\|_{C^0}.\label{love-drift-lap-conj}
\end{equation}
Here we have made use of the non-negativity of $|X|^2_{\tilde{g}}$ together with the boundedness of $\Delta_{\tilde{g}}\tilde{f}$ given by \eqref{hyp-basic-ass}.

We perform a local Nash-Moser iteration on \eqref{love-drift-lap-conj} in $B_{\tilde{g}}(x,\,r)$.
More precisely, since $(M^{2n},\,\tilde{g})$ is a Riemannian manifold with Ricci curvature bounded from below,
the results of \cite{Sal-Cos-Uni-Ell} yield the following local Sobolev inequality:
\begin{equation}
\begin{split}\label{sob-inequ-loc}
\left(\frac{1}{\operatorname{vol}_{\tilde{g}} (B_{\tilde{g}}(x,\,r))}\int_{B_{\tilde{g}}(x,\,r)}|\varphi|^{\frac{2n}{n-1}}\,\tilde{\omega}^{n}\right)^{\frac{n-1}{n}}\leq \left(\frac{C(r_0)r^2}{\operatorname{vol}_{\tilde{g}}(B_{\tilde{g}}(x,\,r))}\int_{B_{\tilde{g}}(x,\,r)}|\widetilde{\nabla}\varphi|_{\tilde{g}}^2\,\tilde{\omega}^{n}\right)
\end{split}
\end{equation}
for any $\varphi\in H^1_0(B_{\tilde{g}}(x,\,r))$ and for all $x\in M$ and $0<r<r_0$, where $r_0$ is some fixed positive radius.

A Nash-Moser iteration proceeds in several steps. First, one multiplies \eqref{love-drift-lap}
across by\linebreak  $\eta_{s,\,s'}^2v|v|^{2(p-1)}$ with $p\geq 1$, where $\eta_{s,\,s'}$, with $0<s+s'<r$ and $s,s'>0$,
is a Lipschitz cut-off function with compact support in $B_{\tilde{g}}(x,\,s+s')$ equal to $1$ on $B_{\tilde{g}}(x,\,s)$ and with
$|\widetilde{\nabla}\eta_{s,\,s'}|_{\tilde{g}}\leq\frac{1}{s'}$ almost everywhere. One then integrates by parts and uses
the Sobolev inequality of \eqref{sob-inequ-loc} to obtain a so-called ``reversed H\" older inequality'' which, after iteration, leads to the bound
\begin{equation*}
\begin{split}
\sup_{B_{\tilde{g}}(x,\,\frac{r}{2})}|v|\leq&\, C\left(\|v\|_{L^2(B_{\tilde{g}}(x,\,r))}+\|F\|_{L^{\infty}(B_{\tilde{g}}(x,\,r))}\right)\\
\leq&\,C\left(\|u\|_{L^2(e^{-\tilde{f}}\tilde{\omega}^{n})}+\|F\|_{C^0(M)}\right)\\
\leq&\,C\|F\|_{C^0(M)}\\
\end{split}
\end{equation*}
for $r\leq r_0$, where $C=C(r_0,\,\tilde{\omega},n)$. Here we have made use of \eqref{weak-apriori-bd-lin-th} in the last line.
This estimate yields an a priori local $C^0$-estimate which is uniform on the center of the ball $B_{\tilde{g}}(x,\,\frac{r}{2})$. In particular,
unravelling the definition of the function $v$, one obtains the expected a priori uniform exponential growth, namely
\begin{equation*}
|u(x)|\leq Ce^{\frac{\tilde{f}(x)}{2}}\|F\|_{C^0},\qquad x\in M.
\end{equation*}
\end{proof}

Thanks to Claim \ref{claim-first-rough-growth}, by local Schauder elliptic estimates we actually see that $u$ lies in $C^{2k+2,2\alpha}_{\operatorname{loc}}$ and that we have the estimates
\begin{equation}\label{first-loc-a-priori-est-lin-th}
\|u\|_{C^{2k+2\alpha}(\{\tilde{f}\,<\,R\})}\leq C\|F\|_{C^{2k,2\alpha}(\{\tilde{f}\,<\,2R\})}\leq C\|F\|_{\mathcal{C}^{2k,\,2\alpha}_{X,\,\beta}}
\end{equation}
for some positive constant $C=C(R,\,\tilde{\omega},n)$. We now proceed to prove the expected a priori weighted estimates on $u$ and on its derivatives.

\begin{claim}\label{claim-sec-rough-growth}
There exists a positive constant $A=A(\tilde{\omega},\,n)$ such that
$$|u(x)|\leq A\log\tilde{f}(x)\|F\|_{C^0}\qquad\textrm{{for all $x\in M$ with $\tilde{f}(x)\geq 2$.}}$$
\end{claim}

\begin{proof}[Proof of Claim \ref{claim-sec-rough-growth}]
Let $\varepsilon>0$ and let $\delta \in(0,1)$ be
such that $\lim_{\tilde{f}\rightarrow+\infty}\left(u-\varepsilon e^{\delta \tilde{f}}\right)=-\infty$, parameters that we can choose by
Claim \ref{claim-first-rough-growth}. For $A>0$ a constant to be determined later, we have outside a compact set $\{\tilde{f}\geq R(\delta)\}$ the inequality
\begin{equation*}
\begin{split}
\Delta_{\tilde{g},\,X}\left(u-A\log (f+1)-\varepsilon e^{\delta \tilde{f}}\right)\geq -\|F\|_{C^0}+2A> 0,
\end{split}
\end{equation*}
so long as $A>\frac{1}{2}\|F\|_{C^0}$. Here Lemma \ref{lemma-sub-sol-barrier} has been applied. Appealing to the maximum principle then yields the bound
$$\max_{\{\tilde{f}\geq R(\delta)\}}\left(u-A\log (f+1)-\varepsilon e^{\delta \tilde{f}}\right)=\max_{\{\tilde{f}= R(\delta)\}}\left(u-A\log (f+1)-\varepsilon e^{\delta \tilde{f}}\right).$$
Next, letting $\varepsilon\to0$, we see that
\begin{equation*}
u-A\log (f+1)\leq \max_{\{\tilde{f}= R(\delta)\}}\left(u-A\log (f+1)\right)\leq 0
\end{equation*}
if we set $A:= C\max_{\{\tilde{f}= R(\delta)\}}u\leq C\|F\|_{C^0}$ with $C:=C(\delta,\tilde{\omega},n)$. This we can do thanks to \eqref{first-loc-a-priori-est-lin-th}.

Applying the same argument to $-u$ concludes the proof of the claim.
\end{proof}

Observe that $\tilde{u}:=u+c\chi\log r$, where $F-c\in C^{2k,2\alpha}_{X,\beta}(M)$, satisfies the equation
\begin{equation}\label{heat-eqn-tilde-u}
\Delta_{\tilde{g},\,X}\tilde{u}=F+c\Delta_{\tilde{g},\,X}(\chi\log r)=F-c+\underbrace{c+c\Delta_{\tilde{g},\,X}(\chi\log r)}_{\text{compactly supported}}:=\tilde{F}\in C^{2k,2\alpha}_{X,\beta}(M).
\end{equation}
The next claim estimates the $C^{2k+2,2\alpha}_{\operatorname{loc}}$-norms of $\tilde{u}$ in terms of the data $F$ and of its local $C^0$-norm.
For this purpose, define the corresponding solution to the Ricci flow $g(\tau):=(-\tau)(\phi^X_{\tau})^*g$ for $\tau<0$,
where $\partial_{\tau}\phi^X_{\tau}=\frac{X}{2(-\tau)}\circ\phi^X_{\tau}$ and $\phi^X_{\tau=-1}=\Id_{\C\times D}$. Here,
$\phi^X_{\tau}(z,\theta)=(\frac{z}{\sqrt{-\tau}},\theta)$ for $(z,\theta)\in \C\times D$. In particular,
if $A_{r_1,r_2}:=\{(z,\theta)\in\C\times D\,|\, r_1\leq |z|\leq r_2\}$ for $0\leq r_1<r_2$, then $\phi^X_{\tau}\left(A_{r_1,r_2}\right)=A_{\frac{r_1}{\sqrt{-\tau}},\frac{r_2}{\sqrt{-\tau}}}$.

\begin{claim}\label{claim-a-priori-rough-bd-hih-der}
There exists a radius $r_0>0$ and a positive constant $C$ such that if $r\geq r_0$, then
\begin{equation}\label{est-sch-loc-para}
\|\tilde{u}\|_{C^{2k+2,2\alpha}_{X,0}\left(A_{r(x)-C,r(x)+C}\right)}\leq C\left(\|\tilde{u}\|_{C^{0}\left(A_{\frac{r(x)}{C},Cr(x)}\right)}+\|\tilde{F}\|_{C^{2k,2\alpha}_{X,0}\left(A_{\frac{r(x)}{C},Cr(x)}\right)}\right).
\end{equation}
Moreover,
\begin{equation}\label{rough-est-cov-der-tilde-u}
\left|X\cdot \tilde{u}\right|+\left|\nabla^{\tilde{g}}\tilde{u}\right|_{\tilde{g}}+\left|\nabla^{\tilde{g},2}\tilde{u}\right|_{\tilde{g}}\leq C\log r\|F\|_{\mathcal{C}^{2k,\,2\alpha}_{X,\,\beta}},\qquad r\geq r_0.
\end{equation}
\end{claim}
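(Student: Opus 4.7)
The plan is to convert the drift equation $\Delta_{\tilde{g}, X} \tilde{u} = \tilde{F}$ on the soliton end of $M$ into a uniformly parabolic heat-type equation via the Ricci-flow rescaling $g(\tau) = -\tau (\phi^X_\tau)^* g$, $\tau < 0$, and then invoke standard interior parabolic Schauder estimates. More precisely, on the region where $M \setminus K$ is identified with $\widehat M \setminus \widehat K$ via $\nu$, the vector field $X$ coincides with the generator (up to rescaling) of $\phi^X_\tau$, and by the hypothesis \eqref{hyp-basic-ass} the metric $\tilde{g}$ differs from the product shrinker $\widehat{g}_\lambda$ by a perturbation whose derivatives decay like $r^{-\gamma}$. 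Setting $v(x, \tau) := \tilde{u}(\phi^X_\tau(x))$, a direct calculation using that $\mathcal{L}_X$ corresponds to $2(-\tau)\partial_\tau$ along orbits of $\phi^X_\tau$ converts the drift equation into a uniformly parabolic linear equation for $v$ on a fixed parabolic cylinder $U \times [-T_2, -T_1]$ of unit-order size, whose coefficients are perturbations of the $g(\tau)$-heat operator controlled in norm by \eqref{hyp-basic-ass}.

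The annular region $A_{r(x) - C, r(x) + C}$ corresponds, under an appropriate choice of $\tau$-interval, to a bounded parabolic subcylinder, while $A_{r(x)/C, Cr(x)}$ corresponds to a larger cylinder of unit-order extent in the rescaled picture. The weighted norm $C^{2k+2, 2\alpha}_{X, 0}$ is designed precisely so that spatial derivatives count as one and $X$-derivatives count as two, matching the parabolic scaling; consequently, after rescaling it is equivalent to the standard parabolic H\"older norm $C^{2k+2, 2\alpha}_{\mathrm{par}}$ on the fixed cylinder. Classical interior parabolic Schauder theory then yields
\begin{equation*}
\|v\|_{C^{2k+2, 2\alpha}_{\mathrm{par}}(\text{inner cylinder})} \leq C\left(\|v\|_{C^0(\text{outer cylinder})} + \|\widetilde F\|_{C^{2k, 2\alpha}_{\mathrm{par}}(\text{outer cylinder})}\right),
\end{equation*}
and unravelling the rescaling returns \eqref{est-sch-loc-para}. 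A standard Schauder perturbation argument absorbs the $O(r^{-\gamma})$ deviation of $\tilde{g}$ from $\widehat{g}_\lambda$, provided $r_0$ is chosen large enough.

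The bound \eqref{rough-est-cov-der-tilde-u} then follows by combining Claim \ref{claim-sec-rough-growth} with \eqref{est-sch-loc-para}. By definition of $\mathcal{C}^{2k, 2\alpha}_{X, \beta}$, the decomposition $\tilde F = (F - c) + c + c\Delta_{\tilde{g}, X}(\chi\log r)$ satisfies $\|F - c\|_{C^{2k, 2\alpha}_{X, \beta}} + |c| \leq C\|F\|_{\mathcal{C}^{2k, 2\alpha}_{X, \beta}}$, and since $\beta > 0$ together with the compact support of the correction term this gives $\|\tilde F\|_{C^{2k, 2\alpha}_{X, 0}(A_{r/C, Cr})} \leq C \|F\|_{\mathcal{C}^{2k, 2\alpha}_{X, \beta}}$ for $r \geq r_0$. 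Likewise, Claim \ref{claim-sec-rough-growth} yields $|u| \leq C \log r \|F\|_{\mathcal{C}^{2k, 2\alpha}_{X, \beta}}$ on the relevant annulus (using $\log \tilde f \leq C\log r$ for large $r$), and since $|c \chi \log r| \leq C\log r \|F\|_{\mathcal{C}^{2k, 2\alpha}_{X, \beta}}$, we obtain the same log-bound for $\tilde u$. Inserting both bounds into \eqref{est-sch-loc-para} produces \eqref{rough-est-cov-der-tilde-u}.

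The main obstacle is the precise dictionary between the elliptic weighted H\"older space $C^{2k+2, 2\alpha}_{X, 0}$ on annular shells of the shrinker and the standard parabolic H\"older space on the rescaled fixed cylinder; in particular, one must verify that the second term in the H\"older seminorm of Section \ref{function-spaces-subsection}, defined using the flow of $X$ over a time interval of length $|t - s|$, is equivalent after rescaling to a genuine time-H\"older seminorm of exponent $\alpha$. Once this equivalence is laid out and one checks that the perturbation of the operator away from the heat operator of the exact shrinker is uniformly small in the parabolic H\"older norm on the fixed cylinder, the remainder reduces to interior parabolic Schauder theory combined with the already-established Claims \ref{claim-first-rough-growth} and \ref{claim-sec-rough-growth}.
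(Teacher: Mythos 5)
Your proposal takes essentially the same approach as the paper: converting the elliptic drift equation into a parabolic heat equation via the Ricci-flow rescaling $\tilde g(\tau) = (-\tau)(\phi^X_\tau)^*\tilde g$, applying interior parabolic Schauder estimates on a fixed cylinder, and then unravelling the rescaling, with \eqref{rough-est-cov-der-tilde-u} following by combining \eqref{est-sch-loc-para} with the logarithmic $C^0$ bound of Claim \ref{claim-sec-rough-growth}. The only minor stylistic difference is that the paper does not invoke a separate Schauder perturbation argument to absorb the $O(r^{-\gamma})$ deviation of $\tilde g$ from the product shrinker; it simply observes that, by \eqref{hyp-basic-ass}, the rescaled metrics $\tilde g(\tau)$ for $-\tau\in[\tfrac12,2]$ have uniformly bounded covariant and time derivatives, so parabolic Schauder estimates apply directly — which is the same content with less machinery.
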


\begin{proof}[Proof of Claim \ref{claim-a-priori-rough-bd-hih-der}]
Since \eqref{heat-eqn-tilde-u} is expressed in terms of the Riemannian metric $\tilde{g}$,
we define analogously the family of metrics $\tilde{g}(\tau):=(-\tau)(\phi^X_{\tau})^*\tilde{g}$ for $\tau<0$, where $\partial_{\tau}\phi^X_{\tau}=\frac{X}{2(-\tau)}\circ\phi^X_{\tau}$ and $\phi^X_{\tau=-1}=\Id_{\C\times D}$. For $-\tau\in\left[\frac{1}{2},\,2\right]$, the metrics $\tilde{g}(\tau)$
are uniformly equivalent and their covariant derivatives (with respect to $g$) and time derivatives are bounded by \eqref{hyp-basic-ass}. Now, $\tilde{u}(\tau):=(\phi^X_{\tau})^*\tilde{u}$ satisfies
\begin{equation}\label{heat-eqn-disguise}
\partial_{\tau}\tilde{u}=\Delta_{\tilde{g}(\tau)}\tilde{u}+\tilde{F}(\tau),\qquad \tilde{F}(\tau):=-(-\tau)^{-1}(\phi^X_{\tau})^*F.
\end{equation}
Standard parabolic Schauder estimates applied to \eqref{heat-eqn-disguise} on a ball $B_{g}(x,r_0)$, $2r_0<\inj(g)$, then ensure the existence of a uniform positive constant $C$ such that
\begin{equation*}
\|\tilde{u}(\tau)\|_{C^{2k+2,\,2\alpha}(B_{g}(x,r_0)\times\left[-\frac{3}{2},\,-1\right])}\leq C\left(\|\tilde{u}(\tau)\|_{C^0(B_{g}(x,2r_0)\times\left[-2,\,-\frac{1}{2}\right])}
+\|\tilde{F}(\tau)\|_{C^{2k,2\alpha}(B_{g}(x,2r_0)\times\left[-2,\,-\frac{1}{2}\right])}\right).
\end{equation*}
Unravelling the definition of the function $\tilde{u}(\tau)$ and that of the metrics $\tilde{g}(\tau)$ then
yields \eqref{est-sch-loc-para} after observing that $$\bigcup_{\tau\in\left[-2,\,-\frac{1}{2}\right]}\phi^X_{\tau}\left(B_{g}(x,\,2r_0)\right)
\subset A_{\frac{r(x)}{\sqrt{2}}-\sqrt{2}r_0,\sqrt{2}r(x)+2\sqrt{2}r_0}.$$
The final estimate \eqref{rough-est-cov-der-tilde-u} is a straightforward combination of \eqref{est-sch-loc-para} together with the a priori bound from Claim \ref{claim-sec-rough-growth}.
\end{proof}

Now we are in a position to linearise equation \eqref{love-drift-lap} outside a compact set with respect to the background metric. Namely, we write
\begin{equation}\label{linearise-covid-0}
\Delta_{g,\,X}\tilde{u}=\tilde{F}+(\Delta_{g}-\Delta_{\tilde{g}})\tilde{u}:=G,
\end{equation}
where $G$ satisfies pointwise estimate
\begin{equation}\label{data-rough-est}
\begin{split}
G-\tilde{F}&=(g^{-1}-\tilde{g}^{-1})\ast\partial\bar{\partial}u=O(r^{-\gamma})|\partial\bar{\partial}u|_g,
\end{split}
\end{equation}
here $\ast$ denoting any linear combination of contractions of tensors with respect to the metric $g$. Indeed, this estimate holds true by virtue of \eqref{hyp-basic-ass}. We rewrite \eqref{linearise-covid-0} (outside a compact set) as follows:
\begin{equation}\label{linearise-covid}
\Delta_{C}\tilde{u}-X\cdot\tilde{u}+\Delta_{D}\tilde{u}=G.
\end{equation}
Here $\Delta_{C}$ and $\Delta_{D}$ denote the Riemannian Laplacian of the metric $\omega_{C}$ on $\C$ and $\omega_{D}$ on $D$ respectively.
Integrating this equation over $D$ at a sufficiently large height $r$, we find that
\begin{equation}\label{nomad}
\Delta_{C,\,X}\overline{u}(r)=\overline{G}(r),\qquad r\geq r_0,
\end{equation}
where $$\overline{u}(r):=\fint_{D}\tilde{u}(r,\cdot)\,\omega^{n-1}_{D}\qquad\textrm{and}\qquad \overline{G}(r)=\fint_{D}G(r,\cdot)\,\omega_{D}^{n-1},$$
both functions in the $r$-variable only because both are $JX$-invariant by definition. We next derive some estimates on $\overline{u}(r)$.

\begin{claim}\label{cla-c-0-bd-mean-val}
One has
\begin{equation*}
\left|\overline{u}(r)\right|\leq C\|\tilde{F}\|_{C^{2k,2\alpha}_{X,\beta}},\qquad r\geq r_0.
\end{equation*}
Moreover, $\lim_{r\rightarrow+\infty}\overline{u}(r)=:u_{\infty}$ exists, is finite, and
\begin{equation*}
\left|\overline{u}(r)-u_{\infty}\right|\leq C\left(r^{-\beta}\|\tilde{F}\|_{C^{2k,2\alpha}_{X,\beta}}+r^{-\gamma}\sup_{\{f\geq \frac{r^2}{2}\}}|\partial\bar{\partial}u| \right),\qquad r\geq r_0.
\end{equation*}
\end{claim}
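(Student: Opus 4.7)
The plan is to recognize \eqref{nomad} as a one-dimensional ODE in $r$ and invert it explicitly. In the coordinates $(r,\theta)$ on $\C\setminus\{0\}$ with $r=|z|^{\lambda}$, a direct computation shows that $\omega_C=\frac{i}{2}\partial\bar\partial |z|^{2\lambda}$ corresponds to the metric $g_C=dr^2+\lambda^2 r^2\,d\theta^2$, so on radial functions $h(r)$ the Riemannian Laplacian is $h''(r)+r^{-1}h'(r)$. Combined with $X=r\partial_{r}$, this identifies the drift operator on radial functions as
\begin{equation*}
\Delta_{C,X}h(r)=h''(r)+\left(\frac{1}{r}-r\right)h'(r),
\end{equation*}
so that \eqref{nomad} becomes the linear ODE $\overline{u}''(r)+(r^{-1}-r)\overline{u}'(r)=\overline{G}(r)$ on $[r_0,+\infty)$, whose homogeneous solutions are constants together with one growing like $e^{r^2/2}/r^2$ at infinity.

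Setting $w=\overline{u}'$ and using the integrating factor $re^{-r^2/2}$, the equation rewrites as $\bigl(re^{-r^2/2}w(r)\bigr)'=re^{-r^2/2}\overline{G}(r)$. The growth bound \eqref{rough-est-cov-der-tilde-u} forces $re^{-r^2/2}w(r)\to 0$ as $r\to+\infty$, ruling out the exponentially growing homogeneous solution, so integrating from $r$ to $+\infty$ yields the representation
\begin{equation*}
\overline{u}'(r)=-\frac{e^{r^2/2}}{r}\int_r^{+\infty}se^{-s^2/2}\overline{G}(s)\,ds.
\end{equation*}
The assumption $\tilde{F}\in C^{2k,2\alpha}_{X,\beta}(M)$, combined with \eqref{data-rough-est} and $f\sim r^2/2$ at infinity, delivers the pointwise bound
\begin{equation*}
|\overline{G}(s)|\leq C\,s^{-\beta}\|\tilde{F}\|_{C^{2k,2\alpha}_{X,\beta}}+C\,s^{-\gamma}\sup_{\{f\geq s^2/2\}}|\partial\bar\partial u|.
\end{equation*}
Coupled with the elementary estimate $\int_r^{+\infty}s^{1-\alpha}e^{-s^2/2}\,ds\leq 2r^{-\alpha}e^{-r^2/2}$ (obtained via integration by parts with $dv=se^{-s^2/2}ds$, valid for $r$ large and any $\alpha>0$), this gives
\begin{equation*}
|\overline{u}'(r)|\leq \frac{C}{r^{1+\beta}}\|\tilde{F}\|_{C^{2k,2\alpha}_{X,\beta}}+\frac{C}{r^{1+\gamma}}\sup_{\{f\geq r^2/2\}}|\partial\bar\partial u|.
\end{equation*}
Integrability at infinity implies that the limit $u_\infty:=\lim_{r\to+\infty}\overline{u}(r)$ exists, and one further integration yields the desired decay estimate $|\overline{u}(r)-u_\infty|\leq Cr^{-\beta}\|\tilde{F}\|_{C^{2k,2\alpha}_{X,\beta}}+Cr^{-\gamma}\sup_{\{f\geq r^2/2\}}|\partial\bar\partial u|$.

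The hard part will be the uniform bound $|\overline{u}(r)|\leq C\|\tilde{F}\|_{C^{2k,2\alpha}_{X,\beta}}$, which via the decay just established reduces to controlling $|u_\infty|$ by $\|\tilde{F}\|_{C^{2k,2\alpha}_{X,\beta}}$. My plan is to obtain this by pairing the weighted $L^2$ bound $\|u\|_{L^2(e^{-\tilde{f}}\tilde{\omega}^n)}\leq C\|\tilde{F}\|_{C^{2k,2\alpha}_{X,\beta}}$ from \eqref{weak-apriori-bd-lin-th} with the normalization $\int_M u\,e^{-\tilde{f}}\tilde{\omega}^n=0$: writing $\tilde{u}=u+c\chi\log r$, integrating against $e^{-\tilde{f}}\tilde{\omega}^n$, and solving for $u_\infty$ expresses it as
\begin{equation*}
u_\infty\operatorname{vol}_{\tilde{f}}(M)=c\int_M\chi\log r\,e^{-\tilde{f}}\tilde{\omega}^n-\int_M(\tilde{u}-u_\infty)\,e^{-\tilde{f}}\tilde{\omega}^n,
\end{equation*}
where the Gaussian-type weight renders both integrals finite and controlled by $\|\tilde{F}\|_{C^{2k,2\alpha}_{X,\beta}}$ using $|c|\leq\|\tilde{F}\|_{C^{2k,2\alpha}_{X,\beta}}$, the decay just proved, and the rough logarithmic bound of Claim \ref{claim-sec-rough-growth}.
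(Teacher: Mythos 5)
Your identification of the $r$-ODE and its explicit inversion via the integrating factor $re^{-r^2/2}$ is correct and is in fact equivalent to the Gr\"onwall argument the paper runs for $X\cdot\overline{u}=r\overline{u}'$; the representation formula for $\overline{u}'(r)$ and the resulting derivation of the second displayed estimate in the Claim are sound. However, there is a genuine gap in your argument for the first bound, $|\overline{u}(r)|\leq C\|\tilde{F}\|_{C^{2k,2\alpha}_{X,\beta}}$. The identity you write down is tautological: expanding $\int_M(\tilde{u}-u_\infty)e^{-\tilde{f}}\tilde{\omega}^n=\int_M\tilde{u}\,e^{-\tilde{f}}\tilde{\omega}^n-u_\infty\operatorname{vol}_{\tilde{f}}(M)$, the term $u_\infty\operatorname{vol}_{\tilde{f}}(M)$ cancels from both sides and the identity collapses to $\int_M\tilde{u}\,e^{-\tilde{f}}\tilde{\omega}^n=c\int_M\chi\log r\,e^{-\tilde{f}}\tilde{\omega}^n$, which is just the mean-zero normalization of $u$ rewritten and carries no information whatsoever about $u_\infty$. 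Any attempt to then estimate the right-hand side using $u_\infty$ itself (for instance through the decay of $\overline{u}-u_\infty$, or the rough bound of Claim \ref{claim-sec-rough-growth} applied to $\tilde{u}-u_\infty$) reintroduces the unknown and stays circular. The mechanism that actually controls $u_\infty$ is the one in the paper: combine the integrability of $\overline{u}'$ on $[r_0,\infty)$ with the interior estimate $|\overline{u}(r_0)|\leq C\|F\|_{\mathcal{C}^{2k,2\alpha}_{X,\beta}}$ coming from \eqref{first-loc-a-priori-est-lin-th}, and conclude $|\overline{u}(r)|\leq|\overline{u}(r_0)|+\int_{r_0}^{\infty}|\overline{u}'|\leq C\|\tilde{F}\|_{C^{2k,2\alpha}_{X,\beta}}$. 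You never invoke this local estimate.

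There is a second, smaller but still real, gap earlier. You jump from $|\overline{u}'(r)|\leq Cr^{-1-\beta}\|\tilde{F}\|+Cr^{-1-\gamma}\sup_{\{f\geq r^2/2\}}|\partial\bar{\partial}u|$ to ``integrability at infinity implies $u_\infty$ exists.'' But at this stage of the proof, \eqref{rough-est-cov-der-tilde-u} only gives the logarithmic growth $|\partial\bar{\partial}u|=O(\log r)\|\tilde{F}\|$, so the quantity $\sup_{\{f\geq r^2/2\}}|\partial\bar{\partial}u|$ is, a priori, infinite, and your bound does not by itself yield integrability. The paper resolves this by first running the ODE argument with the pointwise bound $|\overline{G}(s)|\leq Cs^{-\min\{\beta,\gamma\}}(1+\log s)\|\tilde{F}\|$ (finite for each $s$), which gives $|\overline{u}'(r)|=O(r^{-1-\min\{\beta,\gamma\}}\log r)\|\tilde{F}\|$, hence integrability, existence of $u_\infty$, and the uniform bound; only afterwards does it re-run the same ODE inequality with the estimate involving $\sup_{\{f\geq r^2/2\}}|\partial\bar{\partial}u|$ to record the conditional decay rate stated in the Claim (which becomes useful only in the bootstrap of Claim \ref{sharp-pt-dec-u}). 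Your representation formula actually makes this two-pass argument transparent; you simply need to perform the first pass with the pointwise $\log$ bound before writing the $\sup$ version.
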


\begin{proof}[Proof of Claim \ref{cla-c-0-bd-mean-val}]
Equation \eqref{nomad} can be rewritten as
\begin{equation}\label{antecedent-neat-a-priori-ode-bar-u}
\left|\frac{X\cdot X\cdot \overline{u}(r)}{r^{2}}-X\cdot \overline{u}(r)\right|\leq C\left(r^{-\beta}\|\tilde{F}\|_{C^{2k,2\alpha}_{X,\beta}}+r^{-\gamma}
\sup_{\{f\geq \frac{r^2}{2}\}}|\partial\bar{\partial}u|\right),\qquad r\geq r_0,
\end{equation}
by virtue of \eqref{data-rough-est}.  This is a first order differential inequality for $X\cdot\overline{u}(r)$.
Now, estimate \eqref{rough-est-cov-der-tilde-u} from Claim \ref{claim-a-priori-rough-bd-hih-der} implies a first rough estimate, namely
\begin{equation*}
\left|\frac{X\cdot X\cdot \overline{u}(r)}{r^{2}}-X\cdot \overline{u}(r)\right|\leq Cr^{-\min\{\beta,\gamma\}}\left(1+\log r\right)\|\tilde{F}\|_{C^{2k,2\alpha}_{X,\beta}},\qquad r\geq r_0.\label{first-neat-a-priori-ode-bar-u}
\end{equation*}
Gr\"onwall's inequality then leads to the bound
\begin{equation*}
\begin{split}
|X\cdot \overline{u}(r)|&\leq C\|\tilde{F}\|_{C^{2k,2\alpha}_{X,\beta}}e^{\frac{r^2}{2}}\int_r^{+\infty}s^{-\min\{\beta,\gamma\}}\left(1+\log s\right)\,se^{-\frac{s^2}{2}}ds\\
&\leq C\|\tilde{F}\|_{C^{2k,2\alpha}_{X,\beta}}r^{-\min\{\beta,\gamma\}}\log r,\qquad r\geq r_0,
\end{split}
\end{equation*}
for some uniform positive constant $C$ independent of $r\geq r_0$.
Integrating once more in $r$, Claim \ref{claim-sec-rough-growth} ensures that $\overline{u}(r)$ admits a limit $u_{\infty}$ as $r\to+\infty$ and that for $r\geq r_0$,
\begin{equation*}
\begin{split}
|\overline{u}(r)|&\leq |\overline{u}(r_0)|+C\|\tilde{F}\|_{C^{2k,2\alpha}_{X,\beta}}\int_{r_0}^rs^{-\min\{\beta,\gamma\}-1}\log s\,ds\\
&\leq C\|\tilde{F}\|_{C^{2k,2\alpha}_{X,\beta}}
\end{split}
\end{equation*}
for some positive constant $C$ which is independent of $r$ (and of the data $F$) that may vary from line to line. This concludes the proof of the first part of the claim.

Returning to inequality \eqref{antecedent-neat-a-priori-ode-bar-u}, another application Gr\"onwall's inequality leads to the bound
\begin{equation*}
\begin{split}
|X\cdot \overline{u}(r)|&\leq Ce^{\frac{r^2}{2}}\left(\int_r^{+\infty}s^{-\beta}\,se^{-\frac{s^2}{2}}ds\|\tilde{F}\|_{C^{2k,2\alpha}_{X,\beta}}+\int_r^{+\infty}s^{-\gamma}\,se^{-\frac{s^2}{2}}ds\sup_{\{f\geq \frac{r^2}{2}\}}|\partial\bar{\partial}u|\right)\\
&\leq C\left(r^{-\beta}\|\tilde{F}\|_{C^{2k,2\alpha}_{X,\beta}}+r^{-\gamma}\sup_{\{f\geq \frac{r^2}{2}\}}|\partial\bar{\partial}u|\right), \qquad r\geq r_0.
\end{split}
\end{equation*}
Integrating this inequality once more between $r$ and $r=+\infty$ yields the second part of the claim.
\end{proof}

The next claim concerns the uniform boundedness of the projection of $u$ onto the orthogonal complement of the kernel of $\Delta_{D}$, $D$ being interpreted as embedded in each level set $\{f=\frac{r^2}{2}\}$.
\begin{claim}\label{cla-c-0-orth-mean-val}
Given $\delta\in(0,\,\min\{\beta,\gamma\})$, there exists $r_0=r_0(\delta,\tilde{\omega},n)$ such that
\begin{equation*}
\|\tilde{u}-\overline{u}(r)\|_{L^{2}(D)}\leq C\|\tilde{F}\|_{C^{2k,2\alpha}_{X,\beta}}r^{-\delta},\qquad r\geq r_0.
\end{equation*}
\end{claim}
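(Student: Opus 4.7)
The plan is to combine a Lichnerowicz--Obata-type spectral gap on $D$ with a barrier argument along the $\C$-factor. Define $w(r,x):=\tilde{u}(r,x)-\overline{u}(r)$, which by construction satisfies $\int_{D}w(r,\cdot)\,\omega_{D}^{n-1}=0$ for every $r$. Subtracting the averaged equation \eqref{nomad} from \eqref{linearise-covid} yields
\begin{equation*}
\Delta_{C,X}w+\Delta_{D}w=G-\overline{G}(r),
\end{equation*}
and the zero-mean condition on $w$ makes the Poincar\'e inequality $\int_{D}|\nabla^{D}w|_{g_{D}}^{2}\omega_{D}^{n-1}\geq\lambda^{D}\int_{D}w^{2}\omega_{D}^{n-1}$ available --- this is precisely where the hypothesis $\beta<\lambda^{D}$ enters.

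Next I would set $\phi(r):=\int_{D}w^{2}\omega_{D}^{n-1}$. Since $w$ is $JX$-invariant we have $\Delta_{C,X}w=w_{rr}+w_{r}/r-rw_{r}$, and multiplying the equation for $w$ by $w$ and integrating by parts on $D$ gives
\begin{equation*}
\phi''(r)+\phi'(r)/r-r\phi'(r)=2\int_{D}w_{r}^{2}\omega_{D}^{n-1}+2\int_{D}|\nabla^{D}w|_{g_{D}}^{2}\omega_{D}^{n-1}+2\int_{D}w(G-\overline{G})\omega_{D}^{n-1}.
\end{equation*}
Discarding the first term, applying the spectral gap to the second, and using Young's inequality on the cross term with a small parameter $\varepsilon\in(0,2\lambda^{D})$ yields the differential inequality
\begin{equation*}
L_{\sigma}\phi:=\phi''+\phi'/r-r\phi'-\sigma\phi\geq-\varepsilon^{-1}\|G-\overline{G}\|_{L^{2}(D)}^{2},\qquad\sigma:=2\lambda^{D}-\varepsilon.
\end{equation*}
Unravelling \eqref{linearise-covid-0}, combining the definition of $C^{2k,2\alpha}_{X,\beta}$, the decay rate \eqref{hyp-basic-ass}, and the rough bound \eqref{rough-est-cov-der-tilde-u} on $|\partial\bar{\partial}u|_{g}$ from Claim \ref{claim-a-priori-rough-bd-hih-der}, the data term is controlled by $\|G-\overline{G}\|_{L^{2}(D)}\leq C(r^{-\beta}+r^{-\gamma}\log r)\|F\|_{\mathcal{C}^{2k,2\alpha}_{X,\beta}}$.

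Given $\delta\in(0,\min\{\beta,\gamma\})$, I would fix $\varepsilon$ small enough that $\sigma>2\delta$ --- possible since $\delta<\lambda^{D}$ --- and test against the barrier $\psi(r):=Ar^{-2\delta}$. A direct computation gives
\begin{equation*}
L_{\sigma}\psi=4\delta^{2}Ar^{-2\delta-2}-(\sigma-2\delta)Ar^{-2\delta}\leq -\tfrac{1}{2}(\sigma-2\delta)Ar^{-2\delta}
\end{equation*}
for $r\geq r_{0}$ large. Setting $\Phi:=\phi-Ar^{-2\delta}$ and taking $A=C_{0}\|F\|_{\mathcal{C}^{2k,2\alpha}_{X,\beta}}^{2}$ large enough to dominate both the data term $\varepsilon^{-1}\|G-\overline{G}\|_{L^{2}(D)}^{2}$ (using $(r^{-\beta}+r^{-\gamma}\log r)^{2}r^{2\delta}\to 0$ as $r\to\infty$ since $\delta<\min\{\beta,\gamma\}$) and the boundary value $\phi(r_{0})r_{0}^{2\delta}$ (bounded via Claim \ref{cla-c-0-bd-mean-val} and local Schauder estimates), we arrange $L_{\sigma}\Phi\geq 0$ on $[r_{0},\infty)$, $\Phi(r_{0})\leq 0$, and $\Phi(r)\to 0$ as $r\to\infty$. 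The maximum principle for $L_{\sigma}$, available because its zero-order coefficient $-\sigma$ is strictly negative, then forces $\Phi\leq 0$, i.e., $\phi(r)\leq Ar^{-2\delta}$. Taking square roots yields the claimed $L^{2}(D)$-estimate; the $\|F\|$ dependence can be replaced by $\|\tilde{F}\|$ since $F-\tilde{F}$ contributes only a constant plus compactly supported term that does not affect the asymptotic rate. The main subtlety will be the simultaneous choice of $\varepsilon$, $\sigma$, $A$, and $r_{0}$ to cover the entire open range $\delta\in(0,\min\{\beta,\gamma\})$; this is forced directly by the strict inequality $\sigma/2>\delta$ combined with the decay rate of the data.
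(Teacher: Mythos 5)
Your decomposition, energy identity, spectral gap on $D$, Young's inequality, and barrier $Ar^{-2\delta}$ are all exactly the paper's moves, and the algebra is correct. The gap is in the final maximum-principle step: you assert $\Phi(r)\to 0$ as $r\to\infty$, but at this point in the argument the only available control on $\tilde{u}$ is the logarithmic growth bound from Claim \ref{claim-sec-rough-growth}, so $\phi(r)=\|\tilde{u}-\overline{u}(r)\|^{2}_{L^{2}(D)}$ is a priori only $O\bigl((\log r)^{2}\bigr)$, not decaying. Thus $\Phi=\phi-Ar^{-2\delta}$ need not tend to $0$, and in fact need not attain a maximum on $[r_{0},\infty)$ at all. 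This matters because $L_{\sigma}$ does \emph{not} satisfy an unconditional maximum principle on the half-line: the homogeneous equation $L_{\sigma}u=0$ has a solution growing like $\int^{r}e^{s^{2}/2}\,ds$, so a supersolution that is nonpositive on $\{r=r_{0}\}$ can still be unbounded above. A negative zero-order coefficient rules out an \emph{interior} positive maximum, but says nothing if the supremum is approached at infinity.

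The paper's proof closes exactly this hole by taking the comparison function $\phi-Ar^{-2\delta}-Br$ for $B>0$. Since $\Delta_{C,X}(-Br)=B(r-1/r)\geq 0\geq 2(\lambda^{D}-\varepsilon)(-Br)$ for $r\geq 1$, the linear term does not disturb the barrier inequality, while the a priori bound $\phi=O((\log r)^{2})$ forces $\phi-Ar^{-2\delta}-Br\to-\infty$; hence the maximum is attained on a compact set, the interior maximum principle applies, and letting $B\to 0$ at the end recovers $\phi\leq Ar^{-2\delta}$. To repair your write-up, replace the unjustified claim ``$\Phi(r)\to 0$'' by introducing this auxiliary barrier (any positive supersolution of $L_{\sigma}$ dominating $(\log r)^{2}$ would do; $Br$ is the simplest) and then pass to the limit in the auxiliary parameter. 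With that change, your argument and the paper's coincide.
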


\begin{proof}[Proof of Claim \ref{cla-c-0-orth-mean-val}]
Recall that by \eqref{linearise-covid} and \eqref{nomad}, $\Delta_{g,\,X}\tilde{u}=G$ so that
\begin{equation}\label{eq-gov-u-mean-value}
\Delta_{g,\,X}(\tilde{u}-\overline{u}(r))=G-\overline{G}(r)
\end{equation}
outside a compact set. Since for any function $v$, we have
$$2v\Delta_{C,\,X}v=\Delta_{C,\,X}(v^{2})-2|\nabla^{\mathbb{C}}v|^{2}_{g_{\mathbb{C}}},$$
multiplying \eqref{eq-gov-u-mean-value} across by $\tilde{u}-\overline{u}(r)$ and integrating over
$D$ gives rise to the lower bound
\begin{equation}
\begin{split}\label{ineq-ene-no-young}
\Delta_{C,\,X}\left(\|\tilde{u}-\overline{u}(r)\|^{2}_{L^{2}(D)}\right)&\geq
\Delta_{C,\,X}\left(\|\tilde{u}-\overline{u}(r)\|^{2}_{L^{2}(D)}\right)-2\int_{\mathbb{P}^{1}}|\nabla^{\mathbb{C}}(\tilde{u}-\overline{u}(r))|_{g_{\mathbb{C}}}^{2}\,\frac{\omega_{D}^{n-1}}{(n-1)!}\\
&=2\int_{D}(\tilde{u}-\overline{u}(r))\Delta_{C,\,X}(\tilde{u}-\overline{u}(r))\,\frac{\omega_{D}^{n-1}}{(n-1)!}\\
&=2\int_{D}(\tilde{u}-\overline{u}(r))(G-\overline{G}(r)-\Delta_{D}(\tilde{u}-\overline{u}(r)))\,\frac{\omega_{D}^{n-1}}{(n-1)!}\\
&=2\|\nabla^{g_D}(\tilde{u}-\overline{u}(r))\|_{L^{2}(D)}^{2}+2\langle G-\overline{G}(r),\,\tilde{u}-\overline{u}(r)\rangle_{L^{2}(D)}\\
&\geq2\lambda^{D}\|\tilde{u}-\overline{u}(r)\|^{2}_{L^{2}(D)}-2\| G-\overline{G}(r)\|_{L^{2}(D)}\|\tilde{u}-\overline{u}(r)\|_{L^{2}(D)},
\end{split}
\end{equation}
where we have made use of the Poincar\'e inequality on $(D,\,g_D)$ in the last line. Young's inequality then implies for $\varepsilon\in (0,\lambda^{D})$ that
\begin{equation*}
\begin{split}
\Delta_{C,\,X}\left(\|\tilde{u}-\overline{u}(r)\|^{2}_{L^{2}(D)}\right)\geq 2(\lambda^{D}-\varepsilon)\|\tilde{u}-\overline{u}(r)\|^{2}_{L^{2}(D)}-C_{\varepsilon}\|G-\overline{G}(r)\|^2_{L^{2}(D)}.
\end{split}
\end{equation*}
Therefore, invoking estimate \eqref{data-rough-est} and Claim \ref{claim-a-priori-rough-bd-hih-der} together with the previous inequality, we find that
\begin{equation*}
\begin{split}
\Delta_{C,\,X}\left(\|\tilde{u}-\overline{u}(r)\|^{2}_{L^{2}(D)}\right)&\geq 2(\lambda^D-\varepsilon)\|\tilde{u}-\overline{u}(r)\|^{2}_{L^{2}(D)}-C_{\varepsilon}\|\tilde{F}\|^2_{C^{2k,2\alpha}_{X,\beta}}r^{-2\min\{\beta,\gamma\}}\log^2 r,\qquad r\geq 2.
\end{split}
\end{equation*}
By Lemma \ref{lemma-sub-sol-barrier} applied to $\tilde{g}:=g$, we see that
$$\Delta_{C,\,X}(r^{-2\delta})=2\delta r^{-2\delta}+O(r^{-2\delta-2}),$$
which, for $A>0$ and $\delta\in(0,\min\{\beta,\gamma\})$, implies that
\begin{equation*}
\begin{split}
\Delta_{C,\,X}\left(\|\tilde{u}-\overline{u}(r)\|^{2}_{L^{2}(D)}-Ar^{-2\delta}\right)
&\geq2(\lambda^D-\varepsilon)\|\tilde{u}-\overline{u}(r)\|^{2}_{L^{2}(D)}-C_{\varepsilon}\|\tilde{F}\|^2_{C^{2k,2\alpha}_{X,\beta}}r^{-2\min\{\beta,\gamma\}}\log^2 r
\\
&\quad-2A\delta r^{-2\delta}-ACr^{-2\delta-2}\\
&\geq2(\lambda^D-\varepsilon)\left(\|\tilde{u}-\overline{u}(r)\|^{2}_{L^{2}(D)}-Ar^{-2\delta}\right)+2A(\lambda^D-\varepsilon-\delta)r^{-2\delta}\\
&\quad-C_{\varepsilon}\|\tilde{F}\|^2_{C^{2k,2\alpha}_{X,\beta}}r^{-2\min\{\beta,\gamma\}}\log^2 r-ACr^{-2\delta-2}\\
&\geq 2(\lambda^D-\varepsilon)\left(\|\tilde{u}-\overline{u}(r)\|^{2}_{L^{2}(D)}-Ar^{-2\delta}\right),
\end{split}
\end{equation*}
provided that $\varepsilon\in(0,\lambda^D-\delta)$, $r\geq r_0=r_0(\delta,n,\tilde{\omega}),$ and $A\geq C\|\tilde{F}\|_{C^{2k,2\alpha}_{X,\beta}}.$

Now, since $\|\tilde{u}-\overline{u}(r)\|_{L^{2}(D)}$ is growing at most logarithmically by Claim \ref{claim-sec-rough-growth}, given $B>0$, we compute that
\begin{equation*}
\begin{split}
\Delta_{C,\,X}\left(\|\tilde{u}-\overline{u}(r)\|^{2}_{L^{2}(D)}-Ar^{-2\delta}-B r\right)
&\geq2(\lambda^D-\varepsilon)\left(\|\tilde{u}-\overline{u}(r)\|^{2}_{L^{2}(D)}-Ar^{-2\delta}-B r\right)
\end{split}
\end{equation*}
if $\varepsilon\in(0,\lambda^D-\delta)$, $r\geq r_0=r_0(\delta,n,\tilde{\omega}),$ and $A\geq C\|\tilde{F}\|_{C^{2k,2\alpha}_{X,\beta}}.$ In particular, the maximum principle applied to the function $\|\tilde{u}-\overline{u}(r)\|^{2}_{L^{2}(D)}-Ar^{-2\delta}-B r$ outside a compact set of the form $r\geq r_0$ leads to the equality
\begin{equation*}
\begin{split}
\max_{\{r\geq r_0\}}\left(\|\tilde{u}-\overline{u}(r)\|^{2}_{L^{2}(D)}-Ar^{-2\delta}-B r\right)=\max\left\{0,\max_{\{r= r_0\}}\left(\|\tilde{u}-\overline{u}(r)\|^{2}_{L^{2}(D)}-Ar^{-2\delta}-B r\right)\right\}.
\end{split}
\end{equation*}
Letting $B\to0$ and setting $A=C\|\tilde{F}\|_{C^{2k,2\alpha}_{X,\beta}}$ with $C$ sufficiently large but uniform in the data $F$ and the radius $r$, one arrives at the expected bound:
 \begin{equation*}
\|\tilde{u}-\overline{u}(r)\|_{L^{2}(D)}\leq C\|\tilde{F}\|_{C^{2k,2\alpha}_{X,\beta}}r^{-\delta},\qquad r\geq r_0.
\end{equation*}
\end{proof}
The next claim proves a quantitative almost sharp weighted $C^0$-estimate on $\tilde{u}-u_{\infty}$ in terms of the data $F$.
\begin{claim}\label{claim-first-rough-dec-pt}
Given $\delta\in(0,\min\{\beta,\gamma\})$, there exists $r_0=r_0(\delta,\tilde{\omega},n)>0$ independent of $F$ (and the solution $u$) such that
\begin{equation*}
\sup_{r\geq r_0}r^{\delta}|\tilde{u}-u_{\infty}|\leq C\|\tilde{F}\|_{C^{2k,2\alpha}_{X,\beta}}.
\end{equation*}
\end{claim}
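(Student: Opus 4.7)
The plan is to decompose $\tilde{u}-u_\infty = (\tilde{u}-\overline{u}(r)) + (\overline{u}(r)-u_\infty)$ and to bound each piece separately at the rate $r^{-\delta}$. For the mean-value piece, I would substitute the rough bound $\sup_{\{f\,\geq\,r^2/2\}}|\partial\bar\partial u| \leq C\log r\,\|\tilde{F}\|_{C^{2k,2\alpha}_{X,\beta}}$ coming from Claim \ref{claim-a-priori-rough-bd-hih-der} directly into the second estimate of Claim \ref{cla-c-0-bd-mean-val}. This would yield
\begin{equation*}
|\overline{u}(r)-u_\infty| \leq C\bigl(r^{-\beta}+r^{-\gamma}\log r\bigr)\|\tilde{F}\|_{C^{2k,2\alpha}_{X,\beta}} \leq Cr^{-\delta}\|\tilde{F}\|_{C^{2k,2\alpha}_{X,\beta}}
\end{equation*}
for $r$ sufficiently large, since $\delta<\min\{\beta,\gamma\}$ absorbs the logarithmic loss.

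For the orthogonal piece $v := \tilde{u}-\overline{u}(r)$, my plan is to upgrade the $L^2(D)$-estimate $\|v\|_{L^2(D)} \leq Cr^{-\delta''}\|\tilde{F}\|$ from Claim \ref{cla-c-0-orth-mean-val} (taking an auxiliary rate $\delta''\in(\delta,\,\min\{\beta,\gamma\})$) to a pointwise bound on each fibre $D_r$ via Sobolev interpolation on the compact manifold $D$. First I would apply the parabolic Schauder estimate \eqref{est-sch-loc-para} from Claim \ref{claim-a-priori-rough-bd-hih-der} to $v$ on annular regions $A_{r-C,\,r+C}$, combined with the rough $C^0$-bounds on $\tilde{u}$ and $\overline{u}(r)$ coming from Claim \ref{claim-sec-rough-growth} and Claim \ref{cla-c-0-bd-mean-val} respectively, to produce
\begin{equation*}
\|v\|_{C^k(D_r)} \leq C_k\log r\,\|\tilde{F}\|_{C^{2k,2\alpha}_{X,\beta}}\qquad\text{for every }k\in\mathbb{N}.
\end{equation*}
Then the standard Sobolev interpolation on the compact real $(2n-2)$-dimensional manifold $D$, namely
\begin{equation*}
\|v\|_{C^0(D)} \leq C_{k,m}\,\|v\|_{L^2(D)}^{\,1-m/k}\,\|v\|_{C^k(D)}^{\,m/k}\qquad\text{for }m>n-1\text{ fixed and }k\geq m,
\end{equation*}
would give $\|v\|_{C^0(D_r)} \leq C_k\,r^{-\delta''(1-m/k)}(\log r)^{m/k}\,\|\tilde{F}\|$. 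Choosing $k$ large enough that $\delta''(1-m/k)>\delta$ then absorbs the logarithmic factor and yields $\|v\|_{C^0(D_r)} \leq Cr^{-\delta}\|\tilde{F}\|_{C^{2k,2\alpha}_{X,\beta}}$ for all $r\geq r_0(\delta,\tilde{\omega},n)$.

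Combining the two bounds would conclude the proof. The main obstacle I expect is verifying that the parabolic Schauder estimate \eqref{est-sch-loc-para} can in fact be iterated to give logarithmic (and not polynomial) growth for derivatives of $v$ of arbitrary order on annular regions at infinity; this will rest on the controlled norms of $\tilde{F}\in C^{2k,2\alpha}_{X,\beta}$ at every order, together with the logarithmic rough bound on $\tilde{u}$ supplied by Claim \ref{claim-sec-rough-growth}. A minor subtlety is that the interpolation rate depends on how close $\delta''$ can be taken to $\min\{\beta,\gamma\}$ while still allowing Claim \ref{cla-c-0-orth-mean-val} to apply; this is handled by simply enlarging $r_0$ as $\delta''$ approaches $\min\{\beta,\gamma\}$.
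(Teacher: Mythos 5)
Your decomposition $\tilde{u}-u_\infty=(\tilde{u}-\overline{u}(r))+(\overline{u}(r)-u_\infty)$ and the treatment of the mean-value piece coincide with the paper's. For the orthogonal piece $v=\tilde{u}-\overline{u}(r)$, however, you diverge: you propose to go from $\|v\|_{L^2(D)}\lesssim r^{-\delta''}$ to a pointwise bound via Gagliardo--Nirenberg interpolation against $\|v\|_{C^{k'}(D_r)}$, whereas the paper applies a local parabolic De~Giorgi--Nash--Moser estimate (the ``baby Nash--Moser'' step \eqref{baby-nash-moser}), which is a genuine $L^2\to L^\infty$ bound involving no high-order norm of $v$.

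This difference is not cosmetic, and your route has a real gap. The interpolation inequality you quote, $\|v\|_{C^0(D)}\lesssim\|v\|_{L^2(D)}^{1-m/k'}\|v\|_{C^{k'}(D)}^{m/k'}$ with $m>n-1$, degrades the $L^2$ decay by the factor $1-m/k'$; to make this loss negligible you must send $k'\to\infty$. But the logarithmic control on $\|v\|_{C^{k'}(D_r)}$ that you need from iterated Schauder is only available up to $k'\leq 2k+2$, since $\tilde{F}$ is assumed merely to lie in $C^{2k,2\alpha}_{X,\beta}(M)$ for the \emph{fixed} $k$ appearing in Theorem \ref{iso-sch-Laplacian-pol}, and the Schauder iteration cannot go beyond the regularity of the source. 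Consequently your scheme yields at best a decay rate of order $\delta''\bigl(1-\tfrac{n-1}{2k+2}\bigr)$, which is bounded strictly away from $\min\{\beta,\gamma\}$ for fixed $k$; in particular it cannot reach the rate $r^{-\delta}$ for $\delta$ close to $\min\{\beta,\gamma\}$, which is exactly what the claim requires. You sense this yourself when you flag ``derivatives of $v$ of arbitrary order'' as the obstacle: that requirement is not a technical nuisance but a genuine obstruction under the finite-regularity hypothesis of the theorem. The paper's Nash--Moser argument avoids the issue precisely because a De~Giorgi--Nash--Moser local boundedness estimate for the parabolic equation $\partial_\tau w=\Delta_{(-\tau)g_D}w+S$ produces $\sup|w|\lesssim\|w\|_{L^2}+\|S\|$ with no interpolation exponent and no dependence on the differentiability of the coefficients beyond ellipticity bounds, so it inherits the full $L^2(D)$-decay rate without loss.
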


\begin{proof}[Proof of Claim \ref{claim-first-rough-dec-pt}]
It suffices to prove that for all $\delta\in(0,\min\{\beta,\gamma\})$, there exists $r_0=r_0(\delta,n,\tilde{\omega})>0$ such that
\begin{equation}\label{variant-sufficient}
\sup_{r\geq r_0}r^{\delta}|\tilde{u}-\overline{u}(r)|\leq C\|\tilde{F}\|_{C^{2k,2\alpha}_{X,\beta}}.
\end{equation}
Indeed, the triangle inequality together with Claims \ref{claim-a-priori-rough-bd-hih-der} and \ref{cla-c-0-bd-mean-val} already
yield such a uniform $C^0$-polynomial rate on the difference $\overline{u}(r)-u_{\infty}$.

In order to prove \eqref{variant-sufficient}, we apply a local parabolic Nash-Moser iteration to the following heat equation
with a source term (see for instance \cite[Theorem $6.17$]{Boo-Lieberman}) by recalling that for $\tau<0$, $\tilde{u}(\tau):=(\phi^X_{\tau})^{*}\tilde{u}$ and
$\overline{u}(r,\tau):=(\phi^X_{\tau})^{*}\overline{u}(r)=\overline{u}\left(\frac{r}{\sqrt{-\tau}}\right)$:
\begin{equation*}
\partial_{\tau}\left(\tilde{u}-\overline{u}(\cdot,\cdot)\right)(\tau)=\Delta_{(-\tau)\cdot g_D}\left(\tilde{u}-\overline{u}(\cdot,\cdot)\right)(\tau)+\underbrace{\Delta_{C}\left(\tilde{u}-\overline{u}(\cdot,\cdot)\right)(\tau)-\left(G-\overline{G}\right)(\tau)}_{:=S(\tau)\quad\text{source term}},\qquad (-\tau)\in\left[\frac{1}{2},\,2\right].
\end{equation*}
Here we have used \eqref{heat-eqn-disguise}, \eqref{linearise-covid-0}, and \eqref{nomad}.
Also, the notation $(-\tau)\cdot g_D$ denotes the metric on $D$ rescaled by $(-\tau)$. In particular, there exists $C>0$ such that if $r\geq r_0$,
\begin{equation}
\begin{split}\label{baby-nash-moser}
\sup_{f=\frac{r^2}{2}}|\tilde{u}-\overline{u}(r)|&=\sup_{f=\frac{r^2}{2}}|\tilde{u}(\tau)-\overline{u}(r,\tau)|_{\tau=-1}\\
&\leq C\sup_{(-\tau)\in[1/2,2]}\left(\|\tilde{u}(\tau)-\overline{u}(r,\tau)\|_{L^2(D)}+|S(\tau)|\right)\\
&\leq C\sup_{s\in[r/\sqrt{2},\sqrt{2}r]}\left(\|\tilde{u}(1)-\overline{u}(s,1)\|_{L^2(D)}+|S(1)|\right).
\end{split}
\end{equation}
The source term can be estimated as follows: if $k\geq 1$, $(-\tau)\in\left[\frac{1}{2},\,2\right]$ and $r\geq r_0$, then
\begin{equation*}
\begin{split}
\left|\Delta_{C}\left(\tilde{u}-\overline{u}(\cdot,\cdot)\right)(\tau)-\left(G-\overline{G}\right)(\tau)\right|&\leq C\left(r^{-\beta}\|\tilde{F}\|_{C^{2k,2\alpha}_{X,\beta}}+r^{-\gamma}\sup_{f\geq \frac{r^2}{4}}|\partial\overline{\partial}u|+r^{-2}\sup_{f\geq \frac{r^2}{4}}|u|\right)\\
&\leq C\|\tilde{F}\|_{C^{2k,2\alpha}_{X,\beta}}r^{-\min\{\beta,\gamma\}}(1+\log r),
\end{split}
\end{equation*}
where we have applied Claim \ref{claim-a-priori-rough-bd-hih-der} to $X\cdot \tilde{u}$ and $X\cdot X\cdot \tilde{u}$ in order to estimate $\Delta_{C}\tilde{u}$.

Finally, thanks to \eqref{baby-nash-moser}, Claim \ref{cla-c-0-orth-mean-val} combined with the above estimate on the source term implies that
\begin{equation*}
\begin{split}
\sup_{f=\frac{r^2}{2}}|\tilde{u}-\overline{u}(r)|&\leq C\|\tilde{F}\|_{C^{2k,2\alpha}_{X,\beta}}r^{-\delta}+C\|\tilde{F}\|_{C^{2k,2\alpha}_{X,\beta}}r^{-\min\{\beta,\gamma\}}(1+\log r)\\
&\leq C\|\tilde{F}\|_{C^{2k,2\alpha}_{X,\beta}}r^{-\delta},\qquad r\geq r_0,
\end{split}
\end{equation*}
as claimed.
\end{proof}

The next claim provides a quantitative sharp weighted $C^0$-estimate on $\tilde{u}-u_{\infty}$ in terms of the data.

\begin{claim}\label{sharp-pt-dec-u}
Given $\beta\in(0,\lambda^D)$, there exists $r_0=r_0(\beta,\tilde{\omega},n)>0$ independent of $F$ (and the solution $u$) such that
\begin{equation*}
\sup_{r\geq r_0}r^{\beta}|\tilde{u}-u_{\infty}|\leq C\|\tilde{F}\|_{C^{2k,2\alpha}_{X,\beta}}.
\end{equation*}
\end{claim}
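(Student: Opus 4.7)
The strategy is to bootstrap the weighted pointwise decay rate obtained in Claim \ref{claim-first-rough-dec-pt}, namely any $\delta_0 \in (0, \min\{\beta, \gamma\})$, all the way up to the sharp rate $\beta$, using that $\beta < \lambda^D$.

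The first step is to upgrade the decay estimate from a $C^0$-bound to derivative bounds. Given the pointwise bound $|\tilde{u} - u_\infty| \leq C r^{-\delta_0}\|\tilde{F}\|_{C^{2k,2\alpha}_{X,\beta}}$ outside a compact set, apply the parabolic Schauder argument of Claim \ref{claim-a-priori-rough-bd-hih-der} to the rescaled heat equation for $\tilde{u}(\tau) - u_\infty$. Since outside a compact set $\tilde{u}$ and $u$ differ by a pluriharmonic function (namely $c\chi\log r$), one obtains $|\nabla^{\tilde{g}, i}\mathcal{L}_X^{(j)}(\tilde{u})|_{\tilde{g}} + |\partial\bar\partial u|_g \leq C r^{-\delta_0}\|\tilde{F}\|_{C^{2k,2\alpha}_{X,\beta}}$ for $i + 2j \leq 2k+2$ on the annulus $A_{r-C, r+C}$. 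In particular the error term in \eqref{data-rough-est} is now $O(r^{-\gamma-\delta_0})$ instead of $O(r^{-\gamma}\log r)$.

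With this improved Hessian decay, the rest of the argument is a finite iteration of Claims \ref{cla-c-0-bd-mean-val}, \ref{cla-c-0-orth-mean-val} and \ref{claim-first-rough-dec-pt}. Namely, the second part of Claim \ref{cla-c-0-bd-mean-val} now yields $|\overline u(r) - u_\infty| \leq C r^{-\min\{\beta,\, \gamma+\delta_0\}}\|\tilde{F}\|_{C^{2k,2\alpha}_{X,\beta}}$. Similarly, the energy argument of Claim \ref{cla-c-0-orth-mean-val} uses $\|G - \overline G(r)\|_{L^2(D)}$ as the source, and this source is now bounded by $C r^{-\min\{\beta,\gamma+\delta_0\}}\|\tilde{F}\|_{C^{2k,2\alpha}_{X,\beta}}$, so the maximum principle comparison with the barrier $A r^{-2\delta_1}$ goes through for any $\delta_1 < \min\{\beta,\, \lambda^D,\, \gamma+\delta_0\}$ (the $\lambda^D$ cap coming from the Poincar\'e inequality on $D$). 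The parabolic Nash-Moser iteration of Claim \ref{claim-first-rough-dec-pt} then propagates this $L^2(D)$-decay to a pointwise $C^0$-decay $|\tilde u - u_\infty| \leq C r^{-\delta_1}\|\tilde{F}\|_{C^{2k,2\alpha}_{X,\beta}}$. Setting $\delta_{k+1} := \min\{\beta, \gamma+\delta_k\}-\varepsilon_k$ for a small $\varepsilon_k > 0$, the sequence $\{\delta_k\}$ strictly increases by at least $\gamma - \varepsilon_k$ at each step until it saturates at $\beta$; since $\gamma > 0$, this happens after finitely many (roughly $\lceil \beta/\gamma\rceil$) iterations.

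Finally, to replace $\beta - \varepsilon$ by the sharp exponent $\beta$, one notices that once $\delta_k \geq \beta - \gamma$ in the iteration, Claim \ref{cla-c-0-bd-mean-val} already gives $|\overline u(r) - u_\infty| \leq C r^{-\beta}\|\tilde{F}\|_{C^{2k,2\alpha}_{X,\beta}}$ (since the $r^{-\beta}$ source term from $\tilde F$ dominates), and Claim \ref{cla-c-0-orth-mean-val} can be run with the barrier $A r^{-2\beta}$ directly because the maximum-principle condition $2\beta < 2\lambda^D$ is strict by hypothesis. I expect the main technical obstacle to be the bookkeeping in this iteration, in particular ensuring uniform control of the constants $C$ and the radius $r_0$ as $\delta_k \to \beta$, which amounts to controlling the $\varepsilon$-dependence of the Young-inequality constant $C_\varepsilon$ in \eqref{ineq-ene-no-young} and choosing $\varepsilon < \lambda^D - \beta$ uniformly throughout the bootstrap.
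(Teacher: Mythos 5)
Your proposal is correct and follows essentially the same bootstrap as the paper: apply the parabolic Schauder estimate of Claim \ref{claim-a-priori-rough-bd-hih-der} to upgrade the $C^0$-decay of Claim \ref{claim-first-rough-dec-pt} to Hessian decay, feed the resulting improvement of \eqref{data-rough-est} back through Claims \ref{cla-c-0-bd-mean-val}, \ref{cla-c-0-orth-mean-val}, and \ref{claim-first-rough-dec-pt}, and iterate finitely many times (gaining roughly $\gamma$ per pass) until the rate saturates at $\beta$, with $\beta<\lambda^D$ guaranteeing the barrier $r^{-2\delta'}$ remains admissible in the $L^2(D)$ comparison. Your additional remarks on tracking the $\varepsilon_k$-losses and choosing $\varepsilon<\lambda^D-\beta$ uniformly are in fact unnecessary once the $\log^2 r$ factor in the source is gone (the improved source is a clean power, so $\delta'=\min\{\beta,\gamma+\delta\}$ is attained without loss), but they do no harm.
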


\begin{proof}[Proof of Claim \ref{sharp-pt-dec-u}]
Applying \eqref{est-sch-loc-para} from Claim \ref{claim-a-priori-rough-bd-hih-der} to $\tilde{u}-u_{\infty}$ together with Claim \ref{claim-first-rough-dec-pt} demonstrates that for $\delta\in(0,\min\{\beta,\gamma\})$,
\begin{equation*}
|X\cdot \tilde{u}|(x)+|\nabla^{\tilde{g}}\tilde{u}|(x)+|\nabla^{\tilde{g},2}\tilde{u}|(x)\leq C\|\tilde{F}\|_{C^{2k,2\alpha}_{X,\beta}}r^{-\delta},\qquad r\geq r_0.
\end{equation*}
 Recalling \eqref{data-rough-est}, the previous estimate implies in turn the following one:
\begin{equation}\label{data-sharp-est}
|G-\tilde{F}|\leq C\|\tilde{F}\|_{C^{2k,2\alpha}_{X,\beta}}r^{-\gamma-\delta},\qquad r\geq r_0.
\end{equation}
On one hand, thanks to Claim \ref{cla-c-0-bd-mean-val}, one obtains an improved decay on $\overline{u}(r)-u_{\infty}$, namely
\begin{equation*}
|\overline{u}(r)-u_{\infty}|\leq C\|\tilde{F}\|_{C^{2k,2\alpha}_{X,\beta}}\left(r^{-\min\{\beta,\gamma+\delta\}}\right),\qquad r\geq r_0.
\end{equation*}
On the other hand, \eqref{data-sharp-est} can then be inserted into the proof of Claim \ref{cla-c-0-orth-mean-val}
to establish an improved $L^2(D)$-decay on $\tilde{u}-\overline{u}(r)$. Indeed, from inequality \eqref{ineq-ene-no-young} we deduce that for $r\geq r_0$,
\begin{equation*}
\begin{split}
\Delta_{C,\,X}\left(\|\tilde{u}-\overline{u}(r)\|^{2}_{L^{2}(D)}\right)&\geq 2\lambda^{D}\|\tilde{u}-\overline{u}(r)\|^{2}_{L^{2}(D)}-C\|\tilde{u}-\overline{u}(r)\|_{L^{2}(D)}\|\tilde{F}\|_{C^{2k,2\alpha}_{X,\beta}}r^{-\min\{\beta,\gamma+\delta\}}\\
&\geq 2(\lambda^D-\varepsilon)\|\tilde{u}-\overline{u}(r)\|^{2}_{L^{2}(D)}-C_{\varepsilon}\|\tilde{F}\|_{C^{2k,2\alpha}_{X,\beta}}^2r^{-2\min\{\beta,\gamma+\delta\}}
\end{split}
\end{equation*}
for any $\varepsilon\in(0,\lambda^D)$. Using a barrier function of the form $r^{-2\delta'}$ with $0<\delta'\leq\min\{\beta,\gamma+\delta\}<\lambda^D$
and by choosing $\varepsilon>0$ carefully, one arrives at an improved $L^2(D)$-decay of the form above, specifically
\begin{equation*}
\|\tilde{u}-\overline{u}(r)\|_{L^2(D)}\leq C\|\tilde{F}\|_{C^{2k,2\alpha}_{X,\beta}}r^{-\delta'},\qquad r\geq r_0.
\end{equation*}
The proof of Claim \ref{claim-first-rough-dec-pt} can now be adapted to give a
corresponding improved pointwise decay. By applying this reasoning a finite number of times, one arrives at the desired sharp decay on $\tilde{u}-u_{\infty}$.
\end{proof}

Theorem \ref{iso-sch-Laplacian-pol} now follows by combining Claim \ref{claim-a-priori-rough-bd-hih-der} (after multiplying by the weight $r^{\beta}$) and Claim \ref{sharp-pt-dec-u}.
\end{proof}

\subsection{Small perturbations along the continuity path}\label{invert-poly}

In this section we show, using the implicit function theorem, that the
invertibility of the drift Laplacian given by Theorem \ref{iso-sch-Laplacian-pol}
allows for small perturbations in polynomially weighted function spaces
of solutions to the complex Monge-Amp\`ere equation that we wish to solve.
This forms the openness part of the continuity method as will be explained later in Section \ref{continuitie}.

In notation reminiscent of that of \cite[Chapter $5$]{Tian-Can-Met-Boo}, we consider
the space $\left(\mathcal{C}^{2,\,2\alpha}_{X,\,\beta}(M)\right)_{\tilde{\omega},0}$ of functions $F\in \mathcal{C}^{2,\,2\alpha}_{X,\,\beta}(M)$ with
\begin{equation*}
\int_M\left(e^{F}-1\right)\,e^{-\tilde{f}}\tilde{\omega}^n=0.
\end{equation*}
This function space is a hypersurface of the Banach space $\mathcal{C}^{2,\,2\alpha}_{X,\,\beta}(M)$. Notice that the tangent space at a function $F_0$ is the set of functions $u\in \mathcal{C}^{2,\,2\alpha}_{X,\,\beta}(M)$ with
\begin{equation*}
\int_Mu\,e^{F_0-\tilde{f}}\tilde{\omega}^n=0.
\end{equation*}
We have:
\begin{theorem}\label{Imp-Def-Kah-Ste}
Let $F_0\in\left(\mathcal{C}^{2,\,2\alpha}_{X,\,\beta}(M)\right)_{\tilde{\omega},0}\cap \mathcal{C}^{\infty}_{X,\,\beta}(M)$ for some $\beta\in(0,\lambda^D)$ and let $\psi_0\in\mathcal{M}^{\infty}_{X,\,\beta}(M)$ be a solution of the complex Monge-Amp\`ere equation
\begin{equation*}
\log\left(\frac{\tilde{\omega}^n_{\psi_0}}{\tilde{\omega}^n}\right)-\frac{X}{2}\cdot\psi_0=F_0.
\end{equation*}
Then for any $\alpha\in\left(0,\,\frac{1}{2}\right)$, there exists a neighbourhood $U_{F_0}\subset\left(C^{2,\,2\alpha}_{X,\,\beta}(M)\right)_{\tilde{\omega},0}$ of $F_0$ such that for all $F\in U_{F_0}$, there exists a unique function $\psi\in\mathcal{M}^{4,\,2\alpha}_{X,\,\beta}(M)$ such that
\begin{equation}
\log\left(\frac{\tilde{\omega}^n_{\psi}}{\tilde{\omega}^n}\right)-\frac{X}{2}\cdot\psi=F.\label{MA-neigh-small-per-pol}
\end{equation}
Moreover, if $F\in U_{F_0}$ lies in $\mathcal{C}^{\infty}_{X,\,\beta}(M)$ then the unique solution $\psi\in\mathcal{M}^{4,\,2\alpha}_{X,\,\beta}(M)$ to \eqref{MA-neigh-small-per-pol} lies in $\mathcal{M}^{\infty}_{X,\,\beta}(M).$
\end{theorem}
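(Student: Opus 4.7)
The plan is to realize \eqref{MA-neigh-small-per-pol} as a local inversion problem $\Phi(\psi) = F$ in Banach spaces and apply the implicit function theorem, with the key ingredient being the isomorphism provided by Theorem \ref{iso-sch-Laplacian-pol}. Concretely, I define
\begin{equation*}
\Phi:\mathcal{M}^{4,\,2\alpha}_{X,\,\beta}(M)\longrightarrow\mathcal{C}^{2,\,2\alpha}_{X,\,\beta}(M),\qquad \Phi(\psi):=\log\!\left(\tfrac{\tilde{\omega}_{\psi}^{n}}{\tilde{\omega}^{n}}\right)-\tfrac{X}{2}\cdot\psi,
\end{equation*}
and observe first that $\Phi$ actually lands in the hypersurface $\left(\mathcal{C}^{2,\,2\alpha}_{X,\,\beta}(M)\right)_{\tilde{\omega},0}$: indeed, the identity $e^{-\tilde{f}_{\psi}}\tilde{\omega}_{\psi}^{n}=e^{\Phi(\psi)-\tilde{f}}\tilde{\omega}^{n}$, together with Lemma \ref{lemma-preserved-int}(i) applied to $G\equiv 1$ along any $C^{1}$-path in $\mathcal{M}^{\infty}_{X,\,\beta}(M)$ joining $0$ to $\psi$, gives $\int_{M}(e^{\Phi(\psi)}-1)e^{-\tilde{f}}\tilde{\omega}^{n}=0$. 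Standard computations, as in \eqref{equ:taylor-exp}, show that $\Phi$ is a $C^{\infty}$-map of Banach manifolds with differential at $\psi_{0}$ given by
\begin{equation*}
D\Phi_{\psi_{0}}(u)=\Delta_{\tilde{g}_{\psi_{0}}}u-\tfrac{X}{2}\cdot u=\Delta_{\tilde{g}_{\psi_{0}},\,X}u.
\end{equation*}

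The heart of the argument is then the observation that $\tilde{g}_{\psi_{0}}$ satisfies the asymptotic hypothesis \eqref{hyp-basic-ass} because $\psi_{0}\in\mathcal{M}^{\infty}_{X,\,\beta}(M)$, so Theorem \ref{iso-sch-Laplacian-pol} applies with $\tilde{g}$ replaced by $\tilde{g}_{\psi_{0}}$ and guarantees that
\begin{equation*}
\Delta_{\tilde{g}_{\psi_{0}},\,X}:\mathcal{D}^{4,\,2\alpha}_{X,\,\beta}(M)\cap\Big\{{\textstyle\int_{M}}u\,e^{-\tilde{f}_{\psi_{0}}}\tilde{\omega}_{\psi_{0}}^{n}=0\Big\}\longrightarrow \mathcal{C}^{2,\,2\alpha}_{X,\,\beta}(M)\cap\Big\{{\textstyle\int_{M}}v\,e^{-\tilde{f}_{\psi_{0}}}\tilde{\omega}_{\psi_{0}}^{n}=0\Big\}
\end{equation*}
is an isomorphism of Banach spaces. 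Crucially, the measure $e^{-\tilde{f}_{\psi_{0}}}\tilde{\omega}_{\psi_{0}}^{n}$ equals $e^{F_{0}-\tilde{f}}\tilde{\omega}^{n}$, which is precisely the measure defining the tangent space to $\left(\mathcal{C}^{2,\,2\alpha}_{X,\,\beta}(M)\right)_{\tilde{\omega},0}$ at $F_{0}$. Since $\Phi$ is invariant under the addition of a constant to $\psi$, the kernel of $D\Phi_{\psi_{0}}$ on the full space $\mathcal{D}^{4,\,2\alpha}_{X,\,\beta}(M)$ is exactly $\mathbb{R}$; quotienting by this $\mathbb{R}$-action (or equivalently fixing the weighted mean of $\psi-\psi_{0}$) identifies the domain with the orthogonal of constants above, and the IFT then produces the required neighbourhood $U_{F_{0}}$ and the unique solution $\psi=\psi_{0}+v$ in $\mathcal{M}^{4,\,2\alpha}_{X,\,\beta}(M)$.

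For the final regularity statement, I would bootstrap. Given $F\in U_{F_{0}}\cap\mathcal{C}^{\infty}_{X,\,\beta}(M)$ with associated solution $\psi\in\mathcal{M}^{4,\,2\alpha}_{X,\,\beta}(M)$, differentiating \eqref{MA-neigh-small-per-pol} shows that $w:=\mathcal{L}_{X}^{(j)}(\nabla^{\tilde{g}})^{i}(\psi-\psi_{0})$ solves a linear elliptic equation of the form $\Delta_{\tilde{g}_{\psi},\,X}w=G$ with $G$ expressible in terms of derivatives of $F-F_{0}$ and of already-controlled lower-order derivatives of $\psi-\psi_{0}$. Applying Theorem \ref{iso-sch-Laplacian-pol} at successively higher regularity levels $\mathcal{D}^{2k+2,\,2\alpha}_{X,\,\beta}\to\mathcal{C}^{2k,\,2\alpha}_{X,\,\beta}$, together with the interior Schauder estimates \eqref{first-loc-a-priori-est-lin-th} and the parabolic estimate \eqref{est-sch-loc-para} to handle the $X$-derivatives, promotes $\psi$ from $\mathcal{M}^{4,\,2\alpha}_{X,\,\beta}$ to $\mathcal{M}^{\infty}_{X,\,\beta}$ in finitely many steps.

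The main obstacle, and the place where the machinery developed in Section \ref{linear} is indispensable, is verifying the Fredholm-plus-isomorphism property in the weighted spaces of Section \ref{function-spaces-subsection}: the presence of both the logarithmic term $\chi\log r$ and the constants in $\mathcal{D}^{2k+2,\,2\alpha}_{X,\,\beta}$, combined with the unboundedness of $X$, make the matching between the kernel of $D\Phi_{\psi_{0}}$ and the tangent space of the constraint hypersurface delicate; once this is reconciled via the identity $e^{-\tilde{f}_{\psi_{0}}}\tilde{\omega}_{\psi_{0}}^{n}=e^{F_{0}-\tilde{f}}\tilde{\omega}^{n}$, the remainder of the proof is routine.
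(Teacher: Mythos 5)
Your argument is correct and is essentially the same as the paper's: you define the same Monge--Amp\`ere map (the paper calls it $MA_{\tilde{\omega}}$), use the Taylor expansion \eqref{equ:taylor-exp} and Lemma \ref{lemma-preserved-int}(i) to show it takes values in the constraint hypersurface, identify its linearisation at $\psi_0$ as $\Delta_{\tilde{g}_{\psi_0},X}$, invoke Theorem \ref{iso-sch-Laplacian-pol} with $\tilde{\omega}_{\psi_0}$ in place of $\tilde{\omega}$ (noting the matching of weighted measures $e^{-\tilde{f}_{\psi_0}}\tilde{\omega}_{\psi_0}^n = e^{F_0-\tilde{f}}\tilde{\omega}^n$), apply the implicit function theorem, and close with a bootstrapping argument for the $\mathcal{C}^\infty_{X,\beta}$ regularity — all exactly as in the paper's proof.
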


\begin{remark}
{Consideration of only finite regularity of the difference $\omega-\tilde{\omega}$
(which lowers the assumptions on the regularity of the coefficients of the drift Laplacian $\Delta_{\tilde{g},\,X}$)
and of the data $(\psi_0,\,F_0)$ would lead to a more refined version of Theorem \ref{Imp-Def-Kah-Ste}.}
\end{remark}

\begin{proof}[Proof of Theorem \ref{Imp-Def-Kah-Ste}]
In order to apply the implicit function theorem for Banach spaces, we must reformulate
the statement of Theorem \ref{Imp-Def-Kah-Ste} in terms of
the map $MA_{\tilde{\omega}}$ introduced formally at the beginning of Section \ref{linear}. To this end, consider the mapping
\begin{equation*}
\begin{split}
MA_{\tilde{\omega}}:\psi\in &\,\mathcal{M}^{4,\,2\alpha}_{X,\,\beta}(M)\\
&\mapsto \log\left(\frac{\tilde{\omega}_{\psi}^n}{\tilde{\omega}^n}\right)-\frac{X}{2}\cdot\psi\in\left(\mathcal{C}^{2,\,2\alpha}_{X,\,\beta}(M)\right)_{\tilde{\omega},0},\qquad \alpha\in\left(0,\,\frac{1}{2}\right).
\end{split}
\end{equation*}
Notice that the function spaces above can be defined by either using the metric $\tilde{g}$ or $\tilde{g}_{t\psi_0}$ for any $t\in[0,\,1]$.
To see that $MA_{\tilde{\omega}}$ is well-defined, apply the Taylor expansion \eqref{equ:taylor-exp} to the background metric $\tilde{\omega}$
to obtain
\begin{equation}\label{reform-MA-op}
\begin{split}
MA_{\tilde{\omega}}(\psi)&=\log\left(\frac{\tilde{\omega}_{\psi}^n}{\tilde{\omega}^n}\right)-\frac{X}{2}\cdot\psi\\
&=\Delta_{\tilde{\omega}}\psi-\frac{X}{2}\cdot\psi-\int_0^1\int_0^{u}\arrowvert \partial\bar{\partial}\psi\arrowvert^2_{\tilde{g}_{t\psi}}\,dt\,du.
\end{split}
\end{equation}
Then by the very definition of $\mathcal{D}^{4,\,2\alpha}_{X,\,\beta}(M)$,
the first two terms of the last line of \eqref{reform-MA-op} lie in $\mathcal{C}^{2,\,2\alpha}_{X,\,\beta}(M)$.

Now, if $S$ and $T$ are tensors in $C^{2k,\,2\alpha}_{X,\,\gamma_1}(M)$ and $C^{2k,\,2\alpha}_{X,\gamma_2}(M)$ respectively with $\gamma_i\geq 0$, $i=1,2$,
then observe that $S\ast T$ lies in $C^{2k,\,2\alpha}_{X,\,\gamma_1+\gamma_2}(M)$,
where $\ast$ denotes any linear combination of contractions of tensors with respect to the metric $\tilde{g}$. Moreover,
\begin{equation}\label{mult-inequ-holder}
\|S\ast T\|_{C^{2k,\,2\alpha}_{X,\,\gamma_1+\gamma_2}}\leq C(k,\,\alpha)\|S\|_{C^{2k,\,2\alpha}_{X,\,\gamma_1}}\cdot \|T\|_{C^{2k,\,2\alpha}_{X,\,\gamma_2}}.
\end{equation}
Next notice that $$\arrowvert i\partial \bar{\partial}\psi\arrowvert^2_{\tilde{g}_{t\psi}}=\tilde{g}_{t\psi}^{-1}\ast \tilde{g}_{t\psi}^{-1}
\ast (\nabla^{\tilde{g}})^{\,2}\psi\ast(\nabla^{\tilde{g}})^{\,2}\psi$$ and that $$\tilde{g}_{t\psi}^{-1}
-\tilde{g}^{-1}\in C^{2,\,2\alpha}_{X,\,\beta}(M).$$
Thus, applying \eqref{mult-inequ-holder} twice to $S=T=(\nabla^{\tilde{g}})^{2}\psi$ and to the inverse $\tilde{g}_{t\psi}^{-1}$ with weights
$\gamma_1=\gamma_{2}=\beta$ and $k=1$, one finds that $\arrowvert i\partial\bar{\partial}\psi\arrowvert^2_{\tilde{g}_{t\psi}}\in C^{2,\,2\alpha}_{X,\,2\beta}(M)\subset C^{2,\,2\alpha}_{X,\,\beta}(M)$ for each
$t\in[0,\,1]$ and that
\begin{equation*}
\left\|\int_0^1\int_0^{u}\arrowvert i\partial\bar{\partial}\psi\arrowvert^2_{\tilde{g}_{t\psi}}\,dt\,du
\right\|_{C^{2,\,2\alpha}_{X,\,\beta}}\leq C\left(k,\alpha,\tilde{g}\right)\|\psi\|_{\mathcal{D}^{4,\,2\alpha}_{X,\,\beta}},\\
\end{equation*}
as long as $\|\psi\|_{\mathcal{D}^{4,\,2\alpha}_{X,\,\beta}}\leq 1$. Finally, the $JX$-invariance of the right-hand side of \eqref{reform-MA-op} is clear and Lemma \ref{lemma-preserved-int}(i) ensures that the function $$\exp MA_{\tilde{\omega}}(\psi)-1$$ has zero mean value with respect to the weighted measure $e^{-\tilde{f}}\tilde{\omega}^n$. Indeed, Lemma \ref{lemma-preserved-int}(i) applied to the linear path $\tilde{\omega}_{\tau}:=\tilde{\omega}+i\partial\overline{\partial}(\tau\psi)$ for $\tau\in[0,1]$ gives us that
\begin{equation*}
\int_M\left(\exp MA_{\tilde{\omega}}(\psi)-1\right)\,e^{-\tilde{f}}\tilde{\omega}^n=\int_Me^{-\tilde{f}_{\psi}}\tilde{\omega}_{\psi}^n-\int_Me^{-\tilde{f}}\tilde{\omega}^n=0.
\end{equation*}

By \eqref{equ:sec-der}, we have that
\begin{equation*}
\begin{split}
D_{\psi_0}MA_{\tilde{\omega}}:\psi\in &\,\mathcal{M}^{4,\,2\alpha}_{X,\,\beta}(M)\cap\left\{\int_{M}u\,e^{-\tilde{f}_{\psi_0}}\tilde{\omega}_{\psi_0}^{n}=0\right\}\\
&\mapsto \Delta_{\tilde{\omega}_{\psi_0}}\psi-\frac{X}{2}\cdot\psi\in T_{F_0}\left(\mathcal{C}^{2,\,2\alpha}_{X,\,\beta}(M)\right)_{\tilde{\omega},0},
\end{split}
\end{equation*}
where the tangent space of $\left(\mathcal{C}^{2,\,2\alpha}_{X,\,\beta}(M)\right)_{\tilde{\omega},0}$ at $F_0$ is equal to the set of functions $u\in \mathcal{C}^{2,\,2\alpha}_{X,\,\beta}(M)$ with $0$ mean value with respect to the weighted measure $e^{-\tilde{f}_{\psi_0}}\tilde{\omega}_{\psi_0}^n$.
Therefore, after applying Theorem \ref{iso-sch-Laplacian-pol} to the background metric $\tilde{\omega}_{\psi_0}$ in place of $\tilde{\omega}$, we conclude that
$D_{\psi_0}MA_{\tilde{\omega}}$ is an isomorphism of Banach spaces. The result now follows by applying the implicit function theorem
to the map $MA_{\tilde{\omega}}$ in a neighbourhood of $\psi_0\in\mathcal{M}^{4,\,2\alpha}_{X,\,\beta}(M)\cap\left\{\int_{M}u\,e^{-\tilde{f}_{\psi_0}}\tilde{\omega}_{\psi_0}^{n}=0\right\}$.\\

The proof of the regularity at infinity of the solution $\psi$ in case the data $F\in \mathcal{C}^{\infty}_{X,\beta}(M)$ follows by a standard bootstrapping and will therefore be omitted; see Propositions \ref{prop-C4-est} and \ref{high-order-est-prop} for the non-linear setting.
\end{proof}

\section{Proof of Theorem \ref{mainthm}(v): A priori estimates}\label{sec-a-priori-est}

\subsection{The continuity path}\label{continuitie}

Recall the setup and notation of Theorem \ref{mainthm}: $J$ denotes the complex structure on $M$, $z$ the holomorphic coordinate
on the $\mathbb{C}$-component of $\widehat{M}$, and we write $r=|z|^{\lambda}$, treating both $r$ and $z$ as functions on $M$ via $\nu$.
It is clear then that $X=r\partial_{r}$ on $M\setminus K$.

Recall from \eqref{cmaa} that the complex Monge-Amp\`ere equation we wish to solve is
\begin{equation}\label{ast-0}
\left\{
\begin{array}{rl}
(\omega+i\partial\bar{\partial}\psi)^{n}=e^{F+\frac{X}{2}\cdot\psi}\omega^{n},&\qquad\psi\in C^{\infty}(M),
\qquad\mathcal{L}_{JX}\psi=0,\qquad\omega+i\partial\bar{\partial}\psi>0,\\
\int_{M}e^{F-f}\omega^{n}=\int_{M}e^{-f}\omega^{n}, &
\end{array} \right.\tag{$\ast_{0}$}
\end{equation}
where $F:M\to\mathbb{R}$ is a $JX$-invariant smooth function equal to a constant $c_{0}$ outside a compact subset $V$ of $M$ and
$f:M\to\mathbb{R}$ is the Hamiltonian potential of $X$ with respect to $\omega$, i.e., $-\omega\lrcorner JX=df$, normalised so that
\begin{equation*}
\Delta_{\omega}f-f+\frac{X}{2}\cdot f=0
\end{equation*}
outside a compact set. Define $F_{s}:=\log(1+s(e^{F}-1))$. In this section, we prove Theorem \ref{mainthm}(v) by providing a solution to \eqref{ast-0} by
implementing the continuity path
\begin{equation}\label{star-s}
\left\{
\begin{array}{rl}
(\omega+i\partial\bar{\partial}\psi_{s})^{n}=e^{F_{s}+\frac{X}{2}\cdot\psi_{s}}\omega^{n},&\qquad\psi_{s}\in\mathcal{M}^{\infty}_{X,\,\beta}(M),
\qquad\mathcal{L}_{JX}\psi_{s}=0,\qquad s\in[0,\,1],\\
\int_{M}e^{F-f}\omega^{n}=\int_{M}e^{-f}\omega^{n}, &\qquad\int_{M}\psi_{s}\,e^{-f}\omega^{n}=0.
\end{array} \right.\tag{$\star_{s}$}
\end{equation}

When $s=0$, $(\star_{0})$ admits the trivial solution, namely $\psi_{0}\equiv0$. When $s=1$, $(\star_{1})$
corresponds to \eqref{ast-0}, that is, the equation that we wish to solve. Via the a priori estimates to follow, we will
show that the set $s\in[0,\,1]$ for which \eqref{star-s} has a solution is closed. As we have just seen, this set is
non-empty. Openness of this set follows from the isomorphism properties of the drift Laplacian given
by Theorem \ref{Imp-Def-Kah-Ste}. Connectedness of $[0,\,1]$ then implies that \eqref{star-s} has a solution for $s=1$, resulting
in the desired solution of \eqref{ast-0}.

\subsection{The continuity path re-parametrised}

To obtain certain localisation results and in turn, a priori estimates for \eqref{star-s}, we need to consider a reformulation of \eqref{star-s} in the following way.
Identify $(M\setminus K,\,\omega)$ and $(\widehat{M}\setminus\widehat{K},\,\widehat{\omega})$ using $\nu$, where $K\subset M$, $\widehat{K}\subset\widehat{M}$ are compact,
and define $F_{s}:=\log(1+s(e^{F}-1))$. Then there exists a compact subset $K\subset V\subset M$ such that
for all $s\in[0,\,1]$, $F_{s}$ is equal to a constant $c_{s}$ on $M\setminus V$. Explicitly, $c_{s}=\log(1+s(e^{c_{0}}-1))$.
Note that $c_{s}$ varies continuously as a function of $s$ and that \eqref{star-s} {takes the form}
$$(\omega+i\partial\bar{\partial}\psi_{s})^{n}=e^{F_{s}+\frac{X}{2}\cdot\psi_{s}}\omega^{n}.$$

Let $\eta_{s}:=-2c_{s}\log(r)$, a real-valued function defined on $M\setminus K$. Then, with $g$ denoting the K\"ahler metric associated to $\omega$, it is clear that
$$\|(\log(r))^{-1}\cdot\eta_{s}\|_{C^{0}(M\setminus K)}
+\|d\eta_{s}\|_{C^{0}(M\setminus K,\,g)}+\|r\cdot i\partial\bar{\partial}\eta_{s}\|_{C^{0}(M\setminus K,\,g)}\leq2|c_{s}|\left(1+\sup_{M\setminus K}r^{-1}\right)\leq C(K),$$
and so Lemma \ref{glue} infers the existence of a bump function $\chi:M\to\mathbb{R}$ supported on $M\setminus V$ and a compact subset $W\supset V$, both independent of $s$, such that $\chi=1$ on $M\setminus W$ and such that for all $s\in[0,\,1]$,
$\omega_{s}:=\omega+i\partial\bar{\partial}\left(\chi\cdot\eta_{s}\right)>0$ on $M$.
Define $\Phi_{s}:=\chi\cdot\eta_{s}$. Then $\omega_{s}=\omega+i\partial\bar{\partial}\Phi_{s}$
and since $\Phi_{s}=-2c_{s}\log r$ on $M\setminus W$, that is, a pluriharmonic function, $\omega_{s}$ is isometric to $\omega$
on this set. Furthermore, we find that
\begin{equation*}
\begin{split}
\log\left(\frac{(\omega_{s}+i\partial\bar{\partial}(\psi_{s}-\Phi_{s}))^{n}}{\omega_{s}^{n}}\right)&-\frac{X}{2}\cdot(\psi_{s}-\Phi_{s})=
\log\left(\frac{(\omega+i\partial\bar{\partial}\psi_{s})^{n}}{(\omega+i\partial\bar{\partial}\Phi_{s})^{n}}\right)-\frac{X}{2}\cdot(\psi_{s}-\Phi_{s})\\
&=\log\left(\frac{(\omega+i\partial\bar{\partial}\psi_{s})^{n}}{\omega^{n}}\right)-\frac{X}{2}\cdot\psi_{s}
-\log\left(\frac{(\omega+i\partial\bar{\partial}\Phi_{s})^{n}}{\omega^{n}}\right)+\frac{X}{2}\cdot\Phi_{s}\\
&=F_s-\left(\log\left(\frac{(\omega+i\partial\bar{\partial}\Phi_{s})^{n}}{\omega^{n}}\right)-\frac{X}{2}\cdot\Phi_{s}\right)=:G_{s},
\end{split}
\end{equation*}
with $G_{s}$ vanishing on $M\setminus W$. Set $\vartheta_{s}:=\psi_{s}-\Phi_{s}$. Then $\vartheta_{s}\in \mathbb{R}\oplus C^{\infty}_{X,\,\beta}(M)$ and
we can rewrite \eqref{star-s} in terms of $\vartheta_{s}$ as
\begin{equation}\label{starstar-s}
\log\left(\frac{(\omega_{s}+i\partial\bar{\partial}\vartheta_{s})^{n}}{\omega_{s}^{n}}\right)-\frac{X}{2}\cdot\vartheta_{s}=G_{s},
\quad\vartheta_{s}\in\mathbb{R}\,\oplus\, C^{\infty}_{X,\,\beta}(M),\quad\mathcal{L}_{JX}\vartheta_{s}=0,\quad\omega_{s}+i\partial\bar{\partial}\vartheta_{s}>0,\, s\in[0,\,1],
\tag{$\star\star_{s}$}
\end{equation}
with the support of $G_{s}$ contained in $W$ and $\omega_{s}=\omega$ on $M\setminus W$. We derive a priori estimates for
\eqref{starstar-s}, the advantage over \eqref{star-s} being that it allows for a localisation of the infimum and supremum of $|\vartheta_{s}|$,
essentially because the unbounded log term has been absorbed into the background metric $\omega_{s}$ in \eqref{starstar-s}. As we have control on $\Phi_{s}$, the a priori estimates we derive
for $\vartheta_{s}$ will translate into the desired a priori estimates for $\psi_{s}$, thereby allowing us to complete the closedness part of the continuity method for
\eqref{star-s}.

Define $\sigma_{s}:=\omega_{s}+i\partial\bar{\partial}\vartheta_{s}$. Then in terms of the Ricci forms $\rho_{\sigma_{s}}$ and $\rho_{\omega_{s}}$ of
$\sigma_{s}$ and $\omega_{s}$ respectively, \eqref{starstar-s} yields
\begin{equation}\label{wtf}
\rho_{\sigma_{s}}+\frac{1}{2}\mathcal{L}_{X}\sigma_{s}=\rho_{\omega_{s}}+\frac{1}{2}\mathcal{L}_{X}\omega_{s}-i\partial\bar{\partial}G_{s}.
\end{equation}
We will write $h_{s}$ for the K\"ahler metric associated to $\sigma_{s}$.

We will need the following lemma regarding the Hamiltonian potential $f_{\omega_{s}}$ of $X$ with respect to $\omega_{s}$.
\begin{lemma}\label{normal-fss}
Let $f_{\omega_{s}}:=f+\frac{X}{2}\cdot\Phi_{s}$. Then $-\omega_{s}\lrcorner JX=df_{\omega_{s}}$ and there
exists a compact subset $U\subset M$ containing $W$ such that for all $s\in[0,\,1]$,
there exists $H_{s}\in C^{\infty}(M)$ varying smoothly in $s$ and equal to $-c_{s}$ on $M\setminus U$
so that
\begin{equation}\label{normal-fs}
\Delta_{\omega_{s}}f_{\omega_{s}}-\frac{X}{2}\cdot f_{\omega_{s}}+f_{\omega_{s}}=H_{s}.
\end{equation}
\end{lemma}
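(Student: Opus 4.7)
The plan is to verify the two claims essentially by direct computation, localising everything in the region where $\omega_s$ equals $\omega$ and using the normalisation of $f$ provided by Lemma \ref{warm}.

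\textbf{Step 1 (The Hamiltonian identity).} I would first observe that $\Phi_s=\chi\cdot\eta_s=-2c_s\chi\log r$ depends only on $r$ (since $\chi$ is chosen as in Lemma \ref{glue} and hence is a function of $r$ alone), so $\mathcal{L}_{JX}\Phi_s=0$. Given this, the standard identity $\iota_{JX}(i\partial\bar{\partial}\Phi_s)=-d\!\left(\tfrac{X}{2}\cdot\Phi_s\right)$ (which was already used implicitly in Lemma \ref{lemma-preserved-int} to track the Hamiltonian potential $\tilde{f}_t=\tilde{f}+\tfrac{X}{2}\cdot\varphi_t$ along a path of K\"ahler forms) yields
\[
-\omega_s\lrcorner JX=-\omega\lrcorner JX-\iota_{JX}(i\partial\bar{\partial}\Phi_s)=df+d\!\left(\tfrac{X}{2}\cdot\Phi_s\right)=df_{\omega_s}.
\]
This settles the first claim. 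The identity itself can be proved in one line by contracting the $(1,1)$-form $i\partial\bar{\partial}\Phi_s$ with $X^{1,0}=\tfrac12(X-iJX)$ and using $JX\cdot\Phi_s=0$ together with $\bar{\partial}(X^{1,0}\cdot\Phi_s)=\bar{\partial}\partial\Phi_s\lrcorner X^{1,0}$.

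\textbf{Step 2 (Definition of $H_s$ and behaviour outside a compact set).} I would define $H_s:=\Delta_{\omega_s}f_{\omega_s}-\tfrac{X}{2}\cdot f_{\omega_s}+f_{\omega_s}$ on all of $M$; smoothness in both $x$ and $s$ is then automatic from the explicit expressions for $\omega_s$, $f$, and $\Phi_s$. The content is therefore to identify $U$ so that $H_s\equiv -c_s$ on $M\setminus U$. Outside $W$, the function $\Phi_s=-2c_s\log r$ is pluriharmonic, so $\omega_s=\omega$; moreover $X=r\partial_r$ there, whence $X\cdot\log r=1$ and consequently
\[
f_{\omega_s}=f+\tfrac{X}{2}\cdot\Phi_s=f-c_s\qquad\textrm{on $M\setminus W$.}
\]
Let $K'\subset M$ be the compact subset (containing $K$) outside of which Lemma \ref{warm} gives $\Delta_\omega f+f-\tfrac{X}{2}\cdot f=0$, and take $U:=W\cup K'$. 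Then on $M\setminus U$, using $\omega_s=\omega$,
\[
\Delta_{\omega_s}f_{\omega_s}-\tfrac{X}{2}\cdot f_{\omega_s}+f_{\omega_s}=\Delta_\omega f-\tfrac{X}{2}\cdot f+(f-c_s)=-c_s,
\]
which is the desired identity with $H_s=-c_s$ on $M\setminus U$.

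\textbf{Step 3 (Smooth dependence on $s$).} Finally, the smoothness of $s\mapsto c_s=\log(1+s(e^{c_0}-1))$, of $s\mapsto\chi\cdot\eta_s$, and of the continuous family of K\"ahler forms $\omega_s$ makes all quantities depend smoothly on $s$, so $H_s\in C^\infty(M)$ varies smoothly in $s\in[0,1]$. There is no real obstacle here: the proof is essentially a bookkeeping exercise that uses in an essential way the pluriharmonicity of $\log r$, the fact that $X\cdot\log r=1$, and the preferred normalisation of $f$ from Lemma \ref{warm}; the role of $U$ is simply to absorb both $W$ and the compact set outside of which Lemma \ref{warm} applies.
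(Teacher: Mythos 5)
Your proof is correct, and it is genuinely simpler than the paper's. The paper proceeds by deriving a Ricci form identity $\rho_{\omega_s}+\tfrac{1}{2}\mathcal{L}_X\omega_s-\omega_s=i\partial\bar\partial Q_s$ with $Q_s:=F_2+G_s-F_s-\Phi_s$, then contracts this with $X^{1,0}$ and invokes the Bochner formula (mirroring the proof of Lemma~\ref{warm}) to show that $\Delta_{\omega_s}f_{\omega_s}-\tfrac{X}{2}\cdot f_{\omega_s}+f_{\omega_s}+\tfrac{X}{2}\cdot Q_s$ is a global constant, which it then evaluates as zero on $M\setminus W$; this gives $H_s=-\tfrac{X}{2}\cdot Q_s$. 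You instead observe that the lemma's content is essentially definition-unpacking: since the left-hand side of \eqref{normal-fs} is manifestly a smooth function that depends smoothly on $s$, the only thing to prove is that it equals $-c_s$ outside a compact set, and that follows at once from $\omega_s=\omega$, $f_{\omega_s}=f-c_s$ on $M\setminus W$, and the normalisation $\Delta_\omega f+f-\tfrac{X}{2}\cdot f=0$ of Lemma~\ref{warm}. What the paper's longer route buys is an explicit global formula $H_s=-\tfrac{X}{2}\cdot Q_s$ in terms of the Ricci data, and a presentation parallel to Lemma~\ref{warm}; but nothing downstream (in particular Lemma~\ref{lemma-tr-star-star}) actually needs more than the statement you prove, so your more direct argument is an improvement. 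Your Step~1 verification of $-\omega_s\lrcorner JX=df_{\omega_s}$ (contracting $i\partial\bar\partial\Phi_s$ with $X^{1,0}$ using $JX$-invariance of $\Phi_s$) is also fine and supplies the detail the paper dismisses as ``clear.''
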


\begin{proof}
The first assertion is clear. Regarding the normalisation condition \eqref{normal-fs}, a computation shows that for the Ricci forms $\rho_{\omega}$ and $\rho_{\omega_{s}}$ of $\omega$ and $\omega_{s}$ respectively,
\begin{equation*}
\begin{split}
\rho_{\omega_{s}}+\frac{1}{2}\mathcal{L}_{X}\omega_{s}-\omega_{s}&=\rho_{\omega}+\frac{1}{2}\mathcal{L}_{X}\omega-\omega-i\partial\bar{\partial}
\left(\log\left(\frac{\omega_{s}^{n}}{\omega^{n}}\right)-\frac{X}{2}\cdot\Phi_{s}+ \Phi_{s}\right)\\
&=i\partial\bar{\partial}(F_{2}+G_{s}-F_s-\Phi_{s}),
\end{split}
\end{equation*}
where we have used \eqref{credit}. Write $Q_{s}:=F_{2}+G_{s}-F_s-\Phi_{s}$. Then $Q_{s}$ is $JX$-invariant and
it is easy to see that $Q_{s}$ is equal to $2c_{s}\log(r)-c_{s}$ outside a compact subset $U\supseteq W$ of $M$ independent of $s$.
Contracting the identity
\begin{equation*}
\rho_{\omega_{s}}+\frac{1}{2}\mathcal{L}_{X}\omega_{s}-\omega_{s}=i\partial\bar{\partial}Q_{s}
\end{equation*}
with $X^{1,\,0}:=\frac{1}{2}(X-iJX)$ and arguing as in Lemma \ref{warm} using the $JX$-invariance of the functions involved, we find that
$$\Delta_{\omega_{s}}f_{\omega_{s}}-\frac{X}{2}\cdot f_{\omega_{s}}+f_{\omega_{s}}+\frac{X}{2}\cdot Q_{s}$$
is constant on $M$. But since on $M\setminus W$, $\omega_{s}=\omega$, $f_{\omega_{s}}=f-c_{s}$,
and $\frac{X}{2}\cdot Q_{s}=c_{s}$, this constant must be zero. Hence the result follows with $H_{s}:=-\frac{X}{2}\cdot Q_{s}$.
\end{proof}

This allows for a normalisation for the Hamiltonian potential $f_{\sigma_{s}}:=f_{\omega_{s}}+\frac{X}{2}\cdot\vartheta_{s}$
of $X$ with respect to $\sigma_{s}$.

\begin{lemma}\label{lemma-tr-star-star}
Let $f_{\sigma_{s}}:=f_{\omega_{s}}+\frac{X}{2}\cdot\vartheta_{s}$ and let $U$ be as in Lemma \ref{normal-fss}. Then $-\sigma_{s}\lrcorner JX=df_{\sigma_{s}}$ and
for all $s\in[0,\,1]$, there exists a compactly supported function $P_{s}\in C^{\infty}(M)$ varying smoothly in $s$ with
$\operatorname{supp}P_{s}\subseteq U$ such that
$$\Delta_{\sigma_{s}}f_{\sigma_{s}}-\frac{X}{2}\cdot f_{\sigma_{s}}=-f+P_{s}.$$
\end{lemma}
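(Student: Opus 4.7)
The plan is to mimic the derivation of Lemma \ref{normal-fss}, with $\sigma_{s}$ replacing $\omega_{s}$. For the Hamiltonian identity $-\sigma_{s}\lrcorner JX=df_{\sigma_{s}}$: since $\vartheta_{s}$ is $JX$-invariant and $JX$ is real holomorphic, the standard computation (used implicitly in Lemma \ref{warm}) gives $-i\partial\bar{\partial}\vartheta_{s}\lrcorner JX=d(\frac{X}{2}\cdot\vartheta_{s})$; adding $-\omega_{s}\lrcorner JX=df_{\omega_{s}}$ from Lemma \ref{normal-fss} yields the claim.

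Next, combining \eqref{wtf} with the identity $\rho_{\omega_{s}}+\frac{1}{2}\mathcal{L}_{X}\omega_{s}-\omega_{s}=i\partial\bar{\partial}Q_{s}$ from the proof of Lemma \ref{normal-fss} (where $Q_{s}=F_{2}+G_{s}-F_{s}-\Phi_{s}$), one obtains
\begin{equation*}
\rho_{\sigma_{s}}+\frac{1}{2}\mathcal{L}_{X}\sigma_{s}-\sigma_{s}=i\partial\bar{\partial}R_{s},\qquad R_{s}:=Q_{s}-G_{s}-\vartheta_{s}=F_{2}-F_{s}-\psi_{s}.
\end{equation*}
Contracting this with $X^{1,\,0}=\frac{1}{2}(X-iJX)$ and invoking the Bochner argument carried out verbatim in Lemmas \ref{warm} and \ref{normal-fss} (using the $JX$-invariance of both $R_{s}$ and $f_{\sigma_{s}}$), I expect to conclude that
\begin{equation*}
\Delta_{\sigma_{s}}f_{\sigma_{s}}-\frac{X}{2}\cdot f_{\sigma_{s}}+f_{\sigma_{s}}+\frac{X}{2}\cdot R_{s}=k_{s}\in\R.
\end{equation*}

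The constant $k_{s}$ will then be pinned down by evaluation at infinity. Outside $U$ one has $\omega_{s}=\omega$, $\Phi_{s}=-2c_{s}\log r$, $F_{s}\equiv c_{s}$, $F_{2}\equiv 0$, and $\vartheta_{s}\in\R\oplus C^{\infty}_{X,\,\beta}(M)$ so that the derivatives of $\vartheta_{s}$ decay. Consequently $\sigma_{s}\to\omega$, $f_{\sigma_{s}}=f-c_{s}+o(1)$, and $\frac{X}{2}\cdot R_{s}=c_{s}+o(1)$ as $r\to+\infty$; combined with the asymptotic normalisation $\Delta_{\omega}f-\frac{X}{2}\cdot f+f=0$ from Lemma \ref{warm}, this forces $k_{s}=0$.

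With $k_{s}=0$ in hand, using $f-f_{\sigma_{s}}=-\frac{X}{2}\cdot\psi_{s}$ together with the explicit form of $R_{s}$, a direct simplification of the constancy identity yields
\begin{equation*}
\Delta_{\sigma_{s}}f_{\sigma_{s}}-\frac{X}{2}\cdot f_{\sigma_{s}}=-f_{\sigma_{s}}-\frac{X}{2}\cdot R_{s}=-f+\frac{X}{2}\cdot(F_{s}-F_{2}).
\end{equation*}
Setting $P_{s}:=\frac{X}{2}\cdot(F_{s}-F_{2})$, we have $P_{s}=0$ on $M\setminus V$ (since $F_{s}-F_{2}\equiv c_{s}$ there), whence $\operatorname{supp}P_{s}\subseteq V\subseteq W\subseteq U$; smooth dependence on $s$ is automatic from the smooth $s$-dependence of $F_{s}=\log(1+s(e^{F}-1))$ and the $s$-independence of $F_{2}$. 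The only nontrivial step in the whole argument is the Bochner-based constancy assertion, but since it is the precise analogue of what is already executed in Lemmas \ref{warm} and \ref{normal-fss}, no new obstacle arises.
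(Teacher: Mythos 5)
Your proof is correct, but it takes a genuinely different route from the paper's. The paper's argument is shorter and bootstraps directly from Lemma \ref{normal-fss}: it first observes that $\frac{X}{2}\cdot\log(\sigma_s^n/\omega_s^n)=\Delta_{\sigma_s}f_{\sigma_s}-\Delta_{\omega_s}f_{\omega_s}$ by tracing the Lie derivatives of the two K\"ahler forms, then contracts \eqref{starstar-s} with $\frac{X}{2}$ to obtain $\Delta_{\sigma_s}f_{\sigma_s}-\frac{X}{2}\cdot f_{\sigma_s}=\Delta_{\omega_s}f_{\omega_s}-\frac{X}{2}\cdot f_{\omega_s}+\frac{X}{2}\cdot G_s$, and finally substitutes the normalisation \eqref{normal-fs}; this yields $P_s=H_s+\frac{X}{2}\cdot G_s-\frac{X}{2}\cdot\Phi_s$ with no undetermined constant to chase. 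Your proof instead replicates the Bochner-contraction mechanism of Lemmas \ref{warm} and \ref{normal-fss}, applied directly to the curvature identity $\rho_{\sigma_s}+\frac{1}{2}\mathcal{L}_X\sigma_s-\sigma_s=i\partial\bar\partial R_s$ with $R_s=F_2-F_s-\psi_s$; this produces a constant $k_s$ that must then be pinned down by a separate asymptotic evaluation at infinity. A reassuring check: unwinding $H_s=-\frac{X}{2}\cdot Q_s$ with $Q_s=F_2+G_s-F_s-\Phi_s$ shows the paper's $P_s$ equals $\frac{X}{2}\cdot(F_s-F_2)$, exactly your expression. The paper's route is a little more economical because once \ref{normal-fss} is in hand, the constant-pinning work has already been done; your route is more self-contained and arguably more transparent conceptually, since it identifies the exact function $R_s$ whose $i\partial\bar\partial$ equals the defect of $\sigma_s$ from being a shrinking soliton, at the cost of one extra asymptotic limit.

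Two small points worth tightening. First, in the asymptotic step you should make explicit that the quantity whose limit you take, $\Delta_{\sigma_s}f_{\sigma_s}-\frac{X}{2}\cdot f_{\sigma_s}+f_{\sigma_s}+\frac{X}{2}\cdot R_s$, does converge even though $f_{\sigma_s}$ and $\frac{X}{2}\cdot R_s$ separately grow; the clean way is to note that outside $U$ one has $f_{\sigma_s}+\frac{X}{2}\cdot R_s=f$ identically, and then that $\Delta_{\sigma_s}f_{\sigma_s}-\frac{X}{2}\cdot f_{\sigma_s}$ differs from $\Delta_\omega f-\frac{X}{2}\cdot f$ by terms that decay (because $i\partial\bar\partial f$ is bounded, $h_s^{-1}-g^{-1}$ decays, and $\vartheta_s\in\R\oplus C^\infty_{X,\beta}(M)$). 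Second, you should state that the Bochner-contraction step uses the first assertion $-\sigma_s\lrcorner JX=df_{\sigma_s}$, i.e.\ that $X=\nabla^{h_s}f_{\sigma_s}$, so the contraction of $\rho_{\sigma_s}$ with $X^{1,0}$ produces $\bar\partial(\Delta_{\sigma_s}f_{\sigma_s})$; this is exactly the role of the first assertion and is parallel to Lemma \ref{warm}.
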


\begin{proof}
Again, the first assertion is clear. As for \eqref{normal-fs}, we have that
\begin{equation*}
\begin{split}
\frac{X}{2}\cdot\log\left(\frac{\sigma_{s}^{n}}{\omega_{s}^{n}}\right)&=\frac{1}{2}\tr_{\sigma_{s}}
\mathcal{L}_{X}\sigma_{s}-\frac{1}{2}\tr_{\omega_{s}}\mathcal{L}_{X}\omega_{s}\\
&=\tr_{\sigma_{s}}(i\partial\bar{\partial}f_{\sigma_{s}})-\tr_{\omega}(i\partial\bar{\partial}f_{\omega_{s}})\\
&=\Delta_{\sigma_{s}}f_{\sigma_{s}}-\Delta_{\omega_{s}}f_{\omega_{s}}.
\end{split}
\end{equation*}
Thus, contracting both sides of \eqref{starstar-s} with $\frac{X}{2}$, we obtain
$$\Delta_{\sigma_{s}}f_{\sigma_{s}}-\Delta_{\omega_{s}}f_{\omega_{s}}=\frac{X}{2}\cdot G_{s}+\frac{X}{2}\cdot\left(f_{\omega_{s}}+\frac{X}{2}\cdot\vartheta_{s}\right)-\frac{X}{2}\cdot f_{\omega_{s}},$$
i.e.,
$$\Delta_{\sigma_{s}}f_{\sigma_{s}}-\frac{X}{2}\cdot f_{\sigma_{s}}=\Delta_{\omega_{s}}f_{\omega_{s}}-\frac{X}{2}\cdot f_{\omega_{s}}+\frac{X}{2}\cdot G_{s}.$$
Hence we derive from \eqref{normal-fs} that
$$\Delta_{\sigma_{s}}f_{\sigma_{s}}-\frac{X}{2}\cdot f_{\sigma_{s}}=H_{s}+\frac{X}{2}\cdot G_{s}-f_{\omega_{s}}.$$
With $P_{s}:=H_{s}+\frac{X}{2}\cdot G_{s}-\frac{X}{2}\cdot\Phi_{s}$, the result is now clear.
\end{proof}

\subsection{Summary of notation}

For clarity, in this section we provide a summary of our notation regarding the various K\"ahler forms in play.
\begin{itemize}
 \item $F$ is the data in \eqref{ast-0} equal to a constant $c_{0}$ outside a compact set.
  \item $\omega$ is the background K\"ahler form given in \eqref{ast-0} isometric to $\omega_{C}+\omega_{D}$ outside a fixed compact subset $K\subset M$.
  \item $g$ is the K\"ahler metric associated to $\omega$.
  \item $f$ is the Hamiltonian potential of $JX$ with respect to $\omega$ given in Theorem \ref{mainthm}(iii). It is equal to $\frac{|z|^{2\lambda}}{2}-1$ outside the compact subset $K\subset M$ and normalised so that
  $$\Delta_{\omega}f-f+\frac{X}{2}\cdot f=0$$ outside a compact set.
 \item $c_{s}:=\log(1+s(e^{c_{0}}-1))$.
  \item $F_{s}$ is the data in \eqref{star-s} equal to $c_{s}$ outside a fixed compact subset $V \subset M$ with $V\supset K$.
\item $\psi_{s}$ is the solution to the original continuity path \eqref{star-s}.
\item $\Phi_{s}=-2\chi\cdot c_{s}\log r$, where $0\leq\chi\leq1$ is a bump function identically equal to $1$ outside a fixed compact subset $W \supset V\supset K$ of $M$. In particular, notice that
$\Phi_{s}=-c_{s}\log(2(f+1))$ on $M\setminus W$.
   \item $\omega_{s}:=\omega+i\partial\bar{\partial}\Phi_{s}$ is the $1$-parameter family of background metrics isometric to $\omega$ outside a compact set independent of $s$
  appearing in \eqref{starstar-s}.
\item $g_{s}$ is the K\"ahler metric associated to $\omega_{s}$.
    \item $f_{s}:=f+\frac{X}{2}\cdot\Phi_{s}$ is the Hamiltonian potential of $JX$ with respect to $\omega_{s}$.
\item $\vartheta_{s}=\psi_s - \Phi_s$ is the solution of the re-parametrised continuity path \eqref{starstar-s}.
    \item $\sigma_{s}:=\omega_{s}+i\partial\bar{\partial}\vartheta_{s}$ is the associated K\"ahler metric.
        \item $f_{\sigma_{s}}$ is the Hamiltonian potential of $JX$ with respect to $\sigma_{s}$. It is normalised by the equation
    $$\Delta_{\sigma_{s}}f_{\sigma_{s}}-\frac{X}{2}\cdot f_{\sigma_{s}}=-f+P_{s},$$
where $P_{s}$ is compactly supported.
\item $h_{s}$ is the K\"ahler metric associated to $\sigma_{s}$.
\end{itemize}

\subsection{A priori lower bound on the radial derivative}\label{sec-low-bd-rad-der}

The fact that the data $G_{s}$ of \eqref{starstar-s} is compactly supported allows us to localise the extrema of $X\cdot\vartheta_{s}$ using the maximum principle. This leads to a uniform lower bound on $X\cdot\vartheta_{s}$ and in particular on $X\cdot\psi_{s}$.

\begin{lemma}[Localising the supremum and infimum of the radial derivative]\label{lemma-loc-crit-pts-rad-der}
Let $(\vartheta_s)_{0\,\leq\, s\,\leq\, 1}$  be a path of solutions in $\mathbb{R}\oplus C^{\infty}_{X,\,\beta}(M)$ to \eqref{starstar-s}.
{Then $\sup_M X\cdot\vartheta_{s}=\max\{0\,,\,\max_{W}X\cdot\vartheta_{s}\}$
and $\inf_M X\cdot\vartheta_{s}=\min\{0\,,\,\min_{W}X\cdot\vartheta_{s}\}$.}
\end{lemma}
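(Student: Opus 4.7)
The natural approach is to apply the maximum principle to $w:=X\cdot\vartheta_s$, taking advantage of two facts: (i) $G_s$ has compact support in $W$, so the equation $(\star\star_s)$ is essentially a model soliton equation on $M\setminus W$, and (ii) $\vartheta_s\in\mathbb{R}\oplus C^\infty_{X,\beta}(M)$, so writing $\vartheta_s=c+\tilde\vartheta_s$ with $\tilde\vartheta_s\in C^\infty_{X,\beta}(M)$, we have $w=X\cdot\tilde\vartheta_s=O(f^{-\beta/2})$ at infinity. Consequently $\sup_M w$ and $\inf_M w$ are finite and attained at some point of $M$; the lemma will follow if no such extremum can lie in the interior of $M\setminus W$.

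To apply the maximum principle, I would derive an elliptic equation for $w$ outside $W$. Since $X$ is real holomorphic, $\mathcal{L}_X$ commutes with $i\partial\bar\partial$, and one has $\mathcal{L}_X\sigma_s=2i\partial\bar\partial f_{\sigma_s}$ and $\mathcal{L}_X\omega_s=2i\partial\bar\partial f_{\omega_s}$. Differentiating $(\star\star_s)$ along $X$, writing $X\cdot\log(\sigma_s^n/\omega_s^n)=\operatorname{tr}_{\sigma_s}(\mathcal{L}_X\sigma_s)-\operatorname{tr}_{\omega_s}(\mathcal{L}_X\omega_s)$, and using the identity $\mathcal{L}_X\sigma_s=\mathcal{L}_X\omega_s+i\partial\bar\partial w$ together with the normalisations of Lemmas \ref{normal-fss} and \ref{lemma-tr-star-star}, one arrives at
\begin{equation*}
\Delta_{\sigma_s,X}w \;=\; X\cdot G_s+2\bigl(\operatorname{tr}_{\omega_s}-\operatorname{tr}_{\sigma_s}\bigr)\bigl(i\partial\bar\partial f_{\omega_s}\bigr).
\end{equation*}
Outside $W$, $X\cdot G_s=0$, $\omega_s=\omega$ coincides with the model shrinking soliton $\omega_C+\omega_D$ at infinity, and $i\partial\bar\partial f_{\omega_s}=i\partial\bar\partial(f-c_s)=\omega_C$; so the source term reduces to $2\bigl(1-\operatorname{tr}_{\sigma_s}(\omega_C)\bigr)$.

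Now suppose for contradiction that $\sup_M w$ is attained at some interior point $p_0\in M\setminus W$ with $w(p_0)>\max_W w$. At $p_0$ we have $X\cdot w=0$ and $\Delta_{\sigma_s}w\leq0$, so $\Delta_{\sigma_s,X}w(p_0)\leq 0$, forcing $\operatorname{tr}_{\sigma_s}(\omega_C)(p_0)\geq 1$. I would then combine this trace inequality with the pointwise Monge-Amp\`ere relation $\sigma_s^n=e^{w/2}\omega^n$ at $p_0$ (valid since we are outside $W$). Decomposing the eigenvalue problem of $g^{-1}h_s$ with respect to the splitting $\omega=\omega_C+\omega_D$ and using the AM--GM inequality on the $D$-block against the constraint coming from the $\mathbb{C}$-direction, one forces $w(p_0)\leq 0$, which contradicts $w(p_0)>\max_W w\geq w|_{\partial W}\geq \lim_{\infty}w=0$ unless $\max_W w\geq 0$, in which case $\sup_M w\leq \max_W w$ trivially. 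The infimum argument is symmetric after replacing $w$ by $-w$ and $\operatorname{tr}_{\sigma_s}(\omega_C)\geq 1$ by the reverse inequality.

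The main obstacle in this plan is the final compatibility step: verifying that $\operatorname{tr}_{\sigma_s}(\omega_C)\geq 1$ at a putative interior maximum of $w$ cannot hold simultaneously with $w>0$ and $\det(g^{-1}h_s)=e^{w/2}$. This requires exploiting the rigid product structure $\omega=\omega_C+\omega_D$ of the model at infinity and is the substantive content beyond the standard maximum principle argument; everything else is bookkeeping with the normalisations.
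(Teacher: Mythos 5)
Your starting point is sound: differentiating \eqref{starstar-s} along $X$ and using the normalisations of Lemmas \ref{normal-fss} and \ref{lemma-tr-star-star} does produce, outside $W$, an equation of the form $\Delta_{\sigma_{s},X}(X\cdot\vartheta_{s}/2)=1-\operatorname{tr}_{\sigma_{s}}\omega_{C}$, which is precisely \eqref{banana1} in the paper. However, the concluding step of your plan fails, and the failure is not a technicality but a sign that the weak maximum principle is the wrong tool here.

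At an interior critical point $p_{0}$ of $w:=X\cdot\vartheta_{s}$ (in particular at a putative interior maximum), all first derivatives of $w$ vanish. By Claim \ref{commute}, $X\cdot w=0$ forces $i\partial\bar{\partial}\vartheta_{s}(X,JX)=0$ and $e_{D}\cdot w=0$ for $D$-directions forces $i\partial\bar{\partial}\vartheta_{s}(e_{D},JX)=0$; hence $\sigma_{s}|_{p_{0}}$ is block diagonal with $\mathbb{C}$-block exactly $\omega_{C}$. Consequently $\operatorname{tr}_{\sigma_{s}}\omega_{C}(p_{0})=1$ identically, with equality, and the source term vanishes, giving $\Delta_{\sigma_{s}}w(p_{0})=0$ — which is perfectly consistent with $p_{0}$ being an interior max. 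Moreover, since $\sigma_{s}^{n}/\omega^{n}=e^{w/2}$ at $p_{0}$ reduces to $\det\bigl(I+g_{D}^{-1}(i\partial\bar{\partial}\vartheta_{s})_{D\bar{D}}\bigr)=e^{w/2}$, the $D$-block is entirely unconstrained and $w(p_{0})$ can have any sign. Your proposed AM--GM/eigenvalue compatibility argument therefore cannot produce a contradiction: the constraint $\operatorname{tr}_{\sigma_{s}}\omega_{C}\geq 1$ together with $\det(g^{-1}h_{s})=e^{w/2}$ and $w>0$ is explicitly satisfiable (e.g.\ $n=2$, $g=I$, $g^{-1}h_{s}=\operatorname{diag}(1,2)$). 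You correctly flagged this ``final compatibility step'' as the obstacle, but there is no pointwise resolution.

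The paper gets around this by proving a stronger structural fact and then invoking the \emph{strong} maximum principle rather than the weak one. The key algebraic observation, not present in your plan, is that
\begin{equation*}
1-\operatorname{tr}_{\sigma_{s}}\omega_{C}=e^{-w/2}\,\frac{(\omega_{D}+i\partial\bar{\partial}\vartheta_{s})^{n}}{\omega^{n}}
\end{equation*}
and, because $\omega_{D}^{n}=0$, \emph{every} term in the expansion of $(\omega_{D}+i\partial\bar{\partial}\vartheta_{s})^{n}/\omega^{n}$ must carry at least one $\mathbb{C}$-direction component of $i\partial\bar{\partial}\vartheta_{s}$, which by Claim \ref{commute} is a first derivative of $w$ (either $X\cdot w$ or $\nabla^{g_{D}}w$). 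Schematically this is the paper's identity \eqref{lovely-eqn-der-rad-bis}. Thus the full equation on $M\setminus W$ is a linear homogeneous second-order elliptic equation in $w$ with no zeroth-order term. The strong maximum principle, together with the decay $w\to 0$ at infinity, then localises both the supremum and the infimum to $W$. This is the idea you are missing: it is not a pointwise inequality at the critical point but the linearity and absence of a zeroth-order term in the equation for $w$ that drives the argument.
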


\begin{proof}
First, using $\nu$ to identify $(M,\,\omega)$ and $(\widehat{M},\,\widehat{\omega})$ on $M\setminus W$, notice that
\begin{equation*}
\begin{split}
\frac{X}{2}\cdot\left(\log\left(\frac{\sigma_{s}^{n}}{\omega_{s}^{n}}\right)\right)&=\operatorname{tr}_{\sigma_{s}}\mathcal{L}_{\frac{X}{2}}\sigma_{s}-\operatorname{tr}_{\omega_{s}}\mathcal{L}_{\frac{X}{2}}\omega_{s}\\
&=\operatorname{tr}_{\sigma_{s}}\mathcal{L}_{\frac{X}{2}}(\omega_{s}+i\partial\bar{\partial}\vartheta_{s})-\operatorname{tr}_{\omega}\mathcal{L}_{\frac{X}{2}}\omega\\
&=\operatorname{tr}_{\sigma_{s}}\omega_{C}+\frac{1}{2}\Delta_{\sigma_{s}}(X\cdot\vartheta_{s})-\operatorname{tr}_{\omega}\omega_{C}\\
&=\operatorname{tr}_{\sigma_{s}}\omega_{C}+\frac{1}{2}\Delta_{\sigma_{s}}(X\cdot\vartheta_{s})-1.\\
\end{split}
\end{equation*}
Thus, upon differentiating \eqref{starstar-s} along $X$, we obtain on $M\setminus W$ the equation
\begin{equation}\label{banana1}
\Delta_{\sigma_{s},\,X}\left(\frac{X\cdot\vartheta_{s}}{2}\right):=
\Delta_{\sigma_{s}}\left(\frac{X\cdot\vartheta_{s}}{2}\right)-\frac{X}{2}\cdot\left(\frac{X\cdot\vartheta_{s}}{2}\right)
=1-\operatorname{tr}_{\sigma_{s}}\omega_{C}.
\end{equation}
Now on $M\setminus V$, we have that
\begin{equation*}
\begin{split}
\operatorname{tr}_{\sigma_{s}}\omega_{C}&=\frac{n\sigma_{s}^{n-1}\wedge\omega_{C}}{\sigma_{s}^{n}}\\
&=\frac{ne^{-\frac{X\cdot\vartheta_{s}}{2}}\sigma_{s}^{n-1}\wedge\omega_{C}}{\omega^{n}},
\end{split}
\end{equation*}
hence
\begin{equation}\label{banana2}
\begin{split}
1-\operatorname{tr}_{\sigma_{s}}\omega_{C}&=e^{-\frac{X\cdot\vartheta_{s}}{2}}\left(e^{\frac{X\cdot\vartheta_{s}}{2}}-\frac{n\sigma_{s}^{n-1}\wedge\omega_{C}}{\omega^{n}}\right)\\
&=e^{-\frac{X\cdot\vartheta_{s}}{2}}\left(\frac{\sigma_{s}^{n}-n\sigma_{s}^{n-1}\wedge\omega_{C}}{\omega^{n}}\right).\\
\end{split}
\end{equation}
For $k=1,\ldots,n$, we have for dimensional reasons that
$$\omega^{k}=(\omega_{D}+\omega_{C})^{k}=\omega_{D}^{k}+k\omega_{D}^{k-1}\wedge\omega_{C}.$$
Thus,
\begin{equation*}
\begin{split}
\sigma^{n}_{s}&=(\omega+i\partial\bar{\partial}\vartheta_{s})^{n}\\
&=\sum_{k\,=\,0}^{n}{n \choose k}\omega^{k}\wedge(i\partial\bar{\partial}\vartheta_{s})^{n-k}\\
&=(i\partial\bar{\partial}\vartheta_{s})^{n}+\sum_{k\,=\,1}^{n}{n \choose k}\omega^{k}\wedge(i\partial\bar{\partial}\vartheta_{s})^{n-k}\\
&=(i\partial\bar{\partial}\vartheta_{s})^{n}+\sum_{k\,=\,1}^{n}{n \choose k}(\omega_{D}^{k}+k\omega_{D}^{k-1}\wedge\omega_{C})\wedge(i\partial\bar{\partial}\vartheta_{s})^{n-k}\\
&=(i\partial\bar{\partial}\vartheta_{s})^{n}+\sum_{k\,=\,1}^{n}{n \choose k}\omega_{D}^{k}\wedge(i\partial\bar{\partial}\vartheta_{s})^{n-k}
+\sum_{k\,=\,1}^{n}k{n \choose k}\omega_{D}^{k-1}\wedge(i\partial\bar{\partial}\vartheta_{s})^{n-k}\wedge\omega_{C},\\
&=\sum_{k\,=\,0}^{n}{n \choose k}\omega_{D}^{k}\wedge(i\partial\bar{\partial}\vartheta_{s})^{n-k}
+\sum_{k\,=\,1}^{n}k{n \choose k}\omega_{D}^{k-1}\wedge(i\partial\bar{\partial}\vartheta_{s})^{n-k}\wedge\omega_{C}\\
\end{split}
\end{equation*}
and
\begin{equation*}
\begin{split}
n\sigma^{n-1}_{s}\wedge\omega_{C}&=n\sum_{j\,=\,0}^{n-1}{n-1 \choose j}\omega^{j}\wedge(i\partial\bar{\partial}\vartheta_{s})^{n-1-j}\wedge\omega_{C}\\
&=ni\partial\bar{\partial}\vartheta_{s}^{n-1}\wedge\omega_{C}+n\sum_{j\,=\,1}^{n-1}{n-1 \choose j}\omega^{j}\wedge(i\partial\bar{\partial}\vartheta_{s})^{n-1-j}\wedge\omega_{C}\\
&=ni\partial\bar{\partial}\vartheta_{s}^{n-1}\wedge\omega_{C}+n\sum_{j\,=\,1}^{n-1}{n-1 \choose j}(\omega_{D}^{j}+j\omega_{D}^{j-1}\wedge\omega_{C})\wedge(i\partial\bar{\partial}\vartheta_{s})^{n-1-j}\wedge\omega_{C}\\
&=ni\partial\bar{\partial}\vartheta_{s}^{n-1}\wedge\omega_{C}+n\sum_{j\,=\,1}^{n-1}{n-1 \choose j}\omega_{D}^{j}\wedge(i\partial\bar{\partial}\vartheta_{s})^{n-1-j}\wedge\omega_{C}\\
&=ni\partial\bar{\partial}\vartheta_{s}^{n-1}\wedge\omega_{C}+n\sum_{k\,=\,2}^{n}{n-1 \choose k-1}\omega_{D}^{k-1}\wedge(i\partial\bar{\partial}\vartheta_{s})^{n-k}\wedge\omega_{C}\\
&=n\sum_{k\,=\,1}^{n}{n-1 \choose k-1}\omega_{D}^{k-1}\wedge(i\partial\bar{\partial}\vartheta_{s})^{n-k}\wedge\omega_{C}.\\
\end{split}
\end{equation*}
Consequently,
\begin{equation*}
\begin{split}
\sigma^{n}_{s}-n\sigma^{n-1}_{s}\wedge\omega_{C}&=(i\partial\bar{\partial}\vartheta_{s})^{n}+\sum_{k\,=\,1}^{n}{n \choose k}\omega_{D}^{k}\wedge(i\partial\bar{\partial}\vartheta_{s})^{n-k}+\sum_{k\,=\,1}^{n}k{n \choose k}\omega_{D}^{k-1}\wedge(i\partial\bar{\partial}\vartheta_{s})^{n-k}\wedge\omega_{C}\\
&\qquad-n\sum_{k\,=\,1}^{n}{n-1 \choose k-1}\omega_{D}^{k-1}\wedge(i\partial\bar{\partial}\vartheta_{s})^{n-k}\wedge\omega_{C}\\
&=(i\partial\bar{\partial}\vartheta_{s})^{n}+\sum_{k\,=\,1}^{n}{n \choose k}\omega_{D}^{k}\wedge(i\partial\bar{\partial}\vartheta_{s})^{n-k}\\
&\qquad+\sum_{k\,=\,1}^{n}\underbrace{\left[k{n \choose k}-n{n-1 \choose k-1}\right]}_{=\,0}\omega_{D}^{k-1}\wedge(i\partial\bar{\partial}\vartheta_{s})^{n-k}\wedge\omega_{C}\\
&=\sum_{k\,=\,0}^{n}{n \choose k}\omega_{D}^{k}\wedge(i\partial\bar{\partial}\vartheta_{s})^{n-k}\\
&=(\omega_{D}+i\partial\bar{\partial}\vartheta_{s})^{n}.
\end{split}
\end{equation*}
Combining \eqref{banana1} and \eqref{banana2}, we find that
\begin{equation}\label{lovely-eqn-der-rad}
\Delta_{\sigma_{s},\,X}\left(\frac{X\cdot\vartheta_{s}}{2}\right)=\underbrace{e^{-\frac{X\cdot\vartheta_{s}}{2}}\frac{(\omega_{D}+i\partial\bar{\partial}\vartheta_{s})^{n}}{\omega^{n}}}_{\textrm{first order
operator acting on $X\cdot\vartheta_s$}}.
\end{equation}
Indeed, the right-hand side of \eqref{lovely-eqn-der-rad} can be written schematically as:
\begin{equation}\label{lovely-eqn-der-rad-bis}
\frac{(\omega_{D}+i\partial\bar{\partial}\vartheta_{s})^{n}}{\omega^{n}}=\frac{1}{r^2}\left(X\cdot (X\cdot \vartheta_{s}) \alpha_1+\nabla^{g_D}(X\cdot \vartheta_{s})\ast \nabla^{g_D}(X\cdot \vartheta_{s})\ast \alpha_2\right),
\end{equation}
where $\alpha_1$ and $\alpha_2$ are tensors on $M\setminus V$ depending polynomially on $i\partial\bar{\partial}\vartheta_{s}$ and where $\ast$ denotes any linear combination of tensors with respect to the background metric $\omega$. This can be seen, for example, by noting that on $M\setminus V$,
\begin{equation*}
\begin{split}
	\frac{(\omega_{D}+i\partial\bar{\partial}\vartheta_{s})^{n}}{\omega^{n}} &=
\frac{(i\partial\bar{\partial}\vartheta_{s})^{n}}{\omega^{n}}+
 \sum_{k=1}^{n-1}{n \choose k}\frac{\omega_{D}^{n-k}\wedge(i\partial\bar{\partial}\vartheta_{s})^{k}}{\omega^{n}},
\end{split}
\end{equation*}
together with an application of the following claim.

\begin{claim}\label{commute}
Let $Y$ and $Z$ be real holomorphic vector fields such that $[Y,\,Z]=0$. Then for any smooth real-valued function $v$ on $M$ with $\mathcal{L}_{JY}v=\mathcal{L}_{JZ}v=0$, we have
$\frac{i}{2}\partial\bar{\partial}v(Y,\,Z)=\frac{i}{2}\partial\bar{\partial}v(JY,\,JZ)=0$ and $Z\cdot(Y\cdot v)=Y\cdot(Z\cdot v)=2i\partial\bar{\partial}v(Z,\,JY)$.
\end{claim}

\begin{proof}[Proof of Claim \ref{commute}]
The first equality follows from the fact that
$$2i\partial\bar{\partial}v(Y,\,Z)=2i\partial\bar{\partial}v(JY,\,JZ)=dd^{c}v(JY,\,JZ)=JY\cdot(d^{c}v(JZ))-JZ\cdot(d^{c}v(JY))-d^{c}v([JY,\,JZ]).$$ As for the
second, the vanishing of $[Y,\,Z]$ implies that $Z\cdot(Y\cdot v)=Y\cdot(Z\cdot v)$, whereas
with $Y^{1,\,0}:=\frac{1}{2}(Y-iJY)$ and $Z^{1,\,0}:=\frac{1}{2}(Z-iJZ)$, the invariance of $v$ and the fact that $JY\cdot(Z\cdot v)=0$ implies that
\begin{equation*}
\frac{1}{4}Y\cdot(Z\cdot v)=Y^{1,\,0}\cdot(Z^{1,\,0}\cdot v)=\overline{Y^{1,\,0}}\cdot (Z^{1,\,0}\cdot v)=\partial\bar{\partial}v(Z^{1,\,0},\,\overline{Y^{1,\,0}})
=\frac{i}{2}\partial\bar{\partial}v(Z,\,JY)-\frac{1}{2}\underbrace{\partial\bar{\partial}v(JY,\,JZ)}_{=\,0}.
\end{equation*}
\end{proof}

The strong maximum principle combined with the fact that $X\cdot\vartheta_{s}\to0$ at infinity now implies the result.
\end{proof}

From this, we can derive a lower bound on $X\cdot\vartheta_{s}$, and hence on $X\cdot\psi_{s}$.
\begin{prop}\label{lowerbound}
There exists a positive constant $C$ such that for all $s\in[0,\,1]$, $X\cdot\vartheta_s\geq -C$. In particular, $X\cdot\psi_{s}>-C$ for all $s\in[0,\,1]$.
\end{prop}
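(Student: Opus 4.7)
The plan is to localise the infimum via Lemma \ref{lemma-loc-crit-pts-rad-der} and then exploit a rigid identity relating $X \cdot \vartheta_s$ to the Hamiltonian potentials $f_{\omega_s}$ and $f_{\sigma_s}$, whose infimum is pinned uniformly in $s$ by the toric structure of the zero set of $X$. By Lemma \ref{lemma-loc-crit-pts-rad-der}, $\inf_M X\cdot\vartheta_s = \min_W X\cdot\vartheta_s$, so it suffices to bound $X\cdot\vartheta_s$ from below on the compact set $W$ uniformly in $s$. Combining the normalisations $f_{\omega_s} := f + \tfrac{X}{2}\cdot\Phi_s$ of Lemma \ref{normal-fss} and $f_{\sigma_s} := f_{\omega_s} + \tfrac{X}{2}\cdot\vartheta_s$ of Lemma \ref{lemma-tr-star-star} yields the key identity
\begin{equation*}
X\cdot\vartheta_s = 2(f_{\sigma_s} - f_{\omega_s}).
\end{equation*}
Since $\tfrac{X}{2}\cdot\Phi_s$ equals $-c_s$ outside the compact set $W$ and is smooth on $W$ (both uniformly in $s$, as $s\mapsto c_s$ is continuous on $[0,1]$), $f_{\omega_s}$ is uniformly bounded on $W$, so the problem reduces to a uniform-in-$s$ lower bound on $\inf_M f_{\sigma_s}$.

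The main observation is that this lower bound is determined entirely by $f$ restricted to the zero set $Z(X) := \{x\in M : X(x)=0\}$. First, $f_{\sigma_s}$ is proper on $M$: outside a sufficiently large compact set it equals $f - c_s + \tfrac{X}{2}\cdot\tilde{\vartheta}_s$, where $c_s$ is bounded, $\tilde{\vartheta}_s\in C^{\infty}_{X,\beta}(M)$ decays polynomially at infinity, and $f\to +\infty$. Hence $f_{\sigma_s}$ attains its minimum at some $y_s\in M$, and since $df_{\sigma_s} = -\sigma_s\lrcorner JX$ with $\sigma_s$ non-degenerate, $y_s$ must satisfy $X(y_s)=0$. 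The zero set $Z(X)$ is compact: under the identification $\nu$, $X = r\partial_r$ on $M\setminus K$ with $r>0$, so $Z(X)\subseteq K$; it is non-empty since it contains the non-empty fixed-point set $\mathrm{Fix}(T)$. At any $y\in Z(X)$, the cancellations $X\cdot\Phi_s(y) = X\cdot\vartheta_s(y) = 0$ yield $f_{\sigma_s}(y) = f_{\omega_s}(y) = f(y)$. Consequently,
\begin{equation*}
\inf_M f_{\sigma_s} = \min_{y\in Z(X)} f(y) =: c^{*},
\end{equation*}
a finite constant independent of $s$.

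Putting the pieces together, on $W$ we have
\begin{equation*}
X\cdot\vartheta_s = 2(f_{\sigma_s} - f_{\omega_s}) \geq 2c^{*} - 2\sup_{s\in[0,1],\,W}|f_{\omega_s}| \geq -C,
\end{equation*}
and by Lemma \ref{lemma-loc-crit-pts-rad-der} this inequality extends to all of $M$. The statement for $\psi_s = \vartheta_s + \Phi_s$ then follows from the decomposition $X\cdot\psi_s = X\cdot\vartheta_s + X\cdot\Phi_s$ together with the uniform $C^0$-bound on $X\cdot\Phi_s$. The main conceptual point to verify is the rigid identity $f_{\sigma_s}(y) = f(y)$ on $Z(X)$, which amounts to careful bookkeeping with the normalisations imposed in Section \ref{sec-set-up-CMA} and Lemmas \ref{normal-fss}, \ref{lemma-tr-star-star}; once this is established, the proposition follows almost immediately from the toric symmetry and the maximum-principle content of Lemma \ref{lemma-loc-crit-pts-rad-der}.
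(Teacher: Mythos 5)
Your proposal is correct and takes essentially the same route as the paper: both arguments rest on the observations that $f_{\sigma_s}$ is proper and achieves its global minimum at a zero of $X$ (since $X = \nabla^{h_s}f_{\sigma_s}$), that $f_{\sigma_s}$ agrees with $f$ on the zero set of $X$, and that Lemma~\ref{lemma-loc-crit-pts-rad-der} localises the infimum of $X\cdot\vartheta_s$ to the compact set $W$. Your write-up is somewhat more explicit about the compactness and non-emptiness of $Z(X)$ and about the identity $X\cdot\vartheta_s = 2(f_{\sigma_s} - f_{\omega_s})$, but these are points the paper treats implicitly; the substance is the same.
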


\begin{proof}
In order to prove that $X\cdot\vartheta_{s}$ is uniformly bounded from below, first note that since $X\cdot\Phi_{s}$ is bounded and
$X\cdot\vartheta_{s}$ tends to zero at infinity, $f_{\sigma_{s}}:=f+\frac{X}{2}\cdot\Phi_{s}+\frac{X}{2}\cdot\vartheta_{s}$ is a proper function bounded from below
by virtue of the fact that $f$ is by Lemma \ref{warm}. Then since $X=\nabla^{h_{s}}f_{\sigma_{s}}$, $f_{\sigma_{s}}$ must attain its global minimum at a point lying in the
zero set of $X$ and hence must coincide with the global minimum of $f$ on this set; that is to say,
$$f_{\sigma_{s}}\geq\min_{\{X\,=\,0\}}f_{\sigma_{s}}=\min_{\{X\,=\,0\}}f\geq -C.$$ The lower bound on
$X\cdot\vartheta_{s}$ then follows from the previous localisation of the minimum of this function given by Lemma \ref{lemma-loc-crit-pts-rad-der}.
\end{proof}

\subsection{A priori $C^{0}$-estimate}\label{sec-a-priori-energy}

We proceed with the a priori estimate on the $C^{0}$-norm of $(\vartheta_{s})_{0\,\leq \,s\,\leq\, 1}$
which is uniform in $s\in[0,\,1]$.
We begin with two crucial observations, the first a localisation result for the global extrema of $\vartheta_{s}$.

\begin{lemma}[Localising the supremum and infimum of a solution of \eqref{starstar-s}]\label{lemma-loc-crit-pts}
Let $(\vartheta_s)_{0\,\leq\, s\,\leq\, 1}$  be a path of solutions in $\mathbb{R}\oplus C^{\infty}_{X,\,\beta}(M)$ to \eqref{starstar-s}. Then
$\sup_M\vartheta_{s}=\max_{W}\vartheta_{s}$ (resp.~$\inf_M\vartheta_{s}=\min_{W}\vartheta_{s}$).
\end{lemma}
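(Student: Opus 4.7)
The plan is to exploit that the data $G_{s}$ in \eqref{starstar-s} is compactly supported in $W$ and that $\omega_{s}=\omega$ outside $W$, so that on $M\setminus W$ the equation degenerates into a homogeneous complex Monge-Amp\`ere equation which, after linearisation, becomes a linear elliptic equation with no zeroth order term. The strong maximum principle then forces the global extrema to lie inside $W$, up to the value at infinity.

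First, I would note that on $M\setminus W$ the equation \eqref{starstar-s} reads $\log\bigl((\omega+i\partial\bar{\partial}\vartheta_{s})^{n}/\omega^{n}\bigr)=\tfrac{X}{2}\cdot\vartheta_{s}$. Invoking the Taylor expansion \eqref{equ:taylor-exp} applied to $MA_{\omega}$ gives the identity
\begin{equation*}
\int_{0}^{1}\operatorname{tr}_{\omega+ti\partial\bar{\partial}\vartheta_{s}}\bigl(i\partial\bar{\partial}\vartheta_{s}\bigr)\,dt-\frac{X}{2}\cdot\vartheta_{s}=0\qquad\text{on }M\setminus W.
\end{equation*}
Thus $\mathcal{L}\vartheta_{s}=0$ on $M\setminus W$, where the linear operator $\mathcal{L}u:=\int_{0}^{1}\operatorname{tr}_{\omega+ti\partial\bar{\partial}\vartheta_{s}}(i\partial\bar{\partial}u)\,dt-\tfrac{X}{2}\cdot u$ is elliptic (convexity of the cone of positive $(1,\,1)$-forms, applied to $\omega>0$ and $\sigma_{s}=\omega+i\partial\bar{\partial}\vartheta_{s}>0$ on $M\setminus W$) and has no zeroth order term, so it annihilates constants.

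Next, I would use the decomposition $\vartheta_{s}=c_{\infty}+\tilde{\vartheta}_{s}$ afforded by $\vartheta_{s}\in\mathbb{R}\oplus C^{\infty}_{X,\,\beta}(M)$, where $\tilde{\vartheta}_{s}\to 0$ at infinity, to obtain $\mathcal{L}\tilde{\vartheta}_{s}=0$ on $M\setminus W$. Suppose $\sup_{M}\vartheta_{s}$ is attained at a point $p\in M\setminus W$. The strong maximum principle applied to $\mathcal{L}$ forces $\tilde{\vartheta}_{s}$ to be constant on the connected component $\Omega\subset M\setminus W$ containing $p$: if $\Omega$ is unbounded, the decay at infinity pins this constant to $0$, so $\vartheta_{s}\equiv c_{\infty}$ on $\overline{\Omega}$ and in particular on $\partial\Omega\subset\partial W$, giving $\max_{W}\vartheta_{s}\geq c_{\infty}=\vartheta_{s}(p)$; if $\Omega$ is bounded then the same constant equals the boundary value on $\partial\Omega\subset W$, again yielding $\max_{W}\vartheta_{s}\geq\vartheta_{s}(p)$. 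Combined with the trivial inequality $\max_{W}\vartheta_{s}\leq\sup_{M}\vartheta_{s}$, this gives the equality. The statement for the infimum is identical with $-\vartheta_{s}$ in place of $\vartheta_{s}$.

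The hard part is not the algebra but ensuring that the strong maximum principle really produces the stated equality in a non-compact setting: one must combine the interior strong maximum principle on each connected component of $M\setminus W$ with a Phragm\'en--Lindel\"of-type observation near infinity, using that $\tilde{\vartheta}_{s}\to0$ at infinity so that the asymptotic value plays the role of a Dirichlet datum at infinity, which is itself realised on $\partial W$ once $\tilde{\vartheta}_{s}$ is constant on an unbounded component. One should also record that the ellipticity of $\mathcal{L}$ along the convex combination $\omega+ti\partial\bar{\partial}\vartheta_{s}$, $t\in[0,\,1]$, is preserved without requiring any a priori $C^{0}$-estimate on $\vartheta_{s}$, which is essential since the present lemma will be used precisely to obtain such an estimate in Section \ref{sec-a-priori-energy}.
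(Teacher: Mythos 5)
Your linearisation step is correct: on $M\setminus W$, writing $\mathcal{L}u=\int_{0}^{1}\tr_{\omega+ti\partial\bar{\partial}\vartheta_{s}}(i\partial\bar{\partial}u)\,dt-\tfrac{X}{2}\cdot u$ and noting that $\mathcal{L}$ is elliptic with no zeroth-order term does give $\mathcal{L}\tilde{\vartheta}_{s}=0$. The gap is in the maximum-principle step. Your argument starts from ``Suppose $\sup_{M}\vartheta_{s}$ is attained at a point $p\in M\setminus W$,'' and from there the strong maximum principle plus the decay $\tilde{\vartheta}_{s}\to 0$ does push the conclusion through. But you never rule out the third possibility: that $\sup_{M}\vartheta_{s}=c_{\infty}$ is \emph{not} attained, i.e.\ $\tilde{\vartheta}_{s}<0$ everywhere on $M$. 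In that scenario there is no interior maximum, the strong maximum principle is vacuous, and without further input one would have $\max_{W}\vartheta_{s}<c_{\infty}=\sup_{M}\vartheta_{s}$ --- the exact failure of the lemma. You acknowledge that a Phragm\'en--Lindel\"of ``observation'' is needed near infinity, but you do not supply it, and the phrase ``which is itself realised on $\partial W$ once $\tilde{\vartheta}_{s}$ is constant on an unbounded component'' is circular: constancy is the conclusion of the SMP, which requires an interior maximum to begin with. Moreover, $\mathcal{L}$ has an unbounded first-order coefficient ($-\tfrac{X}{2}\cdot$ grows linearly), so a generic Phragm\'en--Lindel\"of principle cannot be invoked as a black box; it must be proved with a barrier adapted to this operator.

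That barrier is exactly what the paper supplies and your proof is missing. The paper first observes, via concavity of $\log\det$, that $\vartheta_{s}$ satisfies the one-sided inequality $\Delta_{\omega_{s}}\vartheta_{s}-\tfrac{X}{2}\cdot\vartheta_{s}\geq G_{s}$ (rather than your exact identity $\mathcal{L}\vartheta_{s}=0$), and then uses the barrier $u_{\varepsilon}=2\varepsilon\log r$, which on $M\setminus W$ is pluriharmonic with $\tfrac{X}{2}\cdot u_{\varepsilon}=\varepsilon$, so that $\Delta_{\omega_{s},X}(\vartheta_{s}-u_{\varepsilon})\geq\varepsilon>0$. Since $\vartheta_{s}$ is bounded, $\vartheta_{s}-u_{\varepsilon}\to-\infty$ at infinity, forcing its maximum to be attained and, by the (weak) maximum principle, to lie in $W$. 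Sending $\varepsilon\to 0$ then yields $\vartheta_{s}(x)\leq\max_{W}\vartheta_{s}$ for every $x$, which also disposes of the non-attained case for free. To repair your proof you would need to carry out precisely this perturbation argument (or an equivalent comparison with a supersolution decaying slower than $\tilde{\vartheta}_{s}$), at which point you would essentially have reproduced the paper's proof without needing the linearisation $\mathcal{L}$, the decomposition into connected components, or the strong maximum principle.
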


\begin{proof}
We prove the assertions of Lemma \ref{lemma-loc-crit-pts} that concern the supremum of a solution $\vartheta_{s}$ only. The statements
on the infimum of $\vartheta_{s}$ can be proved in a similar manner.

Observe from \eqref{starstar-s} {and the basic inequality $\log(1+x)\leq x$ for all $x>-1$} that $\vartheta_{s}$ is a subsolution of the following differential inequality:
\begin{equation*}
\Delta_{\omega_{s}}\vartheta_{s}-\frac{X}{2}\cdot\vartheta_{s}\geq G_{s},
\end{equation*}
{where recall that $G_{s}$ is compactly supported.} Let $\varepsilon>0$ and consider any smooth function $u_{\varepsilon}$ on $M$ identically equal to $2\varepsilon \log (r)$ on $M\setminus W$ such that $\lim_{\varepsilon\rightarrow 0} u_{\varepsilon}=0$ uniformly on compact sets of $M$. This function will serve as a barrier function. Indeed, {since $\log(r)$ is pluriharmonic}, one has that on $M\setminus W$,
\begin{equation}\label{max-ppe-vartheta2}
\Delta_{\omega_{s}}\left(\vartheta_{s}-2\varepsilon\log(r)\right)-\frac{X}{2}\cdot\left(\vartheta_{s}-2\varepsilon\log(r)\right)\geq \varepsilon>0.
\end{equation}
Now $\vartheta_{s}$ being bounded on $M$ implies that the function $\vartheta_{s}-2\varepsilon\log(r)$ tends to $-\infty$ as $r\to+\infty$. In particular, this latter function must attain its maximum on $M$. The maximum principle applied to \eqref{max-ppe-vartheta2} then ensures that it must be attained in $W$, i.e., $\max_M(\vartheta_{s}-u_{\varepsilon})=\max_W(\vartheta_{s}-u_{\varepsilon}).$ In conclusion, we have that
\begin{equation*}
\vartheta_s(x)\leq u_{\varepsilon}(x)+\max_W(\vartheta_{s}-u_{\varepsilon}),\qquad x\in M,
\end{equation*}
which leads to the bound $\vartheta_s(x)\leq \max_W \vartheta_s$ by letting $\varepsilon\to 0$ and making use of the assumption on $u_{\varepsilon}$.
Since this holds true for any $x\in M$, the desired estimate follows.
\end{proof}

\subsubsection{Aubin-Tian-Zhu's functionals}

We now introduce two functionals that have been defined and used by
Aubin \cite{Aub-red-Cas-Pos}, Bando and Mabuchi \cite{Ban-Mab-Uni}, and Tian \cite[Chapter $6$]{Tian-Can-Met-Boo}
in the study of Fano manifolds, and by Tian and Zhu \cite{Tian-Zhu-I} in the study of shrinking gradient K\"ahler-Ricci solitons
on compact K\"ahler manifolds.
\begin{definition}\label{IJ}
Let $(\varphi_t)_{0\,\leq\, t\,\leq\, 1}$ be a $C^1$-path in $\mathcal{M}^{\infty}_{X,\,\beta}(M)$ from $\varphi_{0}=0$ to $\varphi_{1}=\varphi$.
We define the following two generalised weighted energies:
\begin{equation*}
\begin{split}
I_{\omega,\,X}(\varphi)&:=\int_M\varphi\left(e^{-f}\omega^n-e^{-f-\frac{X}{2}\cdot\varphi}\omega_{\varphi}^n\right),\\
J_{\omega,\,X}(\varphi)&:=\int_0^1\int_M\dot{\varphi_s}\left(e^{-f}\omega^n-e^{-f-\frac{X}{2}\cdot\varphi_s}\omega_{\varphi_s}^n\right)\wedge ds.
\end{split}
\end{equation*}
\end{definition}

At first sight, these two functionals resemble relative weighted mean values of a potential $\varphi$ in
$\mathcal{M}^{\infty}_{X,\,\beta}(M)$ or of
a path $(\varphi_t)_{0\,\leq\, t\,\leq\, 1}$ in $\mathcal{M}^{\infty}_{X,\,\beta}(M)$ respectively. When $X\equiv 0$ and $(M,\,\omega)$ is a compact K\"ahler manifold,
an integration by parts together with some algebraic manipulations (see Aubin's seminal paper \cite{Aub-red-Cas-Pos} or Tian's book \cite[Chapter $6$]{Tian-Can-Met-Boo}) show that
\begin{equation}
\begin{split}\label{formulae--fct-I-J-ein}
I_{\omega,\,0}(\varphi)&=\sum_{k\,=\,0}^{n-1}\int_{M}i\partial\varphi\wedge\bar{\partial}\varphi\wedge\omega^k\wedge\omega_{\varphi}^{n-1-k},\\
J_{\omega,\,0}(\varphi)&=\sum_{k\,=\,0}^{n-1}\frac{k+1}{n+1}\int_{M}i\partial\varphi\wedge\bar{\partial}\varphi\wedge\omega^{k}\wedge\omega_{\varphi}^{n-1-k}.
\end{split}
\end{equation}
This justifies the description of $I_{\omega,\,0}(\varphi)$ and $J_{\omega,\,0}(\varphi)$ as modified energies.
Moreover, it demonstrates that on a compact K\"ahler manifold $J_{\omega,\,0}$ is a true functional, that is to say, it does not depend on the choice of path.

Such formulae \eqref{formulae--fct-I-J-ein} for $I_{\omega,\,X}$ and $J_{\omega,\,X}$
for a non-vanishing vector field $X$ and a non-compact K\"ahler manifold $(M,\,\omega)$
do not seem to be readily available for a good reason: the exponential function is not algebraic.
However, following Tian and Zhu's work \cite{Tian-Zhu-I}, one can
prove that the essential properties shared by both
$I_{\omega,\,0}$ and $J_{\omega,\,0}$ hold true for a non-vanishing vector field $X$ in a non-compact setting. The proof follows exactly as in \cite[Theorem 7.5]{conlon33}.

\begin{theorem}\label{main-thm-I-J}
$I_{\omega,\,X}(\varphi)$
and $J_{\omega,\,X}(\varphi)$ are well-defined for $\varphi\in\mathcal{M}^{\infty}_{X,\,\beta}(M)$.
Moreover, $J_{\omega,\,X}$ does not depend on the choice of a $C^1$ path $(\varphi_t)_{0\,\leq\, t\,\leq\, 1}$ in
$\mathcal{M}^{\infty}_{X,\,\beta}(M)$ from $\varphi_{0}=0$ to $\varphi_{1}=\varphi$, hence defines
a functional on $\mathcal{M}^{\infty}_{X,\,\beta}(M)$. Finally, the first variation of the difference $(I_{\omega,\,X}-J_{\omega,\,X})$ is
given by
\begin{equation*}
\frac{d}{dt}\left(I_{\omega,\,X}-J_{\omega,\,X}\right)(\varphi_t)=-\int_M\varphi_t\left(\Delta_{\omega_{\varphi_t}}\dot{\varphi_t}-
\frac{X}{2}\cdot\dot{\varphi_t}\right)\,e^{-f_{\varphi_t}}\omega_{\varphi_t}^n,
\end{equation*}
where $f_{\varphi_t}:=f+\frac{X}{2}\cdot\varphi_t$ satisfies
$X=\nabla^{\omega_{\varphi_t}}f_{\varphi_t}$
and where $(\varphi_t)_{0\,\leq\, t\,\leq\, 1}$ is any $C^1$-path in $\mathcal{M}^{\infty}_{X,\,\beta}(M)$ from $\varphi_{0}=0$ to $\varphi_{1}=\varphi$.
\end{theorem}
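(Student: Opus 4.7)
The plan is to adapt Tian--Zhu's strategy from the compact setting \cite{Tian-Zhu-I} to our non-compact weighted setting, with the main work consisting of verifying that all relevant integrals converge absolutely and that integration by parts at infinity produces no boundary contribution. The proof will rely on three key facts already at our disposal: the preservation of the weighted measure along a path (Lemma \ref{lemma-preserved-int}), the pluriharmonic classification at infinity (Lemma \ref{pluri}), and the Poincar\'e inequality (Proposition \ref{poincare}).

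For well-definedness of $I_{\omega,X}(\varphi)$, write $\varphi = c_1 \chi \log r + c_2 + \tilde\varphi$ with $\tilde\varphi \in C^\infty_{X,\beta}(M)$ polynomially decaying at infinity. Applying Lemma \ref{lemma-preserved-int}(i) along the linear path $\tau \in [0,1] \mapsto \omega + \tau\, i\partial\bar\partial\varphi$, one obtains the identity
\begin{equation*}
e^{-f}\omega^n - e^{-f - \frac{X}{2}\cdot\varphi}\omega_\varphi^n = -\int_0^1 \left(\Delta_{\omega_{\tau\varphi}}\varphi - \tfrac{X}{2}\cdot\varphi\right) e^{-f - \frac{\tau X}{2}\cdot\varphi}\omega_{\tau\varphi}^n\, d\tau.
\end{equation*}
Here $\Delta_{\omega_{\tau\varphi}}\varphi - \tfrac{X}{2}\cdot\varphi$ lies in $\mathcal{C}^{0,2\alpha}_{X,\beta}(M)$ (the logarithmic piece of $\varphi$ contributes only a bounded perturbation by Lemma \ref{pluri}), while $\varphi$ grows at most logarithmically. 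Combined with the exponential factor $e^{-f}$ and finiteness of the weighted volume, this gives absolute convergence. A completely analogous application of Fubini together with Lemma \ref{lemma-preserved-int}(ii) handles $J_{\omega,X}(\varphi)$.

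For path independence of $J_{\omega,X}$, I would consider a smooth two-parameter family $(\varphi_{s,t})_{(s,t) \in [0,1]^2}$ in $\mathcal{M}^\infty_{X,\beta}(M)$ with $\varphi_{s,0} = 0$ and $\varphi_{s,1} = \varphi$ for all $s$, and set
\begin{equation*}
J(s) := \int_0^1 \int_M \partial_t\varphi_{s,t}\left(e^{-f}\omega^n - e^{-f_{s,t}}\omega_{s,t}^n\right) dt.
\end{equation*}
Differentiating in $s$ and using the identity
\begin{equation*}
\partial_s\!\left(e^{-f_{s,t}}\omega_{s,t}^n\right) = \left(\Delta_{\omega_{s,t}}\partial_s\varphi_{s,t} - \tfrac{X}{2}\cdot\partial_s\varphi_{s,t}\right) e^{-f_{s,t}}\omega_{s,t}^n,
\end{equation*}
together with the commutation $\partial_s\partial_t = \partial_t\partial_s$ and the self-adjointness of the drift Laplacian $\Delta_{\omega_{s,t}} - \tfrac{X}{2}\cdot$ with respect to $e^{-f_{s,t}}\omega_{s,t}^n$, one arrives after an integration by parts in $t$ at $\partial_s J(s) = 0$, provided the spatial boundary terms at infinity vanish. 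This is the delicate point: after cutting off by $\chi_R := \chi(f/R)$ and sending $R \to +\infty$, the error terms involve $d\chi_R$ and are controlled by the polynomial decay of $\partial_s\varphi_{s,t}, \partial_t\varphi_{s,t}$ modulo constants, against the exponential weight and the Poincar\'e inequality. The main obstacle throughout the proof will be this rigorous justification of integration by parts at infinity.

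For the first variation of $I_{\omega,X} - J_{\omega,X}$ along a $C^1$-path $(\varphi_t)$ from $0$ to $\varphi$, path independence allows us to write $J_{\omega,X}(\varphi_t) = \int_0^t \int_M \dot\varphi_s(e^{-f}\omega^n - e^{-f_{\varphi_s}}\omega_{\varphi_s}^n)\, ds$, giving
\begin{equation*}
\frac{d}{dt}J_{\omega,X}(\varphi_t) = \int_M \dot\varphi_t\left(e^{-f}\omega^n - e^{-f_{\varphi_t}}\omega_{\varphi_t}^n\right).
\end{equation*}
Direct differentiation of $I_{\omega,X}(\varphi_t) = \int_M \varphi_t(e^{-f}\omega^n - e^{-f_{\varphi_t}}\omega_{\varphi_t}^n)$ then yields an identical $\dot\varphi_t$-term plus the $\varphi_t \cdot \partial_t(e^{-f_{\varphi_t}}\omega_{\varphi_t}^n)$-term; subtracting cancels the former, leaving
\begin{equation*}
\frac{d}{dt}(I_{\omega,X} - J_{\omega,X})(\varphi_t) = -\int_M \varphi_t\left(\Delta_{\omega_{\varphi_t}}\dot\varphi_t - \tfrac{X}{2}\cdot\dot\varphi_t\right) e^{-f_{\varphi_t}}\omega_{\varphi_t}^n,
\end{equation*}
as claimed.
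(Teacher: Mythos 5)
The paper does not reprove Theorem \ref{main-thm-I-J}; it cites \cite[Theorem 7.5]{conlon33}, whose argument is the standard Tian--Zhu strategy adapted to the weighted non-compact setting. Your proposal reconstructs exactly that argument: the $\tau$-path integral representation for $e^{-f}\omega^n - e^{-f_\varphi}\omega_\varphi^n$ combined with Lemma \ref{lemma-preserved-int} for absolute convergence, the two-parameter family together with self-adjointness of the drift Laplacian for path-independence, and the direct differentiation with cancellation for the first-variation formula, so the approach matches. One small imprecision: the Poincar\'e inequality plays no role in killing the boundary terms from the cutoff $\chi_R$ — the exponential decay of $e^{-f}$ against the at-most-logarithmic growth of the potentials and the at-most-polynomial growth of $|d\chi_R|$ already forces these to vanish as $R\to\infty$, so you can drop that appeal.
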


Recall that the equation we wish to solve is \eqref{star-s}, namely
\begin{equation*}
e^{-f_{\psi_{s}}}\omega_{\psi_{s}}^{n}=e^{F_s-f}\omega^{n}.
\end{equation*}

\begin{prop}[A priori energy estimates]\label{prop-a-priori-ene-est}
Let $(\psi_s)_{0\,\leq\, s\,\leq\, 1}$ be a path of solutions in $\mathcal{M}^{\infty}_{X,\,\beta}(M)$ to \eqref{star-s}. Then for $p\in(1,2)$, there exists a positive constant $C=C\left(n,p,\omega,\sup_{s\in[0,1]}\|F_s\|_{C^0}\right)$ such that
\begin{equation*}
\sup_{0\,\leq\, s\,\leq\, 1}\int_M|\psi_s-\overline{\psi}_{s}|^p\,e^{-f}\omega^n\leq C,
\end{equation*}
where $\overline{\psi}_{s}:=\int_{M}\psi_{s}\,e^{-f}\omega^{n}$. In particular, if $\overline{\psi}_{s}=0$, then
\begin{equation*}
\sup_{0\,\leq\, s\,\leq\, 1}\int_M|\vartheta_{s}|^p\,e^{-f}\omega^n\leq C.
\end{equation*}
\end{prop}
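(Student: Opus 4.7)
The plan is to adapt the classical Aubin--Yau--Tian $L^p$-energy estimate for the Aubin continuity path to the weighted non-compact setting, combining the functionals $I_{\omega,X}$ and $J_{\omega,X}$ of Definition~\ref{IJ} with the weighted Poincar\'e inequality of Proposition~\ref{poincare}. First I would differentiate the Monge--Amp\`ere equation in \eqref{star-s} in $s$ to obtain $\Delta_{\omega_{\psi_s}}\dot\psi_s - \tfrac{X}{2}\cdot\dot\psi_s = \dot F_s$, and plug this into the variational formula of Theorem~\ref{main-thm-I-J} to get
\[
\tfrac{d}{ds}(I_{\omega,X}-J_{\omega,X})(\psi_s) = -\int_M \psi_s\,\dot F_s\,e^{-f_{\psi_s}}\omega_{\psi_s}^n = -\int_M \psi_s\,\dot F_s\,e^{F_s - f}\omega^n
\]
after using \eqref{star-s} a second time. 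The explicit form $F_s = \log(1 + s(e^F - 1))$ yields $\dot F_s\,e^{F_s} = e^F - 1$, and Lemma~\ref{lemma-preserved-int}(i) (applied to the linear path $\tau \mapsto \tau\psi_1$) gives $\int_M(e^F - 1)e^{-f}\omega^n = 0$, so $\psi_s$ may be replaced by $\psi_s - \overline\psi_s$. H\"older's inequality with dual exponent $q = p/(p-1)$ applied to the bounded function $e^F - 1$, together with integration in $s$, then yield
\[
|(I_{\omega,X}-J_{\omega,X})(\psi_s)|\;\leq\;C\int_0^s \|\psi_\tau - \overline\psi_\tau\|_{L^p(e^{-f}\omega^n)}\,d\tau
\]
for a constant $C = C(n,p,\omega,\sup_s\|F_s\|_{C^0})$.

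Second, I would establish a converse energy bound of the form $\|\psi_s - \overline\psi_s\|_{L^p(e^{-f}\omega^n)}^p \leq C(I_{\omega,X}-J_{\omega,X})(\psi_s) + C$. For this, I would apply Theorem~\ref{main-thm-I-J} along the linear segment $t \mapsto t\psi_s$, $t\in[0,1]$, so that
\[
(I_{\omega,X}-J_{\omega,X})(\psi_s) = -\int_0^1\!\int_M t\psi_s\Bigl(\Delta_{\omega_{t\psi_s}}\psi_s - \tfrac{X}{2}\cdot\psi_s\Bigr)\,e^{-f_{t\psi_s}}\omega_{t\psi_s}^n\,dt.
\]
Since $X = \nabla^{\omega_{t\psi_s}}f_{t\psi_s}$, integration by parts with respect to the weighted measure $e^{-f_{t\psi_s}}\omega_{t\psi_s}^n$ converts the inner integrand into $|\nabla\psi_s|^2_{g_{t\psi_s}}$ plus correction terms involving $\Delta f_{t\psi_s} - |\nabla f_{t\psi_s}|^2$; these are bounded uniformly in $s,t$ thanks to the approximate shrinking soliton structure of $\omega$ (Lemma~\ref{warm}) and the $\mathcal{M}_{X,\beta}^\infty$-regularity of $\psi_s$. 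Comparing $\omega_{t\psi_s}$ with $\omega$ and absorbing the error then produces a lower bound of the form $(I_{\omega,X}-J_{\omega,X})(\psi_s) \geq c\int_M|\nabla^g\psi_s|_g^2\,e^{-f}\omega^n - C$. Proposition~\ref{poincare} with $p = 2$ now gives $\|\psi_s - \overline\psi_s\|_{L^2(e^{-f}\omega^n)}^2 \leq C(I_{\omega,X}-J_{\omega,X})(\psi_s) + C$, and H\"older's inequality (using the finiteness of $\int_M e^{-f}\omega^n$ from Example~\ref{example}) transfers this to $L^p$ for $p\in(1,2)$. Setting $u(s) := \|\psi_s - \overline\psi_s\|_{L^p(e^{-f}\omega^n)}$ and combining both bounds produces $u(s)^p \leq C + C\int_0^s u(\tau)\,d\tau$, and a standard Gr\"onwall-type argument then yields the uniform bound $\sup_{s\in[0,1]} u(s) \leq C$.

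The main obstacle will be the lower bound on $(I_{\omega,X} - J_{\omega,X})(\psi_s)$. In the compact K\"ahler--Einstein setting, the elegant algebraic identity $I(\psi) - J(\psi) = \sum_{k=0}^{n-1}\tfrac{n-k}{n+1}\int_M i\partial\psi\wedge\bar\partial\psi\wedge\omega^k\wedge\omega_\psi^{n-1-k}$ makes both positivity and the gradient lower bound manifest. In the present non-compact weighted setting, integration by parts introduces boundary contributions at infinity and additional drift terms coming from the weight $e^{-f}$ and the operator $\Delta - \tfrac{X}{2}\cdot$, and these must be handled carefully via the polynomial decay built into $\mathcal{M}_{X,\beta}^\infty(M)$, the normalisation \eqref{normal2} of $f$, and the fact that $\omega$ is an exact shrinking soliton outside a compact subset of $M$. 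The comparison between $|\nabla\psi_s|_{g_{t\psi_s}}$ and $|\nabla\psi_s|_g$ is precisely where the restriction $p\in(1,2)$ enters: a uniform metric equivalence between the varying Kähler metrics would allow the endpoint $p = 2$, but only the weaker $L^p$-control for $p < 2$ is available from the energy estimate alone, via the Jensen/H\"older transfer from $L^2$ on a finite weighted measure space.
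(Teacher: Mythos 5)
Your overall architecture mirrors the paper's argument — two paths through the $(I-J)$-functional, the Poincar\'e inequality, a Gr\"onwall-type differential inequality — but the central technical step, the lower bound on $(I_{\omega,X}-J_{\omega,X})(\psi_s)$, does not go through as you describe, and the diagnosis of where the restriction $p<2$ comes from is also off.

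The key point is that after integrating by parts along the linear path $t\mapsto t\psi$, what one gets is
\[
(I_{\omega,X}-J_{\omega,X})(\psi)\;=\;n\int_0^1\!\!\int_M t\,i\partial\psi\wedge\bar{\partial}\psi\wedge e^{-f-t\frac{X}{2}\cdot\psi}\,\omega_{t\psi}^{\,n-1}\,dt,
\]
and there are no error terms of the type $\Delta f_{t\psi}-|\nabla f_{t\psi}|^2$ — since $X=\nabla^{\omega_{t\psi}}f_{t\psi}$, the operator $\Delta_{\omega_{t\psi}}-\frac{X}{2}\cdot$ is exactly the drift Laplacian for the measure $e^{-f_{t\psi}}\omega_{t\psi}^n$ and the integration by parts is exact. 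The genuine obstruction is the factor $e^{-t\frac{X}{2}\cdot\psi}$ in the integrand. Your proposal to absorb the comparison between $g_{t\psi}$ and $g$ and arrive at $(I-J)(\psi_s)\geq c\int_M|\nabla^g\psi_s|^2_g\,e^{-f}\omega^n-C$ would require a uniform upper bound on $\frac{X}{2}\cdot\psi_s$, but at this stage of the argument only a lower bound on $X\cdot\psi_s$ is available (Proposition \ref{lowerbound}); the upper bound is Proposition \ref{prop-bd-uni-X-psi} and it is proved \emph{after} the $C^0$-estimate, not before. So the factor $e^{-t\frac{X}{2}\cdot\psi}$ can decay like $e^{-t\cdot(\text{huge})}$ where $\frac{X}{2}\cdot\psi$ is large and positive, and there is no $t$-independent positive lower bound on $\int_0^1 t(1-t)^{n-1}e^{-t\frac{X}{2}\cdot\psi}\,dt$; the cleanest lower bound available (Claim \ref{claim-est-bded-below} in the paper) is the quantitative $c/(\frac{X}{2}\cdot\psi+A)^2$, which uses only the lower bound on $\frac{X}{2}\cdot\psi$.

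Consequently, what one actually gets is $(I-J)(\psi)\geq c\int_M\frac{|\nabla^g\psi|_g^2}{(\frac{X}{2}\cdot\psi+A)^2}\,e^{-f}\omega^n$. To relate this degenerate-weighted energy to $\|\psi-\overline{\psi}\|_{L^p(e^{-f}\omega^n)}$, one then applies H\"older's inequality to split $\int_M|\nabla^g\psi|_g^p\,e^{-f}\omega^n$ into the weighted Dirichlet energy times $\bigl(\int_M(\frac{X}{2}\cdot\psi+A)^{2p/(2-p)}\,e^{-f}\omega^n\bigr)^{(2-p)/p}$. The uniform control on the latter factor is where Lemma \ref{lemma-preserved-int}(i) enters essentially: conservation of the weighted volume implies $\int_M(f_{\psi_s}+A)^r\,e^{-f}\omega^n\leq C(r)$ for all $r$, and this, combined with the lower bound on $\frac{X}{2}\cdot\psi$, gives $\int_M(\frac{X}{2}\cdot\psi+A)^r\,e^{-f}\omega^n\leq C(r)$. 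This is the actual reason the estimate is stated for $p\in(1,2)$: the exponent $2p/(2-p)$ blows up as $p\to 2$, not because of an imperfect comparison between $g_{t\psi}$ and $g$ or a Jensen-type loss from $L^2$ to $L^p$. You should insert this weighted lower bound and the H\"older splitting in place of your step aiming for a clean $\int_M|\nabla^g\psi|^2_g\,e^{-f}\omega^n$ bound; the rest of your Gr\"onwall argument then proceeds as written.
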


\begin{proof}
As a consequence of Theorem \ref{main-thm-I-J}, we can use any $C^1$-path $(\varphi_t)_{0\,\leq\, t\,\leq\, 1}$
in $\mathcal{M}^{\infty}_{X,\,\beta}(M)$ from $\varphi_{0}=0$ to $\varphi_{1}=\varphi\in\mathcal{M}^{\infty}_{X,\,\beta}(M)$
to compute $J_{\omega,\,X}(\varphi)$. As in \cite{Tian-Zhu-I}, we choose two different paths
to compute $J_{\omega,\,X}(\psi)$, the first being the linear path
defined by $\varphi_t:=t\psi$, $t\in[0,\,1]$, for $\psi\in\mathcal{M}^{\infty}_{X,\,\beta}(M)$ a solution to \eqref{star-s}. For this path, Theorem \ref{main-thm-I-J} asserts that
\begin{equation*}
\left(I_{\omega,\,X}-J_{\omega,\,X}\right)(\psi)=-\int_0^1\int_Mt\psi\left(\Delta_{\omega_{t\psi}}\psi-\frac{X}{2}\cdot\psi\right)\,e^{-f-t\frac{X}{2}\cdot\psi}\omega_{t\psi}^n\wedge dt.
\end{equation*}
Integration by parts with respect to the weighted volume form $e^{-f-t\frac{X}{2}\cdot\psi}\omega_{t\psi}^n$ then leads to
\begin{equation}
\begin{split}\label{bded-below-I-J}
\left(I_{\omega,\,X}-J_{\omega,\,X}\right)(\psi)&=n\int_0^1\int_Mt\, i\partial\psi\wedge\bar{\partial}\psi\wedge\,\left(e^{-f-t\frac{X}{2}\cdot\psi}\omega_{t\psi}^{n-1}\right)\wedge dt\\
&=n\int_0^1\int_M t\,i\partial\psi\wedge\bar{\partial}\psi\wedge\,\left(e^{-f-t\frac{X}{2}\cdot\psi}\left((1-t)\omega+t\omega_{\psi}\right)^{n-1}\right)\wedge dt\\
&= n\sum_{k\,=\,0}^{n-1}{{n-1}\choose{k}}\left(\int_0^1t^{k+1}(1-t)^{n-1-k}\int_M i\partial\psi\wedge\bar{\partial}\psi\wedge\left(e^{-f-t\frac{X}{2}\cdot\psi}\omega^{n-1-k}
\wedge\omega_{\psi}^k\right)\right)\wedge dt\\
&\geq n \int_0^1t(1-t)^{n-1}\int_M i\partial\psi\wedge\bar{\partial}\psi\wedge\left(e^{-f-t\frac{X}{2}\cdot\psi}\omega^{n-1}\right)\wedge dt\\
&=n\int_{M}\left(\int_0^1t(1-t)^{n-1}e^{-t\frac{X}{2}\cdot\psi}\,dt\right)i\partial\psi\wedge\bar{\partial}\psi\wedge e^{-f}\omega^{n-1}.
\end{split}
\end{equation}
From this, the following claim will allow us to obtain a lower bound.

\begin{claim}\label{claim-est-bded-below}
There exists positive uniform constants $A,\,c,$ such that
\begin{equation*}
\int_0^1t(1-t)^{n-1}e^{-t\frac{X}{2}\cdot\psi}\,dt\geq \frac{c}{\left(\frac{X}{2}\cdot\psi+A\right)^{2}}.
\end{equation*}
\end{claim}

\begin{proof}[Proof of Claim \ref{claim-est-bded-below}]
For $k\geq k_n:=2n(n-1)$, we find using integration by parts and a change of variable that
\begin{equation*}
\begin{split}
\int_0^1t(1-t)^{n-1}e^{-kt}\,dt&=\int_0^1(1-s)s^{n-1}e^{-k(1-s)}\,ds=e^{-k}\left\{\left(1+\frac{n}{k}\right)\int_0^1s^{n-1}e^{ks}\,ds-\frac{e^k}{k}\right\}\\
&=e^{-k}\left\{\left(1+\frac{n}{k}\right)\left(\frac{e^k}{k}-\frac{(n-1)}{k}\int_0^1s^{n-2}e^{ks}\,ds\right)-\frac{e^k}{k}\right\}\\
&\geq \left(1+\frac{n}{k}\right)\left(\frac{1}{k}-\frac{(n-1)}{k^2}(1-e^{-k})\right)-\frac{1}{k}\\
&=\frac{k-n(n-1)}{k^3}+e^{-k}\frac{(n+k)(n-1)}{k^3}\\
&\geq \frac{1}{2k^2}.
\end{split}
\end{equation*}
Here we have bounded $s^{n-2}$ from above by $1$ in the fourth inequality.

Set $A:=k_n-\inf_{M}\frac{X}{2}\cdot\psi$ and let $k=\frac{X}{2}\cdot\psi+A$. Then $k\geq k_n$, {$A$ is uniformly bounded from above by Proposition \ref{lowerbound}},
and it follows from what we have just derived that
\begin{equation*}
\int_0^1 t(1-t)^{n-1}e^{-t\left(\frac{X}{2}\cdot\psi+A\right)}\,dt\geq\frac{1}{2\left(\frac{X}{2}\cdot\psi+A\right)^{2}},
\end{equation*}
resulting in the desired bound.
\end{proof}

Applying Claim \ref{claim-est-bded-below} to \eqref{bded-below-I-J} yields the lower bound
\begin{equation}\label{inequ-bded-bel-I-J-fin}
(I_{\omega,\,X}-J_{\omega,\,X})(\psi)\geq c\int_M i\partial\psi\wedge\bar{\partial}\psi\wedge \frac{e^{-f}\omega^{n-1}}{\left(\frac{X}{2}\cdot\psi+A\right)^{2}}
\geq c\int_M \frac{|\nabla^{g}\psi|^{2}_{g}}{\left(\frac{X}{2}\cdot\psi+A\right)^{2}}\,e^{-f}\omega^{n}
\end{equation}
for some positive constant $c$. We also require an upper bound on
$(I_{\omega,\,X}-J_{\omega,\,X})(\psi)$ to complete the proof of the proposition.

To this end,
we consider the continuity path of solutions
$\varphi_s:=\psi_{s}$, $s\in[0,\,1]$, to \eqref{star-s}
to compute $(I_{\omega,\,X}-J_{\omega,\,X})(\psi)$. First observe that the
first variations $(\dot{\psi_s})_{0\,\leq\, s\,\leq\, 1}$ satisfy the following PDE
obtained from \eqref{star-s} by differentiating with respect to the parameter $s$:
\begin{equation*}
\begin{split}
\Delta_{\omega_{\psi_s}}\dot{\psi_s}-\frac{X}{2}\cdot\dot{\psi_s}=\dot{F}_{s},\qquad 0\leq s\leq 1.
\end{split}
\end{equation*}
Combined with \eqref{star-s} and Theorem \ref{main-thm-I-J}, we see that
\begin{equation*}
\begin{split}
(I_{\omega,\,X}-J_{\omega,\,X})(\psi)&=\int_0^1\int_M\psi_t\cdot(-\dot{F}_{t})\,e^{-f_{\psi_t}}\omega_{\psi_t}^n\wedge dt\\
&=\int_0^1\int_M\psi_t\cdot(-\dot{F}_{t})\,e^{F_{t}-f}\omega^n\wedge dt\\
\end{split}
\end{equation*}
so that, from \eqref{inequ-bded-bel-I-J-fin}, for some $c>0$,
\begin{equation}\label{star}
\int_0^1\int_M\psi_t\cdot(-\dot{F}_{t})\,e^{F_{t}-f}\omega^n\wedge dt
\geq c\int_M \frac{|\nabla^{g}\psi|^{2}_{g}}{\left(\frac{X}{2}\cdot\psi+A\right)^{2}}\,e^{-f}\omega^{n}.
\end{equation}
Now, as
$$\frac{d}{ds}\left(\int_{M}e^{-f_{\psi_{s}}}\omega_{\psi_{s}}^{n}\right)=0$$
by Lemma \ref{lemma-preserved-int}(i) with $G\equiv 1$, we derive from \eqref{star-s} that
$$\int_{M}\dot{F}_{t}e^{F_{t}-f}\omega^{n}=0.$$
This allows us to rewrite \eqref{star} as
\begin{equation*}
\int_0^1\int_M(\psi_t-\overline{\psi}_{t})\cdot(-\dot{F}_{t})\,e^{F_{t}-f}\omega^n\wedge dt
\geq c\int_M \frac{|\nabla^{g}\psi|^{2}_{g}}{\left(\frac{X}{2}\cdot\psi+A\right)^{2}}\,e^{-f}\omega^{n},
\end{equation*}
with $\overline{\psi}_{t}$ as in the statement of the proposition. Applying the Poincar\'e inequality of Proposition \ref{poincare}, we then
see that for any $p\in(1,\,2)$ and $\frac{1}{q}=1-\frac{1}{p}$,
\begin{equation}\label{hithere}
\begin{split}
\Biggl(\int_M|\psi&-\overline{\psi}|^{p}\,e^{-f}\omega^n\Biggl)^{\frac{2}{p}}\leq
C\left(\int_M|\nabla^{g}\psi|_{g}^{p}\,e^{-f}\omega^n\right)^{\frac{2}{p}}\\
&\leq C\left(\int_M\frac{|\nabla^{g}\psi|_{g}^{2}}{\left(\frac{X}{2}\cdot\psi+A\right)^{2}}\,e^{-f}\omega^n\right)
\left(\int_M\left(\frac{X}{2}\cdot\psi+A\right)^{\frac{2p}{2-p}}\,e^{-f}\omega^n\right)^{\frac{2-p}{p}}\\
&\leq C\left(\int_0^1\int_M|\psi_t-\overline{\psi}_{t}||\dot{F}_{t}|\,e^{F_{t}-f}\omega^n\wedge dt\right)
\left(\int_M\left(\frac{X}{2}\cdot\psi+A\right)^{\frac{2p}{2-p}}\,e^{-f}\omega^n\right)^{\frac{2-p}{p}}\\
&\leq C\int_{0}^{1}\left(\int_M|\psi_t-\overline{\psi}_{t}|^{p}\,e^{-f}\omega^n\right)^{\frac{1}{p}}
\left(\int_M|\dot{F}_{t}|^{q}\,e^{qF_{t}}\,e^{-f}\omega^n\right)^{\frac{1}{q}}\,dt\left(\int_M\left(\frac{X}{2}\cdot\psi+A\right)^{\frac{2p}{2-p}}\,e^{-f}\omega^n\right)^{\frac{2-p}{p}}\\
&\leq C\int_{0}^{1}\left(\int_M|\psi_t-\overline{\psi}_{t}|^{p}\,e^{-f}\omega^n\right)^{\frac{1}{p}}\,dt
\left(\int_M\left(\frac{X}{2}\cdot\psi+A\right)^{\frac{2p}{2-p}}\,e^{-f}\omega^n\right)^{\frac{2-p}{p}}.
\end{split}
\end{equation}
Here we have used H\"older's inequality in the second and fourth lines with respect to the weighted measure $e^{-f}\omega^n$.

Next, observe from Lemma \ref{lemma-preserved-int}(i) that for all $r\in\mathbb{N}$,
$$c\int_{M}(f_{\psi_{s}}+A)^{r}\,e^{-f}\omega^{n}\leq\int_{M}(f_{\psi_{s}}+A)^{r}e^{F_s}\,e^{-f}\omega^{n}=\int_{M}(f_{\psi_{s}}+A)^{r}\,e^{-f_{\psi_{s}}}\omega_{\psi_{s}}^{n}=\int_{M}(f+A)^{r}
\,e^{-f}\omega^{n}\leq C(r).$$
By induction {on $r$}, using the fact that $\frac{X}{2}\cdot\psi+A\geq0$
and that $A\leq C$ by Proposition \ref{lowerbound}, one can prove directly from this that
$$\int_M\left(\frac{X}{2}\cdot\psi+A\right)^{r}\,e^{-f}\omega^n\leq C(r)\qquad\textrm{for all $r\in\mathbb{N}$}.$$
It then follows from H\"older's inequality that this statement holds true for all $r\geq1$.
Applying this to \eqref{hithere}, we arrive at the fact that for all $p\in(1,\,2)$,
$$\left(\int_M|\psi-\overline{\psi}|^{p}\,e^{-f}\omega^n\right)^{\frac{2}{p}}\leq
C(p)\int_{0}^{1}\left(\int_M|\psi_{t}-\overline{\psi_{t}}|^{p}\,e^{-f}\omega^n\right)^{\frac{1}{p}}\,dt,$$
i.e.,
 \begin{equation*}
\|\psi-\overline{\psi}\|^2_{L^p(e^{-f}\omega^n)}\leq C(p)\int_0^1\|\psi_t-\overline{\psi}_{t}\|_{L^p(e^{-f}\omega^n)}\,dt\qquad\textrm{for any $p\in(1,\,2)$}.
\end{equation*}
This last inequality applies to any truncated path of the one-parameter family of solutions $(\psi_s)_{0\,\leq\, s\,\leq\, 1}$ of \eqref{star-s}. Thus,
 \begin{equation}
\begin{split}\label{crucial-a-priori-ineq-bis}
\|\psi_s-\overline{\psi}_{s}\|^2_{L^p(e^{-f}\omega^n)}&\leq C\int_0^1\|\psi_{st}-\overline{\psi}_{st}\|_{L^p(e^{-f}\omega^n)}\,dt\\
&=\frac{C}{s}\int_0^s\|\psi_{t}-\overline{\psi}_{t}\|_{L^p(e^{-f}\omega^n)}\,dt.
\end{split}
\end{equation}
This is a Gr\"onwall-type differential inequality and can be integrated as follows.
Let $$H(s):=\int_0^s\|\psi_{t}-\overline{\psi}_{t}\|_{L^p(e^{-f}\omega^n)}\,dt$$ and observe that \eqref{crucial-a-priori-ineq-bis} can be rewritten as
\begin{equation*}
\begin{split}
H'(s)\leq \frac{C}{s^{\frac{1}{2}}}(H(s))^{\frac{1}{2}},\quad s\in(0,1].
\end{split}
\end{equation*}
Integrating then implies that $H(s)\leq C\left(n,\omega,\sup_{s\in[0,1]}\|F_s\|_{C^0}\right)\cdot s$
for all $s\in[0,\,1]$ which, after applying \eqref{crucial-a-priori-ineq-bis} once more, yields the desired upper bound.
\end{proof}

\subsubsection{A priori estimate on $\sup_M\vartheta_{s}$}
Let $\vartheta_s$ be a solution to \eqref{starstar-s} for some fixed value of the parameter $s\in[0,\,1]$.
We next obtain an upper bound for $\sup_M\vartheta_s$ uniform in $s$.
To obtain such a bound, it suffices by Lemma \ref{lemma-loc-crit-pts} to only bound $\max_{W}\vartheta_s$ from above.
We do this by implementing a local Nash-Moser iteration using the fact that $\vartheta_{s}$ is a super-solution of the
linearised complex Monge-Amp\`ere equation of which the drift Laplacian with respect to the known metric $\omega_{s}$
forms a part.

\begin{prop}[A priori upper bound on $\sup_M\vartheta$]\label{prop-bd-abo-uni-psi}
Let $(\vartheta_s)_{0\,\leq\, s\,\leq\, 1}$ be a path of solutions in $\mathbb{R}\oplus C^{\infty}_{X,\,\beta}(M)$ to \eqref{starstar-s}. Then there exists a positive constant $C=C\left(n,\omega,\sup_{s\in[0,1]}\|G_s\|_{C^0}\right)$ such that
\begin{equation*}
\sup_{0\,\leq\, s\,\leq\, 1}\sup_{W}\vartheta_s\leq C.
\end{equation*}
\end{prop}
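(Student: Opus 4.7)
The plan is to combine three ingredients: the localisation of the supremum provided by Lemma \ref{lemma-loc-crit-pts}, a one-sided linear inequality derived from the Monge-Amp\`ere equation \eqref{starstar-s} by concavity of $\log$, and a standard local Moser iteration on a compact neighbourhood of $W$. The whole point of the reformulation of the continuity path from \eqref{star-s} to \eqref{starstar-s} is precisely to make this localised approach possible: the data $G_s$ is compactly supported in $W$, the background metrics $\omega_s$ are isometric to $\omega$ outside a fixed compact set independent of $s$, and so the problem reduces to a local one.

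First, I rewrite \eqref{starstar-s} as a linear differential inequality on $\vartheta_s$. Applying the elementary inequality $\log\det(I+A)\leq \tr A$, valid whenever $I+A$ is positive, in a unitary frame for $\omega_s$, one obtains
\begin{equation*}
G_s+\frac{X}{2}\cdot\vartheta_s=\log\!\left(\frac{\sigma_s^n}{\omega_s^n}\right)\leq \tr_{\omega_s}(i\partial\bar\partial\vartheta_s)=\Delta_{\omega_s}\vartheta_s,
\end{equation*}
so that $\vartheta_s$ satisfies, on all of $M$, the linear sub-solution inequality
\begin{equation*}
L_s\vartheta_s:=\Delta_{\omega_s}\vartheta_s-\frac{X}{2}\cdot\vartheta_s\geq G_s.
\end{equation*}
Since $\omega_s=\omega+i\partial\bar\partial\Phi_s$ depends smoothly on $s\in[0,1]$ and is uniformly equivalent to $\omega$ on any compact set, the operator $L_s$ is uniformly elliptic on any compact $W'\Supset W$ with coefficients and right-hand side $\|G_s\|_{L^\infty(W')}$ bounded uniformly in $s\in[0,1]$.

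Next, I apply standard local Moser iteration for sub-solutions of linear elliptic operators with bounded drift to conclude that, for a choice of compact sets $W\Subset W'\Subset M$ and some fixed $p\in(1,2)$,
\begin{equation*}
\sup_W(\vartheta_s)_+\leq C\bigl(\|(\vartheta_s)_+\|_{L^p(W')}+\|G_s\|_{L^\infty(W')}\bigr),
\end{equation*}
with $C$ independent of $s\in[0,1]$. To control $\|\vartheta_s\|_{L^p(W')}$, I use that $\vartheta_s=\psi_s-\Phi_s$, that $\Phi_s$ is by construction uniformly bounded in $C^0(W')$ in $s$, and that the normalisation $\int_M\psi_s\,e^{-f}\omega^n=0$ imposed along $(\star_s)$ forces $\overline{\psi}_s=0$. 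Since $e^{-f}$ is bounded above and below by positive constants on the compact set $W'$, Proposition \ref{prop-a-priori-ene-est} provides the needed uniform bound $\|\psi_s\|_{L^p(W')}\leq C$, hence $\|\vartheta_s\|_{L^p(W')}\leq C$.

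Combining these estimates with Lemma \ref{lemma-loc-crit-pts}, I obtain $\sup_M\vartheta_s=\max_W\vartheta_s\leq \sup_W(\vartheta_s)_+\leq C$ uniformly in $s\in[0,1]$, as required. The main conceptual obstacle is that the Monge-Amp\`ere equation \eqref{starstar-s} is genuinely nonlinear, so linear Moser theory is not a priori available; the crucial observation is that the elementary one-sided bound $\log\det(I+A)\leq\tr A$ is precisely enough to convert \eqref{starstar-s} into a sub-solution inequality for a \emph{linear} drift-Laplacian, which is exactly the structure the iteration requires. The remaining subtlety is ensuring that every constant appearing in the iteration is controlled uniformly in $s\in[0,1]$, which ultimately rests on the continuous $s$-dependence of $\Phi_s$ (and hence of $\omega_s$ and $G_s$) together with the compactness of $W'$.
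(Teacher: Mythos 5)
Your proof is correct and follows essentially the same approach as the paper: both use the concavity inequality $\log\det(I+A)\leq\tr A$ to turn \eqref{starstar-s} into a sub-solution inequality for the drift Laplacian $\Delta_{\omega_s}-\frac{X}{2}\cdot$, then run a local Moser iteration on a compact neighbourhood of $W$, and finally feed in the weighted $L^p$ bound from Proposition~\ref{prop-a-priori-ene-est}. The only cosmetic difference is that the paper carries out the iteration directly with the weighted measure $e^{-f_{\omega_s}}\omega_s^n$, whereas you observe that on the compact set $W'$ the weight and the drift are uniformly controlled in $s$ and so ``standard'' unweighted Moser theory applies, which is a legitimate shortcut.
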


\begin{proof}
Let $s\in[0,\,1]$ and let
$(\vartheta_{s})_+:=\max\{\vartheta_{s},\,0\}$. This is a non-negative Lipschitz function. The strategy of proof is standard:
we use a Nash-Moser iteration to obtain an a priori upper bound on $\sup_{W}(\vartheta_{s})_+$ in terms of the
(weighted) energy of $(\vartheta_{s})_+$ on a tubular neighbourhood of $W$. The result then follows by invoking Proposition \ref{prop-a-priori-ene-est}.

To this end, notice that since $\log(1+x)\leq x$ for all $x>-1$ and since $\vartheta_{s}$ is a solution to \eqref{starstar-s}, $\vartheta_{s}$ satisfies the differential inequality
\begin{equation}\label{sub-diff-inequ-psi}
\Delta_{\omega_{s}}\vartheta_{s}-\frac{X}{2}\cdot \vartheta_{s}\geq -|G_{s}|\qquad\text{on $M$.}
\end{equation}
Let $g_{s}$ denote the K\"ahler metric associated to $\omega_{s}$ and let $f_{\omega_{s}}:=f+\frac{X}{2}\cdot\Phi_{s}$. Then these metrics are all equivalent to $g$ uniformly in $s$
and $-\omega_{s}\lrcorner X=df_{\omega_{s}}$. Let $x\in\{f\,<\,R\}$ and $\varepsilon>0$ be such that $B_{g_{s}}(x,\,\varepsilon)\Subset \{f\,<\,R\}$ and multiply \eqref{sub-diff-inequ-psi} across
by $\eta_{t,\,t'}^2(\vartheta_{s})_+|(\vartheta_{s})_+|^{2(p-1)}$ with $p\geq 1$, where $\eta_{t,\,t'}$, with $0<t+t'<\varepsilon$ and $t,\,t'>0$, is a Lipschitz cut-off function with compact support in $B_{g_{s}}(x,\,t+t')$ equal to $1$ on $B_{g_{s}}(x,\,t)$ and with $|\nabla^{g_{s}}\eta_{t,\,t'}|_{g_{s}}\leq\frac{1}{t'}$ almost everywhere. Next, integrate by parts and use a local Sobolev inequality for the pair $(\omega_{s},\,f_{\omega_{s}})$ to obtain a so-called ``reversed H\"older inequality'' which, after iteration, leads to the following bound for $p\in(1,2)$:
\begin{equation*}
\begin{split}\label{first-a-priori-c-0-est-cpct-part-non-lin}
\sup_{B_{g_{s}}(x,\,\frac{\varepsilon}{2})}(\vartheta_{s})_+&\leq C(n,p,\omega,\varepsilon)\left(\|(\vartheta_{s})_+\|_{L^p(B_{g_{s}}(x,\,\varepsilon),\,e^{-f_{\omega_{s}}}\omega_{s}^{n})}^p+\|G_{s}\|^p_{C^0}\right)^{\frac{1}{p}}\\
&\leq C(n,p,\omega,\varepsilon)\left(\int_{\{f\,<\,R\}}(\vartheta_{s})_+^p\,e^{-f_{\omega_{s}}}\omega_{s}^n+\|G_{s}\|^p_{C^0}\right)^{\frac{1}{p}}\\
&\leq C(n,p,\omega,\varepsilon)\left(\int_{\{f\,<\,R\}}|\vartheta_{s}|^p\,e^{-f}\omega^n+\|G_{s}\|^p_{C^0}\right)^{\frac{1}{p}}\\
&\leq C\left(n,p,\omega,\varepsilon,\sup_{s\,\in\,[0,\,1]}\|G_s\|_{C^0}\right).
\end{split}
\end{equation*}
Here, we have made use of Proposition \ref{prop-a-priori-ene-est} in the last line.
\end{proof}

\subsubsection{A priori estimate on $\inf_M\vartheta_{s}$}

Recall that the equation we wish to solve is \eqref{star-s}, that is,
\begin{equation*}
e^{-f_{\psi_{s}}}\omega^{n}_{\psi_{s}}=e^{F_s-f}\omega^{n},
\end{equation*}
where $\omega_{\psi_{s}}:=\omega+i\partial\bar{\partial}\psi_{s}>0$
and $f_{\psi_{s}}:=f+\frac{X}{2}\cdot\psi_{s}$. This pair satisfies $-\omega_{\psi_{s}}\lrcorner X=df_{\psi_{s}}$.
We work under the assumption that $\int_{M}\psi_{s}\,e^{-f}\omega^{n}=0$.

\subsubsection*{An upper bound on the $I_{\omega,\,X}$-functional}
We first show that the $I_{\omega,\,X}$-functional is bounded along the continuity path.
\begin{lemma}\label{I-bounded}
$\sup_{s\,\in\,[0,\,1]}I_{\omega,\,X}(\psi_{s})\leq C(\sup_M(\vartheta_{s})_+).$
\end{lemma}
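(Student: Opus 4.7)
The plan is to reduce $I(\psi_s)$ to an integral against the signed zero-mean measure $(e^{F_s}-1)\,e^{-f}\omega^n$, decompose $\psi_s = \vartheta_s + \Phi_s$, and then exploit the upper bound on $\vartheta_s$ together with the fact that the $\Phi_s$-contribution is a uniformly bounded error.

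First I would use the normalisation $\int_M \psi_s\,e^{-f}\omega^n = 0$ from \eqref{star-s} and the equation $e^{-f_{\psi_s}}\omega_{\psi_s}^n = e^{F_s - f}\omega^n$ itself to write
\begin{equation*}
I(\psi_s) \;=\; -\int_M \psi_s\,e^{F_s - f}\omega^n \;=\; -\int_M \psi_s\,(e^{F_s}-1)\,e^{-f}\omega^n,
\end{equation*}
where the last equality uses $\int_M(e^{F_s}-1)e^{-f}\omega^n=0$. This zero-mean fact follows either directly from the explicit form $F_s = \log(1+s(e^F-1))$ together with the normalisation $\int_M(e^F-1)e^{-f}\omega^n=0$ built into \eqref{star-s}, or equivalently by applying Lemma~\ref{lemma-preserved-int}(i) with $G\equiv 1$ to the path of solutions.

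Next, I would split $\psi_s = \vartheta_s + \Phi_s$. The piece $-\int_M\Phi_s(e^{F_s}-1)e^{-f}\omega^n$ is bounded uniformly in $s$: outside the compact set $V$, one has $\Phi_s(e^{F_s}-1) = -2c_s(e^{c_s}-1)\log r$, and $(\log r)\,e^{-f}$ is integrable because $f\sim r^2/2$ at infinity; inside $V$, the integrand is smooth with compact support and $|c_s|\le|c_0|$.

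For the main piece, observe that $\vartheta_s\in\R\oplus C^\infty_{X,\beta}(M)$ has a finite limit at infinity, so $\sup_M\vartheta_s$ is finite. Using the zero-mean property once more,
\begin{equation*}
-\int_M\vartheta_s(e^{F_s}-1)e^{-f}\omega^n \;=\; \int_M(\sup_M\vartheta_s - \vartheta_s)\,(e^{F_s}-1)\,e^{-f}\omega^n.
\end{equation*}
The factor $(\sup_M\vartheta_s - \vartheta_s)$ is nonnegative; discarding the (nonpositive) contribution from $\{F_s<0\}$ and bounding $(e^{F_s}-1)_+\le e^{\|F\|_{C^0}}-1$ yields
\begin{equation*}
\le\,(e^{\|F\|_{C^0}}-1)\!\int_M(\sup_M\vartheta_s - \vartheta_s)\,e^{-f}\omega^n \;=\; (e^{\|F\|_{C^0}}-1)\!\left(V\sup_M\vartheta_s + \int_M\Phi_s\,e^{-f}\omega^n\right),
\end{equation*}
where $V=\int_M e^{-f}\omega^n$ and we used $\int_M\vartheta_s\,e^{-f}\omega^n = -\int_M\Phi_s\,e^{-f}\omega^n$ (which comes from $\psi_s=\vartheta_s+\Phi_s$ and $\int_M\psi_s\,e^{-f}\omega^n=0$). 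Both $V$ and $\int_M\Phi_s\,e^{-f}\omega^n$ are uniformly bounded in $s$ by the same integrability observation as above. Combining and using $\sup_M\vartheta_s\le \sup_M(\vartheta_s)_+$ (which is automatic: if $\sup_M\vartheta_s\ge 0$ the two agree, whereas if $\sup_M\vartheta_s<0$ one has $\sup_M(\vartheta_s)_+=0$ and the multiplicative constant is positive), one concludes
\begin{equation*}
I(\psi_s)\;\le\; C\,\sup_M(\vartheta_s)_+ + C' \;=\; C\bigl(\sup_M(\vartheta_s)_+\bigr),
\end{equation*}
with $C,C'$ depending only on $n$, $\omega$ and $\sup_{s\in[0,1]}\|F_s\|_{C^0}$. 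There is no serious obstacle in the argument: the only delicate bookkeeping is verifying that the logarithmic growth of $\Phi_s$ at infinity remains integrable against the Gaussian-type weight $e^{-f}\omega^n$, after which the estimate reduces to the zero-mean trick familiar from Tian--Zhu, adapted to the weighted noncompact setting.
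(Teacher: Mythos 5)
Your argument is correct and leads to the same estimate as the paper, but it follows a slightly different route. The paper's proof starts from $I(\psi_s)=-\int_M\psi_s\,e^{-f_{\psi_s}}\omega_{\psi_s}^n$, splits the integral into the regions $\{\psi_s\ge0\}$ and $\{\psi_s\le0\}$, discards the nonpositive contribution from the former, applies the equation and the uniform bound $e^{F_s}\le C$ on the latter, and only at the very end flips back to $\{\psi_s\ge0\}$ via the zero-mean property and decomposes $\psi_s=\vartheta_s+\Phi_s$. You instead apply the equation immediately to get $-\int_M\psi_s\,e^{F_s-f}\omega^n$, subtract the zero-mean background term to obtain $-\int_M\psi_s(e^{F_s}-1)e^{-f}\omega^n$, decompose $\psi_s=\vartheta_s+\Phi_s$ early, add the constant $\sup_M\vartheta_s$ for free using $\int_M(e^{F_s}-1)e^{-f}\omega^n=0$, and then discard the region $\{F_s<0\}$ where the integrand is nonpositive. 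The underlying idea (discard a sign-definite region, bound the source uniformly, absorb the $\Phi_s$-contribution as a bounded error) is the same, but you throw away based on the sign of $F_s$ rather than the sign of $\psi_s$; a small advantage of your version is that the $\Phi_s$-piece is split off cleanly at the start rather than appearing inside a sublevel set $\{\vartheta_s\ge-\Phi_s\}$. One minor inaccuracy: the second equality in your first display, $-\int_M\psi_s\,e^{F_s-f}\omega^n=-\int_M\psi_s(e^{F_s}-1)e^{-f}\omega^n$, is justified by $\int_M\psi_s\,e^{-f}\omega^n=0$, not by $\int_M(e^{F_s}-1)e^{-f}\omega^n=0$ as you wrote; the latter fact is what you (correctly) invoke two steps later when inserting $\sup_M\vartheta_s$.
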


\begin{proof}
By assumption, $\int_{M}\psi_{s}\,e^{-f}\omega^{n}=0$ so that $\int_{\{\psi_{s}\,\geq\,0\}}\psi_{s}\,e^{-f}\omega^{n}=-\int_{\{\psi_{s}\,\leq\,0\}}\psi_{s}\,e^{-f}\omega^{n}.$
We therefore have that
\begin{equation*}
\begin{split}
I_{\omega,\,X}(\psi_{s})&=\int_M\psi_{s}\left(e^{-f}\omega^n-e^{-f_{\psi_{s}}}\omega_{\psi_{s}}^n\right)
=-\int_M\psi_{s}\,e^{-f_{\psi_{s}}}\omega_{\psi_{s}}^n\\
&=-\int_{\{\psi_{s}\,\geq\,0\}}\psi_{s}\,e^{-f_{\psi_{s}}}\omega_{\psi_{s}}^n+\int_{\{\psi_{s}\,\leq\,0\}}(-\psi_{s})\,e^{-f_{\psi_{s}}}\omega_{\psi_{s}}^n
\leq\int_{\{\psi_{s}\,\leq\,0\}}(-\psi_{s})\,e^{-f_{\psi_{s}}}\omega_{\psi_{s}}^n\\
&=\int_{\{\psi_{s}\,\leq\,0\}}(-\psi_{s})\,e^{F_s}\,e^{-f}\omega^n\leq C\int_{\{\psi_{s}\,\leq\,0\}}(-\psi_{s})\,e^{-f}\omega^n=C\int_{\{\psi_{s}\,\geq\,0\}}\psi_{s}\,e^{-f}\omega^n\\
&=C\left(\int_{\{\vartheta_{s}\,\geq\,-\Phi_{s}\}}(\vartheta_{s}+\Phi_{s})\,e^{-f}\omega^n\right)\leq C\Biggl(\underbrace{\int_{M}|\Phi_{s}|\,e^{-f}\omega^{n}}_{\textrm{bounded}}+\int_{\{\vartheta_{s}\,\geq\,-\Phi_{s}\}}\vartheta_{s}\,e^{-f}\omega^n\Biggr)\\
&\leq C+C\sup_M\vartheta_s^+\int_{\{\vartheta_{s}\,\geq\,-\Phi_{s}\}}\,e^{-f}\omega^n\leq C+C\sup_M(\vartheta_{s})_+\int_{M}\,e^{-f}\omega^n\\
&\leq C(1+\sup_M(\vartheta_{s})_+).
\end{split}
\end{equation*}
From this, the result follows.
\end{proof}

\subsubsection*{An upper bound on the weighted $L^{p}$-norm of the gradient of the Legendre transform}

Recall the continuity path \eqref{star-s}:
\begin{equation*}
(\omega+i\partial\bar{\partial}\psi_{s})^{n}=e^{F_{s}+\frac{X}{2}\cdot\psi_{s}}\omega^{n},\qquad s\in[0,\,1],
\end{equation*}
where $$F_{s}:=\log\left(se^{F}+(1-s)\right)\qquad\textrm{and}\qquad i\partial\bar{\partial}F=\rho_{\omega}+\frac{1}{2}\mathcal{L}_{X}\omega-\omega.$$
Here, $\rho_{\omega}$ denotes the Ricci form of $\omega$ and $F\in C^{\infty}(M)$ is bounded.
On $\mathfrak{t}\simeq\mathbb{R}^{n}$ we have coordinates $\xi:=(\xi_{1},\ldots,\xi_{n})$, induced coordinates $x=(x_{1},\ldots,x_{n})$ on $\mathfrak{t}^{*}$ which contains the image
of the moment map, and we can write $\omega=2i\partial\bar{\partial}\phi_{0}$ for a convex function $\phi_{0}$ on $\mathbb{R}^{n}\simeq\mathfrak{t}$
up to the addition of a linear function (cf.~Section \ref{toric-geom}). Let $b_{X}\in\mathbb{R}^{n}$ denote the vector field $JX\in\mathfrak{t}$ as in \eqref{eqnY4},
write $\nabla$ for the Levi-Civita connection of the flat metric on $\mathbb{R}^{n}$, and $\langle\cdot\,,\,\cdot\rangle$ for the corresponding inner product.
As in \eqref{norm}, we normalise $\phi_{0}$ so that
\begin{equation*}
F=-\log\det(\phi_{0,\,ij})+\langle\nabla\phi_0,\,b_X \rangle-2\phi_0.
\end{equation*}
Set $\phi_s:=\phi_{0}+\frac{1}{2}\psi_{s}$. Then in the coordinates $\xi$ on $\mathbb{R}^{n}$, equation \eqref{star-s} becomes
$$\det(\phi_{s,\,ij})=\left(se^{F} + (1-s)\right)e^{\langle \nabla \phi_s,\,b_X \rangle - \langle \nabla \phi_0,\,b_X \rangle}\det(\phi_{0,\,ij}),\qquad s\in[0,\,1].$$
Plugging in the definition of $F$, this becomes
\begin{equation*}
\begin{split}
\det(\phi_{s,\,ij})&=\left(se^{-2\phi_0 - \log\det(\phi_{0,\,ij})} + (1-s)e^{-\langle\nabla\phi_0, b_X\rangle}\right)e^{\langle \nabla \phi_s,\,b_X \rangle}\det(\phi_{0,\,ij}) \\
				&= \left(se^{-2\phi_0} + (1-s)e^{-\langle\nabla \phi_0,\,b_X \rangle}\det(\phi_{0,\,ij}) \right)e^{\langle\nabla\phi_s,\,b_X \rangle},\qquad s\in[0,\,1],
\end{split}
\end{equation*}
or equivalently,
\begin{equation}\label{caddo}
e^{-\langle \nabla \phi_s, \,b_X \rangle}\det(\phi_{s,\,ij}) = se^{-2\phi_0}+(1-s)e^{-\langle\nabla \phi_0,\,b_X \rangle}\det(\phi_{0,\,ij}),\qquad s\in[0,\,1].
\end{equation}
Let $u_{s}=L(\phi_{s})$. Then we have the following uniform integral bound on $|\nabla u_{s}|^{p},\,p\geq1$.

\begin{lemma}\label{Lpnorm}
For all $p\geq1$,
$$\sup_{s\,\in\,[0,\,1]}\int_{P_{-K_{M}}}|\nabla u_{s}|^{p}e^{-\langle b_{X},\,x\rangle}\,dx\leq C.$$
\end{lemma}

\begin{proof}
First note that
$$\int_{\mathbb{R}^{n}}|\xi|^{p}e^{-\langle b_{X},\,\nabla\phi_{0}\rangle}\det(\phi_{0,ij})\,d\xi\leq C.$$
Indeed, since $F$ is equal to a constant $c_0$ off of a compact subset of $M$, we see that $F$ is globally bounded on $M$. This means that
$$ \sup_{\mathbb{R}^n}\left| -\log\det(\phi_{0,\,ij}) + \langle \nabla \phi_0, b_X \rangle - 2\phi_0 \right|\leq C, $$
resulting in the fact that
$$\int_{\mathbb{R}^{n}}|\xi|^{p}e^{-\langle b_{X},\,\nabla\phi_{0}\rangle}\det(\phi_{0,\,ij})\,d\xi\leq C \int_{\mathbb{R}^{n}}|\xi|^{p}e^{-2\phi_0}\,d\xi\leq C, $$
where we have used Lemma \ref{growth} in the last inequality. Therefore, using Lemma \ref{growth} once again and \eqref{caddo}, we find that
\begin{equation*}
\begin{split}
\int_{P_{-K_{M}}}|\nabla u_{s}|^{p}e^{-\langle b_{X},\,x\rangle}\,dx&=\int_{\mathbb{R}^{n}}|\xi|^{p}e^{-\langle b_{X},\,\nabla\phi_{s}\rangle}\det(\phi_{s,ij})\,d\xi \\
 &= s\int_{\mathbb{R}^{n}}|\xi|^{p}e^{-2\phi_{0}}\,d\xi   + (1-s) \int_{\mathbb{R}^{n}}|\xi|^{p}e^{-\langle b_{X},\,\nabla\phi_{0}\rangle}\det(\phi_{0,ij})\,d\xi  \\
 &\leq C,
\end{split}
\end{equation*}
as desired.
\end{proof}

\subsubsection*{An upper bound on the $\hat{F}$-functional}

Now, our background metric $\omega$ satisfies the two bullet points above
Lemma \ref{expression} as demonstrated in the already proved Theorem \ref{mainthm}(ii)--(iv).
As a consequence, it is clear from Lemma \ref{expression}(i) that condition (a) of Definition \ref{fhat} holds true.
The hypothesis of Lemma \ref{converge2} as well as condition (b) of Definition \ref{fhat} via Lemma \ref{converge1} also
hold true thanks to Lemma \ref{lemma-preserved-int}(ii). Thus, the $\hat{F}$-functional from Definition \ref{fhat} is finite and therefore well-defined along the continuity path \eqref{star-s}
and moreover, by Lemma \ref{converge2}, may be expressed along in terms of the $J_{\omega,\,X}$-functional as $$\hat{F}(\psi_{s})=J_{\omega,\,X}(\psi_{s})-\int_{M}\psi_{s}\,e^{-f}\omega^{n}.$$

We next show that $\hat{F}$ is bounded above along the continuity path \eqref{star-s} using Lemma \ref{I-bounded}. This will in turn provide
an a priori estimate on the weighted integral of the Legendre transform $u_{s}:=L(\phi_{s})$ of $\phi_{s}$. From this,
we derive an a priori estimate on the weighted $L^{1}$-norm of $u_{s}$. Via the Sobolev inequality, we then obtain local
control on $u_{s}$, and as a result, on $\psi_{s}$. This eventually leads to the desired uniform lower bound on $\inf_{M}\vartheta_{s}$.

\begin{lemma}\label{boundedd}
$\hat{F}(\psi_{s})\leq C(\sup_M(\vartheta_{s})_+).$
\end{lemma}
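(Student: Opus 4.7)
The plan is to identify $\hat F(\psi_s)$ with the $J$-functional $J_{\omega,X}(\psi_s)$ of Definition~\ref{IJ}, exploit the classical non-negativity of $I-J$ (derived from the linear path), and then invoke Lemma~\ref{I-bounded}. To that end, I first need to verify that $\hat F(\psi_s)$ is well-defined, i.e., that conditions (a) and (b) of Definition~\ref{fhat} hold along the linear path $\varphi_\tau := \tau\psi_s$, $\tau \in [0,1]$. For condition (a), the constancy of the moment polytope along $\varphi_\tau$ follows from Lemma~\ref{expression}(i) applied to $\tau\psi_s$; the required pointwise bound $|X\cdot(\tau\psi_s)| < \infty$ holds because $\psi_s \in \mathcal{M}^{\infty}_{X,\beta}(M)$ implies $X\cdot\psi_s \in C^\infty_{X,\beta}(M) \oplus \mathbb{R}$ and hence is bounded on $M$. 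Condition (b) and the additional integrability hypothesis of Lemma~\ref{converge2} both follow from Lemma~\ref{lemma-preserved-int}(ii). Lemma~\ref{converge2} then gives
\begin{equation*}
\hat F(\psi_s) = J_{\omega,X}(\psi_s) - \int_M \psi_s\,e^{-f}\omega^n = J_{\omega,X}(\psi_s),
\end{equation*}
where the second equality uses the normalisation $\int_M \psi_s\,e^{-f}\omega^n = 0$ built into \eqref{star-s}.

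Next I would show $J_{\omega,X}(\psi_s) \le I_{\omega,X}(\psi_s)$. Evaluating $(I_{\omega,X}-J_{\omega,X})(\psi_s)$ along the linear path $\varphi_t := t\psi_s$ by means of Theorem~\ref{main-thm-I-J} and integrating by parts against the weighted measure $e^{-f_{\varphi_t}}\omega_{\varphi_t}^n$ leads, exactly as in \eqref{bded-below-I-J} in the proof of Proposition~\ref{prop-a-priori-ene-est}, to the identity
\begin{equation*}
(I_{\omega,X}-J_{\omega,X})(\psi_s) = n \sum_{k=0}^{n-1}\binom{n-1}{k} \int_0^1 t^{k+1}(1-t)^{n-1-k} \int_M i\partial\psi_s \wedge \bar\partial\psi_s \wedge e^{-f - t\frac{X}{2}\cdot\psi_s}\omega^{n-1-k}\wedge\omega_{\psi_s}^k\,dt.
\end{equation*}
Each summand is manifestly non-negative: $i\partial\psi_s \wedge \bar\partial\psi_s \wedge \omega^{n-1-k}\wedge\omega_{\psi_s}^k$ is a non-negative $(n,n)$-form (both $\omega$ and $\omega_{\psi_s}$ being positive), and the weight $e^{-f - t\frac{X}{2}\cdot\psi_s}$ is strictly positive. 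Consequently $(I_{\omega,X}-J_{\omega,X})(\psi_s) \ge 0$, i.e., $J_{\omega,X}(\psi_s) \le I_{\omega,X}(\psi_s)$.

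Combining the previous two steps with Lemma~\ref{I-bounded} gives
\begin{equation*}
\hat F(\psi_s) = J_{\omega,X}(\psi_s) \le I_{\omega,X}(\psi_s) \le C\bigl(1 + \sup_M (\vartheta_s)_+\bigr),
\end{equation*}
which is the desired bound (absorbing the additive $1$ into the dependence on $\sup_M(\vartheta_s)_+ \ge 0$). The main subtlety is thus not analytic but bookkeeping: one must justify that the classical identities relating $\hat F$, $I$, and $J$ (and the non-negativity of $I-J$) remain valid in the present non-compact weighted setting, which is where Lemmas~\ref{converge1}, \ref{converge2}, and \ref{lemma-preserved-int} are crucial; once this is established, the bound on $\hat F$ is a direct consequence of the earlier bound on $I$.
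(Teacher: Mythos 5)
Your proposal is correct and follows essentially the same route as the paper. The paper establishes the identity $\hat F(\psi_s) = J(\psi_s) - \int_M \psi_s\, e^{-f}\omega^n$ (via Lemmas~\ref{converge1}, \ref{converge2}, \ref{lemma-preserved-int}, and \ref{expression}) in the paragraph immediately preceding the lemma, then cites the non-negativity of $I - J$ directly from \eqref{inequ-bded-bel-I-J-fin} and invokes Lemma~\ref{I-bounded} — exactly the chain of reasoning you reconstruct, with your extra step of re-deriving $(I-J)\geq 0$ from \eqref{bded-below-I-J} being just a slightly earlier entry point into the same computation.
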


\begin{proof}
By assumption we have that $\int_{M}\psi_{s}\,e^{-f}\omega^{n}=0$ so that
$\hat{F}(\psi_{s})=J_{\omega,\,X}(\psi_{s})$. Moreover, from \eqref{inequ-bded-bel-I-J-fin} we read that $(I_{\omega,\,X}-J_{\omega,\,X})(\psi_{s})\geq0$. Thus, Lemma \ref{I-bounded} implies that
\begin{equation*}
\begin{split}
\hat{F}(\psi_{s})&=J_{\omega,\,X}(\psi_{s})=I_{\omega,\,X}(\psi_{s})-(I_{\omega,\,X}-J_{\omega,\,X})(\psi_{s})\leq I_{\omega,\,X}(\psi_{s})+0\leq C\left(\sup_{M}(\vartheta_{s})_+\right),
\end{split}
\end{equation*}
as claimed.
\end{proof}

\subsubsection*{An upper bound on the weighted integral of the Legendre transform}
We know that
\begin{equation*}
\begin{split}
\int_{P_{-K_{M}}}|u_{s}|\,e^{-\langle b_{X},\,x\rangle}dx&\leq
\int_{P_{-K_{M}}}|u_{s}-u_{0}|\,e^{-\langle b_{X},\,x\rangle}dx+\int_{P_{-K_{M}}}|u_{0}|\,e^{-\langle b_{X},\,x\rangle}dx\\
&\leq\int_{P_{-K_{M}}}\left(\int_{0}^{1}|\dot{u}_{st}|\,dt\right)\,e^{-\langle b_{X},\,x\rangle}dx+\int_{P_{-K_{M}}}|u_{0}|\,e^{-\langle b_{X},\,x\rangle}dx,
\end{split}
\end{equation*}
and these last two integrals are finite by Lemma \ref{lemma-preserved-int}(ii) via Lemma \ref{converge1}, and Lemma \ref{expression}(ii), respectively.
By definition, the $\hat{F}$-functional along \eqref{star-s} is given by
\begin{equation}\label{fhat2}
\hat{F}(\psi_s)=2\int_{P_{-K_{M}}}(u_{s}-u_{0})\,e^{-\langle b_{X},\,x\rangle}dx.
\end{equation}
Therefore with $\int_{P_{-K_{M}}}|u_{0}|\,e^{-\langle b_{X},\,x\rangle}dx$
and $\int_{P_{-K_{M}}}|u_{1}|\,e^{-\langle b_{X},\,x\rangle}dx$ convergent, we can split
the integral in \eqref{fhat2}. Together with the integral bound given in Lemma \ref{expression}(ii), this leads to the following consequence of Lemma \ref{boundedd}.

\begin{corollary}\label{boundd}
$$\sup_{s\,\in\,[0,\,1]}\int_{P_{-K_{M}}}u_{s}\,e^{-\langle b_{X},\,x\rangle}\,dx\leq C.$$
\end{corollary}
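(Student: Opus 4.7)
The plan is to read the bound off directly by assembling the pieces that have just been established. First I would use Lemma \ref{boundedd}, which asserts that $\hat{F}(\psi_{s}) \leq C(\sup_{M}(\vartheta_{s})_{+})$, to reduce the problem to controlling $\sup_{M}\vartheta_{s}$ uniformly in $s$. This control is precisely what Proposition \ref{prop-bd-abo-uni-psi} gives on the compact set $W$; combining it with Lemma \ref{lemma-loc-crit-pts}, which localises the supremum of $\vartheta_{s}$ to $W$, yields a uniform upper bound $\sup_{0\leq s\leq 1}\sup_{M}\vartheta_{s}\leq C$. Plugging this back into Lemma \ref{boundedd} produces a uniform upper bound $\hat{F}(\psi_{s})\leq C$ along the continuity path.

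Next I would invoke the expression \eqref{fhat2} for the $\hat{F}$-functional, namely
\begin{equation*}
\hat{F}(\psi_{s}) = 2\int_{P_{-K_{M}}}(u_{s}-u_{0})\,e^{-\langle b_{X},\,x\rangle}\,dx.
\end{equation*}
The paragraph preceding the corollary already justifies splitting this integral: by Lemma \ref{lemma-preserved-int}(ii) together with Lemma \ref{converge1} the difference $\int_{P_{-K_M}}|u_{s}-u_{0}|\,e^{-\langle b_{X},\,x\rangle}\,dx$ is finite for every $s$, while $\int_{P_{-K_M}}|u_{0}|\,e^{-\langle b_{X},\,x\rangle}\,dx < +\infty$ by Lemma \ref{expression}(ii). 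I would therefore rearrange to obtain
\begin{equation*}
\int_{P_{-K_{M}}}u_{s}\,e^{-\langle b_{X},\,x\rangle}\,dx = \frac{1}{2}\hat{F}(\psi_{s}) + \int_{P_{-K_{M}}}u_{0}\,e^{-\langle b_{X},\,x\rangle}\,dx,
\end{equation*}
and the uniform upper bound on $\hat{F}(\psi_{s})$ together with the (fixed, $s$-independent) finite integral $\int_{P_{-K_M}} u_{0}\,e^{-\langle b_{X},\,x\rangle}\,dx$ delivers the desired uniform bound. There is no real obstacle at this step: the corollary is essentially a bookkeeping consequence of Lemma \ref{boundedd}, Proposition \ref{prop-bd-abo-uni-psi}, and the integrability statements that have already been used to make $\hat{F}$ well-defined. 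The only conceptually substantive input is the a priori upper bound $\sup_{M}\vartheta_{s}\leq C$, which was the nontrivial work done earlier via the Nash–Moser iteration.
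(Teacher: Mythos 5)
Your proof is correct and follows essentially the same route as the paper: the corollary is exactly the bookkeeping combination of the $\hat{F}$-decomposition \eqref{fhat2}, the integrability of the $u_0$ term from Lemma \ref{expression}(ii), and the uniform bound $\hat{F}(\psi_{s})\le C$ obtained by feeding the a priori bound $\sup_{M}(\vartheta_s)_+\le C$ (Proposition \ref{prop-bd-abo-uni-psi} with Lemma \ref{lemma-loc-crit-pts}) into Lemma \ref{boundedd}. You also correctly note that this is the only conceptually substantive ingredient and that the splitting is legitimate because both constituent integrals converge.
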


\subsubsection*{An upper bound on the weighted $L^{1}$-norm of the Legendre transform}

We now use Corollary \ref{boundd} to derive a uniform weighted $L^{1}$-norm on $u_{s}$.
Notice that we must make use of the already obtained uniform upper bound on $\vartheta_{s}$.

\begin{lemma}\label{L1norm}
$$\sup_{s\,\in\,[0,\,1]}\int_{P_{-K_{M}}}|u_s|\,e^{-\langle b_X,x\rangle}\,dx\leq C.$$
\end{lemma}

\begin{proof}
Recall from the definition of the Legendre transform that for all $x\in P_{-K_{M}}$,
\begin{equation*}
\begin{split}
u_s(x)-u_0(x)&=\sup_{\xi\,\in\,\mathbb{R}^n}\left(\langle x,\,\xi\rangle-\phi_s(\xi)\right)-u_0(x)\\
&\geq\langle x,\,\nabla u_0(x)\rangle-\phi_s(\nabla u_0(x))-u_0(x)\\
&= \phi_0(\nabla u_0(x))-\phi_s(\nabla u_0(x))\\
&= -\frac{1}{2}\psi_s(\nabla u_0(x))\\
&=-\frac{1}{2}\Phi_s(\nabla u_0(x))-\frac{1}{2}\vartheta_s(\nabla u_0(x))\\
&\geq-\frac{1}{2}\Phi_s(\nabla u_0(x))-C\\
\end{split}
\end{equation*}
for some uniform positive constant $C$. Here we have used the a priori upper bound on $\vartheta_s$ given by Proposition \ref{prop-bd-abo-uni-psi} in the last line.
With this, we estimate that
\begin{equation}\label{finished}
\begin{split}
\int_{P_{-K_{M}}}&|u_s|\,e^{-\langle b_X,x\rangle}dx\leq\int_{P_{-K_{M}}}\left(u_s-u_0+\frac{1}{2}\Phi_s(\nabla u_0(x))+C\right)\,e^{-\langle b_X,\,x\rangle}dx\\
&\qquad+\int_{P_{-K_{M}}}\left|u_0-\frac{1}{2}\Phi_s(\nabla u_0(x))-C\right|\,e^{-\langle b_X,\,x\rangle}dx\\
&\leq\int_{P_{-K_{M}}}u_s\,e^{-\langle b_X,\,x\rangle}dx+2\int_{P_{-K_{M}}}|u_0|\,e^{-\langle b_X,\,x\rangle}dx+2C\int_{P_{-K_{M}}}e^{-\langle b_X,\,x\rangle}dx\\
&\qquad+\int_{P_{-K_{M}}}|\Phi_s(\nabla u_0(x))|\,e^{-\langle b_X,\,x\rangle}dx\\
&\leq C'+\int_{P_{-K_{M}}}|\Phi_s(\nabla u_0(x))|\,e^{-\langle b_X,\,x\rangle}dx
\end{split}
\end{equation}
for a uniform positive constant $C'$. Here we have used Corollary \ref{boundd}, Lemma \ref{expression}(ii), and the fact that
$$\int_{P_{-K_{M}}}e^{-\langle b_X,\,x\rangle}dx=(2\pi)^{n}\int_{M}e^{-f}\omega^{n}<\infty$$
to bound each of the terms in the third line respectively. The final integral we bound in the following way.

Choose a compact subset $U\subset M$ strictly containing $W$ and $f^{-1}((-\infty,\,1])$. This we can do because $f$ is proper and bounded below.
Next, choose $R>0$ sufficiently large so that $(\nabla\phi_{0})(U)\subset B_{R}(0)$. Then in particular,
$(\nabla\phi_{0})(W)\subset B_{R}(0)$ and $\langle b_{X},\,x\rangle>1$ for all $x\in P_{-K_{M}}\setminus(B_{R}(0)\cap P_{-K_{M}})$, the
latter being true because $\langle b_{X},\,x\rangle=f(\nabla u_{0}(x))$ for all $x\in P_{-K_{M}}$. Then recalling that $\Phi_{s}=-c_{s}\log(2(f+1))$ on $M\setminus W$, which in particular holds on
$P_{-K_{M}}\setminus(B_{R}(0)\cap P_{-K_{M}})$, and using the fact that $0<\log(x)<x$ for all $x>1$, we estimate that
\begin{equation*}
\begin{split}
\int_{P_{-K_{M}}}|\Phi_s(\nabla u_0(x))|\,e^{-\langle b_X,\,x\rangle}dx&=\int_{B_{R}(0)\cap P_{-K_{M}}}|\Phi_s(\nabla u_0(x))|\,e^{-\langle b_X,\,x\rangle}dx\\
&\qquad+\int_{P_{-K_{M}}\setminus(B_{R}(0)\cap P_{-K_{M}})}|\Phi_s(\nabla u_0(x))|\,e^{-\langle b_X,\,x\rangle}dx\\
&\leq C\left(1+\int_{P_{-K_{M}}\setminus(B_{R}(0)\cap P_{-K_{M}})}|\log(2(f(\nabla u_0(x))+1))|\,e^{-\langle b_X,\,x\rangle}dx\right)\\
&=C\left(1+\int_{P_{-K_{M}}\setminus(B_{R}(0)\cap P_{-K_{M}})}\log(2(\langle b_{X},\,x\rangle+1))\,e^{-\langle b_X,\,x\rangle}dx\right)\\
&\leq C\left(1+\int_{P_{-K_{M}}\setminus(B_{R}(0)\cap P_{-K_{M}})}(1+\langle b_{X},\,x\rangle)\,e^{-\langle b_X,\,x\rangle}dx\right)\\
&\leq C'
\end{split}
\end{equation*}
for a uniform positive constant $C'$. Combined with \eqref{finished}, this yields the desired bound.
\end{proof}

\subsubsection*{Local control on $u_{s}$}

Lemmas \ref{Lpnorm} and \ref{L1norm}, combined with an application of the Sobolev inequality, now give us local control on $u_{s}$.
\begin{prop}\label{u-bound}
There exists $C>0$ such that for all $x\in P_{-K_{M}}$ and $s\in[0,\,1]$, $$|u_{s}(x)-u_{0}(x)|\leq Ce^{\langle b_{X},\,x\rangle}.$$
\end{prop}

\begin{proof}
From the first paragraph of the proof of Lemma \ref{useful}, we know that outside a compact subset,
$P_{-K_{M}}$ coincides with the Cartesian product of the half line and $P_{D}$, the polytope associated
to $D$. More precisely, in light of \eqref{product-poly}, $P_{-K_{M}}$ coincides with
$[a,\,\infty)\times P_{D}\subseteq\mathbb{R}\times\mathbb{R}^{n-1}$ for some $a\in\mathbb{R}$ outside a convex compact subset.
Suppose that $x\in P_{-K_{M}}$ lies in the region $[a+1,\,\infty)\times P_{D}$.
Then there exists $b\in[a+1,\,\infty)$ such that $x\in\{b\}\times P_{D}$.
Let $\Omega:=[b-1,\,b+1]\times P_{D}\subseteq[a,\,\infty)\times P_{D}\subseteq\mathbb{R}\times\mathbb{R}^{n-1}$.
Set $U_{s}:=u_{s}-u_{0}$ and let $q>n$. Then since $U_{s}$ is smooth up to $\partial P_{-K_{M}}$ by Lemma
\ref{boundaryy}(i), we can apply the Sobolev inequality from \cite[Theorem 3.4]{polysob} (which in particular states that the Sobolev constant depends only on the Euclidean diameter and measure of $\Omega$), together with Lemmas \ref{Lpnorm} and \ref{L1norm}, to determine that for a uniform constant $C>0$,
\begin{equation*}
\begin{split}
|U_{s}(x)|&\leq\norm{U_{s}}_{C^{0}(\Omega)}\leq\left\lVert U_{s}-\frac{1}{|\Omega|}\int_{\Omega}U_{s}\,dx\right\rVert_{C^{0}(\Omega)}+\frac{1}{|\Omega|}\int_{\Omega}|U_{s}|\,dx
\\
&\leq C\norm{\nabla U_{s}}_{L^{q}(\Omega)}+\frac{1}{|\Omega|}\int_{\Omega}|U_{s}|\,dx\\
&\leq C\left(\sup_{y\,\in\,\Omega}e^{\langle b_{X},\,y\rangle}+\left(\sup_{y\,\in\,\Omega}e^{\langle b_{X},\,y\rangle}\right)^{\frac{1}{q}}\right).\\
&\leq C\sup_{y\,\in\,\Omega}e^{\langle b_{X},\,y\rangle},\\
\end{split}
\end{equation*}
because $0<\frac{1}{q}<1$. Continuing, we find that
$$|U_{s}(x)|\leq C\sup_{y\,\in\,\Omega}e^{\langle b_{X},\,y\rangle}=Ce^{\langle b_{X},\,x\rangle}\cdot\sup_{y\,\in\,\Omega}e^{\langle b_{X},\,y-x\rangle}\leq Ce^{\langle b_{X},\,x\rangle}.$$

A slight modification of this argument also shows that
$|U_{s}(x)|\leq Ce^{\langle b_{X},\,x\rangle}$ for all $x\in \linebreak P_{-K_{M}}\setminus([a+1,\,\infty)\times P_{D})$ which as noted above, is a compact
convex subset of $\mathbb{R}^{n}$. In sum, we arrive at the bound
$$|U_{s}(x)|\leq Ce^{\langle b_{X},\,x\rangle}\qquad\textrm{for all $x\in P_{-K_{M}}$},$$
as required.
\end{proof}

\subsubsection*{Local control on $\psi_{s}$}

The previous proposition can be reformulated to give local control on $\psi_{s}$.

\begin{prop}\label{lowerr}
There exists $C>0$ such that for all $x\in M$ and $s\in[0,\,1]$,
$$\psi_{s}(x)\geq-Ce^{f(x)}.$$
\end{prop}

\begin{proof}
The definition of the Legendre transform and Proposition \ref{u-bound} gives us that for all $\xi\in\mathbb{R}^{n}$ and $s\in[0,\,1]$,
\begin{equation*}
\begin{split}
\psi_{s}(\xi)&=2(\phi_{s}(\xi)-\phi_{0}(\xi))\\
&=2\left(\sup_{x\,\in\,P_{-K_{M}}} \left\{ \langle \xi, x \rangle  - u_{s}(x) \right\}-\phi_{0}(\xi)\right)\\
&\geq2\left( \langle \xi,\,\nabla\phi_{0}(\xi) \rangle  - u_{s}(\nabla\phi_{0}(\xi)) -\phi_{0}(\xi)\right)\\
&=2\left( u_{0}(\nabla\phi_{0}(\xi))- u_{s}(\nabla\phi_{0}(\xi))\right)\\
&\geq -Ce^{\langle b_{X},\,\nabla\phi_{0}(\xi)\rangle}\\
&=-Ce^{f(\xi)},
\end{split}
\end{equation*}
for some uniform $C>0$, as claimed.
\end{proof}

\subsubsection*{A priori lower bound on $\inf_M\vartheta_{s}$}
This brings us to the concluding bound of this section. Proposition \ref{lowerr} yields a uniform lower bound on $\min_{W}\psi_{s}$. By Lemma \ref{lemma-loc-crit-pts}, this results in a uniform lower bound on $\inf_{M}\vartheta_{s}$. This is demonstrated in the following proposition.

\begin{prop}[A priori lower bound on $\inf_M\vartheta_{s}$]\label{prop-bd-bel-uni-psi}
Let $(\vartheta_s)_{0\,\leq\, s\,\leq\, 1}$ be a path of solutions in $\mathbb{R}\oplus C^{\infty}_{X,\,\beta}(M)$ to \eqref{starstar-s}. Then there exists a uniform constant $C>0$ such that
\begin{equation*}
\inf_{0\,\leq\, s\,\leq\, 1}\inf_{M}\vartheta_s\geq-C.
\end{equation*}
\end{prop}

\begin{proof}
Combining Lemma \ref{lemma-loc-crit-pts} and Proposition \ref{lowerr}, we find that for all $s\in[0,\,1]$,
$$\inf_{M}\vartheta_s=\min_{W}\vartheta_{s}=\min_{W}\left(\psi_{s}-\Phi_{s}\right)\geq\min_{W}\left(-Ce^{f}-\Phi_{s}\right)\geq-C.$$
\end{proof}

\subsection{A priori upper bound on the radial derivative}\label{sec-upp-bd-rad-der}

The $C^{0}$-bound on $\vartheta_{s}$ allows us to derive an a priori upper bound on $X\cdot\vartheta_{s}$.
\begin{prop}\label{prop-bd-uni-X-psi}
Let $(\vartheta_s)_{0\,\leq\, s\,\leq\, 1}$ be a path of solutions in $\mathbb{R}\oplus C^{\infty}_{X,\,\beta}(M)$ to \eqref{starstar-s}. Then there exists a positive constant $C=C\left(n,\omega,\sup_{s\in[0,1]}\|G_s\|_{C^0}\right)$ such that
\begin{equation*}
\sup_{0\,\leq\, s\,\leq\, 1}\sup_M X\cdot\vartheta_s\leq C.
\end{equation*}
In particular, $X\cdot\vartheta_{s}<C$ for all $s\in[0,\,1]$.
\end{prop}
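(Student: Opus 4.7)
The plan is to apply the maximum principle to a carefully chosen auxiliary function, using both the localisation from Lemma \ref{lemma-loc-crit-pts-rad-der} and the uniform $C^0$-bound on $\vartheta_s$ provided by Propositions \ref{prop-bd-abo-uni-psi} and \ref{prop-bd-bel-uni-psi}. First, differentiating \eqref{starstar-s} along $X$ (and without specialising to the region where $\omega_s$ takes its product form as was done in the proof of Lemma \ref{lemma-loc-crit-pts-rad-der}) yields a PDE valid on all of $M$:
$$\Delta_{\sigma_s, X}(X\cdot\vartheta_s) = 2(\Delta_{\omega_s}-\Delta_{\sigma_s})f_{\omega_s} + X\cdot G_s = -\tr_{\sigma_s}(\mathcal{L}_X\omega_s) + 2\Delta_{\omega_s}f_{\omega_s} + X\cdot G_s,$$
using $2\Delta_{\sigma_s}f_{\omega_s} = \tr_{\sigma_s}(\mathcal{L}_X\omega_s)$. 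The last two terms are bounded uniformly on $M$ and in $s\in[0,1]$.

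For a constant $A>0$ to be chosen, I introduce $v_s:=X\cdot\vartheta_s - A\vartheta_s$. Using the identity $\Delta_{\sigma_s,X}\vartheta_s = n-\tr_{\sigma_s}\omega_s - \tfrac{X}{2}\cdot\vartheta_s$ (immediate from $\sigma_s = \omega_s + i\partial\bar{\partial}\vartheta_s$), a computation combining the above PDE with $A$ times this identity yields
$$\Delta_{\sigma_s, X}v_s = \tr_{\sigma_s}(A\omega_s-\mathcal{L}_X\omega_s) + \tfrac{A}{2}(X\cdot\vartheta_s) + R_s,$$
with $R_s$ uniformly bounded in terms of the background geometry. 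Outside $W$, $\omega_s=\omega_C+\omega_D$ and $\mathcal{L}_X\omega_s=2\omega_C$ (since $X=r\partial_r$ annihilates $\omega_D$ and is the soliton vector field of $\omega_C$), so that $A\omega_s-\mathcal{L}_X\omega_s=(A-2)\omega_C+A\omega_D$ is strictly positive for $A>2$. On the compact set $W$, $|\mathcal{L}_X\omega_s|_{\omega_s}$ is bounded uniformly in $s\in[0,1]$, so for $A$ sufficiently large (uniformly in $s$), the $(1,1)$-form $A\omega_s-\mathcal{L}_X\omega_s$ is $\geq c\omega_s>0$ globally on $M$, whence $\tr_{\sigma_s}(A\omega_s-\mathcal{L}_X\omega_s)\geq 0$.

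Since $X\cdot\vartheta_s$ decays at infinity and $\vartheta_s$ tends to a constant there (recall $\vartheta_s\in\mathbb{R}\oplus C^{\infty}_{X,\beta}(M)$), $v_s$ approaches a bounded limit at infinity. If $\sup_M v_s$ is not attained in $M$, then automatically $\sup_M v_s\leq A\sup_M|\vartheta_s|$. Otherwise, at an interior maximum $p_0\in M$, $\Delta_{\sigma_s,X}v_s(p_0)\leq 0$, which together with the non-negativity of $\tr_{\sigma_s}(A\omega_s-\mathcal{L}_X\omega_s)$ forces $\tfrac{A}{2}(X\cdot\vartheta_s)(p_0)\leq -R_s(p_0)\leq C$, so $v_s(p_0)\leq \tfrac{2C}{A}+A\sup_M|\vartheta_s|$. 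In either case $v_s\leq C'$ on $M$, and combining with the $C^0$-bounds from Propositions \ref{prop-bd-abo-uni-psi} and \ref{prop-bd-bel-uni-psi} yields $X\cdot\vartheta_s\leq A\vartheta_s+C'\leq C''$ uniformly in $s$; the corresponding bound on $X\cdot\psi_s$ follows from $X\cdot\psi_s=X\cdot\vartheta_s+X\cdot\Phi_s$ and the uniform boundedness of $X\cdot\Phi_s$. The main technical obstacle is the uniform-in-$s$ verification of the positivity of $A\omega_s-\mathcal{L}_X\omega_s$ across the entire family of background metrics, which crucially relies on the explicit structure of $\omega_s$ outside $W$ and the continuous dependence of $c_s$ on $s\in[0,1]$.
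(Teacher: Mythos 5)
Your argument is correct, but it takes a genuinely different route from the paper. The paper's proof follows Siepmann: it uses the convexity-type lower bound $X\cdot X\cdot\vartheta_s\geq -2|X|^2_{g_s}$ (from $\sigma_s>0$ via Claim \ref{commute}), integrates this along the flow of $X/2$ against a cutoff $\eta$ supported in $[0,1]$, and evaluates at the maximum point of $X\cdot\vartheta_s$, which Lemma \ref{lemma-loc-crit-pts-rad-der} places in $W$; the $C^0$-bound then controls the remaining terms. You instead derive the full drift-Laplacian equation for $X\cdot\vartheta_s$ on all of $M$, add the penalisation $-A\vartheta_s$ so that the equation for $v_s=X\cdot\vartheta_s-A\vartheta_s$ gains the sign-definite term $\tr_{\sigma_s}(A\omega_s-\mathcal{L}_X\omega_s)\geq 0$ (positive for $A$ large since $\mathcal{L}_X\omega_s=2\omega_C$ outside $W$ and is uniformly bounded on $W$), and then close by an elliptic maximum principle. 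Your computation of the PDE for $v_s$ and the treatment of the case where the supremum escapes to infinity are both correct. What each approach buys: the paper's ODE-along-flow-lines argument sidesteps the need to compute the full PDE for $X\cdot\vartheta_s$ and only requires a one-sided bound on $X\cdot X\cdot\vartheta_s$, but it does require Lemma \ref{lemma-loc-crit-pts-rad-der} to localise the maximum. Your perturbed maximum principle argument, by contrast, does not use the localisation lemma at all (despite your announcing it in the first sentence), replacing it with the global positivity of $A\omega_s-\mathcal{L}_X\omega_s$; the cost is that you need the full linearised equation for $X\cdot\vartheta_s$ everywhere, not just on $M\setminus W$. Both ultimately hinge on the a priori $C^0$-bound from Propositions \ref{prop-bd-abo-uni-psi} and \ref{prop-bd-bel-uni-psi}, which is, in fact, the only place where these bounds enter the radial-derivative estimate.
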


\begin{proof}
Our proof is based on that of Siepmann in the case of an expanding gradient K\"ahler-Ricci soliton; see \cite[Lemma 5.4.14]{siepmann}.
We adapt his proof here to our particular setting.

We begin with Claim \ref{commute} which gives
\begin{equation}\label{est-sec-der-vec-fiel}
X\cdot X\cdot\vartheta_{s}=2i\partial\bar{\partial}\vartheta_{s}(X,\,JX)=2\left(\sigma_{s}(X,\,JX)-\omega_{s}(X,\,JX)\right)\geq-2\omega_{s}(X,\,JX)=-2|X|^{2}_{g_{s}}.
\end{equation}
To get an upper bound for $X\cdot\vartheta_{s}$, we introduce the flow $(\varphi^{X}_{t})_{t\in\mathbb{R}}$ generated by the vector field $\frac{X}{2}$.
This flow is complete since $X$ grows linearly at infinity. Define $\vartheta^{s}_x(t):=\vartheta_{s}(\varphi^{X}_{t}(x))$ for $(x,\,t)\in M\times\mathbb{R}.$ Then for any cut-off function $\eta:\mathbb{R}_+\rightarrow[0,\,1]$ such that $\eta(0)=1$ and $\eta'(0)=0$ we have that
\begin{eqnarray*}
\int_0^{+\infty}\eta''(t)\vartheta^{s}_x(t)dt&=&-\int_0^{+\infty}\eta'(t)(\vartheta^{s}_x)'(t)dt\\
&=&(\vartheta^{s}_x)'(0)+\int_0^{+\infty}\eta(t)(\vartheta^{s}_x)''(t)dt.
\end{eqnarray*}
Using \eqref{est-sec-der-vec-fiel}, it then follows that
\begin{equation*}
\begin{split}
\frac{X}{2}\cdot\vartheta_{s}(x)&=(\vartheta^{s}_x)'(0)\leq-\int_{\supp(\eta)}\frac{X}{2}\cdot \left(\frac{X}{2}\cdot \vartheta_{s}\right)(\varphi^{X}_{t}(x))\,dt+\sup_{t\,\in\,\supp(\eta'')}\arrowvert\vartheta^{s}_x(t)\arrowvert\int_{\supp(\eta'')}\arrowvert\eta''(t)\arrowvert\,dt\\
&\leq\frac{1}{2}\int_{\supp(\eta)}\arrowvert X\arrowvert^2_{g_{s}}(\varphi^{X}_{t}(x))\,dt+\sup_{t\,\in\,\supp(\eta'')}\arrowvert\vartheta_{s}(\varphi^{X}_{t}(x)\arrowvert\int_{\supp(\eta'')}\arrowvert\eta''(t)\arrowvert \,dt.\\
\end{split}
\end{equation*}

Choose $\eta$ such that $\supp(\eta)\subset [0,\,1]$ and let $x$ now be the point where $X\cdot\vartheta_{s}$ attains its maximum value. By
Lemma \ref{lemma-loc-crit-pts-rad-der}(i), we know that $x$ is contained in $W$. Hence, we deduce from the above that
\begin{equation*}
\frac{X}{2}\cdot\vartheta_{s}(x)\leq C\left(\sup_{s\,\in\,[0,\,1]}\left(\sup_{\cup_{t\in[0,\,1]}\varphi^{X}_{t}(W)}|X|^{2}_{g_{s}}\right)+\|\vartheta_{s}\|_{C^{0}}\right).
\end{equation*}
The result now follows from the uniform upper bound on $\|\vartheta_{s}\|_{C^{0}}$.
\end{proof}

\subsection{A priori estimates on higher derivatives}\label{sec-high-der}
We next derive a priori global bounds on higher derivatives of solutions to the complex Monge-Amp\`ere equation \eqref{starstar-s}, beginning with
the $C^{2}$-estimate. The a priori bounds we derive hold everywhere on the manifold $M$, not just on a given fixed compact subset.
The unboundedness of the vector field $X$ prevents us from applying standard local estimates to higher derivatives of solutions to \eqref{starstar-s}.

\subsubsection{$C^2$ a priori estimate}

\begin{prop}[A priori $C^2$-estimate]\label{prop-C^2-est}
Let $(\vartheta_s)_{0\,\leq\, s\,\leq\, 1}$ be a path of solutions in $\mathbb{R}\oplus C^{\infty}_{X,\,\beta}(M)$ to \eqref{starstar-s}. Then there exists a positive constant $C=C\left(n,\omega,\sup_{s\in[0,1]}\|G_s\|_{C^2}\right)$ such that the following $C^2$ a priori estimate holds true:
\begin{equation*}
\sup_{0\,\leq\, s\,\leq\, 1}\|i\partial\bar{\partial}\vartheta_s\|_{C^0}\leq C.
\end{equation*}
In particular,
\begin{equation*}
\sup_{0\,\leq\, s\,\leq\, 1}\|i\partial\bar{\partial}\psi_s\|_{C^0}\leq C.
\end{equation*}
\end{prop}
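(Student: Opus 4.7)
The plan is to adapt the classical Aubin-Yau second-order maximum principle to the drift setting, using the decay built into the function space $\R\oplus C^{\infty}_{X,\beta}(M)$ to localise the maximum at a finite point.

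Fix $s\in[0,1]$ and consider the test function
\begin{equation*}
H_s := \log \tr_{\omega_s}\sigma_s - A\vartheta_s
\end{equation*}
for a constant $A>0$ to be chosen later. Since $\vartheta_s\in \R\oplus C^{\infty}_{X,\beta}(M)$ we have $i\partial\bar{\partial}\vartheta_s\to 0$ at infinity, so $\tr_{\omega_s}\sigma_s\to n$ and $H_s$ admits a finite limit at infinity. Combined with the uniform $C^0$-bound on $\vartheta_s$ from Propositions \ref{prop-bd-abo-uni-psi} and \ref{prop-bd-bel-uni-psi}, either the supremum of $H_s$ equals this limit (in which case $\tr_{\omega_s}\sigma_s \leq n\cdot e^{2A\|\vartheta_s\|_{C^0}}$ holds directly) or it is attained at an interior point $x_s\in M$. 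It is the second alternative that requires work.

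At $x_s$, the standard Aubin-Yau inequality
\begin{equation*}
\Delta_{\sigma_s}\log\tr_{\omega_s}\sigma_s \geq \frac{\Delta_{\omega_s}\log(\sigma_s^n/\omega_s^n)}{\tr_{\omega_s}\sigma_s} - C_1\tr_{\sigma_s}\omega_s,
\end{equation*}
with $C_1$ depending on a lower bound for the bisectional curvature of $\omega_s$ (uniform in $s$ since $\omega_s=\omega$ outside a fixed compact set), together with $\Delta_{\sigma_s}\vartheta_s=n-\tr_{\sigma_s}\omega_s$ and $\Delta_{\sigma_s}H_s(x_s)\leq 0$, yields
\begin{equation*}
(A-C_1)\tr_{\sigma_s}\omega_s(x_s) \leq An+\frac{\Delta_{\omega_s}\log(\sigma_s^n/\omega_s^n)}{\tr_{\omega_s}\sigma_s}(x_s).
\end{equation*}
Using $\log(\sigma_s^n/\omega_s^n)=G_s+\tfrac{X}{2}\cdot\vartheta_s$ from \eqref{starstar-s}, the numerator becomes $\Delta_{\omega_s}G_s + \tfrac{1}{2}\Delta_{\omega_s}(X\cdot\vartheta_s)$. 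The commutator identity $\Delta_{\omega_s}(X\cdot v) = X\cdot\Delta_{\omega_s}v + Q_s(v)$ valid for real holomorphic $X$, with $Q_s(v)$ involving $\Ric_{\omega_s}$, $\mathcal{L}_X\omega_s$, and $\nabla^{g_s}v$, reduces the drift contribution to $\tfrac{1}{2}X\cdot\tr_{\omega_s}\sigma_s$ plus terms uniformly controlled by $\|\vartheta_s\|_{C^0}$ and $\|X\cdot\vartheta_s\|_{C^0}$ via Propositions \ref{lowerbound} and \ref{prop-bd-uni-X-psi}. The critical-point identity $dH_s(x_s)=0$ finally gives
\begin{equation*}
X\cdot\tr_{\omega_s}\sigma_s(x_s) = A\tr_{\omega_s}\sigma_s(x_s)\cdot X\cdot\vartheta_s(x_s),
\end{equation*}
an algebraic expression which, together with the bound on $X\cdot\vartheta_s$, can be absorbed. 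Choosing $A$ depending only on $n$, $C_1$, $\sup_s\|G_s\|_{C^2}$, and $\sup_s\|X\cdot\vartheta_s\|_{C^0}$ sufficiently large then forces a uniform upper bound on $\tr_{\omega_s}\sigma_s(x_s)$, and hence on $\tr_{\omega_s}\sigma_s$ globally by the defining property of $x_s$.

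To conclude, the Monge-Amp\`ere equation \eqref{starstar-s} combined with the uniform bounds on $G_s$ and $X\cdot\vartheta_s$ gives uniform two-sided bounds on $\sigma_s^n/\omega_s^n$; together with the upper bound on $\tr_{\omega_s}\sigma_s$, the arithmetic-geometric mean inequality produces two-sided bounds on all eigenvalues of $\sigma_s$ relative to $\omega_s$, yielding $\|i\partial\bar{\partial}\vartheta_s\|_{C^0}\leq C$. The corresponding estimate for $\psi_s=\vartheta_s+\Phi_s$ follows at once, since $\Phi_s$ is pluriharmonic outside a fixed compact set and smooth on $M$ with uniform $C^2$-control in $s$. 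The main obstacle is precisely the drift term: unlike the compact case of \cite{Zhu-KRS-C1} where the drift is bounded, here $X$ grows linearly at infinity, so $\tfrac{1}{2}\Delta_{\omega_s}(X\cdot\vartheta_s)$ cannot be dominated by $\|\vartheta_s\|_{C^2}$ alone; the saving grace is that this term need only be controlled at the single point $x_s$, where the critical-point identity and the previously established a priori bound on $X\cdot\vartheta_s$ combine to tame it.
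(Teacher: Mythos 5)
Your proof is correct and follows essentially the same route as the paper's: both are Yau's second-order estimate applied to a potential-weighted trace quantity, with the maximum localized at a finite point via the decay of $\vartheta_s\in\R\oplus C^\infty_{X,\beta}(M)$ at infinity, and with the unbounded drift rendered harmless by the a priori bound on $X\cdot\vartheta_s$ from Propositions \ref{lowerbound} and \ref{prop-bd-uni-X-psi}. The only differences are cosmetic: the paper works with $u_s=e^{-\lambda\vartheta_s}(n+\Delta_{\omega_s}\vartheta_s)$ and the drift Laplacian $\Delta_{\sigma_s}-\tfrac{X}{2}\cdot$, in which the drift term drops out automatically at the interior maximum, whereas you take the logarithm $H_s=\log\tr_{\omega_s}\sigma_s-A\vartheta_s$ and use the critical-point identity $dH_s(x_s)=0$ to convert $X\cdot\log\tr_{\omega_s}\sigma_s$ into $A\,X\cdot\vartheta_s(x_s)$ -- note, incidentally, that the commutator term $\Delta_{\omega_s}(X\cdot v)-X\cdot\Delta_{\omega_s}v$ is $g_s^{-1}(\mathcal{L}_Xg_s)g_s^{-1}\cdot i\partial\bar\partial v$, so it involves $i\partial\bar\partial\vartheta_s$ rather than $\nabla^{g_s}\vartheta_s$, and is only $O(1)$ after division by $\tr_{\omega_s}\sigma_s$, not by appeal to $\|\vartheta_s\|_{C^0}$ alone.
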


\begin{proof}
Following closely \cite[Proposition 6.6]{con-der} where the
approach taken is based on standard computations performed in Yau's seminal paper \cite[pp.347--351]{Calabiconj} (see also \cite[Lemma 5.4.16]{siepmann}
and \cite[pp.52--55]{Tian-Can-Met-Boo}), we let $\Delta_{s}$ denote the Laplacian with respect to $\sigma_{s}$ and first estimate the drift Laplacian $\Delta_{s}-\frac{X}{2}\cdot$ of $\tr_{\omega_s}\sigma_s$ to obtain
 \begin{equation}
 \begin{split}\label{C2-est-yau}
\left(\Delta_{s}-\frac{X}{2}\cdot\right) \tr_{\omega_s}\sigma_s&\geq \frac{(\vartheta_s)_{i\bar{\jmath}k}(\vartheta_s)_{\bar{\imath}j\bar{k}}}{(1+(\vartheta_s)_{i\bar{\imath}})(1+(\vartheta_s)_{k\bar{k}})}+\Delta_{s}G_{s}-C\tr_{\omega_s}\sigma_{s}\cdot \tr_{\sigma_s}\omega_{s}\cdot (1+\inf_M \Rm(g_s))\\
&\qquad-C(n,\omega).
\end{split}
\end{equation}

Let $u_{s}:=e^{-\lambda\vartheta_{s}}(n+\Delta_{s}\vartheta_{s})$, where $\lambda>0$ will be specified later. Then one estimates the drift Laplacian $\Delta_{s}-\frac{X}{2}\cdot$ of $u_{s}$ with respect to $\sigma_{s}$ in the following way using the fact that $\vartheta_{s}$ satisfies \eqref{starstar-s}:
 \begin{equation*}
 \begin{split}
\left(\Delta_{s}-\frac{X}{2}\cdot\right) u_{s}
&\geq e^{-\lambda\vartheta_{s}}\Delta_{s}G_{s}+e^{-\lambda\vartheta_{s}}g_{s}\left(\nabla^{s}\left(\frac{X}{2}\right),\,i\partial\bar\partial\vartheta_{s}\right)
-C_{s}n^{2}e^{-\lambda\vartheta_{s}}+\lambda\left(\frac{X}{2}\cdot\vartheta_{s}\right)u_{s}
-\lambda n u_{s}\\
&\qquad+(\lambda+C_{s})e^{\frac{\lambda\vartheta_{s}-G_{s}-\frac{X}{2}\cdot\vartheta_{s}}{n-1}}u_{s}^{\frac{n}{n-1}},
\end{split}
\end{equation*}
where $\nabla^{s}$ is the Levi-Civita connection of $g_{s}$ and $C_{s}:=\inf_{i\,\neq\,k}\operatorname{Rm}^{s}_{i\bar{\imath}k\bar{k}}$,
$\operatorname{Rm}^{s}$ here denoting the complex linear extension of the curvature operator of the metric $g_{s}$. As $C_{s}$ is uniformly bounded
below in $s$ by a constant $A$ (which we may assume is $\leq1$), we may choose $\lambda>0$ sufficiently large so that $\lambda+A=1$. Moreover, as
$$\left|g_{s}\left(\nabla^{s}\left(\frac{X}{2}\right),\,i\partial\bar\partial\vartheta_{s}\right)\right|\leq C\|\nabla^{s}X\|_{C^{0}}(1+u)$$
for some generic constant $C>0$, we deduce that $u$ satisfies the following differential inequality:
\begin{equation*}
\left(\Delta_{s}-\frac{X}{2}\cdot\right) u_{s}\geq-C_{1}(1+u_{s})+C_{2}u_{s}^{\frac{n}{n-1}},
\end{equation*}
where $C_{1}$ and $C_{2}$ depend only on $n$, $A$, $\sup_{s\,\in\,[0,\,1]}\|\vartheta_{s}\|_{C^{0}}$, $\sup_{s\,\in\,[0,\,1]}\|X\cdot\vartheta_{s}\|_{C^{0}}$,
$\sup_{s\,\in\,[0,\,1]}\|G_{s}\|_{C^{2}}$, and $\sup_{s\,\in\,[0,\,1]}\|\nabla^{s}X\|_{C^{0}}$. The combination of Propositions \ref{lowerbound}, \ref{prop-bd-abo-uni-psi}, \ref{prop-bd-bel-uni-psi}, and \ref{prop-bd-uni-X-psi} shows that $C_{1}$ and $C_{2}$ depend only on $n$, $A$ and $\sup_{s\,\in\,[0,\,1]}\|G_{s}\|_{C^{2}}$.

Since $u_{s}$ is non-negative and converges to $n$ at infinity as $\vartheta_{s}\in \mathbb{R}\oplus C^{\infty}_{X,\,\beta}(M)$,
an application of the maximum principle to an exhausting sequence of domains of $M$ yields an upper bound for $n+\Delta_{s}\vartheta_{s}$
and consequently, the desired bound on $i\partial\bar{\partial}\vartheta_{s}$.
\end{proof}

A useful consequence of Proposition \ref{prop-C^2-est} is that the K\"ahler metrics
induced by $\sigma_{s}$ and $\omega_{s}$ are uniformly equivalent.
\begin{corollary}\label{coro-equiv-metrics-0}
Let $(\vartheta_s)_{0\,\leq\, s\,\leq\, 1}$ be a path of solutions in $\mathbb{R}\oplus C^{\infty}_{X,\,\beta}(M)$ to \eqref{starstar-s} and
for $s\in[0,\,1]$, let $g_{s},\,h_{s}$ denote the K\"ahler metrics induced by $\omega_{s},\,\sigma_{s}$ respectively. Then the tensors
$g_{s}^{-1}h_{s}$ and $h_{s}^{-1}g_{s}$ satisfy the following uniform estimate:
\begin{equation*}
\sup_{0\,\leq\, t\,\leq\, 1}\|g_{s}^{-1}h_{s}\|_{C^{0}}+\sup_{0\,\leq\, t\,\leq\, 1}\|h_{s}^{-1}g_{s}\|_{C^{0}}\leq C
\end{equation*}
for some positive constant $C=C\left(n,\omega,\sup_{s\in[0,1]}\|G_s\|_{C^2}\right)$.
In particular, the metrics $g$ and $(h_{s})_{0\,\leq\, s\,\leq\, 1}$ are uniformly equivalent.
\end{corollary}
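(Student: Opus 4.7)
The plan is to deduce the two-sided bound on $h_s$ relative to $g_s$ from Proposition \ref{prop-C^2-est} via a standard eigenvalue argument, and then to propagate it to a bound relative to $g$ using the fact that the family $(\omega_s)_{s\in[0,1]}$ is uniformly equivalent to $\omega$. The main input is that the Monge-Amp\`ere equation \eqref{starstar-s} forces the determinant of $h_s$ relative to $g_s$ to stay bounded away from $0$ and $\infty$.

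First I would diagonalise $h_s = g_s + i\partial\bar\partial\vartheta_s$ with respect to $g_s$ at an arbitrary point $x\in M$, obtaining eigenvalues $\lambda_1(x),\dots,\lambda_n(x)$. By Proposition \ref{prop-C^2-est}, $\tr_{g_s}h_s = n + \Delta_{g_s}\vartheta_s$ is bounded above by a constant $C_1 = C_1(n,\omega,\sup_s\|G_s\|_{C^2})$, hence $\lambda_i(x)\leq C_1$ for every $i$ and every $x$, which is already the upper bound $\|g_s^{-1}h_s\|_{C^0}\leq C_1$. For the lower bound, I would use the equation \eqref{starstar-s}, which reads
\begin{equation*}
\prod_{i=1}^n \lambda_i(x) = \frac{\sigma_s^n}{\omega_s^n}(x) = \exp\!\left(G_s + \tfrac{X}{2}\cdot\vartheta_s\right)(x).
\end{equation*}
The right-hand side is uniformly bounded below by a positive constant $c_0$ thanks to the a priori estimates of Propositions \ref{lowerbound}, \ref{prop-bd-abo-uni-psi}, \ref{prop-bd-bel-uni-psi}, \ref{prop-bd-uni-X-psi} (lower bound on $X\cdot\vartheta_s$ and upper bound on $\vartheta_s$, together with the boundedness of $G_s$). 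Combining these two facts gives, for every fixed $i$,
\begin{equation*}
\lambda_i(x) \;=\; \frac{\prod_j \lambda_j(x)}{\prod_{k\neq i}\lambda_k(x)} \;\geq\; \frac{c_0}{C_1^{\,n-1}},
\end{equation*}
which yields the lower eigenvalue bound, i.e.\ $\|h_s^{-1}g_s\|_{C^0}\leq C_2$.

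Finally, to pass from equivalence of $h_s$ and $g_s$ to equivalence of $h_s$ and $g$, I would invoke the construction of $\omega_s$: by design $\omega_s = \omega + i\partial\bar\partial\Phi_s$ where $\Phi_s = \chi\cdot\eta_s$ with $\eta_s = -2c_s\log r$, $s\mapsto c_s$ continuous on $[0,1]$, and $\chi$ independent of $s$. Hence $i\partial\bar\partial\Phi_s$ is uniformly bounded in $s$ relative to $g$, and on $M\setminus W$ the metrics $\omega_s$ and $\omega$ actually coincide. This gives a uniform two-sided bound of $g_s$ by $g$, which combined with the previous step yields the desired uniform equivalence of $(h_s)_{s\in[0,1]}$ and $g$. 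I do not expect any serious obstacle here: the only non-cosmetic step is the eigenvalue argument, and even that reduces to an elementary manipulation once the determinant of $h_s/g_s$ has been pinned down via the already established a priori $C^0$-bounds on $\vartheta_s$ and on $X\cdot\vartheta_s$.
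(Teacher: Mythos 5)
Your proof is correct and is exactly the standard argument that the paper delegates to \cite[Corollary 7.15]{conlon33}: trace bound from Proposition \ref{prop-C^2-est} for the upper eigenvalue estimate, determinant bound from the Monge--Amp\`ere equation together with the lower bound on $X\cdot\vartheta_s$ (Proposition \ref{lowerbound}) and the uniform bound on $G_s$ for the lower eigenvalue estimate, and finally the uniform equivalence of $\omega_s$ with $\omega$ to transfer the estimate to $g$. This matches the paper's proof in both substance and ingredients.
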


\begin{proof}
The estimate follows as in \cite[Corollary 7.15]{conlon33} using {Propositions \ref{lowerbound}, \ref{prop-bd-uni-X-psi}, and \ref{prop-C^2-est}}. The fact that $\omega$ and $\sigma_{s}$ differ by
a $(1,\,1)$-form whose norm is controlled uniformly in $s$ yields the last claim of the corollary.
\end{proof}

\subsubsection{$C^3$ a priori estimate}
We now present the $C^{3}$-estimate.
\begin{prop}[A priori $C^3$-estimate]\label{prop-C^3-est}
Let $(\vartheta_s)_{0\,\leq\, s\,\leq\, 1}$ be a path of solutions in $\mathbb{R}\oplus C^{\infty}_{X,\,\beta}(M)$ to \eqref{starstar-s} and let
$g_{s}$ be the K\"ahler metric induced by $\omega_{s}$ with Levi-Civita connection $\nabla^{g_{s}}$. Then
\begin{equation*}
\sup_{0\,\leq\, s\,\leq\, 1}\|\nabla^{g_{s}}\partial\bar{\partial}\vartheta_s\|_{C^0}\leq C\left(n,\omega,\sup_{s\in[0,1]}\| G_s\|_{C^3}\right).
\end{equation*}
In particular,
\begin{equation}\label{a-priori-nabla-rad-der}
\sup_{0\,\leq\, s\,\leq\, 1}\|\nabla^{g_{s}}\left(X\cdot\vartheta_s\right)\|_{C^0}\leq C\left(n,\omega,\sup_{s\in[0,1]}\| G_s\|_{C^3}\right).
\end{equation}
\end{prop}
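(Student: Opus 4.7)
The approach I would take follows Calabi and Yau's classical third-order estimate (as in \cite{Calabiconj}, \cite[pp.~52--55]{Tian-Can-Met-Boo}) adapted to the drift Laplacian $\Delta_{h_s,X} := \Delta_{h_s} - \tfrac{X}{2}\cdot$. First, introduce Calabi's third-order tensor $T_s := \nabla^{h_s} - \nabla^{g_s}$, the difference of Chern connections (equivalently $T_s = h_s^{-1}\nabla^{g_s}\partial\bar{\partial}\vartheta_s$), and set $S_s := |T_s|^2_{h_s}$. By Corollary \ref{coro-equiv-metrics-0}, $S_s$ is uniformly equivalent in $s$ to $|\nabla^{g_s}\partial\bar{\partial}\vartheta_s|^2_{g_s}$, so it suffices to bound $S_s$ uniformly.

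Second, differentiate \eqref{starstar-s} twice and apply Yau's computation to obtain the Calabi-type inequality
\begin{equation*}
\Delta_{h_s}S_s \geq c_0 |\nabla^{h_s}T_s|^2_{h_s} - C(1 + S_s),
\end{equation*}
where $C$ depends only on $n$, the curvature of $g_s$ (uniform in $s$ since $\omega_s = \omega$ outside a fixed compact set), the $C^3$-norm of $G_s$, and Corollary \ref{coro-equiv-metrics-0}. The drift contribution $-\tfrac{X}{2}\cdot S_s$ expands through the product rule into terms schematically of the form $h_s * \nabla^{g_s}X * T_s * T_s$; because $|\nabla^{g_s}X|_{g_s}$ is uniformly bounded (as $X$ agrees with the Killing field $r\partial_r$ at infinity), these can be absorbed into $C(1 + S_s)$, yielding $\Delta_{h_s,X}S_s \geq c_0|\nabla^{h_s}T_s|^2_{h_s} - C(1+S_s)$. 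Then combine with inequality \eqref{C2-est-yau} from the proof of Proposition \ref{prop-C^2-est}, which after reorganisation gives $\Delta_{h_s,X}\operatorname{tr}_{\omega_s}\sigma_s \geq c_1 S_s - C_1$, and define
\begin{equation*}
U_s := S_s + A\operatorname{tr}_{\omega_s}\sigma_s
\end{equation*}
for $A > 0$ large enough that $\Delta_{h_s,X}U_s \geq S_s - C_2$.

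Third, apply the maximum principle. Since $\vartheta_s \in \mathbb{R} \oplus C^\infty_{X,\beta}(M)$ and $\omega_s = \omega$ outside a compact set, both $S_s$ and $\operatorname{tr}_{\omega_s}\sigma_s - n$ tend to zero at infinity, so $U_s - An$ attains its supremum at some interior point $p_s \in M$. At $p_s$ we have $\nabla U_s(p_s) = 0$, hence $X \cdot U_s(p_s) = 0$ and $\Delta_{h_s,X}U_s(p_s) = \Delta_{h_s}U_s(p_s) \leq 0$, which forces $S_s(p_s) \leq C_2$ and thus $S_s \leq C_2 + An$ everywhere. This is the desired bound on $\nabla^{g_s}\partial\bar{\partial}\vartheta_s$. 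The consequence \eqref{a-priori-nabla-rad-der} then follows by writing $\nabla^{g_s}(X\cdot \vartheta_s)$ in terms of $\partial\bar{\partial}\vartheta_s$ contracted with $X$ and $\nabla^{g_s}X$ on one hand, and $\nabla^{g_s}\partial\bar{\partial}\vartheta_s$ contracted with $X$ on the other, and invoking Propositions \ref{prop-C^2-est} and \ref{prop-bd-uni-X-psi} together with the uniform $C^1$ control of $X$ inherited from the geometry of $g$.

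The main obstacle I expect to encounter is in the second step: the drift term $-\tfrac{X}{2}\cdot S_s$ produces derivatives of $T_s$ paired with the unbounded vector field $X$, and a naive pointwise estimate would yield an uncontrollable $|X||\nabla T_s||T_s|$ contribution. The crucial observation that circumvents this is that this problematic term is a directional derivative which \emph{vanishes at the interior maximum} of $U_s$; consequently one need not absorb it pointwise across $M$ but only apply the maximum principle cleanly where $\nabla U_s = 0$. This is precisely what makes the argument work in the unbounded drift setting, in contrast to the compact Calabi-Yau case.
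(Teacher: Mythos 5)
Your overall architecture — Calabi's third-order tensor $T_s$, a Yau-type Laplacian inequality for $S_s = |T_s|^2_{h_s}$, the trace $\operatorname{tr}_{\omega_s}\sigma_s$ as a barrier function, and a maximum principle — matches the paper's. However, there is a genuine gap in your treatment of the drift term $-\tfrac{X}{2}\cdot S_s$, which is exactly the point where the argument in the unbounded-drift setting differs nontrivially from the compact one.

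First, the claim in your second step that $-\tfrac{X}{2}\cdot S_s$ ``expands through the product rule into terms schematically of the form $h_s * \nabla^{g_s}X * T_s * T_s$'' is not correct. Since $\nabla^{h_s} h_s = 0$, one has $\tfrac{X}{2}\cdot |T_s|^2_{h_s} = 2\operatorname{Re}\langle \nabla^{h_s}_{X/2}T_s,\,T_s\rangle_{h_s}$, and $\nabla^{h_s}_X T_s$ is a genuine fourth-order quantity in $\vartheta_s$ paired with the unbounded field $X$; no amount of rewriting expresses it purely in terms of $\nabla^{g_s}X$ and $T_s$. Second, the proposed fix — that ``this problematic term is a directional derivative which vanishes at the interior maximum of $U_s$'' — does not go through. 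At the interior maximum $p_s$ of $U_s = S_s + A\operatorname{tr}_{\omega_s}\sigma_s$ what vanishes is $X\cdot U_s(p_s)$, \emph{not} $X\cdot S_s(p_s)$: one only gets $\tfrac{X}{2}\cdot S_s(p_s) = -\tfrac{A}{2}X\cdot\operatorname{tr}_{\omega_s}\sigma_s(p_s)$, a quantity of order $A|X(p_s)|\sqrt{S_s(p_s)}$. Since $p_s$ is unconstrained and $|X|$ is unbounded, this term cannot be absorbed into $C(1+S_s(p_s))$, nor dominated by the fixed multiple $Ac_1 S_s(p_s)$ coming from \eqref{C2-est-yau}. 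Attempting to absorb it into the good term $c_0|\nabla^{h_s}T_s|^2$ via Young's inequality produces a coefficient $|X|^2 S_s$ with $|X|^2\to\infty$, which again does not close.

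What is missing is a mechanism that eliminates the bad fourth-order contribution \emph{before} applying the maximum principle. The paper achieves this by pulling back along the flow $\varphi^X_t$ of $X/2$ and observing that $h_s(t):=(\varphi^X_t)^*h_s$ solves a perturbed K\"ahler--Ricci flow whose velocity is $-\operatorname{Ric}(h_s)+(\varphi^X_t)^*\Lambda$ with $\Lambda$ having bounded $C^1$-norm. Because $\partial_u S|_{u=0} = \tfrac{X}{2}\cdot S$, the drift term is re-expressed as a time derivative. Then the evolution of $\Psi$ contains $-\nabla^{h_s}\operatorname{Ric}(h_s)$, and by the second Bianchi identity $\Delta_{\sigma_s,1/2}\Psi$ contains exactly the same $-\nabla^{h_s}\operatorname{Ric}(h_s)$; these cancel in $\partial_u\Psi - \Delta_{\sigma_s,1/2}\Psi$, leaving a remainder $T$ controlled by $\operatorname{Rm}(g_s)$, $\nabla^{g_s}\operatorname{Rm}(g_s)$, $\Lambda$, and $\Psi$ only — all uniformly bounded after the $C^2$-estimate of Corollary \ref{coro-equiv-metrics-0}. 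This cancellation is precisely what lets them write $\Delta_{\sigma_s}S - \tfrac{X}{2}\cdot S \geq -C(1+S)$ pointwise on all of $M$, at which point your barrier and maximum-principle steps are sound. Without this cancellation, no pointwise choice of location rescues the argument.
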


\begin{proof}
We follow closely the proof given in \cite[Proposition 6.9]{con-der} which itself is based on \cite{Pho-Ses-Stu}.

Set
$$S(h_{s},\,g_{s}):=\arrowvert\nabla^{g_{s}}h_{s}\arrowvert^2_{h_{s}}.$$
Then from the definition of $S$, we see that
\begin{equation*}
\begin{split}
S(h_{s},\,g_{s})=&h_{s}^{i\bar{\jmath}}h_{s}^{k\bar{l}}h_{s}^{p\bar{q}}\nabla^{g_{s}}_i(h_{s})_{kp}\overline{\nabla^{g_{s}}_{j}(h_{s})_{lq}}\\
=&|\Psi|_{h_{s}}^2,
\end{split}
\end{equation*}
where
\begin{equation*}
\begin{split}
\Psi_{ij}^k(h_{s},\,g_{s})&:=\Gamma(h_{s})_{ij}^k-\Gamma(g_{s})_{ij}^k\\
&=h_{s}^{k\bar{l}}\nabla^{g_{s}}_i(h_{s})_{j\bar{l}}.
\end{split}
\end{equation*}
Now, since $\vartheta_{s}$ solves \eqref{starstar-s}, $(M,\,h_{s},\,X)$
is an ``approximate'' steady gradient K\"ahler-Ricci soliton in the following precise sense:
if $h_{s}(t):=(\varphi^{X}_{t})^*h_{s}$ and $g_{s}(t):=(\varphi^{X}_{t})^*g_{s}$,
where $(\varphi^{X}_{t})_{t\,\in\,\R}$ is the
one-parameter family of diffeomorphisms generated by $\frac{X}{2}$,
then $(h_{s}(t))_{t\,\in\,\R}$
is a solution of the following perturbed K\"ahler-Ricci flow with initial condition $h_{s}$:
\begin{equation*}
\begin{split}
\partial_{t}h_{s}(t)&=-\Ric(h_{s}(t))+(\varphi^{X}_{t})^*\left(\mathcal{L}_{\frac{X}{2}}g_{s}+\Ric(g_{s})+\nabla^{g_{s}}\bar{\nabla}^{g_{s}}G_{s}\right),\qquad t\in\R,\\
h_{s}(0)&=h_{s}.
 \end{split}
\end{equation*}
In particular, $\partial_{t}h_{s}=-\Ric(h_{s})
+(\varphi^{X}_{t})^*\Lambda$, where $\Lambda:=
\mathcal{L}_{\frac{X}{2}}g_{s}+\Ric(g_{s})+\nabla^{g_{s}}\bar{\nabla}^{g_{s}}G_{s}$ has uniformly controlled $C^1$-norm
as $g_{s}$ is isometric to $g$ and $G_{s}$ is equal to zero, all outside a compact set independent of $s$.

Define $S(t):=S(h_{s}(s),\,g_{s}(t))$ and correspondingly set $\Psi(t):=\Psi(h_{s}(t),\,g_{s}(t))$. We adapt \cite[Proposition 3.2.8]{Bou-Eys-Gue} to our setting. By a
brute force computation, we have that
\begin{equation*}
\begin{split}
\Delta_{\sigma_{s}}S&=2\Re\left(h_{s}^{i\bar{\jmath}}h_{s}^{p\bar{q}}(h_{s})_{k\bar{l}}\left(\Delta_{\sigma_{s},\,1/2}\Psi_{ip}^k\right)
\overline{\Psi_{jq}^l}\right)+|\nabla^{h_{s}} \Psi|^2_{h_{s}}+|\overline{\nabla}^{h_{s}}\Psi|_{h_{s}}^2\\
&\qquad+\Ric(h_{s})^{i\bar{\jmath}}h_{s}^{p\bar{q}}(h_{s})_{k\bar{l}}\Psi_{ip}^k\overline{\Psi_{jq}^l}
+h_{s}^{i\bar{\jmath}}\Ric(h_{s})^{p\bar{q}}(h_{s})_{k\bar{l}}\Psi_{ip}^k\overline{\Psi_{jq}^l}-h_{s}^{i\bar{\jmath}}h_{s}^{p\bar{q}}\Ric(h_{s})_{k\bar{l}}\Psi_{ip}^k\overline{\Psi_{jq}^l},
\end{split}
\end{equation*}
where
\begin{equation*}
\begin{split}
&\Delta_{\sigma_{s},\,1/2}:=h_{s}^{i\bar{\jmath}}\nabla^{h_{s}}_i\nabla^{h_{s}}_{\bar{\jmath}},\label{def-lap-half}\\
&T^{i\bar{\jmath}}:=h_{s}^{i\bar{k}}h_{s}^{l\bar{\jmath}}T_{k\bar{l}},
\end{split}
\end{equation*}
for $T_{k\bar{l}}\in\Lambda^{1,\,0}M\otimes\Lambda^{0,\,1}M$. We also have that
\begin{equation*}
\begin{split}
\partial_{u}\Psi(u)_{ip}^k|_{u\,=\,0}&=\partial_{u}|_{u\,=\,0}(\Gamma(h_{s}(u))-\Gamma(g_{s}(u)))_{ip}^k\\
&=\nabla^{h_{s}}_i(-\Ric(h_{s})_p^k+\Lambda_p^k)-\nabla^{g_{s}}_i(\mathcal{L}_{\frac{X}{2}}(g_{s})_{p}^{k}),\\
\partial_{u}h_{s}^{i\bar{\jmath}}|_{u\,=\,0}&=\Ric(h_{s})^{i\bar{\jmath}}-\Lambda^{i\bar{\jmath}}.
\end{split}
\end{equation*}
Finally, using the second Bianchi identity, we compute that
\begin{equation*}
\Delta_{\sigma_{s},\,1/2}\Psi_{ip}^k=h_{s}^{a\bar{b}}\nabla_a^{h_{s}}\Rm(g_{s})_{i\bar{b}p}^k-\nabla^{h_{s}}_i\Ric(h_{s})_p^k,
\end{equation*}
which in turn implies that the following evolution equation is satisfied by $\Psi$:
\begin{equation*}
{\partial_{u}\Psi_{ip}^k(u)|_{u\,=\,0}}=\Delta_{\sigma_{s},\,1/2}\Psi_{ip}^k+T_{ip}^k,
\end{equation*}
for a tensor $T$ of the form
\begin{equation*}
\begin{split}
T&=h_{s}^{-1}\ast\nabla^{h_{s}}\Rm(g_{s})+\nabla^{h_{s}}\Lambda-\nabla^{g_{s}}(\mathcal{L}_{\frac{X}{2}}g_{s})\\
&=h_{s}^{-1}\ast\nabla^{g_{s}}\Rm(g_{s})+h_{s}^{-1}\ast h_{s}^{-1}\ast\Rm(g_{s})\ast\Psi+h_{s}^{-1}\ast\Psi\ast \Lambda+\nabla^{g_{s}}(\Lambda-\mathcal{L}_{\frac{X}{2}}g_{s}).
\end{split}
\end{equation*}
Notice the simplification here regarding the ``bad'' term $-\nabla^{h_{s}}\Ric(h_{s})$.
Since this flow is evolving only by diffeomorphism, we know that
\begin{equation*}
\begin{split}
S(t)&=(\varphi^{X}_{t})^*S(h_{s},\,g_{s}),\\
\partial_{u}S|_{u\,=\,0}&=\frac{X}{2}\cdot S(h_{s},\,g_{s}).
\end{split}
\end{equation*}
Hence Young's inequality, together with
the boundedness of $\|h_{s}^{-1}g_{s}\|_{C^0}$ and $\|h_{s}g_{s}^{-1}\|_{C^0}$ ensured by Corollary \ref{coro-equiv-metrics-0}
and the boundedness of the covariant derivatives of the tensors $\Rm(g_{s})$ and $\Lambda$, imply that
\begin{equation*}
\Delta_{\sigma_{s}}S-\frac{X}{2}\cdot S\geq -C(S+1)
\end{equation*}
for some positive uniform constant $C$.

We use as a barrier function the trace $\tr_{\omega_{s}}\sigma_{s}$ which, by \eqref{C2-est-yau} and the uniform equivalence of the metrics $g_{s}$ and $h_{s}$ provided by Corollary \ref{coro-equiv-metrics-0}, satisfies
\begin{equation*}
\Delta_{\sigma_{s}}\tr_{\omega_{s}}\sigma_{s}-\frac{X}{2}\cdot \tr_{\omega_{s}}\sigma_{s}\geq C^{-1}S-C,
\end{equation*}
where $C$ is a uniform positive constant that may vary from line to line. By applying the maximum principle to $\varepsilon S+\tr_{\omega_{s}}\sigma_{s}$ for some sufficiently small $\varepsilon>0$, one arrives at the desired a priori estimate.

The proof of \eqref{a-priori-nabla-rad-der} is a consequence of the previously proved a priori bound on $\nabla^{g_s}\partial\overline{\partial}\vartheta_s$, once we differentiate \eqref{starstar-s}.
\end{proof}

We next establish H\"older regularity of $g_{s}^{-1}h_{s}$ and $h^{-1}_{s}g_{s}$,
an improvement on Corollary \ref{coro-equiv-metrics-0}.

\begin{corollary}\label{coro-equiv-metrics}
Let $(\vartheta_{s})_{0\,\leq\, s\,\leq\, 1}$ be a path of solutions in $\mathbb{R}\oplus C^{\infty}_{X,\,\beta}(M)$ to \eqref{starstar-s}
and for $s\in[0,\,1]$, let $h_{s}$ be the K\"ahler metric induced by $\sigma_{s}$.
Then for any $\alpha\in\left(0,\,\frac{1}{2}\right)$, the tensors $g_{s}^{-1}h_{s}$ and $h_{s}^{-1}g_{s}$ satisfy the following uniform estimate:
 \begin{equation*}
\sup_{0\,\leq\, s\,\leq\, 1}\left(\|g_{s}^{-1}h_{s}\|_{C_{\operatorname{\operatorname{\operatorname{loc}}}}^{0,\,2\alpha}}+\|h_{s}^{-1}g_{s}
\|_{C_{\operatorname{\operatorname{\operatorname{loc}}}}^{0,\,2\alpha}}\right)\leq C\left(n,\alpha,\omega,\sup_{s\in[0,1]}\| G_s\|_{C^{3}}\right).
\end{equation*}
\end{corollary}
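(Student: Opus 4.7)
The plan is to obtain this H\"older regularity as essentially a direct consequence of the $C^3$-estimate already established in Proposition \ref{prop-C^3-est}, together with Corollary \ref{coro-equiv-metrics-0}.

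First I would observe that, as symmetric tensors, $h_s = g_s + (i\partial\bar\partial \vartheta_s)^{\sharp}$, so that
\begin{equation*}
g_s^{-1} h_s = \operatorname{Id} + g_s^{-1} \, i\partial\bar\partial\vartheta_s.
\end{equation*}
The background metrics $g_s = g + i\partial\bar\partial \Phi_s$ depend smoothly on $s \in [0,1]$ and agree with $g$ outside a fixed compact set, so in particular $g_s$ and $g_s^{-1}$ are uniformly bounded in $C^{k}_{\operatorname{loc}}$ for every $k \geq 0$ independently of $s$. Thus it suffices to establish a uniform $C^{0,2\alpha}_{\operatorname{loc}}$-bound on $i\partial\bar\partial\vartheta_s$.

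From Proposition \ref{prop-C^3-est}, we have a uniform $C^0$-bound on $\nabla^{g_s}(i\partial\bar\partial \vartheta_s)$. Combined with Corollary \ref{coro-equiv-metrics-0} and the uniform equivalence of $g_s$ with the fixed background metric $g$, this upgrades to a uniform $C^1_{\operatorname{loc}}$-bound on $i\partial\bar\partial\vartheta_s$ measured with respect to $g$. On any fixed compact set, a tensor with bounded $C^1$-norm is Lipschitz and hence lies in $C^{0,\gamma}$ for every $\gamma \in (0,1]$, with H\"older seminorm controlled by the $C^1$-norm. Taking $\gamma = 2\alpha < 1$, this yields a uniform local $C^{0,2\alpha}$-bound on $i\partial\bar\partial\vartheta_s$, and hence on $g_s^{-1}h_s - \operatorname{Id}$, since post-composition with the smooth uniformly bounded tensor $g_s^{-1}$ preserves H\"older regularity with constants depending only on the local $C^{0,2\alpha}$-norm of $g_s^{-1}$, itself uniformly controlled in $s$.

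For the inverse $h_s^{-1} g_s = (g_s^{-1} h_s)^{-1}$, I would invoke Corollary \ref{coro-equiv-metrics-0}, which guarantees that the eigenvalues of $g_s^{-1} h_s$ are uniformly bounded away from $0$ and $\infty$ in $s$. Matrix inversion is a smooth operation on the open set of such uniformly elliptic endomorphisms, so it preserves $C^{0,2\alpha}_{\operatorname{loc}}$-regularity with quantitative control: if $A_s := g_s^{-1}h_s$ satisfies $c^{-1} \leq A_s \leq c$ and $\|A_s\|_{C^{0,2\alpha}_{\operatorname{loc}}} \leq C$, then $\|A_s^{-1}\|_{C^{0,2\alpha}_{\operatorname{loc}}} \leq C'(c,C)$. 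This is essentially the only minor technical point in the argument, and no genuine obstacle arises; the whole corollary should be no more than a few lines once the inputs from Propositions \ref{prop-C^2-est} and \ref{prop-C^3-est} and Corollary \ref{coro-equiv-metrics-0} are assembled.
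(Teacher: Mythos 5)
Your proof is correct and follows essentially the same route as the paper: both deduce a uniform local $C^{0,2\alpha}$-bound on $i\partial\bar\partial\vartheta_s$ (hence on $g_s^{-1}h_s$) from the $C^2$- and $C^3$-estimates of Propositions~\ref{prop-C^2-est} and~\ref{prop-C^3-est}, and both then transfer this to $h_s^{-1}g_s$ using the uniform two-sided bounds of Corollary~\ref{coro-equiv-metrics-0}. The only cosmetic difference is in the last step: the paper invokes the scalar estimate $[u^{-1}]_{2\alpha}\leq[u]_{2\alpha}(\inf u)^{-2}$ for a positive function bounded below, whereas you phrase it directly as H\"older stability of matrix inversion on the uniformly elliptic cone; these are interchangeable, and your formulation is arguably the cleaner of the two.
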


 \begin{proof}
By standard local interpolation inequalities applied to Propositions \ref{prop-C^2-est} and \ref{prop-C^3-est}, we see that
\begin{equation*}
\|g_{s}^{-1}h_{s}\|_{C_{\operatorname{\operatorname{\operatorname{loc}}}}^{0,\,2\alpha}}\leq C\left(n,\alpha,\omega,\sup_{s\in[0,1]}\| G_s\|_{C^{3}}\right).
\end{equation*}
Combining the previous estimate with Corollary \ref{coro-equiv-metrics-0}, it suffices to prove a uniform bound on the local $2\alpha$-H\"older norm of $h_{s}^{-1}g_{s}$.
We conclude with the following observation: if $u$ is a positive function on $M$ in $C_{\operatorname{\operatorname{\operatorname{loc}}}}^{0,2\alpha}(M)$ uniformly bounded from below by a positive constant, then $[u^{-1}]_{2\alpha}\leq [u]_{2\alpha}(\inf_Mu)^{-2}$. By invoking Corollary \ref{coro-equiv-metrics-0} once more, this last remark applied to $h_{s}^{-1}g_{s}$ implies that
\begin{equation*}
\|h_{s}^{-1}g_{s}\|_{C_{\operatorname{\operatorname{\operatorname{loc}}}}^{0,\,2\alpha}}\leq C\left(n,\alpha,\omega,\sup_{s\in[0,1]}\| G_s\|_{C^{3}}\right)
\end{equation*}
as well.
\end{proof}

\subsubsection{Local bootstrapping}
We now improve the local regularity of our continuity path of solutions to \eqref{starstar-s}. This estimate will be used
in deriving the subsequent weighted a priori estimates.

\begin{prop}\label{prop-loc-holder-C-3}
Let $(\vartheta_s)_{0\,\leq\, s\,\leq\, 1}$ be a path of solutions in $\mathbb{R}\oplus C^{\infty}_{X,\,\beta}(M)$
to \eqref{starstar-s}. Then for any $\alpha\in\left(0,\,\frac{1}{2}\right)$ and for any compact subset $K\subset M$,
\begin{equation*}
\sup_{0\,\leq\, s\,\leq\, 1}\|\vartheta_{s}\|_{C^{3,\,2\alpha}(K)}\leq C\left(n,\alpha,\omega, \sup_{s\in[0,1]}\| G_s\|_{C^{3}},K\right).
\end{equation*}
\end{prop}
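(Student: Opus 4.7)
The plan is to differentiate \eqref{starstar-s} once and interpret the result as a linear elliptic equation for a first derivative of $\vartheta_s$ with H\"older continuous coefficients, to which interior Schauder estimates can be applied. This is a short bootstrap from the $C^{2,\,2\alpha}_{\operatorname{loc}}$-regularity already in hand toward the desired $C^{3,\,2\alpha}_{\operatorname{loc}}$-regularity.

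First I would upgrade the $C^3$-bound of Proposition \ref{prop-C^3-est} to a uniform $C^{2,\,2\alpha}_{\operatorname{loc}}$-bound on $\vartheta_s$ for every $\alpha\in\left(0,\,\frac{1}{2}\right)$ by interpolating between the $C^2$-bound of Proposition \ref{prop-C^2-est} and the $C^3$-bound of Proposition \ref{prop-C^3-est}. Combined with Corollary \ref{coro-equiv-metrics}, this produces uniform $C^{0,\,2\alpha}$-bounds on the coefficient matrix $h_s^{i\bar\jmath}$ in any fixed coordinate chart covering a slightly enlarged compact set $K'\Supset K$, together with uniform ellipticity from Corollary \ref{coro-equiv-metrics-0}.

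Next, fix a coordinate chart on a neighbourhood of $K'$ and differentiate \eqref{starstar-s} along a coordinate direction $\partial_{x_k}$. Using $\partial_{x_k}\log\det(h_s)=h_s^{i\bar\jmath}\partial_{x_k}(h_s)_{i\bar\jmath}$, a direct computation shows that $v:=\partial_{x_k}\vartheta_s$ satisfies a linear equation of the schematic form
\begin{equation*}
h_s^{i\bar\jmath}\partial_i\bar\partial_j v-\frac{X}{2}\cdot v=R_{k,s},
\end{equation*}
where $R_{k,s}$ gathers $\partial_{x_k}G_s$, derivatives of the background metric $\omega_s$, the first-order term $-\tfrac{1}{2}(\partial_{x_k}X)\cdot\vartheta_s$, and contractions of $h_s^{-1}$ with derivatives of $\omega_s$ coming from the $\log\det$ term. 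Thanks to the uniform $C^{2,\,2\alpha}(K')$-bound on $\vartheta_s$, the smoothness of $\omega$ and $X$, and the assumed uniform $C^3$-bound on $G_s$, every term in $R_{k,s}$ is controlled uniformly in $s$ in $C^{0,\,2\alpha}(K')$.

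Finally, interior Schauder estimates for the linear elliptic operator $h_s^{i\bar\jmath}\partial_i\bar\partial_j-\tfrac{X}{2}\cdot$ with $C^{0,\,2\alpha}$-coefficients (uniformly elliptic by Corollary \ref{coro-equiv-metrics-0}, with $C^{0,\,2\alpha}$-coefficients by Corollary \ref{coro-equiv-metrics}) yield
\begin{equation*}
\|v\|_{C^{2,\,2\alpha}(K)}\leq C\bigl(\|v\|_{C^0(K')}+\|R_{k,s}\|_{C^{0,\,2\alpha}(K')}\bigr)
\end{equation*}
uniformly in $s\in[0,\,1]$; varying the index $k$ over all real coordinate directions produces the desired uniform $C^{3,\,2\alpha}(K)$-bound. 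The only delicate point is the bookkeeping showing that, after differentiation of the Monge-Amp\`ere equation, every term on the right-hand side of the linearised equation is genuinely controlled in $C^{0,\,2\alpha}_{\operatorname{loc}}$ by bounds already established in Sections \ref{sec-upp-bd-rad-der} and \ref{sec-high-der} -- no higher-order derivative of $\vartheta_s$ appears unmatched by the leading operator. Once this algebraic verification is done, the bootstrap closes and no genuinely new estimate is required.
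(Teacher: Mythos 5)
Your proof is correct, but it takes a different route from the paper's. The paper's argument for Proposition \ref{prop-loc-holder-C-3} does not differentiate \eqref{starstar-s} directly; instead it revisits Yau's third-order computation from the $C^2$-estimate to derive the identity \eqref{weloveele}, i.e.\ an equation of the schematic form $\Delta_{\sigma_s}\bigl(\Delta_{\omega_s}\vartheta_s-\tfrac{X}{2}\cdot\vartheta_s\bigr)=\text{RHS}$, where the right-hand side is controlled merely in $C^0$ by Propositions \ref{prop-C^2-est} and \ref{prop-C^3-est} and \eqref{easy-obs-diff-lap}. It then applies the Morrey--Schauder $C^{1,2\alpha}$-estimate (which needs only $L^\infty$ data but $C^{0,2\alpha}$-coefficients, the latter from Corollary \ref{coro-equiv-metrics}) to conclude that $\Delta_{\omega_s}\vartheta_s-\tfrac{X}{2}\cdot\vartheta_s\in C^{1,2\alpha}_{\operatorname{loc}}$ uniformly, and finally treats this as the source term for $\Delta_{\omega_s,X}$, whose coefficients are smooth, to extract the $C^{3,2\alpha}$-bound via a second Schauder step. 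Your approach differentiates the Monge--Amp\`ere equation once in a coordinate direction and applies a single Schauder estimate to the resulting linear equation for $\partial_k\vartheta_s$, with $C^{0,2\alpha}$-coefficients $h_s^{i\bar\jmath}$ and $C^{0,2\alpha}$-source obtained via the interpolated $C^{2,2\alpha}$-bound on $\vartheta_s$ and Corollary \ref{coro-equiv-metrics}. This is structurally simpler, requires only one Schauder pass, and is in fact exactly the mechanism the paper uses later in Proposition \ref{prop-loc-reg} for the full bootstrap. The paper's version has the minor advantage that the right-hand side of \eqref{weloveele} need only be $C^0$, so one does not have to verify the more delicate $C^{0,2\alpha}$-bound on the differentiated data; in return it must invoke the Morrey--Schauder theory and run two elliptic steps instead of one. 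Both arguments rest on precisely the same preliminaries (Propositions \ref{prop-C^2-est}, \ref{prop-C^3-est} and Corollaries \ref{coro-equiv-metrics-0}, \ref{coro-equiv-metrics}) and both require the same implicit uniform $C^{1}$-bound on $\vartheta_s$ on compact sets (coming from the $C^0$-bound on $\vartheta_s$ and the $C^0$-bound on $i\partial\bar\partial\vartheta_s$ via Calder\'on--Zygmund theory for $\Delta_{\omega_s}$), so neither gains in hypotheses; your version simply merges the two Schauder steps into one.
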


\begin{proof}
From the standard computations involved in the proof of the a priori $C^2$-estimate, we derive that
\begin{equation}\label{weloveele}
\begin{split}
\Delta_{\sigma_{s}}\left(\Delta_{\omega_{s}}\vartheta_{s}-\frac{X}{2}\cdot\vartheta_{s}\right) =& \Delta_{\sigma_{s}}G_{s}+h_{s}^{-1}\ast g_{s}^{-1}\ast\Rm(g_{s})+\Rm(g_{s})\ast\nabla^{h_{s}}\bar{\nabla}^{h_{s}}\vartheta_{s}\ast h_{s}^{-1}\\
&\quad+g_{s}^{-1}\ast g_{s}^{-1}\ast\Rm(g_{s})+g_{s}^{-1}\ast h_{s}^{-1}\ast h_{s}^{-1}\ast \bar{\nabla}^{h_{s}}\nabla^{h_{s}}\bar{\nabla}^{h_{s}}\vartheta_{s}\ast\nabla^{h_{s}}\bar{\nabla}^{h_{s}}\nabla^{h_{s}} \vartheta_{s}\\
&\quad-\left(\Delta_{\sigma_{s}}-\Delta_{\omega_{s}}\right)\left(\frac{X\cdot \vartheta_{s}}{2}\right),
\end{split}
\end{equation}
where $\ast$ denotes the ordinary contraction of two tensors. Now, since $X$ is real holomorphic and $\vartheta_s$ being $JX$-invariant,
we see that
\begin{equation}\label{lie-der-cov-der-X}
i\partial\overline{\partial}(X\cdot\vartheta_s)=\mathcal{L}_X(i\partial\overline{\partial}\vartheta_s)=\nabla^{g_s}_X(i\partial\overline{\partial}\vartheta_s)+i\partial\overline{\partial}\vartheta_s\ast\nabla^{g_s}X.
\end{equation}
Therefore, thanks to \eqref{lie-der-cov-der-X}, we have the following pointwise estimate:
\begin{equation}
\begin{split}\label{easy-obs-diff-lap}
\left|\left(\Delta_{\sigma_{s}}-\Delta_{\omega_{s}}\right)(X\cdot\vartheta_{s})\right|&
=\left|h_{s}^{-1}\ast i\partial\bar{\partial}\vartheta_{s}\ast i\partial\bar{\partial}(X\cdot\vartheta_{s})\right|_{g_{s}}\\
&\leq |h_{s}^{-1}g_{s}|_{g_{s}}\cdot|i\partial\bar{\partial}\vartheta_{s}|_{g_{s}}\cdot \left(
|i\partial\bar{\partial}\vartheta_{s}|_{g_{s}}|\nabla^{g_{s}}X|_{g_{s}}+|\nabla^{g_s}i\partial\bar{\partial}\vartheta_{s}|_{g_{s}}|X|_{g_{s}}\right).
\end{split}
\end{equation}

By Propositions \ref{prop-C^2-est} and \ref{prop-C^3-est} together with \eqref{easy-obs-diff-lap},
the $C^0$-norm of the right-hand side of \eqref{weloveele} is uniformly bounded on compact subsets and, thanks to Corollary \ref{coro-equiv-metrics}, so too are the coefficients of $\Delta_{\sigma_{s}}$ in the $C^{0,\,2\alpha}_{\operatorname{\operatorname{\operatorname{loc}}}}$-sense. As a result, by applying the Morrey-Schauder $C^{1,\,2\alpha}$-estimates, we see that for any $x\in M$ and for $\delta<\inj_{g_{s}}(M)$,
\begin{equation*}
\left\|\Delta_{\omega_{s}}\vartheta_{s}-\frac{X}{2}\cdot\vartheta_{s}\right\|_{C^{1,\,2\alpha}(B_{g_{s}}(x,\,\delta))}\leq C(x,\,\delta,\,\alpha).
\end{equation*}
Finally, applying standard interior Schauder estimates for elliptic equations once again with respect to $\Delta_{\omega_{s},\,X}$, we deduce that
\begin{equation*}
\begin{split}
\|\vartheta_{s}\|_{C^{3,\,2\alpha}(B_{g_{s}}(x,\,\frac{\delta}{2}))}&\leq C(x,\,\delta,\,\alpha)\left(\left\|\Delta_{\omega_{s}}\vartheta_{s}-\frac{X}{2}\cdot\vartheta_{s}\right\|_{C^{1,\,2\alpha}(B_{g_{s}}(x,\,\delta))}
+\|\vartheta_{s}\|_{C^{1,\,2\alpha}(B_{g_{s}}(x,\,\delta))}\right)\\
&\leq C(x,\,\delta,\,\alpha).
\end{split}
\end{equation*}
\end{proof}

We next establish the following well-known local regularity result for solutions to \eqref{starstar-s}.
\begin{prop}\label{prop-loc-reg}
Let $G_{s}\in C^{k,\,\alpha}_{\operatorname{\operatorname{\operatorname{loc}}}}(M)$ for some $k\geq1$ and $\alpha\in(0,\,1)$ and let $\vartheta_{s}\in C^{3,\,\alpha}_{\operatorname{\operatorname{\operatorname{loc}}}}(M)$ be a solution to \eqref{starstar-s} with data $G_{s}$. Then $\vartheta_{s}\in C^{k+2,\alpha}_{\operatorname{\operatorname{\operatorname{loc}}}}(M)$. Moreover, for all $k\geq 1$, $\alpha\in(0,\,1)$, and compact subset $K\subset M$,
\begin{equation*}
\begin{split}
\|\vartheta_{s}\|_{C^{k+2,\alpha}(K)}\leq C\left(n,\alpha,\omega, \sup_{s\in[0,1]}\| G_s\|_{C^{\max\{k,3\},\alpha}},K\right).
\end{split}
\end{equation*}
\end{prop}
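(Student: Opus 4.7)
The plan is to proceed by induction on $k$, with the base case $k = 1$ handled directly by Proposition \ref{prop-loc-holder-C-3}, which in fact yields the stronger regularity $\vartheta_s \in C^{3, 2\alpha}_{\operatorname{loc}}$. For the inductive step, suppose the statement has been established for $k - 1 \geq 1$; applying it to the weaker assumption $G_s \in C^{k-1, \alpha}_{\operatorname{loc}}$ gives $\vartheta_s \in C^{k+1, \alpha}_{\operatorname{loc}}$, which will serve as input for the next improvement.

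The key move is to differentiate equation \eqref{starstar-s} along an arbitrary local real coordinate direction $\partial_p$. Using the identity $\partial_p \log\det(A) = \operatorname{tr}(A^{-1} \partial_p A)$ applied to $\log((\omega_s + i\partial\bar\partial\vartheta_s)^n/\omega_s^n)$, we obtain for $v := \partial_p \vartheta_s$ a linear elliptic equation of the schematic form
\begin{equation*}
L_s v := h_s^{j\bar k}\,\partial_j \partial_{\bar k} v - \tfrac{X}{2} \cdot v = F,
\end{equation*}
where $F$ involves $\partial_p G_s$ together with lower-order terms built from $\partial_p (g_s)_{j\bar k}$, $h_s^{-1}$, $g_s^{-1}$, and the smooth coordinate components of $X$ (the latter entering through the commutator $[\partial_p, \tfrac{X}{2}]\vartheta_s$). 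By the inductive hypothesis $\vartheta_s \in C^{k+1, \alpha}_{\operatorname{loc}}$, so $h_s \in C^{k-1, \alpha}_{\operatorname{loc}}$ and, thanks to the uniform lower bound on the eigenvalues of $h_s^{-1} g_s$ supplied by Corollary \ref{coro-equiv-metrics-0}, also $h_s^{-1} \in C^{k-1, \alpha}_{\operatorname{loc}}$. Consequently both the coefficients of $L_s$ and the right-hand side $F$ lie in $C^{k-1, \alpha}_{\operatorname{loc}}$.

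Interior Schauder estimates for linear elliptic operators with $C^{k-1, \alpha}$-coefficients then give $v \in C^{k+1, \alpha}_{\operatorname{loc}}$. Since $p$ is an arbitrary coordinate direction, we conclude $\vartheta_s \in C^{k+2, \alpha}_{\operatorname{loc}}$, completing the inductive step. The quantitative estimate on a fixed compact set $K \subset M$ follows by tracking Schauder constants on a slightly larger compact $K' \Supset K$; they depend only on the uniform ellipticity constant (Corollary \ref{coro-equiv-metrics-0}), on $\|h_s^{-1}\|_{C^{k-1, \alpha}(K')}$, which is controlled by $\|\vartheta_s\|_{C^{k+1, \alpha}(K')}$, and on $\|F\|_{C^{k-1, \alpha}(K')}$, which in turn is controlled by $\|G_s\|_{C^{k, \alpha}}$ together with the inductive bound on $\|\vartheta_s\|_{C^{k+1, \alpha}(K')}$. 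The argument is essentially routine bookkeeping once the linearised equation is written out; the only mildly delicate point is to verify that the drift term $-\tfrac{X}{2}\cdot \vartheta_s$ produces no loss of regularity, which is immediate since $X$ is real holomorphic and hence has smooth coordinate coefficients, so that no genuine obstacle arises in the induction.
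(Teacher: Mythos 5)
Your proof is correct and follows essentially the same path as the paper's: induct on $k$, differentiate \eqref{starstar-s} along a coordinate direction to produce a linear elliptic equation for $\partial_p\vartheta_s$, and bootstrap via interior Schauder estimates, with Proposition \ref{prop-loc-holder-C-3} supplying the base case. The only cosmetic differences are that you absorb the drift $-\tfrac{X}{2}\cdot$ into the operator while the paper keeps $\partial_j(\tfrac{X}{2}\cdot\vartheta_s)$ on the right-hand side as part of the source, and you shift the induction index by one; both give identical conclusions.
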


\begin{proof}
We prove this proposition by induction on $k\geq 1$. The case $k=1$ is true by Proposition \ref{prop-loc-holder-C-3}, so let $G_{s}\in C^{k+1,\,\alpha}_{\operatorname{\operatorname{\operatorname{loc}}}}(M)$ and let $\vartheta_{s}\in C^{3,\,\alpha}_{\operatorname{\operatorname{\operatorname{loc}}}}(M)$ be a solution of \eqref{starstar-s}. Then by induction, $\vartheta_{s}\in C^{k+2,\alpha}_{\operatorname{\operatorname{\operatorname{loc}}}}(M)$.
Let $x\in M$ and choose local holomorphic coordinates defined on $B_{g_{s}}(x,\,\delta)$ for some $0<\delta<\inj_{g_{s}}(M)$. Then since $\vartheta_{s}$ satisfies
\begin{equation*}
G_{s}=\log\left(\frac{\sigma_{s}^n}{\omega_{s}^n}\right)-\frac{X}{2}\cdot\vartheta_{s},
\end{equation*}
we know that for $j=1,...,2n$, the derivative $\partial_j\vartheta_{s}$ satisfies
\begin{eqnarray*}
\Delta_{\sigma_{s}}\left(\partial_j\vartheta_{s}\right)=\partial_j\left(G_{s}+\frac{X}{2}\cdot\vartheta_{s}\right)\in C^{k,\,\alpha}_{\operatorname{\operatorname{\operatorname{loc}}}}(M).
\end{eqnarray*}
As the coefficients of $\Delta_{\sigma_{s}}$ are in $C^{k,\,\alpha}_{\operatorname{\operatorname{\operatorname{loc}}}}(M)$,
an application of the standard interior Schauder estimates for elliptic equations now gives us the
desired local regularity result, namely $\partial_j\vartheta_{s}\in C^{k+2,\alpha}_{\operatorname{\operatorname{\operatorname{loc}}}}(M)$ for all $j=1,...,2n$, or equivalently, $\vartheta_{s}\in C^{k+3,\alpha}_{\operatorname{\operatorname{\operatorname{loc}}}}(M)$ together with the expected estimate.
\end{proof}

\subsection{Weighted a priori estimates}\label{sec-wei-bd}

Our first proposition establishes an a priori decay estimate on the gradient of the $X$-derivative of solutions to \eqref{starstar-s}. Its proof uses the Bochner formula in an essential way.

\begin{prop}\label{decay-first-der-rad-der}
Let $(\vartheta_s)_{0\,\leq\, s\,\leq\, 1}$ be a path of solutions in $\mathbb{R}\oplus C^{\infty}_{X,\,\beta}(M)$ to \eqref{starstar-s}.
Then there exist positive constants $C$, $R_0$, and $\varepsilon>0$ such that for all $s\in[0,\,1]$,
\begin{equation*}
\left|\nabla^{g}\left(X\cdot \vartheta_s\right)\right|_{g}\leq \frac{C}{f^{\varepsilon}},\qquad f\geq R_0.
\end{equation*}
\end{prop}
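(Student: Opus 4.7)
The plan is to combine the Bochner formula for the drift Laplacian $\Delta_{\sigma_s,X}$, applied to the function $w_s:=X\cdot\vartheta_s$, with a maximum principle argument using the polynomial barrier $f^{-\varepsilon}$ from Lemma \ref{lemma-sub-sol-barrier}. The starting point is equation \eqref{lovely-eqn-der-rad}, which exhibits $w_s$ as a solution of a drift equation whose right-hand side, by \eqref{lovely-eqn-der-rad-bis}, is a first-order nonlinear operator in $w_s$ carrying a $1/r^2$ factor and with coefficients controlled by the a priori $C^2$- and $C^3$-bounds of Propositions \ref{prop-C^2-est} and \ref{prop-C^3-est}.

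The first step is to exploit the near-soliton structure of $\sigma_s$ at infinity. By \eqref{wtf} together with $\omega_s=\omega$ and $G_s=0$ outside $W$, the metric $h_s$ satisfies $\mathrm{Ric}(h_s)+\tfrac{1}{2}\mathcal{L}_X h_s=g$ outside a compact set. Converting this into the Bakry--\'Emery Ricci tensor attached to the drift $X/2$ and using the uniform equivalence of $h_s$ and $g$ from Corollary \ref{coro-equiv-metrics-0}, one obtains a uniform positive lower bound of the form
\begin{equation*}
\mathrm{Ric}(h_s)+\tfrac{1}{2}\mathcal{L}_{X/2}h_s\;\geq\; c\,h_s
\end{equation*}
outside a compact set of $M$, with $c>0$ independent of $s\in[0,1]$. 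The Bochner formula applied to $w_s$ thus yields
\begin{equation*}
\tfrac{1}{2}\Delta_{\sigma_s,X}|\nabla^{h_s}w_s|_{h_s}^2 \;\geq\; c\,|\nabla^{h_s}w_s|_{h_s}^2 + \langle\nabla^{h_s}\Delta_{\sigma_s,X}w_s,\nabla^{h_s}w_s\rangle_{h_s}
\end{equation*}
outside a compact set.

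The second step is to absorb the cross term by exploiting the $1/r^2$ factor in the structural decomposition \eqref{lovely-eqn-der-rad-bis} of $\Delta_{\sigma_s,X}w_s$, together with Claim \ref{commute} which identifies $X\cdot w_s=2i\partial\bar{\partial}\vartheta_s(X,JX)$. Together with the a priori $C^3$-bound, this produces a pointwise estimate on $|\nabla(\Delta_{\sigma_s,X}w_s)|_{h_s}$ with polynomial decay in $f$, and Young's inequality then gives
\begin{equation*}
\Delta_{\sigma_s,X}|\nabla^{h_s}w_s|_{h_s}^2 \;\geq\; c\,|\nabla^{h_s}w_s|_{h_s}^2 - C f^{-\gamma}
\end{equation*}
outside a compact set of $M$, for some $\gamma>0$ uniform in $s$. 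Setting $\phi_s:=|\nabla^{h_s}w_s|_{h_s}^2 - A f^{-\varepsilon}$ for $\varepsilon\in(0,\min\{c/2,\gamma\})$ and Lemma \ref{lemma-sub-sol-barrier} applied to the barrier $f^{-\varepsilon}$ then yield
\begin{equation*}
\Delta_{\sigma_s,X}\phi_s \;\geq\; c\,\phi_s + (c-2\varepsilon)A f^{-\varepsilon} - Cf^{-\gamma} + O(Af^{-\varepsilon-1})
\end{equation*}
outside a compact set, which becomes strictly positive whenever $\phi_s\geq 0$ provided $A$ is large enough uniformly in $s$. Since $\vartheta_s\in\mathbb{R}\oplus C^{\infty}_{X,\beta}(M)$, the quantity $\phi_s$ tends to $-Af^{-\varepsilon}<0$ at infinity for each $s$, so the supremum of $\phi_s$ is attained; a maximum principle argument at any putative positive interior maximum then yields a contradiction. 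Fixing $A$ large enough to also control $\phi_s$ on the boundary of the compact set via \eqref{a-priori-nabla-rad-der}, one concludes $|\nabla^{h_s}w_s|_{h_s}^2\leq A f^{-\varepsilon}$ outside a compact set uniformly in $s$, and the uniform equivalence of $g$ and $h_s$ converts this into the desired estimate $|\nabla^g w_s|_g\leq C f^{-\varepsilon/2}$.

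Main obstacle: the crux is extracting genuine decay (rather than mere boundedness) in the cross term $\langle\nabla(\Delta_{\sigma_s,X}w_s),\nabla w_s\rangle$ of the Bochner identity. A naive use of the $C^3$-bound of Proposition \ref{prop-C^3-est} would give only $|\nabla\Delta_{\sigma_s,X}w_s|_{h_s}=O(1)$, which is insufficient since Young's inequality would then leave a bounded (non-decaying) error. The $f^{-1}$ decay of this term must instead be read off from the structural form \eqref{lovely-eqn-der-rad-bis} of the right-hand side of \eqref{lovely-eqn-der-rad}, using Claim \ref{commute} to rewrite $X\cdot w_s$ as a Hessian pairing and the a priori control on $i\partial\bar{\partial}\vartheta_s$ from Proposition \ref{prop-C^2-est}. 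A secondary obstacle is the uniformity in $s\in[0,1]$ of all constants, which ultimately relies on the uniform nature of the a priori $C^0$--$C^3$ bounds established throughout Section \ref{sec-a-priori-est}.
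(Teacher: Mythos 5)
Your overall strategy — applying the weighted Bochner formula to $w_s:=X\cdot\vartheta_s$, invoking the soliton identity satisfied by $\sigma_s$ outside $W$ to produce a positive curvature contribution, absorbing the cross-term using the structural $1/r^2$ decay in \eqref{lovely-eqn-der-rad-bis}, and closing with a polynomial barrier and the maximum principle — is exactly the paper's. However, the key curvature step contains a normalization error that, taken literally, does not hold. Because the complex drift Laplacian $\Delta_{\sigma_s,X}=\Delta_{\sigma_s}-\tfrac{X}{2}\cdot$ equals $\tfrac{1}{2}(\Delta_{h_s}-X\cdot)$ once one passes to the Riemannian Laplacian, the relevant drift in the Riemannian Bochner formula is the \emph{full} vector field $X$, not $X/2$. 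The Bakry--\'Emery tensor that appears is therefore
\[
\mathrm{Ric}(h_s)+\mathrm{Hess}_{h_s}(f_{\sigma_s})=\mathrm{Ric}(h_s)+\tfrac{1}{2}\mathcal{L}_X h_s,
\]
which by \eqref{wtf} (with $\omega_s=\omega$, $G_s=0$ outside $W$) equals $g$ exactly, and is $\geq c\,h_s$ by Corollary \ref{coro-equiv-metrics-0}. Your version $\mathrm{Ric}(h_s)+\tfrac{1}{2}\mathcal{L}_{X/2}h_s$ unwinds to $g-\tfrac{1}{2}\mathrm{Hess}_{h_s}(f_{\sigma_s})=\tfrac{1}{2}(g+\mathrm{Ric}(h_s))$, and at this stage of the argument there is no a priori lower bound on $\mathrm{Ric}(h_s)$ — that would require fourth-order control on $\vartheta_s$, which is only obtained later. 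So the factor of two in the drift is not cosmetic; with that drift the curvature term is not available.

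Two further, smaller, issues. Lemma \ref{lemma-sub-sol-barrier} is stated for metrics $\tilde g$ satisfying the decay hypothesis \eqref{hyp-basic-ass}, which $h_s$ is not yet known to satisfy (decay of $i\partial\bar\partial\vartheta_s$ is precisely what comes out of this proposition and its sequels), so it cannot be cited for $\Delta_{h_s,X}f^{-\varepsilon}$ directly; the paper sidesteps this by using the barrier $f_{\sigma_s}^{-\beta}$, whose drift Laplacian it computes from the already-established normalization $\Delta_{\sigma_s}f_{\sigma_s}-\tfrac{X}{2}\cdot f_{\sigma_s}=-f+P_s$ of Lemma \ref{lemma-tr-star-star}. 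Your choice $f^{-\varepsilon}$ can be made to work, but the inequality $\Delta_{h_s,X}f^{-\varepsilon}\le(2\varepsilon+o(1))f^{-\varepsilon}$ has to be rederived using only Propositions \ref{prop-C^2-est}--\ref{prop-C^3-est}. Finally, a structural remark: the paper's Claim \ref{claim-bd-boc-rad-der} bounds the cross-term by $\tfrac{C}{r}(|\mathrm{Hess}_{h_s}u|_{h_s}+|\nabla^{h_s}u|_{h_s})|\nabla^{h_s}u|_{h_s}$, a quantity still quadratic in the unknown, so that after Young's inequality one obtains the \emph{homogeneous} inequality $\Delta_{h_s,X}|\nabla^{h_s}u|_{h_s}^2\ge\tfrac{1}{2}C^{-1}|\nabla^{h_s}u|_{h_s}^2$ on $\{r\ge R\}$; your version, which bounds $|\nabla\Delta_{\sigma_s,X}w_s|$ by a decaying function independent of $w_s$, produces instead an inhomogeneous inequality with a residual $Cf^{-\gamma}$. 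Both feed into a maximum principle, but the paper's route avoids having to track this extra source term against the barrier.
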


\begin{proof}
Let $u:=X\cdot\vartheta_s$, write $\Delta_{h_{s},\,X}:=\Delta_{h_{s}}-X\cdot $ where $\Delta_{h_{s}}$ denotes the Riemannian Laplacian with respect to $h_{s}$, and recall from \eqref{lovely-eqn-der-rad} the differential equation satisfied by $u$ outside a sufficiently large compact set $W$ of $M$:
\begin{equation}\label{lovely}
\frac{1}{2}\Delta_{h_{s},\,X}u=2e^{-\frac{X\cdot\vartheta_s}{2}}\frac{(\omega_{D}+i\partial\bar{\partial}\vartheta_{s})^{n}}{\omega^{n}}.
\end{equation}
Applying the Bochner formula for the drift Laplacian to the function $u$, we obtain
\begin{equation*}
\begin{split}
\frac{1}{2}\Delta_{h_{s},\,X}|\nabla^{h_{s}}u|_{h_{s}}^{2}&=|\operatorname{Hess}_{h_{s}}(u)|_{h_{s}}^{2}+\operatorname{Ric}(h_{s})(\nabla^{h_{s}}u,\,\nabla^{h_{s}}u)+\operatorname{Hess}_{h_{s}}(f_{\sigma_{s}})(\nabla^{h_{s}}u,\,\nabla^{h_{s}}u)\\
&\qquad+\langle\nabla^{h_{s}}\Delta_{h_{s},\,X}u,\,\nabla^{h_{s}}u\rangle_{h_{s}}\\
&=|\operatorname{Hess}_{h_{s}}(u)|_{h_{s}}^{2}+\operatorname{Ric}(g_{s})(\nabla^{h_{s}}u,\,\nabla^{h_{s}}u)+\operatorname{Hess}_{g_{s}}(f_{\omega_{s}})(\nabla^{h_{s}}u,\,\nabla^{h_{s}}u)\\
&\qquad-i\partial\bar{\partial}G_{s}(\nabla^{h_{s}}u,\,\nabla^{h_{s}}u)+4\left\langle\nabla^{h_{s}}\left(e^{-\frac{X\cdot\vartheta_s}{2}}\frac{(\omega_{D}+i\partial\bar{\partial}\vartheta_{s})^{n}}{\omega^{n}}\right),\,\nabla^{h_{s}}u\right\rangle_{h_{s}},\\
\end{split}
\end{equation*}
where we have used \eqref{wtf} and \eqref{lovely} in the second equality. As $G_{s}$ is supported in $W$ and $g_{s}$ is isometric to $g$ on $M\setminus W$, on this latter set this equation reads as
\begin{equation*}
\begin{split}
\frac{1}{2}\Delta_{h_{s},\,X}|\nabla^{h_{s}}u|_{h_{s}}^{2}&=|\operatorname{Hess}_{h_{s}}(u)|_{h_{s}}^{2}+\operatorname{Ric}(g)(\nabla^{h_{s}}u,\,\nabla^{h_{s}}u)+\operatorname{Hess}_{g}(f)(\nabla^{h_{s}}u,\,\nabla^{h_{s}}u)\\
&\qquad+4\left\langle\nabla^{h_{s}}\left(e^{-\frac{X\cdot\vartheta_s}{2}}\frac{(\omega_{D}+i\partial\bar{\partial}\vartheta_{s})^{n}}{\omega^{n}}\right),\,\nabla^{h_{s}}u\right\rangle_{h_{s}}
\end{split}
\end{equation*}
which, using the properties of $g$, then becomes
\begin{equation}\label{bochnerr}
\Delta_{h_{s},\,X}|\nabla^{h_{s}}u|_{h_{s}}^{2}=2|\operatorname{Hess}_{h_{s}}(u)|_{h_{s}}^{2}+2|\nabla^{h_{s}}u|^{2}_{g}+8\left\langle\nabla^{h_{s}}\left(e^{-\frac{X\cdot\vartheta_s}{2}}\frac{(\omega_{D}+i\partial\bar{\partial}\vartheta_{s})^{n}}{\omega^{n}}\right),\,\nabla^{h_{s}}u\right\rangle_{h_{s}}
\end{equation}
on $M\setminus W$. Henceforth working on $M\setminus W$, we analyse the last term of this equation in the following claim.

\begin{claim}\label{claim-bd-boc-rad-der}
On $M\setminus W$, we have that
\begin{equation*}
\left|\left\langle\nabla^{h_{s}}\left(e^{-\frac{X\cdot\vartheta_s}{2}}\frac{(\omega_{D}+i\partial\bar{\partial}\vartheta_{s})^{n}}{\omega^{n}}\right),\,\nabla^{h_{s}}u\right\rangle_{h_{s}}\right|\leq \frac{C}{r}\left(|\operatorname{Hess}_{h_{s}}(u)|_{h_{s}}+|\nabla^{h_{s}}u|_{h_{s}}\right)|\nabla^{h_{s}}u|_{h_{s}}.
\end{equation*}
\end{claim}

\begin{proof}[Proof of Claim \ref{claim-bd-boc-rad-der}]
By the pointwise Cauchy-Schwarz inequality together with the a priori $C^2$ estimate from Proposition \ref{prop-C^2-est}, it suffices to prove that on $M\setminus W$,
\begin{equation*}
\left|\nabla^{g}\left(e^{-\frac{X\cdot\vartheta_s}{2}}\frac{(\omega_{D}+i\partial\bar{\partial}\vartheta_{s})^{n}}{\omega^{n}}\right)\right|_{g}\leq \frac{C}{r}\left(|\operatorname{Hess}_{h_{s}}(u)|_{h_{s}}+|\nabla^{h_{s}}u|_{h_{s}}\right).
\end{equation*}
Now, thanks to \eqref{lovely-eqn-der-rad-bis}, the a priori bounds on $X\cdot \vartheta_s$ (Propositions \ref{lowerbound} and \ref{prop-bd-uni-X-psi}) and its gradient  (Proposition \ref{prop-C^3-est}), one gets schematically:
\begin{equation*}
\begin{split}
\left|\nabla^{g}\left(e^{-\frac{X\cdot\vartheta_s}{2}}\frac{(\omega_{D}+i\partial\bar{\partial}\vartheta_{s})^{n}}{\omega^{n}}\right)\right|_{g}&\leq C\left(\frac{1}{r}|\nabla^gu|_g+\frac{1}{r^2}|\nabla^{g}u|^2_g+\frac{1}{r}|\operatorname{Hess}_{g}(u)|_{g}\right)\\
&\leq \frac{C}{r}\left(|\nabla^gu|_g+|\operatorname{Hess}_{g}(u)|_{g}\right),
\end{split}
\end{equation*}
where we have used implicitly the a priori $C^3$ bound (Proposition \ref{prop-C^3-est}). In order to conclude, it suffices to observe that
\begin{equation*}
\begin{split}
\left|\operatorname{Hess}_{h_s}(u)-\operatorname{Hess}_{g}(u)\right|_g&\leq C|\nabla^g\partial\overline{\partial}\vartheta_s|_g|\nabla^gu|_g\\
&\leq C|\nabla^gu|_g,
\end{split}
\end{equation*}
where $C$ is a positive constant independent of $s\in[0,1]$ that may vary from line to line. Here we have used Proposition \ref{prop-C^3-est} again in the last line.
\end{proof}

Combining \eqref{bochnerr} with Claim \ref{claim-bd-boc-rad-der} and
using Proposition \ref{coro-equiv-metrics-0} to deal with the term $|\nabla^{h_s}u|^{2}_{g}$ of \eqref{bochnerr}, all in all
we end up with the following differential inequality satisfied by $|\nabla^{h_{s}}u|^2_{h_s}$:
\begin{equation*}
\begin{split}
\Delta_{h_{s},\,X}|\nabla^{h_{s}}u|_{h_{s}}^{2}&\geq2|\operatorname{Hess}_{h_{s}}(u)|_{h_{s}}^{2}+C^{-1}|\nabla^{h_{s}}u|^{2}_{h_{s}}-\frac{C}{r}\left(|\operatorname{Hess}_{h_{s}}(u)|_{h_{s}}+|\nabla^{h_{s}}u|_{h_{s}}\right)|\nabla^{h_{s}}u|_{h_{s}}.
\end{split}
\end{equation*}
Next, upon applying Young's inequality, we derive that on the set $\{r>R\}$ for some $R>0$ with $W\subset\{r\leq R\}$ chosen sufficiently large,
\begin{equation}\label{inequ-nabla-u-ultimate}
\begin{split}
\Delta_{h_{s},\,X}|\nabla^{h_{s}}u|_{h_{s}}^{2}&\geq \frac{1}{2}C^{-1}|\nabla^{h_{s}}u|^{2}_{h_{s}}.\\
\end{split}
\end{equation}
Now, Lemma \ref{lemma-tr-star-star} ensures that $f_{\sigma_s}^{-\beta}$ for $\beta>0$ satisfies outside a sufficiently large uniform compact set of $M$ the differential inequality
\begin{equation*}
\begin{split}
\Delta_{h_s,X}f_{\sigma_s}^{-\beta}&=-\beta f_{\sigma_s}^{-\beta-1}\left(\Delta_{h_s,X}f_{\sigma_s}-(\beta+1)|X|^2_{h_s}f_{\sigma_s}^{-1}\right)\\
&=\beta\left(2f_{\sigma_s}-X\cdot\vartheta_{s}+(\beta+1)|X|^2_{h_s}f_{\sigma_s}^{-1}\right)f_{\sigma_s}^{-\beta-1}\\
&\leq 2\beta\left(1+Cf_{\sigma_s}^{-1}\right)f_{\sigma_s}^{-\beta}\leq 3\beta f_{\sigma_s}^{-\beta}
\end{split}
\end{equation*}
for some uniform positive constant $C$. Here we have used Proposition \ref{lowerbound} in the last line to bound $-X\cdot\vartheta_s$ uniformly from above. We have also used \eqref{a-priori-nabla-rad-der} from Proposition \ref{prop-C^3-est} to bound $|X|^2_{h_s}$ from above, since $2|X|^2_{h_s}=2X\cdot f_{\sigma_{s}}=2X\cdot f+X\cdot X\cdot \vartheta_s=r^2+O(r)$ where $O(\cdot)$ is uniform in $s\in[0,1]$. Recalling \eqref{inequ-nabla-u-ultimate}, one can then use $f_{\sigma_s}^{-\beta}$ for some $\beta>0$ to be specified as a barrier function. Indeed, if $A>0$, then outside a sufficiently large compact subset of $M$ we have that
\begin{equation}
\begin{split}\label{inequ-nabla-u-ultimate-bis}
\Delta_{h_s,X}\left(|\nabla^{h_s}u|^2_{h_s}-Af_{\sigma_s}^{-\beta}\right)&\geq \frac{1}{2}C^{-1}\left(|\nabla^{h_s}u|^2_{h_s}-Af_{\sigma_s}^{-\beta}\right)
\end{split}
\end{equation}
whenever $6\beta\leq C^{-1}.$ The maximum principle applied to \eqref{inequ-nabla-u-ultimate-bis} now yields the desired estimate.
\end{proof}

This leads to the following weighted estimate.

\begin{corollary}\label{coro-dec-met}
Let $(\vartheta_s)_{0\,\leq\, s\,\leq\, 1}$ be a path of solutions in $\mathbb{R}\oplus C^{\infty}_{X,\,\beta}(M)$ to \eqref{starstar-s}
and let $C$, $R_0$, and $\varepsilon>0$ be as in Proposition \ref{decay-first-der-rad-der}. Then
for all $s\in[0,\,1]$, there exists $\vartheta_{s}^{\infty}\in\R$ such that
\begin{equation*}
|\vartheta_s-\vartheta_s^{\infty}|+|X\cdot\vartheta_s|+\left|\nabla^{g}\vartheta_s\right|_{g}\leq \frac{C}{f^{\frac{\varepsilon}{2}}},\qquad f\geq R_0.
\end{equation*}
\end{corollary}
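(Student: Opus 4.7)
The corollary upgrades the gradient estimate of Proposition \ref{decay-first-der-rad-der} into a pointwise decay estimate for $u := X \cdot \vartheta_s$, and then deduces the analogous estimates for $\vartheta_s - \vartheta_s^\infty$ and $\nabla^g \vartheta_s$. The plan is in three steps.

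\emph{Step 1 --- Pointwise decay of $u = X\cdot \vartheta_s$.} From Proposition \ref{decay-first-der-rad-der}, $|\nabla^g u|_g \le Cf^{-\varepsilon}$, and the hypothesis $\vartheta_s \in \mathbb{R}\oplus C^\infty_{X,\beta}(M)$ guarantees $u \to 0$ at infinity. The crux is to combine these with the nonlinear equation for $u$, namely \eqref{lovely-eqn-der-rad} and the schematic form \eqref{lovely-eqn-der-rad-bis} of its right-hand side $G$. Using the a priori bounds of Propositions \ref{lowerbound}, \ref{prop-bd-uni-X-psi}, \ref{prop-C^2-est} and \ref{prop-C^3-est}, one sees that $|G| = O(f^{-1/2-\varepsilon})$, and the Bochner identity
\[
\Delta_{h_s,X}(u^2) = 2|\nabla^{h_s} u|^2_{h_s} + 4uG
\]
gives $|\Delta_{h_s,X}(u^2)| = O(f^{-2\varepsilon})$ (the first term dominating for $\varepsilon < 1/2$, which one may always assume). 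Comparison with the barrier $Af^{-\varepsilon}$ from Lemma \ref{lemma-sub-sol-barrier} (adapted to $h_s$ via Corollary \ref{coro-equiv-metrics-0}), whose drift Laplacian satisfies $\Delta_{h_s,X}(Af^{-\varepsilon}) \sim 2A\varepsilon f^{-\varepsilon}$ and so dominates $f^{-2\varepsilon}$ at infinity, together with a maximum principle argument on $\{f \ge R_0\}$ --- with $A$ chosen large so that $u^2 \le AR_0^{-\varepsilon}$ on $\{f = R_0\}$, and combined with $u^2 \to 0$ at infinity --- yields $u^2 \le Af^{-\varepsilon}$, hence $|u| \le \sqrt{A}\, f^{-\varepsilon/2}$.

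\emph{Step 2 --- Existence and decay of $\vartheta_s - \vartheta_s^\infty$.} Integrate the bound of Step 1 along the complete flow $\varphi^X_t$ of $X$. Since $X \cdot f = |X|^2_g = 2f+2$, the flow satisfies $f \circ \varphi^X_t(x) \ge \tfrac{1}{2} f(x) e^{2t}$ for $t \ge 0$, so
\[
\bigl|\vartheta_s(\varphi^X_T(x)) - \vartheta_s(x)\bigr| \le \int_0^T C \bigl(\tfrac{1}{2} f(x) e^{2t}\bigr)^{-\varepsilon/2}\,dt \le \frac{C}{\varepsilon}\, f(x)^{-\varepsilon/2}
\]
uniformly in $T$. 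The limit $\vartheta_s^\infty := \lim_{T \to \infty} \vartheta_s(\varphi^X_T(x))$ therefore exists; as $\vartheta_s \in \mathbb{R} \oplus C^\infty_{X,\beta}(M)$, it coincides with the constant component of $\vartheta_s$ and so is independent of $x$. Letting $T \to \infty$ in the inequality above yields $|\vartheta_s - \vartheta_s^\infty| \le (C/\varepsilon)\, f^{-\varepsilon/2}$.

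\emph{Step 3 --- Decay of $|\nabla^g \vartheta_s|_g$.} This follows by interpolation. Proposition \ref{prop-loc-reg} provides uniform $C^k$-bounds on $\vartheta_s$ on unit-radius $g$-balls for every $k \ge 2$. Combined with the $C^0$-rate from Step 2 and the standard Gagliardo-Nirenberg inequality, one obtains $|\nabla^g \vartheta_s|_g \le C f^{-\varepsilon(1-1/k)/2}$ on each such ball. Taking $k$ sufficiently large and, if necessary, slightly shrinking the value of the generic positive constant $\varepsilon$ from Proposition \ref{decay-first-der-rad-der}, gives the desired rate $f^{-\varepsilon/2}$.

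\emph{Main obstacle.} The crux is Step 1. The cross term $4uG$ in the Bochner identity may be of unfavourable sign, and one must verify that it does not spoil the comparison with the barrier $Af^{-\varepsilon}$. The key point is that $|uG| = O(f^{-1/2-\varepsilon})$ decays strictly faster than $|\nabla^{h_s} u|^2 = O(f^{-2\varepsilon})$ as soon as $\varepsilon < 1/2$; since the exponent $\varepsilon$ from Proposition \ref{decay-first-der-rad-der} is determined only up to shrinking, this condition is harmless.
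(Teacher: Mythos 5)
There is a genuine gap in Step 1, which is the heart of your argument. Your Bochner identity $\Delta_{h_s,X}(u^2)=2|\nabla^{h_s}u|^2_{h_s}+4uG$ and the size estimate $|4uG|=O(f^{-1/2-\varepsilon})$ are both fine, but the maximum-principle comparison runs the wrong way. To conclude $u^2\leq Af^{-\varepsilon}$ on $\{f\geq R_0\}$ you need $\Delta_{h_s,X}(u^2-Af^{-\varepsilon})\geq 0$; yet Lemma \ref{lemma-sub-sol-barrier} gives $\Delta_{h_s,X}(f^{-\varepsilon})=2\varepsilon f^{-\varepsilon}+O(f^{-\varepsilon-1})>0$ (the sign is positive because the outward drift dominates), and precisely because this term is dominant, $\Delta_{h_s,X}(u^2-Af^{-\varepsilon})$ is, for $A$ large, strongly \emph{negative} --- the opposite of what is required. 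Saying the barrier's drift Laplacian ``dominates'' the Bochner term is true but works against you: in a shrinking soliton any decaying radial power $f^{-\delta}$ is a strict \emph{subsolution} of $\Delta_{h_s,X}$ at infinity, so there is no decaying supersolution barrier to trap $u^2$ from above by comparison.

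This is exactly the difficulty the paper circumvents in a different way. In Proposition \ref{decay-first-der-rad-der}, the Bochner identity applied to $|\nabla^{h_s}u|^2_{h_s}$ (not $u^2$) produces a strictly positive linear term $\tfrac{1}{2}C^{-1}|\nabla^{h_s}u|^2_{h_s}$ --- coming from $\operatorname{Ric}(g)+\operatorname{Hess}_g(f)=g$ outside $W$ --- and it is this self-reinforcing term that lets the barrier $Af_{\sigma_s}^{-\beta}$ with $6\beta\leq C^{-1}$ be absorbed; the Bochner formula for $u^2$ has no such term. Accordingly, the paper's proof of the corollary takes a different route: it writes $g(\nabla^g(X\cdot\vartheta_s),\nabla^g\vartheta_s)=|\nabla^C\vartheta_s|^2_{g_C}+\tfrac{X}{2}\cdot|\nabla^g\vartheta_s|^2_g$ (using $X=\nabla^g f$ and $\tfrac{1}{2}\mathcal{L}_Xg=g_C$ on $M\setminus W$) to derive a first-order ODE in $r$ for $|\nabla^g\vartheta_s|^2_g$ that integrates to $|\nabla^g\vartheta_s|_g\leq Cr^{-\min\{\varepsilon,1\}}$; then controls the oscillation of $\vartheta_s$ and $X\cdot\vartheta_s$ over $D$ by the mean-value theorem; and finally bounds the $D$-average via the linearisation \eqref{lin-CMA-infinity} integrated over $D$, yielding a second ODE $0\leq\Delta_{C,X}\overline{\vartheta_s}(r)\leq Cr^{-4\varepsilon-2}+Cr^{-\varepsilon}$ whose integration produces $\vartheta_s^\infty$ and the stated decay. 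Your Steps 2 and 3 would go through if Step 1 were granted (though Step 3 loses an arbitrarily small amount in the exponent, which is harmless only up to relabelling $\varepsilon$), but Step 1 as written does not.
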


\begin{proof}
First observe that since $X=\nabla^{g}f$, for any vector field $Y$ on $M$ we have that
\begin{equation*}
\begin{split}
g(\nabla^{g}(X\cdot \vartheta_s),Y)&=\operatorname{Hess}_{g}(f)(\nabla^{g}\vartheta_{s},\,Y)+\operatorname{Hess}_{g}(\vartheta_{s})(X,\,Y)\\
&=\frac{1}{2}(\mathcal{L}_{X}g)(\nabla^g\vartheta_s,Y)+\operatorname{Hess}_{g}(\vartheta_{s})(X,\,Y).\\
\end{split}
\end{equation*}
In particular, upon setting $Y:=\nabla^g\vartheta_s$, using the $JX$-invariance of $\vartheta_{s}$ and
the fact that $\frac{X}{2}\cdot|\nabla^{g}\vartheta_{s}|^{2}_{g}=\operatorname{Hess}_{g}(\vartheta_{s})(X,\,\nabla^{g}\vartheta_{s})$
and $\frac{1}{2}\mathcal{L}_{X}g=g_{C}$ on $M\setminus W$, we see that on this set,
\begin{equation*}
\begin{split}
g(\nabla^{g}(X\cdot \vartheta_s),\nabla^g\vartheta_s)&=|\nabla^{C}\vartheta_s|_{g_{C}}^2+\frac{X}{2}\cdot|\nabla^g\vartheta_s|^2_g\\
&=r^{-2}\underbrace{|X\cdot\vartheta_s|^{2}}_{\leq\,C}+r^{-2}\underbrace{|JX\cdot\vartheta_{s}|^2}_{=\,0}+\frac{X}{2}\cdot|\nabla^g\vartheta_s|^2_g\\
&\leq\frac{C}{r^2}+\frac{X}{2}\cdot|\nabla^g\vartheta_s|^2_g,\\
\end{split}
\end{equation*}
where we have also used the boundedness of $|X\cdot\vartheta_s|$ given by Propositions \ref{lowerbound} and \ref{prop-bd-uni-X-psi} in the last line.
Therefore by Young's inequality together with Proposition \ref{decay-first-der-rad-der}, we find that
\begin{equation*}
\begin{split}
\frac{X}{2}\cdot|\nabla^g\vartheta_s|^2_g&\geq
-|\nabla^{g}(X\cdot \vartheta_s)|_{g}|\nabla^g\vartheta_s|_{g}-\frac{C}{r^2}\\
&\geq -\frac{C}{r^{2\varepsilon}}|\nabla^g\vartheta_s|_{g}-\frac{C}{r^2}\\
&\geq -\frac{C}{r^{2\varepsilon}}|\nabla^g\vartheta_s|_{g}^2-\frac{C}{r^{\min\{2\varepsilon,\,2\}}},
\end{split}
\end{equation*}
where $C$ is a positive constant that may vary from line to line. The previous differential inequality can be reformulated as follows:
\begin{equation*}
\partial_r\left(e^{-Cr^{-2\varepsilon}}|\nabla^g\vartheta_s|_{g}^2\right)\geq -\frac{Ce^{-Cr^{-2\varepsilon}}}{r^{1+\min\{2\varepsilon,\,2\}}}.
\end{equation*}
Integrating from $r$ to $r=+\infty$ and using the assumption that the covariant derivatives of $\vartheta_s$ decay to $0$ at infinity, we subsequently deduce that
\begin{equation*}
0\leq e^{-Cr^{-2\varepsilon}}|\nabla^g\vartheta_s|_{g}^2\leq C\int^{+\infty}_{r}s^{-1-\min\{2\varepsilon,\,2\}}e^{-Cs^{-2\varepsilon}}\,ds
\end{equation*}
so that
\begin{equation*}
0\leq|\nabla^g\vartheta_s|_{g}^2\leq Ce^{Cr^{-2\varepsilon}}\int^{+\infty}_{r}s^{-1-\min\{2\varepsilon,\,2\}}\underbrace{e^{-Cs^{-2\varepsilon}}}_{\leq\,1}\,ds
\leq Cr^{-\min\{2\varepsilon,\,2\}}e^{Cr^{-2\varepsilon}}.
\end{equation*}
As $e^{Cr^{-2\varepsilon}}$ is bounded at infinity, we arrive at the estimate $|\nabla^g\vartheta_s|_{g}\leq Cr^{-\min\{\varepsilon,\,1\}}$.

Next note from the mean value theorem on $D$ that at height $r$,
\begin{equation}
\begin{split}\label{osc-est-vartheta}
\left|\vartheta_s(r,\,\cdot)-\fint_{D}\vartheta_s(r,\,\cdot)\,\omega_{D}^{n-1}\right|&\leq \sup_{D\times\{r\}}|\nabla^{g}\vartheta_s|_{g}\diam_{g}D\leq \frac{C}{r^{\varepsilon}},
\end{split}
\end{equation}
and thanks to Proposition \ref{decay-first-der-rad-der} that
\begin{equation}
\begin{split}\label{osc-est-xvartheta}
\left|X\cdot\vartheta_s(r,\,\cdot)-\fint_{D}X\cdot\vartheta_s(r,\,\cdot)\,\omega_{D}^{n-1}\right|&\leq\frac{C}{r^{\varepsilon}}.
\end{split}
\end{equation}
These inequalities we will make use of later.

Linearising \eqref{starstar-s} around the background metric $g$ on $M\setminus W$, we can write
\begin{equation}\label{lin-CMA-infinity}
\Delta_{g,\,X}\vartheta_s=\int_0^1\int_0^{u}|\partial\bar{\partial}\vartheta_s|^2_{h_{s,\tau}}\,d\tau du,\qquad h_{s,\tau}:=(1-\tau)g+\tau h_s.
\end{equation}
Integrating over $D\times\{r\}$ then yields the equation
\begin{equation*}
\Delta_{C,X}\overline{\vartheta_s}(r)=\int_{D}\int_0^1\int_0^{u}|\partial\bar{\partial}\vartheta_s|^2_{h_{s,\tau}}\,d\tau du\,\omega_{D}^{n-1},
\end{equation*}
where recall that
$$\overline{\vartheta_s}(r):=\fint_{D\times\{r\}}\vartheta_{s}(r,\,\cdot)\,\omega_{D}^{n-1}.$$
By Corollary \ref{coro-equiv-metrics-0}, we therefore have that
\begin{equation}\label{inequ-mean-value-variant}
0\leq\Delta_{C,X}\overline{\vartheta_s}(r)\leq C\int_{D}|i\partial\bar{\partial}\vartheta_s|^2_{g}\,\omega_{D}^{n-1}
\end{equation}
for some uniform constant $C>0$.

Now, since $\nabla^gX=\nabla^{g,2}f=g_{C}$, one gets the following pointwise estimate obtained by considering an orthonormal frame of the form $(r^{-1}X,r^{-1}JX, (e_i,Je_i)_{1\leq i\leq n-1})$, where $(e_i,Je_i)_{1\leq i\leq n-1}$ is an orthonormal frame with respect to $g_D$:
\begin{equation*}
\begin{split}
|i\partial\bar{\partial}\vartheta_s|^2_{g}&\leq C|\nabla^{g,2}\vartheta_s|^2_g\\
&\leq C\left(r^{-2}|\nabla^g(X\cdot\vartheta_s)|^2_g+r^{-2}|\nabla^g\vartheta_s|^2_g+|\nabla^{g_D,2}\vartheta_s|_{g_D}^2\right)
\end{split}
\end{equation*}
for some uniform positive constant $C$. Integrating over $D$, using integration by parts together with Proposition \ref{decay-first-der-rad-der},
we next derive that
\begin{equation}\label{intermez-hessian-deldel}
\int_{D}|i\partial\bar{\partial}\vartheta_s|^2_{g}\,\omega_{D}^{n-1}\leq\frac{C}{r^{4\varepsilon+2}}+\int_{D}|\nabla^{g_D,2}\vartheta_s|^2_{g_{D}}\,\omega_{D}^{n-1}.
\end{equation}
Now, by Bochner formula applied to $(D,\,g_D)$ and the function $\vartheta_s$, we have that
\begin{equation}
\begin{split}\label{boc-for-D}
\Delta_{D}|\nabla^{g_D}\vartheta_s|^2_{g_D}&=2|\nabla^{g_D,2}\vartheta_s|^2_{g_D}+2\Ric(g_D)(\nabla^{g_D}\vartheta_s,\nabla^{g_D}\vartheta_s)+2g_D\left(\nabla^{g_D}\Delta_{D}\vartheta_s,\nabla^{g_D}\vartheta_s\right)\\
&\geq 2|\nabla^{g_D,2}\vartheta_s|^2_{g_D}+2g_D\left(\nabla^{g_D}\Delta_{D}\vartheta_s,\nabla^{g_D}\vartheta_s\right),
\end{split}
\end{equation}
where we have used the fact that $g_D$ has nonnegative Ricci curvature. (Ricci curvature bounded from below would be enough to complete the argument thanks to the decay on the gradient of $\vartheta_s$ that we have just proved above.) Integrating \eqref{boc-for-D} on $D$ and noticing that $\Delta_{D}\vartheta_s=2\tr_{\omega_D}(i\partial\bar{\partial}\vartheta_s)$ then leads to the bound
\begin{equation}
\begin{split}\label{est-hessian-pt-int}
\int_D|\nabla^{g_D,2}\vartheta_s|^2_{g_D}\,\omega_D^{n-1}&\leq \int_D|\nabla^{g_D}\Delta_{D}\vartheta_s|_{g_D}|\nabla^{g_D}\vartheta_s|_{g_D}\,\omega_D^{n-1}\\
 &\leq C\sup_{D\times\{r\}}|\nabla^{g_D}(i\partial\bar{\partial}\vartheta_s)|_{g_D}|\nabla^{g_D}\vartheta_s|_{g_D}\\
 &\leq \frac{C}{r^{\varepsilon}},
\end{split}
\end{equation}
where $C$ denotes a uniform positive constant that may vary from line to line. Here we have used Proposition \ref{prop-C^3-est} and the decay on the gradient of $\vartheta_s$ previously proved in the last line. Combining \eqref{inequ-mean-value-variant}, \eqref{intermez-hessian-deldel}, and \eqref{est-hessian-pt-int}, we can now infer that
\begin{equation*}
\begin{split}
0\leq\Delta_{C,X}\overline{\vartheta_s}(r)\leq \frac{C}{r^{4\varepsilon+2}}+\frac{C}{r^{\varepsilon}}.
\end{split}
\end{equation*}
We then have that
$$0\leq \frac{\partial}{\partial r}\left(e^{-\frac{r^{2}}{2}}X\cdot\overline{\vartheta_{s}}\right)\leq Cr^{1-\varepsilon}e^{-\frac{r^{2}}{2}}.$$
After integrating this differential inequality from $r$ to $r=+\infty$, we find that
\begin{equation*}
-C\int_r^{+\infty}s^{1-\varepsilon}e^{-\frac{s^{2}}{2}}\,ds\leq e^{-\frac{r^{2}}{2}}X\cdot\overline{\vartheta_{s}}(r)\leq 0.
\end{equation*}
Now, $\int_r^{+\infty}s^{1-\varepsilon}e^{-\frac{s^{2}}{2}}\,ds\leq C r^{-\varepsilon} e^{-\frac{r^{2}}{2}}$ for $r$ large enough which can be proved using integration by parts. In particular,
we have that
\begin{equation*}
-Cr^{-\varepsilon}\leq X\cdot\overline{\vartheta_{s}}(r)\leq 0.
\end{equation*}
Integrating once more yields the existence of a constant $\vartheta_{s}^{\infty}\in\R$ such that  $\vartheta_{s}^{\infty}\leq\overline{\vartheta_s}(r)\leq \vartheta_{s}^{\infty}+Cr^{-\varepsilon}$. The triangle inequality
applied to the oscillation estimates \eqref{osc-est-vartheta} and \eqref{osc-est-xvartheta}
then imply the desired estimates for $\vartheta_{s}$ and $X\cdot\vartheta_s$, respectively.
\end{proof}

As an intermediate step, we obtain a first rough decay estimate of the difference between the background metric and the metric resulting from the solution to \eqref{starstar-s}. More precisely, we have:

\begin{corollary}\label{coro-dec-met-bis}
Let $(\vartheta_s)_{0\,\leq\, s\,\leq\, 1}$ be a path of solutions in $\mathbb{R}\oplus C^{\infty}_{X,\,\beta}(M)$ to \eqref{starstar-s}. If $\alpha\in\left(0,\frac{1}{2}\right)$,
then there exists $C>0$ and $\varepsilon>0$ such that for all $s\in[0,\,1]$,
\begin{equation*}
\|f^{\frac{\varepsilon}{2}}\cdot i\partial\bar{\partial}\vartheta_s\|_{C^{0,2\alpha}_{\operatorname{\operatorname{loc}}}}\leq C.
\end{equation*}
\end{corollary}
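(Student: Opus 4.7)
The plan is to combine the pointwise weighted $C^{0}$-decay of $\vartheta_s-\vartheta_s^{\infty}$ already furnished by Corollary \ref{coro-dec-met} with the uniform-in-$s$ local $C^{3,\,2\alpha}$-bound of Proposition \ref{prop-loc-holder-C-3} via a standard interpolation inequality in H\"older spaces. The rate $\varepsilon$ will have to be reduced slightly, but it will remain positive and uniform in $s\in[0,1]$.

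First, I would fix $\varepsilon>0$ and $R_0>0$ as given by Corollary \ref{coro-dec-met} and pick any point $x\in M$ with $f(x)=:R\geq 2R_0$. On the $g$-geodesic ball $B_{g}(x,1)$ (whose radius is less than the injectivity radius of $g$, for $R$ large enough) the background metric $g$ is isometric to $\widehat{g}:=g_C+g_D$, the metrics $g_s$ and $h_s$ are uniformly equivalent to $g$ by Corollary \ref{coro-equiv-metrics-0}, and Proposition \ref{prop-loc-holder-C-3} furnishes
\begin{equation*}
\|\vartheta_s-\vartheta_s^{\infty}\|_{C^{3,\,2\alpha}(B_{g}(x,1))}=\|\vartheta_s\|_{C^{3,\,2\alpha}(B_{g}(x,1))}\leq C,
\end{equation*}
since subtracting the constant $\vartheta_s^{\infty}$ does not affect higher derivative norms. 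On the other hand, Corollary \ref{coro-dec-met} implies
\begin{equation*}
\|\vartheta_s-\vartheta_s^{\infty}\|_{C^{0}(B_{g}(x,1))}\leq C f(x)^{-\varepsilon/2}.
\end{equation*}

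Next I would apply the usual convex interpolation inequality for H\"older norms on the unit ball: for $\theta=1/(3+2\alpha)\in(0,1)$,
\begin{equation*}
\|i\partial\bar{\partial}(\vartheta_s-\vartheta_s^{\infty})\|_{C^{0,\,2\alpha}(B_{g}(x,1/2))}\leq C\,\|\vartheta_s-\vartheta_s^{\infty}\|_{C^{0}(B_{g}(x,1))}^{\theta}\,\|\vartheta_s\|_{C^{3,\,2\alpha}(B_{g}(x,1))}^{1-\theta}.
\end{equation*}
Combining the two estimates above yields
\begin{equation*}
\|i\partial\bar{\partial}\vartheta_s\|_{C^{0,\,2\alpha}(B_{g}(x,1/2))}\leq C f(x)^{-\varepsilon\theta/2},
\end{equation*}
uniformly in $s\in[0,1]$ and $x$ with $f(x)\geq 2R_0$. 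Setting $\varepsilon':=\varepsilon\theta=\varepsilon/(3+2\alpha)>0$ gives
\begin{equation*}
f(x)^{\varepsilon'/2}\,\|i\partial\bar{\partial}\vartheta_s\|_{C^{0,\,2\alpha}(B_{g}(x,1/2))}\leq C.
\end{equation*}
Renaming $\varepsilon'$ as $\varepsilon$, this is the desired estimate on $\{f\geq 2R_0\}$. On the compact region $\{f\leq 2R_0\}$, the bound follows directly from Proposition \ref{prop-loc-holder-C-3} since $f^{\varepsilon/2}$ is bounded there.

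There is no genuine obstacle; the only subtlety is to be careful about what is being interpolated (the constant $\vartheta_s^\infty$ must be subtracted before applying the $C^0$-to-$C^{3,2\alpha}$ interpolation so that the weighted $C^0$-bound from Corollary \ref{coro-dec-met} can be used), and to accept that the rate $\varepsilon$ appearing in the conclusion is smaller than the one produced by Corollary \ref{coro-dec-met}; this suffices since the statement only asserts \emph{some} positive $\varepsilon$.
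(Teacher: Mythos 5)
There is a genuine gap: your interpolation argument relies on a uniform-in-$x$ bound of the form $\sup_{s}\|\vartheta_s\|_{C^{3,2\alpha}(B_g(x,1))}\le C$ with $C$ independent of the centre $x$, and you attribute this to Proposition~\ref{prop-loc-holder-C-3}. But that proposition only gives a constant depending on the compact set $K$, and its proof runs local Schauder estimates for the operator $\Delta_{\omega_s,X}$, whose drift coefficient $X$ is \emph{unbounded} (of order $r$) on $M$. Concretely, the right-hand side of \eqref{weloveele} contains the term $(\Delta_{\sigma_s}-\Delta_{\omega_s})(\tfrac{X}{2}\cdot\vartheta_s)$, which by \eqref{easy-obs-diff-lap} is a priori of size $O(|X|_{g_s})=O(r)$, and the Schauder constant of the operator $\Delta_{\omega_s,X}$ on a unit ball around $x$ grows at least linearly in $|X|$. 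So the constant in Proposition~\ref{prop-loc-holder-C-3} genuinely degrades as $x\to\infty$, and the interpolation you set up cannot produce a weighted decay estimate from it. Your exponent $\theta=1/(3+2\alpha)$ is computed correctly, and the subtraction of the constant $\vartheta_s^\infty$ before interpolating is the right move, but the higher endpoint of the interpolation is missing.

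The paper's own proof circumvents exactly this obstruction. It fixes normal holomorphic coordinates on $B_g(x,\iota)$, writes the Monge--Amp\`ere equation as the \emph{uniformly} elliptic equation $a^{i\bar\jmath}\partial_i\partial_{\bar\jmath}\vartheta_s=\tfrac{X}{2}\cdot\vartheta_s$ with $a^{i\bar\jmath}$ satisfying uniform $C^{0,2\alpha}_{\mathrm{loc}}$-bounds and uniform ellipticity by Corollary~\ref{coro-equiv-metrics}, and treats $\tfrac{X}{2}\cdot\vartheta_s$ as a \emph{source} term rather than as a first-order coefficient. The Schauder constant is then $x$-independent, and the weighted decay of the source in $C^{0,2\alpha}(B_g(x,\iota))$ follows by interpolating the $C^0$-decay of $X\cdot\vartheta_s$ from Corollary~\ref{coro-dec-met} with the decay of $\nabla^g(X\cdot\vartheta_s)$ from Proposition~\ref{decay-first-der-rad-der}. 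Combined with the $C^0$-decay of $\vartheta_s-\vartheta_s^\infty$, this gives $\|\vartheta_s-\vartheta_s^\infty\|_{C^{2,2\alpha}(B_g(x,\iota/2))}\le Cf(x)^{-\varepsilon/2}$ uniformly in $x$ and $s$. If you want to pursue an interpolation-flavoured argument, you would first need to establish a uniform-in-$x$ higher-order H\"older bound, and the only clean way to do that here is precisely the paper's source-term Schauder estimate, at which point the interpolation step becomes redundant.
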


\begin{proof}
It suffices to prove this estimate outside a compact set $W$ such that $\omega_s=\omega$ on $M\setminus W$. To this end, let $x\in M\setminus W$ and choose normal holomorphic coordinates in a ball $B_{g}(x,\iota)$ for some $\iota>0$ uniform in $x\in M$.
Let $g_{\tau\vartheta_s}^{i\bar{\jmath}}$ denote the components of the inverse of the K\"ahler metric associated to $\omega+i\partial\bar{\partial}(\tau\vartheta_s)$ in these coordinates and set $$a^{i\bar{\jmath}}:=\int_{0}^{1}
g_{\tau\vartheta_s}^{i\bar{\jmath}}\,d\tau.$$ Then we have that
 \begin{eqnarray*}
0&=&\log\left(\frac{\sigma_s^n}{\omega^n}\right)-\frac{X}{2}\cdot\vartheta_s\\
&=&\int_0^1\frac{d}{d\tau}\log\left(\frac{\omega_{\tau\vartheta_s}^n}{\omega^n}\right)\,d\tau-\frac{X}{2}\cdot\vartheta_s\\
&=&\left(\int_0^1 g_{\tau\vartheta_s}^{i\bar{\jmath}}\,d\tau\right)\partial_i\partial_{\bar{\jmath}}\vartheta_s-\frac{X}{2}\cdot\vartheta_s\\
&=&a^{i\bar{\jmath}}\partial_i\partial_{\bar{\jmath}}\vartheta_s-\frac{X}{2}\cdot\vartheta_s.
\end{eqnarray*}
Now, by Corollary \ref{coro-equiv-metrics}, $\|a^{i\bar{\jmath}}\|_{C_{\operatorname{\operatorname{loc}}}^{0,2\alpha}}$ is uniformly bounded from above and $a^{i\bar{\jmath}}\geq \Lambda^{-1}\delta^{i\bar{\jmath}}$ on $B_{g}(x,\iota)$ for some uniform constant $\Lambda>0$. Therefore, by considering $\frac{X}{2}\cdot \vartheta_s$ as a source term, the Schauder estimates imply that
\begin{equation*}
\begin{split}
\|\vartheta_s-\vartheta_{s}^{\infty}\|_{C^{2,\,2\alpha}(B_g(x,\iota/2))}&\leq C\Bigg(\left\|X\cdot\vartheta_s\right\|_{C^{0,2\alpha}(B_g(x,\iota))}+\|\vartheta_s-\vartheta_{s}^{\infty}\|_{C^{0}(B_g(x,\iota))}\Bigg)\\
&\leq Cf(x)^{-\frac{\varepsilon}{2}}
\end{split}
\end{equation*}
for some uniform positive constant $C=C\left(n,\alpha,\omega\right)$. Here we have used Proposition \ref{decay-first-der-rad-der} and Corollary \ref{coro-dec-met} in the last line. The desired rough a priori decay estimate on $i\partial\bar{\partial}\vartheta_s$ and its H\"older semi-norm now follow.
\end{proof}

The next result proves a sharp decay at infinity on the $C^0$-norm of the difference between a solution to \eqref{starstar-s} and its limit at infinity.

\begin{theorem}\label{theo-a-priori-wei-est}
 Let $(\vartheta_s)_{0\,\leq\, s\,\leq\, 1}$ be a path of solutions in $\mathbb{R}\oplus C^{\infty}_{X,\,\beta}(M)$ to \eqref{starstar-s}. Then there exist $R_0>0$ and $C>0$ such that for $s\in[0,\,1]$,
\begin{equation*}
| \vartheta_s-\vartheta_{s}^{\infty}|\leq \frac{C}{f^{\frac{\beta}{2}}},\qquad f\geq R_0,
\end{equation*}
where $\vartheta_{s}^{\infty}\in\R$ is as in Corollary \ref{coro-dec-met} {and $\beta$ is as in Theorem \ref{mainthm}(v)}. Moreover, there exists $C>0$ such that $\|\vartheta_s\|_{\mathcal{D}^{2,\,2\alpha}_{X,\beta}}\leq C$.
\end{theorem}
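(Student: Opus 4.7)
The plan is to linearise $(\star\star_s)$ around $g$ outside the compact set $W$ and then to bootstrap the decay of $\vartheta_s - \vartheta_s^\infty$ by iteratively invoking the proof technique of Theorem \ref{iso-sch-Laplacian-pol}. Outside $W$, where $\omega_s \equiv \omega$ and $G_s \equiv 0$, the Taylor expansion \eqref{equ:taylor-exp} applied to $\omega$ gives
\begin{equation*}
\Delta_{g,X}\vartheta_s = \int_0^1\!\!\int_0^u |\partial\bar\partial\vartheta_s|^2_{h_{s,\tau}}\,d\tau\,du =: F_s,
\end{equation*}
where $h_{s,\tau} := (1-\tau)g + \tau h_s$. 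After extending $F_s$ to all of $M$ by a compactly supported correction, this equation holds globally. The key observation is that $F_s$ is \emph{quadratic} in $i\partial\bar\partial\vartheta_s$; this will drive a doubling-type bootstrap in the decay exponent.

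I will then input the already-established polynomial decay. Corollary \ref{coro-dec-met-bis} gives $i\partial\bar\partial\vartheta_s \in C^{0,2\alpha}_{X,\varepsilon_0}(M)$ uniformly in $s$, for some $\varepsilon_0 > 0$, while Corollary \ref{coro-dec-met} furnishes $\vartheta_s^\infty \in \R$ with $|\vartheta_s - \vartheta_s^\infty| = O(f^{-\varepsilon_0/2})$. In particular $\vartheta_s$ has no logarithmic growth, so is of the form $\vartheta_s^\infty + \tilde{\vartheta}_s$ with $\tilde{\vartheta}_s \to 0$ at infinity. Combined with Corollary \ref{coro-equiv-metrics-0} and the multiplicative inequality \eqref{mult-inequ-holder}, this yields $F_s \in C^{0,2\alpha}_{X, 2\varepsilon_0}(M)$ with uniform control.

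The core of the argument is to replicate Claims \ref{cla-c-0-bd-mean-val}--\ref{sharp-pt-dec-u} from the proof of Theorem \ref{iso-sch-Laplacian-pol} directly for the function $\vartheta_s$. Since $\vartheta_s$ satisfies $\Delta_{g,X}\vartheta_s = F_s + (\textnormal{compact})$ and is known a priori to be bounded and asymptotic to $\vartheta_s^\infty$ without any $\log r$ term, those claims give
\begin{equation*}
\sup_{M}f^{\beta_1/2}|\vartheta_s - \vartheta_s^\infty| \leq C\|F_s\|_{C^{0,2\alpha}_{X,\beta_1}}
\end{equation*}
for any $\beta_1 \in (0,\min\{\lambda^D, 2\varepsilon_0\})$. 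Local Schauder estimates in balls $B_g(x,\iota)$ with $\iota < \inj_g(M)$, carried out exactly as in the proof of Proposition \ref{prop-loc-holder-C-3} and using Corollary \ref{coro-equiv-metrics} for the H\"older regularity of the leading coefficients, then upgrade this pointwise decay to the full $C^{2,2\alpha}_{X,\beta_1}$-bound on $\vartheta_s - \vartheta_s^\infty$.

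With $i\partial\bar\partial\vartheta_s \in C^{0,2\alpha}_{X,\beta_1}(M)$ uniformly, \eqref{mult-inequ-holder} now places $F_s$ in $C^{0,2\alpha}_{X,2\beta_1}(M)$, so one re-runs the above with $\beta_2 := \min\{2\beta_1,\beta\}$. The decay exponent at least doubles at each iteration until it is capped at $\beta < \lambda^D$, so the procedure terminates after finitely many steps and yields $\|\vartheta_s - \vartheta_s^\infty\|_{C^{2,2\alpha}_{X,\beta}} \leq C$, which is the claimed pointwise decay. Combining this with the uniform bound $|\vartheta_s^\infty| \leq \sup_M|\vartheta_s| \leq C$ (from Propositions \ref{prop-bd-abo-uni-psi} and \ref{prop-bd-bel-uni-psi} via the localisation Lemma \ref{lemma-loc-crit-pts}) delivers $\|\vartheta_s\|_{\mathcal{D}^{2,2\alpha}_{X,\beta}} \leq C$. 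The main obstacle is verifying that the Nash--Moser iteration along $D$-orbits and the Gr\"onwall-type ODE estimates on the $D$-mean underlying Claim \ref{sharp-pt-dec-u} truly depend only on the equation satisfied by $\vartheta_s$ together with its rough polynomial decay, rather than on the isomorphism statement of Theorem \ref{iso-sch-Laplacian-pol} itself; this is the case because those estimates were established pointwise for any function with the relevant a priori properties, all of which are at our disposal here.
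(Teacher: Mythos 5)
Your plan coincides with the paper's proof in structure: both linearise $(\star\star_s)$ around $g$ outside a compact set, exploit the quadratic nature of the Monge--Amp\`ere nonlinearity so that the decay exponent (roughly) doubles per pass, feed in the seed decay from Corollary \ref{coro-dec-met-bis}, and recycle the claims from the proof of Theorem \ref{iso-sch-Laplacian-pol} as a priori estimates rather than invoking the isomorphism statement itself. Your final paragraph correctly identifies why those claims are re-usable.

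The gap is in the upgrade from pointwise decay to the full weighted H\"older bound. You invoke local \emph{elliptic} Schauder estimates in fixed-radius balls $B_g(x,\iota)$ ``as in Proposition \ref{prop-loc-holder-C-3}'' together with Corollary \ref{coro-equiv-metrics}. This controls $\nabla^g\vartheta_s$, $\nabla^{g,2}\vartheta_s$ and the \emph{local} H\"older seminorm with the correct polynomial weight, but it does \emph{not} control $\mathcal{L}_X\vartheta_s$ with weight $f^{\beta_1/2}$ (a naive estimate yields only $|X\cdot\vartheta_s|\leq |\nabla^g\vartheta_s|\,|X|_g = O(f^{(1-\beta_1)/2})$, which is not even decaying unless $\beta_1>1$), nor does it touch the $X$-flow half of the H\"older seminorm $[\cdot]_{C^{0,2\alpha}_\beta}$ that involves $(\varphi^X_t)_*$. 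Both are part of the $C^{2,2\alpha}_{X,\beta_1}$ norm and hence of the $\mathcal{D}^{2,2\alpha}_{X,\beta}$ norm you must ultimately bound. This matters twice: once for the final conclusion $\|\vartheta_s\|_{\mathcal{D}^{2,2\alpha}_{X,\beta}}\leq C$, and once because your iteration feeds $\|i\partial\bar\partial\vartheta_s\|_{C^{0,2\alpha}_{X,\beta_j}}$ into the multiplicative inequality \eqref{mult-inequ-holder} to improve the decay of $F_s$. The paper avoids this by invoking the \emph{parabolic} Schauder estimate from \eqref{est-sch-loc-para} in Claim \ref{claim-a-priori-rough-bd-hih-der} with $k=0$: the rescaling $\tilde g(\tau)=(-\tau)(\varphi^X_\tau)^*\tilde g$ converts $\mathcal{L}_X$-differentiation into a time derivative and produces the estimate in the $C^{2,2\alpha}_{X,0}$-norm on annuli $A_{r(x)-C,r(x)+C}$, uniformly weighted. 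Replacing your elliptic Schauder step by that claim (and then absorbing the quadratic term exactly as in the paper's display) closes the gap; without it the argument does not deliver the stated $\mathcal{D}^{2,2\alpha}_{X,\beta}$-bound.
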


\begin{proof}
Linearising \eqref{starstar-s} around $g$ outside a compact set to obtain \eqref{lin-CMA-infinity} and using the uniform equivalence of the metrics $h_{s}$ and $g$ given by Corollary \ref{coro-equiv-metrics-0} together with the bounds of Corollary \ref{coro-dec-met-bis}, we obtain the improved estimate
\begin{equation*}
0\leq\Delta_{g,\,X}\vartheta_s\leq Cr^{-2\varepsilon}.
\end{equation*}
Akin to the proof of Claims \ref{cla-c-0-bd-mean-val} and \ref{cla-c-0-orth-mean-val}, one estimates $X\cdot\overline{\vartheta_s}$ and $\vartheta_s-\overline{\vartheta_s}$ separately. Estimating the former can be reduced to an ODE which gives $X\cdot\overline{\vartheta_s}=O(r^{-2\varepsilon})$ uniformly in $s\in[0,\,1]$, and by integrating from $r$ to $r=+\infty$, we obtain $\overline{\vartheta_s}-\vartheta_{s}^{\infty}=O(r^{-2\varepsilon})$. The latter estimate uses the Poincar\'e inequality on $D$ endowed with its metric $g_D$. By assumption, $\lambda^D>\beta>0$ is the first non-zero eigenvalue of the spectrum of the Laplacian on $D$, and so one has that $\vartheta_s-\overline{\vartheta_s}=O(r^{-\min\{\beta,2\varepsilon\}}).$ Combining these two estimates, one arrives at the fact that $\vartheta_s-\vartheta_{s}^{\infty}=O(r^{-\min\{\beta,2\varepsilon\}})$ which is a strict improvement of Corollary \ref{coro-dec-met}, {provided that $\varepsilon<\beta$}.

Next, invoking local parabolic Schauder estimates established in [\eqref{est-sch-loc-para}, Claim \ref{claim-a-priori-rough-bd-hih-der}] with $k=0$ applied to the linearisation of \eqref{starstar-s} around the background metric $g$ outside a compact set as in \eqref{lin-CMA-infinity} yields the existence of a positive constant $C$ such that for $R\geq R_0$,
\begin{equation*}
\begin{split}
\|\vartheta_s-\vartheta_{s}^{\infty}\|_{C^{2,\,2\alpha}_{X,\min\{\beta,2\varepsilon\}}}&\leq C\left(\|\vartheta_s-\vartheta_{s}^{\infty}\|_{C^{0}_{X,\min\{\beta,2\varepsilon\}}}+\|i\partial\overline{\partial}\vartheta_s\|_{C^{0,2\alpha}_{X,\min\{\beta,2\varepsilon\}}}\|i\partial\overline{\partial}\vartheta_s\|_{C^0(r\geq R)}\right)+C(R)\\
&\leq C\|\vartheta_s-\vartheta_{s}^{\infty}\|_{C^{0}_{X,\min\{\beta,2\varepsilon\}}}+C\|\vartheta_s-\vartheta_{s}^{\infty}\|_{C^{2,\,2\alpha}_{X,\min\{\beta,2\varepsilon\}}}R^{-\min\{\beta,2\varepsilon\}}+C(R),
\end{split}
\end{equation*}
where we have invoked local uniform estimates given by Propositions \ref{prop-C^2-est} and \ref{prop-C^3-est}. By choosing $R$ large enough and absorbing the relevant terms, one finds in particular that $\|\vartheta_s-\vartheta_{s}^{\infty}\|_{C^{2,\,2\alpha}_{X,\min\{\beta,2\varepsilon\}}}\leq C$ for some uniform positive constant $C$. This implies that $|i\partial\overline{\partial}\vartheta_s|_g=O(r^{-\min\{\beta,2\varepsilon\}})$.

By iterating the previous steps a finite number of times, the decay on $\vartheta_s$ is multiplied by $2$ with each iteration until it eventually reaches the threshold decay $r^{-\beta}$.
\end{proof}

We now present the weighted $C^{4}$-estimate.

\begin{prop}[Weighted $C^4$ a priori estimate]\label{prop-C4-est}
Let $(\vartheta_s)_{0\,\leq\, s\,\leq\, 1}$ be a path of solutions in $\mathbb{R}\oplus C^{\infty}_{X,\,\beta}(M)$ to \eqref{starstar-s}. If $\alpha\in\left(0,\frac{1}{2}\right)$, then there exists $C>0$ such that for all $s\in[0,\,1]$,
\begin{equation}
\|\vartheta_s-\vartheta_{s}^{\infty}\|_{C^{4,\,2\alpha}_{X,\,\beta}}\leq C.\label{a-priori-wei-3-alp}
\end{equation}
\end{prop}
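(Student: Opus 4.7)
The strategy is to bootstrap the weighted $C^{2,\,2\alpha}_{X,\,\beta}$ control from Theorem \ref{theo-a-priori-wei-est} two steps higher in the scale of weighted H\"older spaces, using that the Monge-Amp\`ere nonlinearity is quadratic in $\partial\bar{\partial}\vartheta_s$ and hence, by the multiplication inequality \eqref{mult-inequ-holder}, contributes a weight of $f^{-\beta}$ rather than $f^{-\beta/2}$ on the right-hand side of the linearised equation. Outside the compact set $W$, where $\omega_s=\omega$ and $G_s=0$, equation \eqref{starstar-s} takes the semilinear form
\begin{equation*}
\Delta_{g,\,X}\vartheta_s\,=\,N(\vartheta_s)\,:=\,\int_{0}^{1}\!\int_{0}^{u}|\partial\bar{\partial}\vartheta_s|_{g_{\tau\vartheta_s}}^{2}\,d\tau\,du,
\end{equation*}
so from $\|\vartheta_s-\vartheta_s^{\infty}\|_{C^{2,\,2\alpha}_{X,\,\beta}}\leq C$ and two applications of \eqref{mult-inequ-holder} we immediately obtain $\|N(\vartheta_s)\|_{C^{0,\,2\alpha}_{X,\,2\beta}}\leq C$. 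This is a genuine gain of one power of decay, which is exactly what will drive the bootstrap.

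To pass from weighted $C^{2,\,2\alpha}$ to weighted $C^{3,\,2\alpha}$, I differentiate \eqref{starstar-s} along a vector field $Y\in\mathfrak{t}$. Writing the equation in its quasi-linear form
\begin{equation*}
a^{i\bar{\jmath}}\partial_i\partial_{\bar{\jmath}}\vartheta_s-\frac{X}{2}\cdot\vartheta_s\,=\,G_s,\qquad a^{i\bar{\jmath}}\,:=\,\int_{0}^{1}g_{\tau\vartheta_s}^{i\bar{\jmath}}\,d\tau,
\end{equation*}
and noting that $a^{i\bar{\jmath}}-g^{i\bar{\jmath}}\in C^{0,\,2\alpha}_{X,\,\beta}$ by Corollary \ref{coro-equiv-metrics} combined with Theorem \ref{theo-a-priori-wei-est}, the derivative $\mathcal{L}_Y\vartheta_s$ satisfies a linear elliptic equation of the form $a^{i\bar{\jmath}}\partial_i\partial_{\bar{\jmath}}(\mathcal{L}_Y\vartheta_s)-\frac{X}{2}\cdot(\mathcal{L}_Y\vartheta_s)=S_{Y,s}$, where the source $S_{Y,s}$ is bilinear in first derivatives of $a^{i\bar{\jmath}}$ and $\partial\bar{\partial}\vartheta_s$ and hence lies in $C^{0,\,2\alpha}_{X,\,\beta}$ by \eqref{mult-inequ-holder}. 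Applying the localised parabolic Schauder estimate \eqref{est-sch-loc-para} of Claim \ref{claim-a-priori-rough-bd-hih-der} to this linear equation on the annuli $A_{r(x)-C,r(x)+C}$ (the coefficients being uniformly elliptic by Corollary \ref{coro-equiv-metrics-0} and satisfying uniform $C^{0,\,2\alpha}$ bounds by Corollary \ref{coro-equiv-metrics}) yields a uniform weighted bound on $\mathcal{L}_Y\vartheta_s$ in $C^{2,\,2\alpha}_{X,\,\beta}$. Letting $Y$ range over a basis of $\mathfrak{t}$, which spans the tangent space of $M$ at generic points, upgrades $\|\vartheta_s-\vartheta_s^{\infty}\|_{C^{2,\,2\alpha}_{X,\,\beta}}$ to a $C^{3,\,2\alpha}_{X,\,\beta}$ bound.

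A second iteration of the same argument, applied now to the linear equation satisfied by $\mathcal{L}_Y\mathcal{L}_Z\vartheta_s$ for $Y,Z\in\mathfrak{t}$ (whose coefficients and source now have controlled $C^{1,\,2\alpha}_{X,\,\beta}$ norms thanks to the just-obtained $C^{3,\,2\alpha}_{X,\,\beta}$ bound), together with a third application of \eqref{est-sch-loc-para} with $k=1$, produces the desired $C^{4,\,2\alpha}_{X,\,\beta}$ estimate outside $W$. Inside $W$, the analogous bound is furnished directly by Proposition \ref{prop-loc-reg} together with the uniform $C^0$-bound of Propositions \ref{prop-bd-abo-uni-psi} and \ref{prop-bd-bel-uni-psi}. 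Matching the two gives \eqref{a-priori-wei-3-alp}. The main obstacle is the drift term $\frac{X}{2}\cdot$, which prevents a direct application of elliptic Schauder estimates on annuli of growing size; this is handled precisely by the parabolic rescaling $\phi^{X}_{\tau}$ underlying Claim \ref{claim-a-priori-rough-bd-hih-der}, which converts the problem to a uniform parabolic one on sets of bounded parabolic size.
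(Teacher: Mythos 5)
The approach has a genuine gap at the point where you claim that controlling $\mathcal{L}_Y\vartheta_s$ for $Y$ ranging over a basis of $\mathfrak{t}$ "upgrades $\|\vartheta_s-\vartheta_s^{\infty}\|_{C^{2,\,2\alpha}_{X,\,\beta}}$ to a $C^{3,\,2\alpha}_{X,\,\beta}$ bound." The Lie algebra $\mathfrak{t}$ has real dimension $n$, while $M$ has real dimension $2n$; even after adjoining $J\mathfrak{t}$, the resulting vector fields only span the tangent space on the dense orbit and \emph{vanish} at the torus fixed points of $D$, which sit inside every level set $\{r=\text{const.}\}$ along the end. The weighted norm $C^{4,\,2\alpha}_{X,\,\beta}$ is defined in Section~\ref{function-spaces-subsection} in terms of the full covariant derivatives $(\nabla^{g})^{i}$ with $i\leq 4$, so a bound on $\mathcal{L}_Y\vartheta_s$ along vector fields that degenerate on a noncompact set yields no uniform bound on $(\nabla^{g})^{3}\vartheta_s$ near that degeneracy locus. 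This is not a technicality: in local coordinates near a torus fixed point of $D$, the operation $\mathcal{L}_Y$ kills precisely the second-order information of $\vartheta_s$ in the $D$-directions that you would need to recover.

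The paper's proof avoids this problem by differentiating the equation \emph{covariantly} with $\nabla^{g}$ rather than along torus vector fields. In Claim~\ref{dec-C3-inf} one writes, via a Bochner-type commutation using the explicit structure of $g$ at infinity,
\[
\Delta_{g,\,X}\bigl(\nabla^{g}\vartheta_s\bigr)=\nabla^{g}\vartheta_s+Q\bigl(\partial\bar\partial\vartheta_s,\,\nabla^{g}\partial\bar\partial\vartheta_s\bigr)
\]
on $\{r\geq R\}$, and the quadratic term carries a small factor $R^{-\beta}$ that is absorbed. This gives the intermediate $C^{3,\,2\alpha}_{X,\,\beta}$ control. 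The $C^{4,\,2\alpha}_{X,\,\beta}$ bound then follows by applying Theorem~\ref{iso-sch-Laplacian-pol} with $k=2$ to the linearised equation $\Delta_{g,\,X}\vartheta_s = Q(\partial\bar\partial\vartheta_s)$, estimating $\|Q\|_{\mathcal{C}^{2,\,2\alpha}_{X,\,\beta}}$ via the multiplication inequality \eqref{mult-inequ-holder} and the localization factor $R^{-\beta}$, and absorbing. A minor additional point: your opening observation that $N(\vartheta_s)\in C^{0,\,2\alpha}_{X,\,2\beta}$ is not usable in the isomorphism framework, since $2\beta$ may exceed $\lambda^D$ and Theorem~\ref{iso-sch-Laplacian-pol} requires the rate to lie in $(0,\,\lambda^D)$; the paper deliberately stays at rate $\beta$ and trades the extra decay for smallness of the coefficient.
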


\begin{proof}
In order to prove the a priori bound on the $C^{4,\,2\alpha}_{X,\,2}$-norm of $\vartheta_s-\vartheta_{s}^{\infty}$, we first establish the following uniform decay on the third derivatives of $\vartheta_s-\vartheta_{s}^{\infty}$.

\begin{claim}\label{dec-C3-inf}
There exists $C>0$ such that for all $s\in[0,1]$,
\begin{equation*}
\|\nabla^g \vartheta_s\|_{C^{2,\,2\alpha}_{X,\beta}}\leq C.
\end{equation*}
In particular,
\begin{equation*}
|\nabla^g\partial\overline{\partial}\vartheta_s|_g\leq \frac{C}{r^{\beta}}.
\end{equation*}
\end{claim}

\begin{proof}[Proof of Claim \ref{dec-C3-inf}]
We differentiate the linearisation of \eqref{starstar-s} around the background metric $g$ outside a compact set as given in \eqref{lin-CMA-infinity} to get schematically on $\{r\geq R\}$ with $R$ sufficiently large:
\begin{equation}
\begin{split}\label{lin-first-der-dec}
\Delta_{g,X}\left(\nabla^g\vartheta_s\right)&=\nabla^g\vartheta_s+Q(\partial\overline{\partial}\vartheta_s,\nabla^g\partial\overline{\partial}\vartheta_s),\\
\|Q(\partial\overline{\partial}\vartheta_s,\nabla^g\partial\overline{\partial}\vartheta_s)\|_{C^{0,2\alpha}_{X,\beta}}&\leq C \|\nabla^g\partial\overline{\partial}\vartheta_s\|_{C^{0,2\alpha}_{X,\beta}}\|\partial\overline{\partial}\vartheta_s\|_{C^{0,2\alpha}(r> R)}\leq \frac{C}{R^\beta}\|\nabla^g\partial\overline{\partial}\vartheta_s\|_{C^{0,2\alpha}_{X,\beta}}.
\end{split}
\end{equation}
Here we have used Theorem \ref{theo-a-priori-wei-est} in the last inequality. In particular, as in the proof of Theorem \ref{theo-a-priori-wei-est},
by choosing $R$ large enough and absorbing the non-linear term on the right-hand side of \eqref{lin-first-der-dec}, thanks to Proposition \ref{prop-loc-reg} together with Theorem \ref{theo-a-priori-wei-est}, one is led to the bound
\begin{equation*}
\|\nabla^g \vartheta_s\|_{C^{2,\,2\alpha}_{X,\beta}}\leq C.
\end{equation*}
In particular, the desired decay on $|\nabla^g\partial\overline{\partial}\vartheta_s|_g$ holds true.
\end{proof}

By Proposition \ref{prop-loc-reg}, in order to establish \eqref{a-priori-wei-3-alp}
it suffices to estimate the $C^{2,\,2\alpha}_{X,2}$-norm of the right-hand side of the linearisation of \eqref{starstar-s} around the background metric $g$ as given in \eqref{lin-CMA-infinity} once it is localized on $\{r> R\}$ for $R$ sufficiently large. As in the proof of Claim \ref{dec-C3-inf}, the linearisation of \eqref{starstar-s}
around the background metric $g$ outside a compact set as given in \eqref{lin-CMA-infinity} 
gives schematically on $\{r> R\}$:
\begin{equation*}
\begin{split}
\Delta_{g,\,X}\vartheta_s&=Q(\partial\overline{\partial}\vartheta_s),\\
\|Q(\partial\overline{\partial}\vartheta_s)\|_{C^{2,\,2\alpha}_{X,\beta}}&\leq C\Biggl(\|\vartheta_s-\vartheta_{s}^{\infty}\|^2_{C^{2,\,2\alpha}_{X,\beta}}+\|\partial\overline{\partial}\vartheta_s\|_{C^{2,\,2\alpha}_{X,\beta}}\|\partial\overline{\partial}\vartheta_s\|_{C^{0,2\alpha}(r> R)}\\
&\qquad+\|\nabla^g\partial\overline{\partial}\vartheta_s\|_{C^{0,2\alpha}_{X,\beta}}\|\nabla^g\partial\overline{\partial}\vartheta_s\|_{C^{0}(r> R)}\Biggr)\\
&\leq C\left(1+R^{-\beta}\|\vartheta_s-\vartheta_{s}^{\infty}\|_{C^{4,\,2\alpha}_{X,\beta}}+\|\vartheta_s-\vartheta_{s}^{\infty}\|_{C^{4,\,2\alpha}_{X,\beta}}\|\nabla^g\partial\overline{\partial}\vartheta_s\|_{C^{0}(r> R)}\right)\\
&\leq C\left(1+R^{-\beta}\|\vartheta_s-\vartheta_{s}^{\infty}\|_{C^{4,\,2\alpha}_{X,\beta}}\right)
\end{split}
\end{equation*}
for some positive uniform constant that may vary from line to line. Here we have used Theorem \ref{theo-a-priori-wei-est} in the second and third inequalities together with Claim \ref{dec-C3-inf}
in the last inequality. In particular, Theorem \ref{iso-sch-Laplacian-pol} applied to $\vartheta_s-\vartheta_{s}^{\infty}$ and $k=2$ and $\alpha\in\left(0,\frac{1}{2}\right)$ gives for some constant $C$ independent of $R$ the following bound:
\begin{equation*}
\|\vartheta_s-\vartheta_{s}^{\infty}\|_{C^{4,\,2\alpha}_{X,\beta}}\leq C(R)+CR^{-\beta}\|\vartheta_s-\vartheta_{s}^{\infty}\|_{C^{4,\,2\alpha}_{X,\beta}}.
\end{equation*}
This yields the expected a priori estimate after absorbing the last term on the right-hand side of the previous estimates into the left-hand side.
\end{proof}

The next proposition gives the a priori higher order weighted estimates. Since its proof is along the same lines as that of Proposition \ref{prop-C4-est}, we omit it.

\begin{prop}[Higher order weighted estimates]\label{high-order-est-prop}
Let $(\vartheta_s)_{0\,\leq\, s\,\leq\, 1}$ be a path of solutions in $\mathbb{R}\oplus C^{2k+2,\,2\alpha}_{X,\,\beta}(M)$ to \eqref{starstar-s} for $k\geq 1$. If $\alpha\in\left(0,\frac{1}{2}\right)$ and  if there exists $C_{k,\,\alpha}>0$ such that for all $s\in[0,\,1]$, $\|\vartheta_s\|_{\mathcal{D}^{2k+2,2\alpha}_{X,\,\beta}}\leq C_{k,\,\alpha}$, then there exists $C_{k+1,\,\alpha}>0$ such that for all $s\in[0,\,1]$,
\begin{equation*}
\|\vartheta_s\|_{\mathcal{D}^{2(k+1)+2,2\alpha}_{X,\,\beta}}\leq C_{k+1,\,\alpha}.\label{a-priori-wei-high-alp}
\end{equation*}
\end{prop}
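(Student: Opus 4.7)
The plan is to implement an inductive bootstrap whose engine is Theorem \ref{iso-sch-Laplacian-pol} applied to higher weighted H\"older spaces, in a manner entirely parallel to Proposition \ref{prop-C4-est}. I assume as induction hypothesis the uniform bound $\|\vartheta_s\|_{\mathcal{D}^{2k+2,2\alpha}_{X,\,\beta}}\leq C_{k,\alpha}$. The idea is to localise the analysis to $\{r>R\}$ (where $\omega_s=\omega$) with $R$ large, rewrite \eqref{starstar-s} as a linear equation for $\vartheta_s-\vartheta_s^{\infty}$ with the drift Laplacian $\Delta_{g,X}$ and a quadratic ``remainder'' in the Hessian of $\vartheta_s$, apply Theorem \ref{iso-sch-Laplacian-pol} at the order $2(k+1)+2$, and absorb the only obstruction (a term of the form $R^{-\beta}\|\vartheta_s-\vartheta_s^\infty\|_{C^{2k+4,\,2\alpha}_{X,\beta}}$) into the left-hand side by choosing $R$ large enough.

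More precisely, on $\{r>R\}$ the identity \eqref{lin-CMA-infinity} reads schematically
\begin{equation*}
\Delta_{g,X}\vartheta_s=Q(\partial\bar\partial\vartheta_s),\qquad Q(T):=\int_0^1\int_0^u|T|^2_{h_{s,\tau}}\,d\tau\,du,
\end{equation*}
and the multiplicative H\"older inequality \eqref{mult-inequ-holder} from the proof of Theorem \ref{Imp-Def-Kah-Ste} together with the fact that $h_{s,\tau}^{-1}-g^{-1}\in C^{2k,\,2\alpha}_{X,\,\beta}$ (uniformly in $\tau,s$, via the induction hypothesis) yields a control of the form
\begin{equation*}
\|Q(\partial\bar\partial\vartheta_s)\|_{C^{2k,\,2\alpha}_{X,\,\beta}(\{r>R\})}\leq C_{k,\alpha}+CR^{-\beta}\|\vartheta_s-\vartheta_s^{\infty}\|_{C^{2k+4,\,2\alpha}_{X,\,\beta}},
\end{equation*}
where the first summand collects all purely ``lower-order'' contributions (using the induction hypothesis and Theorem \ref{theo-a-priori-wei-est}) and the second summand isolates the top-order term, which is small thanks to the decay $|\partial\bar\partial\vartheta_s|_g=O(r^{-\beta})$ proved in Theorem \ref{theo-a-priori-wei-est}.

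The remaining step is to glue this exterior estimate to the interior bound. I truncate $\vartheta_s-\vartheta_s^{\infty}$ with a cut-off $\chi$ supported on $\{r>R/2\}$ and equal to $1$ on $\{r>R\}$; the function $\chi(\vartheta_s-\vartheta_s^{\infty})-$ (its weighted mean) lies in the space to which Theorem \ref{iso-sch-Laplacian-pol} applies. The commutator $[\Delta_{g,X},\chi]$ produces only compactly-supported terms controlled by Proposition \ref{prop-loc-reg} and the induction hypothesis. Applying Theorem \ref{iso-sch-Laplacian-pol} at order $k+1$ then gives
\begin{equation*}
\|\vartheta_s-\vartheta_s^{\infty}\|_{C^{2k+4,\,2\alpha}_{X,\,\beta}}\leq C(R,k,\alpha)+CR^{-\beta}\|\vartheta_s-\vartheta_s^{\infty}\|_{C^{2k+4,\,2\alpha}_{X,\,\beta}},
\end{equation*}
and fixing $R$ so large that $CR^{-\beta}\leq 1/2$ lets us absorb the last term, producing the announced bound. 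Combined with the interior bound from Proposition \ref{prop-loc-reg} (with data controlled in $C^{2k,\,2\alpha}$ by the induction hypothesis), this gives $\|\vartheta_s\|_{\mathcal{D}^{2(k+1)+2,\,2\alpha}_{X,\,\beta}}\leq C_{k+1,\alpha}$.

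The main obstacle is purely bookkeeping: verifying that the quadratic nonlinearity $Q(\partial\bar\partial\vartheta_s)$ together with all its commutators with derivatives can be honestly estimated in $C^{2k,\,2\alpha}_{X,\,\beta}$, with the dangerous top-order piece always coming multiplied by the small factor $\|\partial\bar\partial\vartheta_s\|_{C^0(\{r>R\})}=O(R^{-\beta})$. Once this is accepted, the absorption mechanism and the isomorphism Theorem \ref{iso-sch-Laplacian-pol} close the induction without any new ideas beyond those of Proposition \ref{prop-C4-est}; for this reason a full written proof is omitted.
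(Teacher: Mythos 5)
Your proposal follows essentially the same route that the paper indicates, namely repeating the argument of Proposition \ref{prop-C4-est} one order higher: linearise \eqref{starstar-s} around $g$ outside a compact set, bound the quadratic remainder $Q(\partial\bar\partial\vartheta_s)$ in $C^{2k,\,2\alpha}_{X,\,\beta}$ via the multiplicative inequality \eqref{mult-inequ-holder} and the induction hypothesis (isolating the top-order term multiplied by the small factor $\|\partial\bar\partial\vartheta_s\|_{C^0(\{r>R\})}=O(R^{-\beta})$), then invert $\Delta_{g,X}$ with Theorem \ref{iso-sch-Laplacian-pol} at order $k+1$ and absorb. The extra care you take in making the cut-off/commutator step explicit is sound and merely fills in a detail the paper leaves implicit.
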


\subsection{Completion of the proof of Theorem \ref{mainthm}(v)}\label{sec-proof-main-thm}

We finally prove Theorem \ref{mainthm}(v).
Set
\begin{equation*}
\begin{split}
S&:=\left\{s\in[0,\,1]\,|\,\textrm{there exists $\psi_s\in\mathcal{M}_{X,\,\beta}^{\infty}(M)$
satisfying \eqref{star-s}}\right\}.
\end{split}
\end{equation*}
Note that $S\neq\emptyset$ since $0\in S$ (take $\psi_{0}=0$).

We first claim that $S$ is open. Indeed, this follows from Theorem \ref{Imp-Def-Kah-Ste}: if $s_0\in S$, then by Theorem \ref{Imp-Def-Kah-Ste}, there exists $\epsilon_{0}>0$ such that for all $s\in(s_{0}-\epsilon_{0},\,s_{0}+\epsilon_{0})$, there exists a solution $\psi_{s}\in \mathcal{M}^{4,\,2\alpha}_{X,\,\beta}(M)$
to $\eqref{star-s}$ with data $F_{s}\in\left(\mathcal{C}^{2,\,2\alpha}_{X,\,\beta}(M)\right)_{\omega,\,0}$. Since the data $F_{s}$ lies in $\mathcal{C}^{\infty}_{X,\,\beta}(M)$,
Theorem \ref{Imp-Def-Kah-Ste} ensures that for each
$s$ in this interval, $\psi_{s}\in \mathcal{M}^{\infty}_{X,\,\beta}(M)$.
It follows that $(s_{0}-\epsilon_{0},\,s_{0}+\epsilon_{0})\cap[0,\,1]\subseteq S$.

We next claim that $S$ is closed. To see this, take a sequence
$(s_k)_{k\,\geq\,0}$ in $S$ converging to some $s_{\infty}\in S$.
Then for $F_k:=F_{s_{k}}$, $k\geq 0$, the corresponding solutions $\psi_{s_k}=:\psi_k$,
$k\geq 0$, of \eqref{star-s} satisfy
 \begin{equation}
(\omega+i\partial\bar{\partial}\psi_k)^n=
e^{F_{k}+\frac{X}{2}\cdot\psi_k}\omega^{n},\qquad k\geq 0.\label{MA-seq}
\end{equation}
It is straightforward to check that the sequence $(F_{{k}})_{k\,\geq\,0}$
 is uniformly bounded in $\mathcal{C}^{2,\,2\alpha}_{X,\,\beta}(M)$.
 As a consequence, the sequence $(\psi_k)_{k\,\geq\,0}$ is
 uniformly bounded in $\mathcal{M}^{4,\,2\alpha}_{X,\,\beta}(M)$
by Proposition \ref{prop-C4-est}. Indeed, recall the correspondence between solutions of \eqref{star-s} and \eqref{starstar-s}: $\psi_k$ is a solution to \eqref{star-s} if and only if $\vartheta_{s_k}=\psi_{s_k}-\Phi_{s_k}$ is a solution to \eqref{starstar-s}.  The
Arzel\`a-Ascoli theorem therefore allows us to pull out a subsequence of
 $(\psi_k)_{k\,\geq\,0}$ that converges to some $\psi_{\infty}\in C^{4,\,2\alpha'}_{\operatorname{\operatorname{loc}}}(M)$,
 $\alpha'\in(0,\alpha)$. As $(\psi_k)_{k\,\geq\,0}$ is uniformly bounded in
  $\mathcal{M}^{4,\,2\alpha}_{X,\,\beta}(M)$,
  $\psi_{\infty}$ will also lie in $\mathcal{M}^{4,\,2\alpha}_{X,\,\beta}(M)$.
  We need to show that\linebreak $(\omega+i\partial\bar{\partial}\psi_{\infty})(x)>0$
at every point $x\in M$. For this, it
 suffices to show that
  $(\omega+i\partial\bar{\partial}\psi_{\infty})^n(x)>0$
  for every $x\in M$. This is seen to hold true by letting $k$ tend to $+\infty$
   (up to a subsequence) in \eqref{MA-seq}. The fact that $\psi_{\infty}\in\mathcal{M}^{\infty}_{X,\,\beta}(M)$
   follows from Proposition \ref{high-order-est-prop}.

Finally, as an open and closed non-empty subset of $[0,\,1]$, connectedness of
$[0,\,1]$ implies that $S=[0,\,1]$. This completes the proof of the Theorem \ref{mainthm}(v).

\bibliographystyle{amsalpha}

\bibliography{ref2}

\end{document}